\theoremstyle{plain}
\newtheorem{theo}{Theorem}[section]
\newtheorem{lem}[theo]{Lemma}
\newtheorem{cor}[theo]{Corollary}
\newtheorem{prop}[theo]{Proposition}
\newtheorem{defi}[theo]{Definition}
\numberwithin{equation}{section}
\theoremstyle{definition}
\newtheorem{remark}[theo]{Remark}
\def\e{\varepsilon}
\def\vf{\varphi}
\def\a{\alpha}
\def\b{\beta}
\def\P{\mathfrak P}
\def\th{\theta}
\def\d{\delta}
\def\z{\zeta}
\def\D{\Delta}
\def\G{\Gamma}
\def\fg{d}
\def\g{\gamma}
\def\gg{\mathbf g}
\def\k{\kappa}
\def\DD{\mathcal D}
\def\LL{\mathcal L}
\def\pp{\partial}
\def\l{\lambda}
\def\rr{T}
\def\s{\sigma}
\def\Si{\Sigma}
\def\m{m}
\def\x{\times}
\def \R{\mathbb R}
\def \N{{\mathbb N}}
\def\E{\mathbb E}
\def \Z{\mathbb Z}
\def \H{\mathbb H}
\def \P{\mathbb P}
\def \X{\mathbb X}
\def\K{\mathcal K}
\def\RR{\mathcal R}
\def\cT{\mathcal T}
\def\B{\Omega}
\def\ls{\lesssim}
\def\ov{\overline}
\def\un{\underline}
\def\v{\varphi}
\def\om{\omega}
\def\Om{\Omega}
\def\A{\mathcal A}
\def\AA{\mathfrak m}
\def\aa{x_i}
\def\ab{R}
\def\bb{x_{i+1}}
\def\C{\mathbb C}
\def\CC{\mathcal C}
\def\W{\mathcal W}
\def\wh{\widehat}
\def\wt{\widetilde}
\def\({\biggl(}
\def\){\biggr)}
\def\<{\bold\langle}
\def\>{\bold\rangle}
\def\LL{{\mathcal L}}
\def\M{\widetilde {M}}
\def\bg{{\mathbf g}}
\def\xx{x}
\def\yy{y}
\def\zz{z}
\def\ww{w}
\def\ppi{{p}}
\def\Dppi{{p}\,}
\def\Pp{\wp}
\def\Vol{\mathrm{Vol}}
\def\Ma{\widetilde{M}_2}
\def\Mb{\widetilde{M}_3}
\def\Mc{\widetilde{M}_4}
\def\Md{\widetilde{M}_5}
\def\Me{\widetilde{M}_1}
\def\kk{\ell}
\def\pp{\partial }
\begin{document}

\title[Local limit theorem]{Local limit theorem in negative curvature}

\author{Fran\c cois Ledrappier}
 \address{Department of Mathematics,  255 Hurley Hall, University of Notre Dame, Notre Dame IN, USA}
 \address{
 Sorbonne Universit\'e, UMR 8001, LPSM, F-75252, Paris Cedex 05 France}
 \address{
 CNRS, UMR 8001, LPSM, F-75252, Paris Cedex 05, France} \email{ledrappier.1@nd.edu}
\author{Seonhee Lim}
\address{Department of Mathematical Sciences and Research Institute of Mathematics, Seoul National University, Seoul
151-747} \email{slim@snu.ac.kr}
\subjclass[2000]{Primary 37D40; 37A17; 37A25; 37A30; 37A50}
\keywords{local limit theorem, Brownian motion, rate of mixing.}

\date{May 26, 2020}

\begin{abstract} Consider the heat kernel $\Pp(t,x,y)$ on the universal cover $\widetilde{M}$ of a closed Riemannian manifold of negative sectional curvature. We show the local limit theorem for $\Pp$ : $$\lim_{t \to \infty} t^{3/2}e^{\lambda_0 t} \Pp(t,x,y)=C(x,y),$$
where $\lambda_0$ is the bottom of the spectrum of the geometric Laplacian and $C(x,y)$ is a positive $\l_0$-harmonic function which depends on $x, y \in \widetilde{M}$.

We show that the $\lambda_0$-Martin boundary of $\widetilde{M}$ is equal to its topological boundary.
The Martin decomposition of $C(x,y)$ gives a family of measures $\{\mu^{\l_0}_x \}$ on $\pp \M$. We show that $\{\mu^{\l_0}_x \}$ is a family minimizing the energy or the Rayleigh quotient of Mohsen.

We use the uniform Harnack inequality on the boundary $\partial \widetilde{M}$ and the uniform three-mixing of the geodesic flow on the unit tangent bundle $SM$ for suitable Gibbs-Margulis measures. 

\end{abstract}

\maketitle

\section{Introduction}
Let $(M, \fg)$ be an $m$-dimensional closed connected Riemannian
manifold of negative sectional curvature, and $(\wt{M}, \wt{\fg})$ its universal cover
endowed with the lifted Riemannian metric. Let us denote by $d$ the distance on $M$, $\M$, as well as on their unit tangent bundles $SM$ and $S\M$ (see \cite{PPS} for various distances on $M$ and on $SM$ and the equivalences between them). Let us denote by $\pi : SM \to M$ and $\pi : S\M \to \M$ the projection of each vector to its base point and by $\Dppi$
the natural projection $(\wt{M}, \wt{\fg})\to (M, \fg)$ and its derivative.  The fundamental group
$\G=\pi_1(M)$ acts on $\wt{M}$ as isometries such that $M=\wt{M}/\G$. {Let $M_0$ be  a bounded fundamental domain for this action.} 

We consider the {\it{geometric Laplace}} operator $\Delta:=-\mbox{Div}\nabla$ or smooth
functions on $\wt{M}$ and the corresponding heat kernel function
$\Pp(t, x, y), t\in \mathbb R_{+}, x, y\in \wt{M}$, which is the probability density defined as the
fundamental solution of the heat equation, {i.e. the function which satisfies $\frac{\partial
\Pp}{\partial t}+\Delta_y \Pp =0 $ and $\underset{t \to 0}{\lim} \; \Pp(t,x,y) =\delta(x-y)$}. The function $\Pp$
 is clearly $\G$-invariant and symmetric in $x$ and $y$. See Section~\ref{Appendix2} for background on general potential theory and properties of the heat kernel. 
 
Denote by $\l _0 $ the bottom of the  spectrum 
of the operator  $\D$ on $L^2( \wt M, \mbox{Vol} )$, where $d \mbox{Vol}(z)$ is the Riemannian
volume form on $\wt{M}$ (see Definition~\ref{lambda0}). 
Since $\G$ is not amenable, $\l _0 $ is positive \cite{Br}. For all 
$x, y  \in \wt{M}$, we have
\begin{equation}\label{bottom} 
\l_0 \; = \; \lim _{t\to \infty } -\frac{1}{t} \log \Pp(t, x,y)
\end{equation} by {the} spectral theorem 
(See \cite{CK} and  \cite{Si}).
Our main result is a local limit theorem which refines (\ref{bottom}).
\begin{theo}[Local Limit Theorem]\label{locallimit} There exists a positive function $C$ on $\wt{M} \x \wt{M}$ such that {for all $x,y \in M$,}
\begin{equation}\label{locallimitequation} \lim _{t \to \infty } t^{3/2}e^{\l_0 t} \Pp(t, x, y) \; = \; C(x,y) . \end{equation}
\end{theo}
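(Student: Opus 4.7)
My plan is to route through the Laplace transform of the heat kernel. For $\mathrm{Re}(\lambda)<\lambda_0$, set
$$G_\lambda(x,y)\ =\ \int_0^\infty e^{\lambda t}\Pp(t,x,y)\, dt,$$
which has $\lambda_0$ as its first singularity. The statement \eqref{locallimitequation} is then equivalent, via a Karamata-type Tauberian argument applied to the positive function $e^{\lambda_0 t}\Pp(t,x,y)$, to a square-root expansion
$$G_{\lambda_0-s}(x,y)\ =\ A(x,y)\,-\,B(x,y)\sqrt{s}\,+\,o(\sqrt{s}),\quad s\downarrow 0,$$
with $B(x,y)>0$; this is the standard Tauberian mechanism behind a $t^{-3/2}e^{-\lambda_0 t}$ rate, yielding $C(x,y)=B(x,y)/(2\sqrt{\pi})$.

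To control $G_\lambda$ near $\lambda_0$, I would first identify the $\lambda_0$-Martin boundary of $\M$ with the geometric boundary $\pp\M$, as flagged in the abstract, via an Ancona-type uniform Harnack inequality on $\pp\M$. This produces minimal positive $(-\lambda_0)$-eigenfunctions $k^{\lambda_0}(\cdot,\xi)$ parametrized by $\xi\in\pp\M$ and an integral representation
$$G_{\lambda_0}(x,y)\ =\ \int_{\pp\M} k^{\lambda_0}(x,\xi)\,d\mu^{\lambda_0}_y(\xi),$$
together with an analogous formula for $G_{\lambda_0-s}$ in terms of the $(\lambda_0-s)$-Martin kernel $k^{\lambda_0-s}$. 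The difference $G_{\lambda_0-s}-G_{\lambda_0}$ then becomes a boundary integral of $k^{\lambda_0-s}-k^{\lambda_0}$, modulo controllable errors coming from the family of boundary measures.

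The heart of the matter is the comparison of $k^{\lambda_0-s}$ with $k^{\lambda_0}$ along a geodesic ray $c_\xi(r)$ toward $\xi$: this ratio is a multiplicative cocycle whose logarithmic derivative is a $\G$-invariant observable $F$ on $S\M$, hence a function on $SM$. By the eigenequation $F$ has zero mean with respect to the Gibbs--Margulis measure on $SM$, and its asymptotic variance $\s^2$ is strictly positive, as negative curvature prevents $F$ from being a measurable coboundary. The uniform three-mixing of the geodesic flow for this measure upgrades the classical central limit theorem for the Birkhoff integrals of $F$ along geodesic rays into the local central limit theorem / second-order expansion needed; a Gaussian integration over $\pp\M$ against $\mu^{\lambda_0}_y$ then produces the $-B(x,y)\sqrt{s}$ term, with $B(x,y)$ explicit in $\s^2$ and the Martin kernels.

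The main obstacle will be the joint coordination of these two ingredients: the uniform Harnack inequality on $\pp\M$ must be strong enough that the boundary representation varies continuously in $\lambda$ and that the cocycle comparison above holds uniformly in $\xi$; the three-mixing estimate in turn must deliver the local CLT at a precision compatible with the target $o(\sqrt{s})$ rate required by the Tauberian step. Once both uniform estimates are in place, the inverse Laplace step and the construction of $C(x,y)$ via boundary integrals become essentially routine.
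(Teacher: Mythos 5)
Your overall scaffold — Laplace transform of the heat kernel, singularity of $G_\lambda$ at $\lambda_0$, a square-root expansion, a Tauberian step, $\lambda_0$-Martin boundary $=\pp\M$, and uniform three-mixing of the geodesic flow — matches the paper's outline. But the central mechanism you propose does not match, and as stated it has a real gap. First, $G_{\lambda_0}(x,\cdot)$ is not a positive $(-\lambda_0)$-eigenfunction (it is singular at $x$), so there is no Martin integral representation $G_{\lambda_0}(x,y)=\int_{\pp\M}k^{\lambda_0}(x,\xi)\,d\mu^{\lambda_0}_y(\xi)$; the Riesz--Martin decomposition applies to positive eigenfunctions, not to Green functions. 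The boundary integral that actually arises in the paper is $c(x,y)=\int_{\pp\M}k_{\lambda_0}(x,y,\xi)\,d\mu^{\lambda_0}_x(\xi)$, and it appears not as a representation of $G$ but as the limit of $-P(\lambda)\,\pp_\lambda G_\lambda(x,y)$ (Proposition~\ref{limit2}). You never introduce the topological pressure $P(\lambda)$ of the potential $\vf_\lambda$, yet the facts $P(\lambda_0)=0$ (Proposition~\ref{branch}) and $P(\lambda)\sim -\tfrac{2\Om}{\sqrt\Upsilon}\sqrt{\lambda_0-\lambda}$ (Corollary~\ref{equivalentP}) are the heart of the square-root singularity. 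That singularity is extracted not from a local CLT for the logarithmic cocycle $\ln(k^{\lambda_0-s}/k^{\lambda_0})$ — there is no Gaussian integration over $\pp\M$ in the paper — but from computing the first and second $\lambda$-derivatives of $G_\lambda$ via $\pp^k_\lambda G_\lambda = k!\int G_\lambda(x,x_1)\cdots G_\lambda(x_k,y)$, using renewal theory on large spheres (Theorem~\ref{renewal2}, Propositions~\ref{limit2} and~\ref{Proposition3.17}), and then closing with the elementary observation that $F'/F^3\to 2/(\Upsilon c^2)$ forces $F\sim\tfrac{\sqrt\Upsilon}{2}c/\sqrt{\lambda_0-\lambda}$. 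The three-mixing enters precisely to control the spherical integrals $\int_{S(x,R)}G_\lambda^2$ and the Birkhoff averages appearing in the second-derivative estimate (Proposition~\ref{renewal3}), not to prove a local CLT.

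Second, your Tauberian claim of ``equivalence'' is not correct as stated. Since $e^{\lambda_0 t}\Pp(t,x,y)\asymp t^{-m/2}$ near $t=0$, the Laplace transform of $t^{-3/2}$-type behavior cannot be read off directly; the paper instead smooths with compactly supported test functions $F$, works with the nonincreasing function $c_F(t)$, establishes $s^{3/2}\int_0^\infty e^{-st}t^2c_F(t)\,dt\to\text{const}$ (this is exactly the second $\lambda$-derivative asymptotic, Corollary~\ref{cor:6.5}), applies Hardy--Littlewood to $\int_0^T t^2c_F(t)\,dt$, and then uses monotonicity of $c_F$ to pass from the integrated asymptotic to the pointwise one. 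Without monotonicity, and without the $t^2$ weight to tame the $t=0$ singularity, the Tauberian inference does not go through. So while your final formula $C(x,y)=B(x,y)/(2\sqrt\pi)$ is consistent with Theorem~\ref{Phi_zero}, the route you sketch to the expansion of $G_{\lambda_0-s}$ is missing the pressure/renewal machinery, and the route from that expansion back to $\Pp(t,x,y)$ is missing the regularization and monotonicity needed for the Tauberian direction.
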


When $\wt{M} $ is the hyperbolic space $\H^3$, there is an explicit expression for $\Pp(t, x, y) $ (\cite{DGM}) and Theorem \ref{locallimit} is clear, with \[ C(x,y ) =  (4\pi )^{-3/2} \frac{d(x,y)} {\sinh d(x,y)} .\]  

In the case of symmetric spaces of non-compact type, i.e. when $\wt{M}=G/K$ for a semi-simple Lie group $G$ and a maximal compact subgroup $K$ of $G$, Bougerol proved an analog of Theorem \ref{locallimit}  with $t^{k/2}$ instead of $t^{3/2}$, where the integer $k$ is given by the rank plus twice the number of positive indivisible roots. In particular, $k =3$ for all rank one symmetric spaces and this explains why one might expect $t^{3/2}$ for negatively curved manifolds. Bougerol proved the theorem for all random walks on $G$ with a distribution that is left and right $K$-invariant which implies the same result for Brownian motions on $\M$. 

The limit function $C(x,y) $ is symmetric by Theorem \ref{locallimit} and it is a positive harmonic function in $y$ for the operator $(\Delta-\l_0)$: 
$$(\D- \l_0) C(x,y) =0.$$
From now on, we will call such a harmonic function for $(\Delta-\l_0)$ \emph{a $\l_0$-harmonic function}.
We further give a formula in Theorem \ref{Phi_zero} below. We remark that
it was already known that  if the limit  \begin{equation}\label{Davies} \lim_{t\to \infty} \frac{\Pp(t,x,y)}{\Pp(t,x,x)} \; = \; \frac{C(x,y)}{C(x,x)} \end{equation} exists on a Riemannian manifold, then $C(x,y)$ is a  $\l_0 $-harmonic function in $y$  \cite{ABJ} (Theorem 1.2). It is indeed a conjecture by Davies (\cite{Da}) that the limit  (\ref{Davies}) always exists (see  \cite{Ko} for a recent counterexample for the analogous question on graphs). Our result can be stated as:
\begin{cor} The universal cover of a compact Riemannian manifold with negative sectional curvature satisfies Davies conjecture. \end{cor}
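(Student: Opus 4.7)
The plan is to derive this corollary as an essentially immediate consequence of the Local Limit Theorem (Theorem \ref{locallimit}), with the only small step being that one has to divide two convergent sequences and check that the denominator stays bounded away from zero. Davies' conjecture, as recalled in \eqref{Davies}, asserts that the ratio $\Pp(t,x,y)/\Pp(t,x,x)$ admits a limit as $t \to \infty$; the Local Limit Theorem already provides a precise asymptotic equivalent of both numerator and denominator, in which the common factor $t^{-3/2}e^{-\l_0 t}$ will cancel.

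Concretely, I would apply Theorem \ref{locallimit} twice on the universal cover $\wt M$ of the closed negatively curved manifold $M$: once to the pair $(x,y)$ and once to the diagonal pair $(x,x)$, yielding
\[
t^{3/2}e^{\l_0 t}\Pp(t,x,y) \longrightarrow C(x,y), \qquad t^{3/2}e^{\l_0 t}\Pp(t,x,x) \longrightarrow C(x,x),
\]
as $t \to \infty$. The value $C(x,x)$ is strictly positive because $C$ is a positive function on $\wt M \x \wt M$, as asserted in Theorem \ref{locallimit}. Dividing these two convergent sequences then gives
\[
\lim_{t \to \infty} \frac{\Pp(t,x,y)}{\Pp(t,x,x)} \; = \; \frac{C(x,y)}{C(x,x)},
\]
which is exactly the statement of Davies' conjecture for $\wt M$.

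No further argument is needed for the corollary itself; the fact that the limit function is a $(-\l_0)$-eigenfunction in $y$ then follows automatically from the result of Anker--Bougerol--Jeulin (\cite{ABJ}, Theorem 1.2), although this is not required for the corollary per se. The main obstacle, of course, is not in this deduction but in establishing Theorem \ref{locallimit}, which is the substantive content of the paper; once it is in hand, the corollary is a one-line consequence.
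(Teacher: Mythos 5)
Your proposal is correct and matches the paper's (implicit) reasoning exactly: the corollary is stated as an immediate restatement of the Local Limit Theorem, obtained by taking the quotient of the two convergent expressions and using the strict positivity of $C$ to ensure the denominator is nonzero. No further comment is needed.
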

See \cite{ABJ} for further discussion and applications of Davies conjecture.

 A local limit theorem similar to Theorem~\ref{locallimit} was first observed by Gerl \cite{Ge} and  Woess \cite {GW} for random walks on a free group which are supported on a finite set of generators of the group. It was then proven by Lalley for random walks with finite support  on a {finitely generated} free group \cite{La}. This was extended by Gou\"ezel and Lalley  to symmetric random walks with finite support on {cocompact} Fuchsian groups \cite{GL} and finally by Gou\"ezel to symmetric random walks with finite support on hyperbolic groups \cite{G1}. Our proof follows the strategy  and ideas of \cite{GL} and \cite{G1}.  By \cite{G2}, this general strategy works for measures of infinite support and with superexponential moments.
 
Two main new ingredients of the proof of Theorem~\ref{locallimit} are the uniform rapid-mixing of the geodesic flow generalizing Dolgopyat theorem and the generalised Patterson-Sullivan conformal family whose Radon-Nikodym derivative is the Martin kernel $k^2_{\l_0}(x,y,\xi)$, which is defined in Theorem~\ref{thm:1.4} below and which is a family realizing the minimum of Mohsen's Rayleigh quotient (see Corollary~\ref{Mohsen}).
 
 As in \cite{G1}, we obtain several subsequent results which have their own interest. 
 Let us introduce more notation to describe these results. For any real $\l < \l_0$, we define the {\it $\l$-Green function} $G_\l $:  for all 
$x \not = y  \in \wt{M}$, 
\[ G_\l (x, y) \; := \; \int_0^\infty e^{\l t} \Pp(t,x,y) dt .\] 

The integral on the right hand side is finite: it converges at $\infty $ thanks to the spectral theorem (\ref{bottom}) and it converges at 0 since as $t \to 0 $, $\Pp(t, x, y) \sim C/ t^{m/2} e^{- \frac{d^2(x,y) }{4t}},$ which can be deduced from the fact that as $t \to 0$, the ambient space can be approximated by Euclidean space. The function $G_\l(x,\cdot)$ is positive and $\l$-harmonic for all $y \neq x$.

We first observe in Lemma~\ref{lem:1} that
for all
$x \not = y  \in \wt{M}$,  the integral
\[ G_{\l_0} (x, y) \; := \int_0^\infty e^{\l_0 t} \Pp(t,x,y) dt \] 
is finite.
{In Section~\ref{section3}, we show (see Proposition~\ref{prop:expdecay}, where we relate  $\tau$ with other dynamical properties)
\begin{theo}\label{expdecay} There are positive constants $\tau$ and $C$ such that, for $x,y \in \M$ with $ d(x,y) \geq 1,$
\[ G_{\l _0} (x,y) \; \leq \;  C e^{-\tau d(x,y)} .\] \end{theo}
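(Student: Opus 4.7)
\emph{Plan.} The plan is to combine the uniform Harnack inequality on $\M$, the local integrability \eqref{finiteintegral}, and the positive asymptotic linear speed of Brownian motion on $\M$. The function $z \mapsto G_{\l_0}(x,z)$ is positive and $(\D+\l_0)$-harmonic on $\M \setminus \{x\}$; by cocompactness of $M$ there exists a radius $r_0 > 0$ (independent of $y$) and a constant $C_H$ such that the uniform Harnack inequality holds for every such function on $B(y,r_0)\subset\M$. Together with \eqref{finiteintegral}, this yields a mean-value bound
\[
G_{\l_0}(x,y) \;\le\; \frac{C_H}{\Vol(B(y,r_0))} \int_{B(y,r_0)} G_{\l_0}(x,z)\,d\Vol(z), \qquad d(x,y) \ge 2r_0,
\]
and reduces the problem to exponential decay in $d(x,y)$ of the averaged quantity on the right.

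\emph{Probabilistic split.} Using the definition of $G_{\l_0}$ and interchanging integrals,
\[
\int_{B(y,r_0)} G_{\l_0}(x,z)\,d\Vol(z) \;=\; \int_0^\infty e^{\l_0 t}\,\P_x\!\bigl(W_t \in B(y,r_0)\bigr)\,dt,
\]
where $(W_t)$ is Brownian motion on $\M$ started at $x$. By classical results (Kaimanovich, Ledrappier), $W_t$ has a positive asymptotic speed $\ell > 0$ with sub-gaussian concentration $d(x,W_t) = \ell t + O(\sqrt{t})$. Fix $v > \ell$ and split the integral at $T := d(x,y)/v$. On $[0,T]$, the tail bound $\P_x(W_t \in B(y,r_0)) \le C \exp(-c(d(x,y) - \ell t)^2/t)$ multiplied by $e^{\l_0 T}$ is at most $e^{-\tau d(x,y)}$ provided $v$ is large enough. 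On $[T,\infty)$, the walk has typically overshot $y$ at scale $\ell t$, and combined with an integrable power-law tail for $e^{\l_0 t}\Pp(t,x,\cdot)$ this also yields exponential decay.

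\emph{Main obstacle.} The decisive step is the regime $t \ge T$: the naive spectral bound $\Pp(t,x,x) \le C e^{-\l_0 t}$ makes $e^{\l_0 t}\Pp(t,x,x)$ only bounded, hence not integrable in $t$. A genuine power-law improvement is needed. This is precisely where the paper's dynamical tools enter --- the uniform rapid mixing of the geodesic flow for suitable Gibbs--Margulis measures together with the boundary uniform Harnack inequality --- providing the required quantitative decay. The explicit identification of $\tau$ with dynamical invariants of the geodesic flow is then the content of Proposition~\ref{prop:expdecay}.
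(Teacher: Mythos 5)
Your write-up is honest about its own incompleteness, and that honesty is where the review has to focus: the ``main obstacle'' paragraph is not a loose end that the paper's machinery quietly fills in --- it is essentially the whole theorem, and appealing to the uniform rapid mixing or the local limit theorem to supply ``an integrable power-law tail for $e^{\l_0 t}\Pp(t,x,\cdot)$'' would be circular. The power-law refinement $e^{\l_0 t}\Pp(t,x,y)\sim C(x,y)\,t^{-3/2}$ \emph{is} Theorem~\ref{locallimit}, and in the paper's logical order the exponential decay Theorem~\ref{expdecay} is an input to that theorem (through the Martin boundary analysis and the control of $\frac{\pp^2}{\pp\l^2}G_\l$), not a consequence of it. So the gap in your regime $t\ge T$ cannot be closed the way you suggest.

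There is also a quantitative confusion in the split itself. With $v>\ell$ and $T=d(x,y)/v$, the time $T$ is \emph{smaller} than the typical arrival time $d(x,y)/\ell$, so on $[T,\infty)$ the walk has not yet ``overshot'' $y$: the interval contains the very times at which the walk is most likely to be near $y$. On that middle range, the only bound you have is the spectral one $e^{\l_0 t}\Pp(t,x,y)\le C$, and integrating it over an interval of length $\asymp d(x,y)$ gives a contribution growing linearly in the distance, not one that decays exponentially. A sub-Gaussian ``tent'' estimate $\Pp(t,x,y)\le C\,e^{-\l_0 t}\exp\bigl(-c(d-\ell t)^2/t\bigr)$ would help, but it is not among the standard heat-kernel bounds and you would have to establish it; the usual Li--Yau/Grigor'yan Gaussian upper bound $Ct^{-m/2}e^{-cd^2/t}$ degenerates precisely in this middle regime.

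The paper's own proof (Proposition~\ref{prop:expdecay}) takes a route that avoids the heat kernel in time entirely. It works with the potential $\Phi_{\l_0}(v)=\pp_{\wt v}\ln k_{\l_0}$ and the pressure function, showing first that $\tau_0:=\inf_m\int\Phi_{\l_0}\,dm>0$ (because $P(\vf_{\l_0})\le 0$ while equilibrium states of H\"older potentials have positive entropy), and then using Corollary~\ref{powerT}, which controls $\int_{S(x,R)}G_{\l_0}^T$ via the pressure $P\bigl(-T\Phi_{\l_0}\bigr)$, together with the Harnack inequality to transfer a bound on the sphere integral into a pointwise bound on $G_{\l_0}$. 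The sharp constant $\tau=\tau_0$ is identified by the converse estimate along a periodic geodesic using the uniform contraction Lemma~\ref{unifstrong}. Nothing in that argument is a quantitative refinement of the heat kernel in $t$; the analytic content all sits in the sphere integrals and the thermodynamic formalism of the geodesic flow. If you want to salvage a probabilistic proof, you would need some independent large-deviation lower bound showing that the linear speed strictly dominates $\l_0$ in the appropriate sense, which is far from automatic and is not what the paper does.
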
	}

Two geodesic rays  in $\M$ are said to be {\it equivalent} if they remain a
bounded distance apart. The {\it geometric boundary} $\pp \M$ is defined as the space of equivalence  classes of unit
speed geodesic rays. A sequence $\{ y_n \}_{n \in \N} $ in $\M$
converges to a point  in $\pp \M$ if, and only if, for some (hence, for all) $x \in \M$, 
\[ d(x, y_n) + d(x, y_m ) - d(y_n, y_m ) \to \infty \quad {\textrm {as }} \; n,m \to \infty .\]
We now describe the Martin boundary of the operator $\D - \l_0 $. 
 The Martin boundary of $\D - \l_0 $ is the closure of the embedding $y \to{ k_{\l_0 }} (\cdot, y) = \frac{{G_{\l_0}}(\cdot, y)}{G_{\lambda_0}(o, y)}$ in the space of functions with the topology of pointwise convergence.  It is crucial for us to identify the Martin boundary of $\Delta -\lambda_0$ with the geometric boundary when we use thermodynamics formalism for the measures on the Martin boundary to obtain the Local Limit Theorem \ref{locallimit}.

\begin{theo}\label{thm:1.4}[$\l_0$-Martin boundary]
Fix $x \in \M$ and assume that the sequence  $\{ y_n \}_{n \in \N} $ converges to a point $\xi \in \pp \M$. Then, there exist a positive $\l _0$-harmonic function $k_{\l_0} (x, y, \xi) $ of the Laplacian, which we call the Martin kernel, such that 
\[ \lim\limits_{n \to \infty } \frac{G_{\l_0 } (y, y_n)}{G_{\l_0 } (x, y_n)} \; = \; k_{\l_0} (x, y, \xi). \] 
 Moreover, the Martin boundary of $\D - \l_0$ coincides with the geometric boundary. In particular,
for any positive $\l_0$-harmonic function $F$ and any $x \in \M$, there is a finite measure $\nu _{x,F}$ on $\pp \M$ such that 
 \[ F(y) \; = \int _{\pp \M} k_{\l_0} (x, y, \xi) d\nu _{x,F}(\xi ) .\] 
 \end{theo}

  See Section~\ref{section3} for the proof and more properties of the Martin kernel $k_{\l_0} (x, y, \xi)$.  
  The Martin kernel squared $k_{\l_0} ^2(x,y,\xi)$ plays the role of a conformal density for a family of measures on the boundary $\pp \M$.
 
 \begin{theo}\label{PattersonSullivan}
There is a family $\{\mu _x^{\l_0} \}_{x \in \M}$ of  finite measures on $\pp \M$ such that 
\begin{itemize} \item[1)] the family $x \mapsto \mu _x^{\l_0} $ is  $\G$-equivariant: $ \mu _{\g x}^{\l_0} = \g_\ast (\mu _x^{\l_0} )$ for $\g \in \G$ and
\item[2)] for  $\mu _x^{\l_0} $-a.e. $\xi \in \pp \M$, all $y \in \M$, we have $$ \frac{d\mu _y^{\l_0} }{d\mu _x^{\l_0} }(\xi ) \; = \; k_{\l_0}^{2} (x, y, \xi ).$$
\end{itemize}
The family is unique if we normalize by $\int _{M_0} \mu _x^{\l_0} (\pp \M )  d\mbox{Vol} (x)= 1.$
\end{theo}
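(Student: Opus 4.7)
My plan is to construct $\mu_x^{\l_0}$ via a \emph{product decomposition} of the limit function $C$ provided by Theorem~\ref{locallimit}, and to read off (1), (2), and uniqueness from that decomposition. From Theorem~\ref{locallimit} together with the symmetry and $\G$-invariance of $\Pp$, the function $C$ is symmetric, diagonally $\G$-invariant (i.e.\ $C(\g x, \g y) = C(x,y)$), and a positive $(-\l_0)$-eigenfunction in each variable. Theorem~\ref{martin} applied in the $y$-variable, with a fixed basepoint $x_0$, yields a unique finite measure $\tilde\mu_x$ on $\pp\M$ with
\[
C(x,y) \;=\; \int_{\pp\M} k_{\l_0}(x_0, y, \xi)\, d\tilde\mu_x(\xi).
\]

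The critical step is upgrading this to a product decomposition
\[
C(x,y) \;=\; \int_{\pp\M} k_{\l_0}(x_0, x, \xi)\, k_{\l_0}(x_0, y, \xi)\, d\bar\mu(\xi),
\]
equivalently $d\tilde\mu_x(\xi) = k_{\l_0}(x_0, x, \xi)\, d\bar\mu(\xi)$ for a single finite measure $\bar\mu$ on $\pp\M$. To establish this I would treat $x \mapsto \tilde\mu_x$ as a measure-valued eigenfunction: for every Martin test function $\phi_y := k_{\l_0}(x_0, y, \cdot)$, the scalar $\tilde\mu_x(\phi_y) = C(x,y)$ is a $(-\l_0)$-eigenfunction of $\Delta_x$; extending by linearity and continuity and invoking Theorem~\ref{martin} a second time, this time in the $x$-variable, produces a joint Martin representation of $C$ on $\pp\M \times \pp\M$. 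Symmetry of $C$ together with the minimality of the Martin kernels pins the resulting double measure to the diagonal. Spectrally, this is the statement that $C$, being the leading-order part of $t^{3/2} e^{\l_0 t} \Pp(t,x,y)$ at the bottom of the $L^2$-spectrum, factors through the generalized eigenfunctions $k_{\l_0}(x_0, \cdot, \xi)$ on $\pp\M$. Once this is in hand I set
\[
\mu_x^{\l_0}(A) \;:=\; \int_A k_{\l_0}(x_0, x, \xi)^2\, d\bar\mu(\xi).
\]
Property (2) is immediate from the cocycle $k_{\l_0}(x_0, y, \xi) = k_{\l_0}(x, y, \xi)\, k_{\l_0}(x_0, x, \xi)$, which gives $d\mu_y^{\l_0}/d\mu_x^{\l_0}(\xi) = k_{\l_0}(x,y,\xi)^2$. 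Property (1) follows from the diagonal $\G$-invariance of $C$ together with the equivariance $k_{\l_0}(\g x_0, \g x, \g\xi) = k_{\l_0}(x_0, x, \xi)$ and the uniqueness of the product decomposition, which together determine how $\bar\mu$ transforms under $\G$ and yield $\mu_{\g x}^{\l_0} = \g_\ast \mu_x^{\l_0}$ by a direct computation.

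For uniqueness, any other family $\{\mu_x'\}$ satisfying (1) and (2) has $f(\xi) := d\mu_x'/d\mu_x^{\l_0}$ independent of $x$ by (2), and $\G$-invariant on $\pp\M$ by (1). Ergodicity of the $\G$-action on $(\pp\M, \mu_{x_0}^{\l_0})$, which holds in this negatively curved setting via standard Hopf/Sullivan-type arguments for conformal densities, forces $f$ to be constant, and the prescribed normalization $\int_M \mu_x^{\l_0}(\pp\M)\, d\Vol(x) = 1$ pins down the constant to $1$. The main obstacle is the product decomposition step: the diagonal factorization does not come from Martin boundary theory alone and requires the precise asymptotic content of the local limit theorem, where the bulk of the technical work must be concentrated.
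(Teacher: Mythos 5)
Your proposal is circular within the logical structure of the paper. Theorem~\ref{locallimit}, which you take as the starting point, is proved \emph{last} (Section~\ref{sec:8}), and its proof depends essentially on the family $\mu_x^{\l_0}$: the renewal theory of Section~\ref{sec:4}, the identification of $P(\l_0)=0$, and the formula (\ref{Harish-Chandra}) for $c(x,y)$ all use the generalized Patterson--Sullivan measures before the local limit theorem is even stated as established. You cannot extract the measures from $C(x,y)$ when the existence of $C(x,y)$ is itself established via these measures.

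Setting the circularity aside, the central ``product decomposition'' step is a genuine gap, not just an omitted detail. Applying Theorem~\ref{martin} twice produces a symmetric representing measure $\nu$ on $\pp\M\times\pp\M$ with $C(x,y)=\int k_{\l_0}(x_0,x,\xi)\,k_{\l_0}(x_0,y,\eta)\,d\nu(\xi,\eta)$, but nothing in symmetry or minimality of the Martin kernels forces $\nu$ to concentrate on the diagonal: off-diagonal symmetric measures produce perfectly good symmetric bi-eigenfunctions, and minimality of $k_{\l_0}(x_0,\cdot,\xi)$ as a single function of one variable says nothing about a two-variable kernel. Pinning $\nu$ to the diagonal would require a quantitative decay statement (for instance, that $C(x,y)\sim C\,G_{\l_0}(x,y)^2\,\th_x(\xi,\eta)^{-1}$ as $x\to\xi,\ y\to\eta$, $\xi\ne\eta$, forcing an off-diagonal contribution to vanish) which is itself of the same depth as the local limit theorem and is not produced by ``the precise asymptotic content'' as you assert. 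The intermediate claim $d\tilde\mu_x = k_{\l_0}(x_0,x,\xi)\,d\bar\mu$ — mutual absolute continuity of all $\tilde\mu_x$ with the cocycle exactly $k_{\l_0}(x_0,x,\cdot)$ — is likewise asserted but not derived; it is equivalent to the diagonal concentration, so you are pushing the difficulty around rather than resolving it.

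The paper's actual route is direct and non-circular: Proposition~\ref{measureNonu} constructs the $\G$-equivariant family $\mu_y^\l$ for $\l<\l_0$ with Radon--Nikodym derivative $k_\l^2(x,y,\xi)\,e^{-P(\l)\b(x,y,\xi)}$ via the thermodynamic formalism for the potential $\vf_\l$; Corollary~\ref{cor:3.10} gives continuity of $\l\mapsto\mu_y^\l$ up to $\l_0$ (using the uniform Ancona estimates of Section~\ref{section3}, which make the Martin kernels and potentials continuous at $\l_0$); and Proposition~\ref{branch} shows $P(\l_0)=0$, so the Busemann factor disappears in the limit and the cocycle becomes $k_{\l_0}^2(x,y,\xi)$. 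Existence, equivariance, the cocycle identity, and uniqueness (via uniqueness of Gibbs states) all pass to the limit simultaneously. That construction produces $\mu_x^{\l_0}$ \emph{before} the local limit theorem is in play, which is exactly what the logical order demands.
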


Consider 
a $\G$-equivariant family $\nu = \{\nu _x\}_{x \in \M} $ of measures on $\pp \M$ with cocycle $\kk(x,y, \xi) := \frac{d\nu_y}{d\nu_x}(\xi) $ and normalized by $\int _{M_0} \nu _x^{\l_0} (\pp \M )  d\mbox{Vol} (x)=1$. Assume that for $\nu$-a.e. $\xi$, the function $ y \mapsto \log {\kk(x,y, \xi )} $ is a Lipschitz continuous function on $\M$ so that the value $\| \nabla_{y} \log {\kk(x,y,\xi)} \|$, which is independent of $x$, is defined for almost every $(x,y,\xi)$
\footnote{The value of $\| \nabla_{y} \log {l(x,y,\xi)} \|$ is defined for a.e. $(x,y,\xi)$.
Indeed, $ \log {l(x,y,\xi)} $ is defined for $\nu$ a.e. $\xi$ and, if we assume the function to be Lipschitz continuous, then its gradient exists for Lebesgue a.e. $y$, by Rademacher theorem.
The value $\| \nabla_{y} \log {l(x,y,\xi)} \|$ is constant in $x$ when defined. Therefore, the set of $(x,y,\xi) $ where $\| \nabla_{y} \log {l(x,y,\xi)} \|$ is not defined is  negligible for $\Vol \x \Vol \x \nu$ and does not depend on $x$. It follows that  $ \| \nabla_{y=x} \log {l(x,y,\xi)} \|^2 $ makes sense for $\Vol \x \nu$-a.e. $(x,\xi).$}. For such a family $\nu$, we define the \emph{energy of} $\nu$ as follows:
\[ \mathcal{E} (\nu ) : = \int _{M_0} \left(\int_{\pp \M} \| \nabla_{y=x} \log {\kk(x,y,\xi)} \|^2 d\nu _{x}(\xi)  \right) d\mbox{Vol} (x),\]
We define the energy to be infinite otherwise. Since for any fixed $x_0,$ 
\begin{equation}\label{rayleigh} \| \nabla |_{y=x} \log \ell (x_0,y, \xi) \|^2 = \frac{\| \nabla |_{y=x}\ell (x_0,y,\xi) \|^2}{\ell ^2(x_0,x,\xi)}=
 4\| \nabla |_{y=x}\sqrt {\ell (x_0,y,\xi)}\|^2 \frac{d\nu _{x_0}}{d\nu _x},\end{equation}
the energy is equal to 4 times the Rayleigh quotient
\[ \RR (\nu ) : = \int _{M_0} \left(\int_{\pp \M} \| \nabla_x \sqrt {\kk(x_0,x,\xi)} \|^2 d\nu _{x_0}(\xi)  \right) d\mbox{Vol} (x)\]
defined by O. Mohsen in \cite{Mo}.  
Mohsen showed that $\l_0= \inf_{\nu} \RR(\nu)$ and asked whether the minimum is achieved. We have 
\begin{cor}\label{Mohsen}
The family $\mu ^{\l_0}_x$  achieves the minimum Rayleigh quotient.\end{cor}
See Section \ref{sec:5.2.3} for a proof.
Mohsen proved the uniqueness for the manifolds with constant negative curvature.

The family $\mu ^{\l_0}_x$ is a fourth natural $\G$-equivariant family $\nu = \nu _x $ of measures on $\pp \M$ with regular cocycles, alongside with the Lebesgue visual measures, the Margulis-Patterson-Sullivan measures and the harmonic measures. Observe that the energy of the Margulis-Patterson-Sullivan measure is the volume entropy squared, and the energy of the harmonic measure is the Kaimanovich entropy \cite{H2}, \cite{K1}, \cite{L3}. For rank one symmetric spaces, all of these families are the same up to normalization.

The last result we would like to emphasize is a formula of the function $C(x,y)$ in Theorem~\ref{locallimit}.

\begin{theo}\label{Phi_zero} Fix $x \in \M$. There is a constant $\Upsilon=\Upsilon_{\l_0}$ such that the positive  $\l _0$-harmonic function $C(x,y)$ satisfies
\begin{equation*} C(x,y) \; =  \frac{\sqrt \Upsilon}{2 \sqrt \pi}  \int_{\partial \wt M} k_{\l_0} (x,y, \xi )  d\mu^{\l_0}_x (\xi) \; = \; \frac{\sqrt \Upsilon}{2 \sqrt \pi}  \int_{\partial \wt M} \sqrt {  d\mu^{\l_0}_x (\xi)}  \sqrt {  d\mu^{\l_0}_y (\xi)} . \end{equation*}
 \end{theo}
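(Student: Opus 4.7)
The plan is to identify $C(x,y)$ as the leading coefficient in a square-root expansion of the Green function at the spectral bottom $\l_0$, and to recognize that coefficient as a boundary integral against $\mu^{\l_0}_x$. On the one hand, Theorem~\ref{locallimit} yields a Tauberian identity: for $x \neq y$ and $s > 0$,
$$G_{\l_0}(x,y) - G_{\l_0 - s}(x,y) \; = \; \int_0^\infty e^{\l_0 t}(1 - e^{-st})\,\Pp(t,x,y)\,dt.$$
Splitting at a large cutoff, the small-$t$ contribution is $O(s)$ by the standard Euclidean bound on $\Pp$, while for large $t$ we substitute $\Pp(t,x,y) \sim C(x,y)\,t^{-3/2} e^{-\l_0 t}$. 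The change of variables $u = st$, together with the gamma-function identity $\int_0^\infty u^{-3/2}(1 - e^{-u})\,du = 2\sqrt\pi$, yields
$$G_{\l_0}(x,y) - G_{\l_0 - s}(x,y) \; = \; 2\sqrt{\pi s}\, C(x,y) \; + \; o(\sqrt s) \qquad (s \to 0^+).$$

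On the other hand, the dynamical machinery of the paper produces, in the course of proving Theorem~\ref{locallimit}, an independent expansion
$$G_{\l_0}(x,y) - G_{\l_0 - s}(x,y) \; = \; \sqrt{\Upsilon s}\,\int_{\pp\M} k_{\l_0}(x,y,\xi)\,d\mu^{\l_0}_x(\xi) \; + \; o(\sqrt s) \qquad (s \to 0^+),$$
where $\Upsilon = \Upsilon_{\l_0}$ is the normalization constant introduced in (\ref{Upsilon}). This is obtained by adapting the Gou\"ezel--Lalley strategy to the negatively curved manifold setting: a renewal equation for $G_\l$ along geodesic rays, the uniform Harnack inequality on $\pp\M$, the Martin kernel decomposition of Theorem~\ref{martin}, and the uniform three-mixing of the geodesic flow for the Gibbs--Margulis measures jointly extract the $\sqrt s$ leading term and identify its coefficient as a Patterson--Sullivan type integral. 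Comparing the two asymptotics yields the first equality of Theorem~\ref{Phi_zero}; the second equality is immediate from Theorem~\ref{PattersonSullivan}(2), since $k_{\l_0}(x,y,\xi) = \sqrt{d\mu^{\l_0}_y/d\mu^{\l_0}_x(\xi)}$ gives $k_{\l_0}(x,y,\xi)\,d\mu^{\l_0}_x(\xi) = \sqrt{d\mu^{\l_0}_x(\xi)\, d\mu^{\l_0}_y(\xi)}$.

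The main obstacle is the dynamical expansion. Producing a clean $\sqrt s$ asymptotic of $G_{\l_0 - s}$ with coefficient precisely of the Patterson--Sullivan form is essentially the technical core of the paper: the $\sqrt s$ order itself reflects the one-dimensional Brownian nature of the relevant excursion time, and extracting it with a well-defined coefficient requires the renewal equation to be controlled via uniform rapid mixing (Dolgopyat-type estimates). The Tauberian comparison above and the final Radon--Nikodym identification are then routine.
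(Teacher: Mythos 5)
Your proposal is correct, and it takes a genuinely different route from the paper's. The paper does not deduce Theorem~\ref{Phi_zero} from Theorem~\ref{locallimit}: it proves both at once, in the ``hard'' Tauberian direction. In Section~\ref{sec:8} it introduces a test function $F$, forms the spectral correlation $c_F(t) = \int e^{\l_0 t}\Pp(t,x,y)F(x)F(y)$, and identifies $\int_0^\infty e^{-st} t^2 c_F(t)\,dt$ with a triple Green-function convolution. Corollary~\ref{cor:6.5} (a consequence of the second-derivative estimate in Proposition~\ref{Proposition3.17}) gives the $s^{-3/2}$ asymptotic of this Laplace transform with coefficient $\frac{\sqrt\Upsilon}{4}\int c(x,y)F(x)F(y)$; Hardy--Littlewood's Tauberian theorem together with the monotonicity of $c_F$ then produce the pointwise $t^{-3/2}$ asymptotic of $c_F$, and a Dirac approximation (justified by a heat-kernel gradient bound) yields Theorem~\ref{locallimit} with the constant $\frac{\sqrt\Upsilon}{2\sqrt\pi}\,c(x,y)$ already exhibited, from which Theorem~\ref{Phi_zero} is read off. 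You instead take Theorem~\ref{locallimit} as given and go in the easy Abelian direction: matching the $\sqrt s$ coefficients of $G_{\l_0}(x,y)-G_{\l_0-s}(x,y)$, once from the heat-kernel asymptotic via $\int_0^\infty u^{-3/2}(1-e^{-u})\,du=2\sqrt\pi$, once by integrating the first-derivative asymptotic of Theorem~\ref{Theorem3.1}. This is valid and, \emph{as a proof of the constant identification}, lighter: no triple convolution, no Hardy--Littlewood inversion of $t^2c_F(t)$, no monotonicity trick, no Dirac approximation. What it cannot do is replace the paper's argument, since the existence of the limit asserted in Theorem~\ref{locallimit} is itself only obtained through the full Tauberian machinery; your route is strictly downstream of it (and note that Theorem~\ref{Theorem3.1}, which you rely on for the dynamical side, is itself derived from the second-derivative estimate of Proposition~\ref{Proposition3.17}, so the deeper analytic input is still being used). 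Your derivation of the second equality from Theorem~\ref{PattersonSullivan}(2) is the same as the paper's.
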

Note that the formula for the constant $\Upsilon$ is given by (\ref{Upsilon}).

Here, $ \displaystyle \int_{\partial \wt M} \sqrt {  d\mu^{\l_0}_x (\xi)}  \sqrt {  d\mu^{\l_0}_y (\xi)} := \int _{\pp \M} \sqrt {\frac{d\mu _y^{\l_0} }{d\mu _x^{\l_0} }(\xi )}  d\mu _x^{\l_0} (\xi ) $ {as used in} unitary representation of $\G$ associated to its action on $(\pp \M, \mu ^{\l_0} ).$
In case of symmetric spaces, the function $C(x,y)$ is the positive  $\l_0$-harmonic function invariant under the stabilizer $K_x$ of the point $x$, a.k.a. the \emph{Harish-Chandra function}, or the ground state, centered at $x$.

The article is organized along the path of the proof of Theorem~\ref{locallimit}.

In Section~{\ref{sec:2}}, we {recall the consequences of Ancona's boundary Harnack inequality for $\l < \l_0$ (\cite{A}), in conjunction with the thermodynamic formalism for the geodesic flow (following \cite{K1}, \cite{H3} and \cite{L2}).  Using mixing  properties of the geodesic flow on the  unit tangent bundle $SM$ for suitable $\G$-invariant Gibbs measures, we show that there is a function $P(\l ) $ of $\l$ and a positive  function $D(x, \l) $ such that, {for $\l < \l_0$, as $R \to \infty $}} 
\begin{equation}\label{eq:renewal}  e^{-P(\l ) R } \int _{S(x, R)} G_{\l}^2 (x, z)  dz \to D(x, \l), \end{equation} 
where $P(\l)<0$ for $\l < \l_0$ and $S(x,R)$ is the sphere of radius $R$ centered at $x$
(see Proposition~\ref{renewal}).

{We also recall from \cite{H3} Corollary 5.5.1 that $\int _{S(x, R)} G_{\l_0}^2 (x, z)  dz $ is bounded independently of $R$ (Proposition \ref{Lemma2.5}).}

In Section 3, we use {this bound} to establish the uniform Harnack inequality at the boundary, i.e. the Ancona-Gou\"ezel inequality (Theorem \ref{Alano}). Theorem~\ref{thm:1.4} follows and the other applications of thermodynamic formalism hold equally at $\l = \l_0$. 

In Section 4, we discuss limits of measures on large spheres using uniform mixing of the geodesic flow.
One consequence of our results is that the measures $\mu_{x,R}$ on the spheres $S(x,R)$ with density $e^{-R P(\l_0)} G^2_{\l_0}(x,y)$  converge to some measure $\mu^{\l_0}_x$ as $R\to \infty$ (Corollary~\ref{renewal5}). The measures $\mu^{\l_0}_x$ turn out to be a $\G$-equivariant family with regular cocycle $e^{P(\l_0)\b(x,y,\xi)} k^2_{\l_0}(x,y,\xi)$, where $\b(x,y,\xi)$ is the Busemann function (see the equation (\ref{eqn:busemann})). On the other hand, for $\l \in [0, \l_0], x\in \M$ and $R>2$, we define the measure $\m _{x,\l ,R}$ on $SM$ by:
\begin{center}\label{def:mxR}
 lifting the measure $ e^{-P({\l} ) R } G_{\l}^2 (x, z)  dz $ on $S(x,R) $ to 
 the set \\ of unit vectors pointing towards $x$, then projecting to $SM$ by $\Dppi$.  \quad \quad ($*$)
 \end{center}
 Another consequence is that there exists a probability measure $\ov m $ over $SM$ such that the measures $\m _{x, \l, R}$ converge towards $\mu^{\l_0}_x (\pp \M) \ov m $ on $SM$ as $R \to \infty $ and $\l \to \l_0$ {(see Corollary~\ref{renewal4})}.

Once we prove that $P(\l _0 ) = 0 $ in Section~\ref{sec:5}, the family of measures $\mu _y^{\l_0}$ satisfies the statements of Theorem \ref{PattersonSullivan}. We also obtain that
for $x,y \in \M$, $\displaystyle  \lim\limits_{\l \to \l_0} -P({\l} ) \frac{\pp }{\pp \l } G_\l (x,y) $ is proportional to $C(x,y)$.

  By a precise study of the second derivative $\displaystyle \frac{\pp ^2 }{\pp \l^2 } G_\l (x,y) $ in {Section~\ref{sec:7}}, we obtain that both
\[ \frac{P({\l} ) }{\sqrt {\l_0 - \l } }  {\textrm { and }} \sqrt {\l_0 - \l } \frac{\pp }{\pp \l } G_\l (x,y) \] 
converge towards positive numbers as $\l \to \l_0 .$ In {Section~\ref{sec:8}}, we conclude the proof of Theorem \ref{locallimit} from Theorem~\ref{Theorem3.1}  thanks to a Tauberian Theorem as in \cite{GL}.  Theorem \ref{Phi_zero}  follows as well.

In {Section~\ref{Appendix},} we prove a uniform version of Dolgopyat's rapid-mixing for hyperbolic flows {which is an important tool for the proofs in the previous sections. As its proof is independent of the rest of the sections and the result is of independent interest as well, we made an Appendix for it.}
In Section~\ref{Appendix2}, for completeness, we prove the precise balayage estimates in the form that is used in the article.

\begin{remark}In this text, $C$ stands for a number depending only on the geometry of $M$ and $\G$. However, its actual value may change from one formula to another. For the sake of clarity, we specify $C_0, \cdots, C_{11}, C_\e, C(T)$ when the same number is used in another computation. Note that $C_1, C_6, C_7$ in Section~\ref{Appendix} have the same role as in \cite{Me}.
Likewise, we consider spaces of $\a$-H\"older continuous functions for some $\a$ of which the actual value may vary. 
Let us also remark that when the constant changes from one line to another, we used the symbols $\simeq$ and $\lesssim$ to indicate that the constant  has changed.
\end{remark}
Acknowledgement : We would like to thank M. Pollicott for generously sharing his insights and ideas \cite{P1}, \cite{P2}, P. Bougerol for his interest and the \cite{ABJ} reference and S. Gou\"ezel for helpful comments. We are very grateful to  several referees for their many precise and thoughtful remarks. The work was supported by University of Notre Dame, Seoul National University and MSRI during our visits. The second author was supported by  NRF-2013R1A1A2011942, SSTF-BA1601-03 and Korea Institute for Advanced Study (KIAS).

\section{Potential theory and thermodynamic formalism}\label{sec:2}

We recall in this section the results obtained by applying classical potential theory to the Laplacian on $\wt M$ and thermodynamic formalism to the geodesic flow.
See Section~\ref{Appendix2} for general potential theory. We have $G_{\l_0} (x,y) = \int_0^\infty e^{\l_0 t} \Pp(t,x,y) \, dt,$ where $\l_0$ is defined in Definition~\ref{lambda0}.
\begin{lem}\label{lem:1} 
For any $x \neq y$,
\begin{equation}\label{Gfinite}
G_{\l_0} (x,y) < \infty.
\end{equation}
For any $x$ and any compact set $K \subset \M$ with non-empty interior, we have
\begin{eqnarray}\label{finiteintegral}
\label{eqn2.3}
 \int _K  G_{\l_0} (x, y) \, d\Vol (y) &< & \infty .\end{eqnarray}
\end{lem}

\begin{proof}
The following argument is inspired by an idea of Guivarc'h in case of Lie groups.
Let $\phi$ be a positive $\l_0$-harmonic function of the Laplacian, i.e. $\Delta \phi = \l_0 \phi$, which exists by Lemma~\ref{lem2.2} (1). 
Then 
$q(t,x,y)$ defined in \eqref{eqn2.1} defines a Markov process $D$ with its Green function
$G_D(x,y) = G_{\l_0}(x,y) \frac{\phi(y)}{\phi(x)}.$

Suppose on the contrary to \eqref{eqn2.3} that there is a compact set $K$ with non-empty interior such that $\int _K  G_{\l_0} (x, y) \, d\Vol (y) = \infty $. It implies that 
$\int _K  G_{D} (x, y) \, d\Vol (y) = \infty $. By the proof of Theorem 4.2.1.(ii) of \cite{Pi},
$G_D(x,y) = \infty$, which implies $G_{\l_0}(x,y) = \infty$, for all $y$.
By Lemma~\ref{lem2.2} (2), there is a unique $\l_0$-harmonic function $\phi$ up to multiplicative constant. It follows that $\phi(y)/\phi(x)$ is $\G$-invariant, 
thus $G_D$ is $\G$-invariant. By discretization (see the proof of the main theorem of \cite{BL}) there is a recurrent random walk $\mu_D$ on $\Gamma$ with Green function $G_D$, which implies that $\G$ is virtually $\mathbb{Z}, \mathbb{Z}^2$ or trivial \cite{V}, which is a contradiction. Thus $G_{\l_0} (x,y) < \infty$ for some $y\neq x$.

Equation \eqref{Gfinite} follows from Equation \eqref{eqn2.3} 
since if $G_{\l_0} (x,y) < \infty $ at some points $y \not = x,$ then $G_{\l_0} (x,y) < \infty $ at all  points $y \not = x$ (see \cite{Da}, Theorem 13). \end{proof} 

\begin{prop}\label{prop:derivative} We have, for $\l \in [0, \l_0) $, for any two points $x \not = y  \in \M$: 
\begin{equation} \label{derivative} 
 \frac {\pp ^k }{\pp \l^k} G_\l (x,y) \; =\; k! \int _{\M ^k} G_\l (x, x_1) G_\l (x_1, x_2) \cdots G_\l (x_k, y ) \, d{\textrm{Vol}}^k (x_1, \cdots, x_k ) . 
\end{equation}
 \end{prop}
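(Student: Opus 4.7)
The plan is to exploit that $G_\lambda$ is literally a Laplace transform of the heat kernel $\mathfrak{P}$, turn the $\lambda$-derivatives into polynomial factors $t^k$, and then convert those factors into a $k$-fold time-simplex integral that combines with the heat semigroup to give a convolution of Green functions.

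First, starting from $G_\lambda(x,y) = \int_0^\infty e^{\lambda t} \mathfrak{P}(t,x,y)\,dt$, I would differentiate $k$ times under the integral sign to get
\[
\frac{\partial^k}{\partial \lambda^k} G_\lambda(x,y) \;=\; \int_0^\infty t^k e^{\lambda t} \mathfrak{P}(t,x,y)\,dt.
\]
To justify this, I need to dominate the difference quotients uniformly for $\lambda$ in a compact subset of $[0,\lambda_0)$. The large-$t$ bound comes from (\ref{bottom}): picking $\lambda < \lambda' < \lambda_0$ gives $\mathfrak{P}(t,x,y) \leq C e^{-\lambda' t}$ for $t$ large, so $t^k e^{\lambda t}\mathfrak{P}(t,x,y)$ is integrable on $[1,\infty)$. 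The small-$t$ bound is the standard Gaussian heat kernel estimate $\mathfrak{P}(t,x,y) \le C t^{-m/2} e^{-d(x,y)^2/(4t)}$, which for $x\neq y$ decays superpolynomially as $t\to 0^+$ and is thus harmless.

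Next I would use the elementary simplex identity $t^k = k!\int_{0<t_1<\cdots<t_k<t} dt_1\cdots dt_k$ and the semigroup (Chapman--Kolmogorov) identity iterated $k$ times,
\[
\mathfrak{P}(t,x,y) \;=\; \int_{\widetilde M^{k}} \mathfrak{P}(t_1,x,x_1)\,\mathfrak{P}(t_2{-}t_1,x_1,x_2)\cdots \mathfrak{P}(t{-}t_k,x_k,y)\,d\mathrm{Vol}^k.
\]
Substituting, rewriting $e^{\lambda t} = e^{\lambda t_1}\,e^{\lambda(t_2-t_1)}\cdots e^{\lambda(t-t_k)}$, and changing variables to the positive time increments $s_0 = t_1,\ s_i = t_{i+1}-t_i,\ s_k = t-t_k$, the time integrals decouple and each of the $k+1$ time slots integrates to a Green function $G_\lambda$. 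One then applies Fubini to interchange the $\widetilde M^k$ integral with the time integrals, yielding
\[
k!\int_{\widetilde M^k} G_\lambda(x,x_1)\, G_\lambda(x_1,x_2)\cdots G_\lambda(x_k,y)\,d\mathrm{Vol}^k(x_1,\ldots,x_k),
\]
which is exactly (\ref{derivative}).

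The main obstacle is the Fubini step combined with the differentiation under the integral: all non-negative integrands, so Tonelli gives that both sides of the final equality make sense in $[0,\infty]$, and finiteness follows a posteriori from the integrability bound already established for the left-hand side. The only real care is needed for the domination of $t^k e^{\lambda t}\mathfrak{P}(t,x,y)$ on compact $\lambda$-subintervals of $[0,\lambda_0)$, which the two-sided heat kernel estimates above provide uniformly.
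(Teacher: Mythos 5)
Your proof is correct and follows essentially the same route as the paper: both rely on the Laplace-transform representation of $G_\lambda$, iterated Chapman--Kolmogorov, the time-simplex identity converting $t^k$ into a $k$-fold integral, and Tonelli/Fubini to decouple the time slots into a product of Green functions. The only differences are cosmetic — the paper computes the $\M^k$ integral and identifies the result as the $\lambda$-derivative, while you differentiate under the integral first and then unfold — and you make explicit the domination argument justifying the interchange, which the paper leaves implicit.
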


\begin{proof} It follows from computation (see \cite{GL} Proposition 1.9). For example, for $k=1$,
\begin{align*}
 \int_{\wt M} G_\l (x,z) G_\l (z,y) dz &=  \int_0 ^\infty  \int_0 ^\infty \int_{\widetilde{M}} e^{\l (t+ u)} \Pp(t,x,z) \Pp(u,z, y) dz dt du  \\ 
 & \stackrel{(8.1)}{=} \int_0^\infty \int_0^\infty e^{\l (t+ u)} \Pp(t+u, x,y) dt du  \\
 &= \int_0 ^\infty \int_0 ^s e^{\l s} \Pp(s, x,y) dt ds = \int_0 ^\infty s e^{\l s} \Pp(s,x,y) ds  =  \frac{\pp }{\pp \l } G_\l (x,y).
 \end{align*}
 \end{proof}

Since the Green function is positive, by (\ref{derivative}) for $k=1$ and $2$,  the map $ \l \mapsto G_\l (x,y)$ is a convex increasing function. Since
$G_\l (x, y) $ is analytic outside the spectrum as a resolvent, its derivative is finite as well, i.e.
\begin{equation}\label{2.5} 
{\textrm {for all }} \l < \l_0, \; {\textrm { all }} x \not = y \in \M, \; \int _{\M} G_\l (x, z) G_\l (z, y) d\mbox{Vol} (z) < +\infty . \end{equation}

For each
$x \in \M$ and $v \in S_x \M$, let $\sigma_x(v)$ be the equivalence class of the geodesic $\g_v$ with the initial vector $v$. 
The mapping $\sigma_x$ is a
homeomorphism from the unit tangent
sphere $S_{x}\wt{M}$ of $\wt{M}$ at $x$ to $\partial \wt{M}$. 
Thus we
will identify the unit tangent bundle $S\wt{M}$ with
$\M\times
\partial\M$. 

For each $x \in \M$, $\pp \M$ is endowed with the Gromov metric $$d_x (\xi, \eta ) \; = \; e^{-a (\xi |\eta )_x} , $$
where $0< a \leq 1$ is such that the sectional curvature $\k$ satisfies $ \k \leq - a^2$ on $\M$ and  $(\xi |\eta )_x $ is the {\it Gromov product}
\begin{equation}\label{eqn:4.1}
(\xi|\eta)_x=\lim\limits_{y\to \xi, z\to \eta}\frac{1}{2}\left(d(x,
y)+d(x, z)-d(y, z)\right).
\end{equation}
The following properties follow from pinched negative curvature: 
\begin{prop}[\cite{A}] \label{Martinboundary}For all $\l \in [0,\l_0)$, every $\xi \in
\partial\M$ there exist a positive $\l$-harmonic function $k_{\l} (x, y, \xi) $ in $y$ such that for each $x,y \in \M,$
\begin{equation}\label{Martin} \lim\limits_{z \to \xi } \frac{G_{\l } (y, z)}{G_{\l } (x, z)} \; = \; k_{\l} (x, y, \xi). \end{equation}
For any positive $\l$-harmonic function $F$, any $x \in \M$, there is a measure $\nu _{x,F}$ on $\pp \M$ such that  \[ F(y) \; = \int _{\pp \M} k_{\l} (x, y, \xi) d\nu _{x,F}(\xi ) .\] \end{prop}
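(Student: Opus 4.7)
The plan is to follow Ancona's strategy in \cite{A}, whose central ingredient is a multiplicative inequality along geodesics which is ultimately a consequence of the spectral gap $\l<\l_0$ and the pinched negative curvature of $\M$. Once this inequality is in place, the existence of the kernel $k_\l$, the identification of the Martin boundary with the geometric boundary, and the integral representation of positive $(\Delta+\l)$-eigenfunctions will follow by a standard sequence of Harnack-type and compactness arguments.

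First I would establish the \emph{Ancona inequality}: there exists $C=C(\l)$ such that if $x,y,z\in\M$ lie in this order along a geodesic, then
\[
G_\l(x,z)\;\leq\; C\,G_\l(x,y)\,G_\l(y,z).
\]
Standard elliptic Harnack gives the reverse inequality up to a constant, so the two together state that $G_\l$ is multiplicative along geodesics up to a bounded factor. To prove the hard direction one fixes a transverse hyperplane $H$ at $y$ and builds a supersolution of $(\Delta+\l)u=0$ on the half-space beyond $H$ of the form $C\,G_\l(x,y)\,G_\l(y,\cdot)$; the comparison principle applied in bounded pieces, together with the strict inequality $\l<\l_0$, yields exponential decay of the Green function away from $H$ and controls the error introduced by iterating over a chain of transverse hyperplanes along the geodesic. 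The spectral gap enters crucially here: without $\l<\l_0$ the supersolution would not be controllable.

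Next I would deduce the \emph{boundary Harnack principle}: for two positive $(\Delta+\l)$-harmonic functions $u,v$ on a neighborhood of $\xi\in\pp\M$ (say on the shadow of a ball far along a geodesic ray towards $\xi$), the ratio $u/v$ is comparable to its value at a fixed reference point with a constant independent of how close one gets to $\xi$. This is a direct consequence of the Ancona inequality applied to Green functions based at points going to $\xi$. From boundary Harnack, if $y_n\to\xi$ and $y_n'\to\xi$ then the ratios $G_\l(y,y_n)/G_\l(x,y_n)$ and $G_\l(y,y_n')/G_\l(x,y_n')$ differ by a factor that tends to $1$; hence the limit $k_\l(x,y,\xi)$ exists and is independent of the approximating sequence, and by Harnack the convergence is uniform in $y$ on compact sets so $y\mapsto k_\l(x,y,\xi)$ is a positive $(-\l)$-eigenfunction.

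Finally, to identify the Martin boundary with $\pp\M$, I would verify that distinct $\xi\neq\eta$ in $\pp\M$ yield non-proportional kernels: this uses again boundary Harnack, localising one kernel near $\xi$ where the other is bounded, so their ratio cannot be constant. One also checks that for any sequence $y_n$ in $\M$ with no limit in $\pp\M$ the sequence $G_\l(\cdot,y_n)/G_\l(x,y_n)$ has no limit, or rather that every subsequential limit corresponds to some $\xi\in\pp\M$; this follows from compactness of $\M\cup\pp\M$ combined with the Ancona inequality, which forces a geodesic subsequence. The integral representation of an arbitrary positive $(-\l)$-eigenfunction $F$ is then the conclusion of the general Martin boundary theorem (Choquet--Martin): $F$ integrates against the minimal elements of the cone, and boundary Harnack shows each $k_\l(x,\cdot,\xi)$ is minimal, so the representing measure $\nu_F$ lives on $\pp\M$.

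The main obstacle is the Ancona inequality itself; everything else is a fairly mechanical consequence. Within that step, the delicate point is producing the supersolution and iterating it along a long geodesic while keeping constants uniform, which is where the strict spectral gap $\l<\l_0$ is genuinely used and where the quantitative form of pinched negative curvature (uniform bounds on how fast geodesic neighborhoods thin out transversally) enters.
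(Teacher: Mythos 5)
Your outline is a faithful reconstruction of Ancona's original argument in \cite{A}, which is exactly what the paper cites for this proposition; no self-contained proof is given at the point where the statement appears. The paper does, however, reprove a stronger uniform version in Section~\ref{section3} (valid for all $\lambda\in[0,\lambda_0]$, including the endpoint), and its route diverges from yours in the two substantive steps. For the Ancona inequality, instead of your supersolution / comparison-principle construction along a chain of transverse hyperplanes, the paper proves a ``pre-Ancona'' estimate $G_{\lambda_0}(x,z:B(y,r)^c\cap\cdots)\leq 2^{-e^{\varepsilon r}}$ by constructing $N=e^{\varepsilon r}$ random barrier surfaces and bounding an expectation via the uniform $L^2$-control of $G_{\lambda_0}$ on spheres (Propositions~\ref{preAlano}, \ref{regularity}, \ref{Lemma2.5}), then feeds this into an inductive iteration (relation~(\ref{nesting})). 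For the boundary Harnack step, rather than invoking it qualitatively, the paper runs a quantitative contraction in the ratios $k_\lambda(x,y,z)/k_\lambda(x,y,w)$ in the style of Anderson--Schoen, with explicit oscillation bounds $\overline\theta(n),\underline\theta(n)$ converging geometrically (Lemma~\ref{unifstrong}). The reason for the different route is precisely the endpoint: your supersolution argument is powered by the strict spectral gap $\lambda<\lambda_0$ and would not survive the passage to $\lambda=\lambda_0$, whereas the barrier construction uses only Proposition~\ref{Lemma2.5} and degenerates gracefully. For the statement as written ($\lambda<\lambda_0$), both arguments are correct; you have accurately described the cited one.
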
 
\begin{prop}[\cite{H1}] \label{HolderNonu} Moreover, for all  $\l \in [0,\l_0)$, there are constants $\a (\l) >0, C(\l ) >0 $  such that \[ \frac{ \| \nabla_y \log k_\l (x,y, \xi ) - \nabla_y \log k_\l (x,y, \eta ) \|}{(d_x(\xi, \eta ))^{\a(\l)} } \; \leq \; C(\l) .\]
 \end{prop}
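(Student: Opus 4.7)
The plan is to deduce the H\"older regularity of $\nabla_y \ln k_\l$ in the boundary variable from Ancona's boundary Harnack inequality and interior elliptic regularity. By $\G$-equivariance of the Martin kernel and the Laplacian, it suffices to establish the estimate for $y$ in a fixed compact set (e.g.\ a fundamental domain $M_0$), with the general bound following by translation and the fact that $\gamma_{\ast}$-translates identify the fibers.

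First I would establish the $C^0$-version of the estimate: if $T = (\xi|\eta)_x$ is large, then on any compact $K \subset \M$,
\[ \left| \ln \frac{k_\l (x, y, \xi )}{k_\l (x, y, \eta )} - c(K, \xi, \eta) \right| \; \leq \; C e^{-\b T} \; = \; C \, d_x(\xi, \eta)^{\b/a} , \]
for some $\b >0$ depending only on $\l < \l_0$ and the geometry, and some constant $c$ independent of $y \in K$. This is a direct consequence of the sharp form of Ancona's factorization: the geodesic rays $\g_{x,\xi}$ and $\g_{x,\eta}$ track each other up to time $T$, and for any point $z$ sufficiently far along this common segment, Ancona's inequality yields $k_\l(x, y, \cdot) \asymp G_\l(y, z)/G_\l(x,z)$ for both $\xi$ and $\eta$, with the multiplicative error decaying like $e^{-\b T}$.

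Next I would upgrade this $C^0$-closeness to closeness of gradients via elliptic regularity. Set $F(y) := k_\l(x, y, \xi)/k_\l(x, y, \eta)$. Since the numerator and denominator are both positive solutions of $(\D + \l)u = 0$, a direct computation shows $F$ satisfies the homogeneous drift-Laplace equation
\[ \D F + 2 \langle \nabla F, \nabla \ln k_\l(x, \cdot, \eta) \rangle \; = \; 0 . \]
By Harnack applied to the positive eigenfunction $k_\l(x,\cdot,\eta)$ and classical interior Schauder regularity (elliptic case), the drift coefficient $\nabla \ln k_\l(x, \cdot, \eta)$ has $C^\a$ norm bounded on unit balls, uniformly in $\eta$. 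Applying Schauder to the function $F - c$ (which solves the same homogeneous equation) gives $\|F - c\|_{C^1(B(y,1))} \leq C \|F - c\|_{C^0(B(y,2))} \leq C' d_x(\xi, \eta)^{\b/a}$. Since $F$ is bounded above and away from $0$ on unit balls by the uniform boundary Harnack principle, the pointwise identity $\nabla \ln F = \nabla F/F$ yields the desired estimate with $\a(\l) := \b/a$. When $d_x(\xi, \eta)$ is not small, the conclusion is trivial because each of $\nabla_y \ln k_\l(x, y, \xi)$, $\nabla_y \ln k_\l(x, y, \eta)$ is separately bounded by the interior gradient estimate for positive eigenfunctions.

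The main obstacle is ensuring that the constants in all of these steps depend only on $\l$ and on the geometry of $M$, and not on $x, y, \xi, \eta$ individually. Uniformity in $y$ is guaranteed by compactness of $M$ and $\G$-equivariance. Uniformity in $\xi, \eta$ reduces to the fact that Ancona's factorization for $\l < \l_0$ carries universal constants depending only on the pinching of curvature and on the spectral gap $\l_0 - \l > 0$, and to the corresponding uniformity of the Schauder constants (which hinges on uniform $C^\a$ control of the drift, itself coming from Harnack estimates that are uniform on $\M$ for $\l$ strictly below $\l_0$).
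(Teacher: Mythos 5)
Your argument is essentially correct but takes a genuinely different route from the paper. The paper (citing and following Hamenst\"adt's [H1] Lemma 3.2, and reproducing the mechanism in Proposition~\ref{derivativeHolder} for the uniform case) works directly at the level of the derivative: it represents $\pp_v k_\l(x,\cdot,z) + \ln C_0$ as an integral of a positive quantity against the hitting distribution $\varpi_z$ on a separating hypersurface, then compares $\varpi_z$ and $\varpi_w$ via the iterative telescoping estimate (Lemma~\ref{unifstrong2}) to obtain the H\"older bound directly in the $C^1$ topology. Your route instead first establishes a $C^0$ H\"older estimate for $\ln (k_\l(x,\cdot,\xi)/k_\l(x,\cdot,\eta))$, observes that this ratio solves the drift equation $\D F + 2\langle\nabla F, \nabla\ln k_\l(x,\cdot,\eta)\rangle = 0$, and then upgrades to $C^1$ closeness by interior Schauder estimates, using Harnack plus Schauder on the eigenfunction $v = k_\l(x,\cdot,\eta)$ to control the drift coefficient uniformly in $C^\a$. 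This is a clean and legitimate mechanism: given the $C^0$ estimate, the Schauder upgrade is almost mechanical and avoids the delicate dominated-convergence bookkeeping with hitting distributions that the paper carries out.

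The one place you undersell the difficulty is the $C^0$ step. You describe the exponential closeness $\bigl|\ln\frac{k_\l(x,y,\xi)}{k_\l(x,y,\eta)} - c\bigr| \leq C e^{-\b T}$ as a ``direct consequence'' of Ancona's factorization. A single application of Ancona and Harnack gives only that the ratio is bounded between two fixed constants, not that it is exponentially close to a constant; the exponential improvement requires the iterated comparison across a nested sequence of separating barriers (the telescoping argument of Anderson--Schoen/Hamenst\"adt, carried out uniformly in $\l$ in the paper's Lemma~\ref{unifstrong}). That iterative step is the heart of the matter and should not be treated as immediate. With that caveat, the plan is sound: one further takes the constant $c$ to be zero (since $k_\l(x,x,\cdot)\equiv 1$), the Schauder interior estimate on $B(x,1)\subset B(x,2)$ passes $\|F-1\|_{C^0(B(x,2))}\lesssim d_x(\xi,\eta)^{\b/a}$ to $\|\nabla F\|_{C^0(B(x,1))}\lesssim d_x(\xi,\eta)^{\b/a}$, and the bound for $\nabla\ln F$ follows since $F$ is uniformly close to $1$. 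The large-angle case is indeed trivial via the Cheng--Yau gradient bound (Proposition~\ref{Harnack}).
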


\begin{prop}[\cite{K1}]  \label{Naim}For three distinct points $x,y,z \in \M $, consider the function
\begin{equation}\label{eq:Naim}
\th _x^\l (y,z) \; := \; \frac{G_\l (y,z)}{G_\l (y,x) G_\l (x,z) } .\end{equation} 

There is a positive function $\th _x^\l  (\xi , \eta) $ on $\pp \M \times \pp \M \backslash \mathrm{Diag} := \{ (\xi, \eta) \in \pp \M \times \pp \M: \xi \neq \eta\}$ such that  
\[ \th_x^\l  (\xi  , \eta)  \; = \; \lim\limits_{y\to \xi, z\to \eta} \th _x^\l(y,z).\]
\end{prop}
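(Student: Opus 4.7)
Fix $\lambda \in [0,\lambda_0)$ and distinct $\xi,\eta \in \pp \M$, and pick an auxiliary pivot $p \in \M$ close to the bi-infinite geodesic from $\xi$ to $\eta$. The strategy is to decouple the double limit via $p$ by writing
\[ \theta_x^\lambda(y,z) \;=\; \underbrace{\frac{G_\lambda(y,z)}{G_\lambda(y,p)\,G_\lambda(p,z)}}_{A(y,z;\,p)} \cdot \frac{G_\lambda(p,y)}{G_\lambda(x,y)} \cdot \frac{G_\lambda(p,z)}{G_\lambda(x,z)}. \]
By the symmetry of $G_\lambda$ and Proposition~\ref{Martinboundary}, the last two factors converge as $y\to\xi$, $z\to\eta$ to the Martin kernel values $k_\lambda(x,p,\xi)$ and $k_\lambda(x,p,\eta)$, so everything reduces to controlling the \emph{Ancona factor} $A(y,z;p)$.

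Ancona's boundary Harnack inequality already gives $A(y,z;p) \asymp 1$ as soon as $y,z$ are close enough to $\xi,\eta$ that $p$ lies within bounded distance of the geodesic segment $[y,z]$. To upgrade boundedness to convergence I would use the iterated (strong) form of Ancona's inequality: moving the pivot to a second point $p'$ further along the geodesic toward $\xi$ produces
\[ \frac{A(y,z;p)}{A(y,z;p')} \;=\; \frac{G_\lambda(y,p')}{G_\lambda(y,p)} \cdot \frac{G_\lambda(p',z)}{G_\lambda(p,z)}, \]
which converges, by Proposition~\ref{Martinboundary}, to $k_\lambda(p,p',\xi)\,k_\lambda(p,p',\eta)$; the strong Ancona estimate promotes this consistency relation to a genuine Cauchy property for $A(y,z;p)$ with deviation exponentially contracting in $\min(d(y,p'),d(z,p'))$. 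This yields a positive limit $A(\xi,\eta,p)$, hence
\[ \theta_x^\lambda(\xi,\eta) \;=\; A(\xi,\eta,p)\,k_\lambda(x,p,\xi)\,k_\lambda(x,p,\eta), \]
which one checks is independent of the choice of $p$.

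For the $\tau$-Hölder continuity of $(\xi,\eta) \mapsto \theta_x^\lambda(\xi,\eta)$ on $\pp \M \times \pp \M \setminus \Delta$, the two Martin kernel factors are $\tau$-Hölder in $\xi$ and $\eta$ by integrating the gradient estimate of Proposition~\ref{HolderNonu} along the compact geodesic from $x$ to $p$, while the Ancona factor $A(\xi,\eta,p)$ inherits Hölder regularity in both variables from the quantitative, exponentially contracting form of Ancona used in the previous step. The main obstacle is exactly this quantitative refinement: the bare bounded-ratio Ancona inequality is enough to build the Martin boundary (Proposition~\ref{Martinboundary}), but the Cauchy-with-exponential-rate version is strictly stronger and is where strict pinched negative curvature enters essentially, through the boundary Harnack technology of \cite{A} and \cite{H1}.
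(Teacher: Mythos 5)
Your decomposition via an auxiliary pivot $p$, and your diagnosis that the essential inputs are Ancona's inequality for boundedness plus a quantitative boundary-Harnack-at-infinity estimate for convergence and Hölder regularity, are in the same spirit as the paper's treatment (the proof is given for the uniform version, Proposition~\ref{Naim2}: boundedness from (\ref{AndersonSchoen2}), convergence and Hölder continuity from Lemma~\ref{unifstrong}). However, the pivot step together with the claim that "the strong Ancona estimate promotes this consistency relation to a genuine Cauchy property" hides a real gap. The Ancona factor $A(y,z;p)=\theta_p^\lambda(y,z)$ is itself a Naïm kernel, now with pivot $p$, so showing that $A$ converges is exactly the problem you set out to solve; and the observation that $A(y,z;p)/A(y,z;p')$ converges to $k_\lambda(p,p',\xi)k_\lambda(p,p',\eta)$ is only a consistency relation between two unknown quantities — it does not imply that either one has a limit, no matter how you move the pivots.

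What actually does the work is the iterated boundary Harnack inequality applied \emph{directly} to $\theta_x^\lambda$, with no pivot at all. Using the symmetry $G_\lambda(a,b)=G_\lambda(b,a)$ one has
\[ \frac{\theta_x^\lambda(y,z)}{\theta_x^\lambda(y',z)} \;=\; \frac{k_\lambda(x,z,y)}{k_\lambda(x,z,y')} \,,\]
and Lemma~\ref{unifstrong} gives $\bigl|\ln\frac{k_\lambda(x,z,y)}{k_\lambda(x,z,y')}\bigr|\leq CK^T$ whenever $y,y'\in\CC(\dot\gamma(2R_1))$ for a geodesic $\gamma$ toward $\xi$ and $x,z\notin\CC(\dot\gamma(-2R_1-T))$; since $\eta\neq\xi$ the latter holds for $z$ near $\eta$ and $T$ large. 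Applying this once in $y$ and once in $z$ yields a multiplicative Cauchy estimate, hence the limit, and the exponential contraction in $T$ (with $T\sim -C\ln d_x(\xi,\xi')$ as in Proposition~\ref{demiMartin}(2)) translates into $\tau$-Hölder continuity of $\theta_x^\lambda(\xi,\eta)$ in both boundary variables, uniformly on sets where $d_x(\xi,\eta)$ is bounded away from $0$. This is exactly the boundary Harnack technology of [AnS, Section~6] and [A] cited here; your pivot decomposition is correct arithmetic but it only relocates the difficulty rather than resolving it.
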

The function $\th_x ^\l (\xi , \eta)$, when it is finite {as it is here},  is called the {\it Na\"im kernel} in potential theory \cite{N}. Compare with the definition of the Gromov product {(\ref{eqn:4.1})}. 

Consider $v\in SM.$ For a lift $\wt v$ in $S\M$, consider the geodesic  $\g _{\wt v} (t)$ with initial tangent vector $\dot {\g_{\wt v} }(0) = \tilde{v}.$ We will denote $\wt v^- = \g_{\wt v}(-\infty)$ and $\wt v^+ = \g_{\wt v}(+\infty)$. Set, for $v \in SM$, 
\begin{equation}\label{thetalambda} \th _\l (v) \; := \; \th _{\g_{\wt v}(0)}^\l (\wt v^+, \wt v^- ), \end{equation}
where $\wt v $ is any lift of $v$. 
Observe that, by definition,  $\th _\l(v) = \th _\l(-v) .$

Fix $x \in \M$. For $\xi \in \pp \M , y \in \M $, the Busemann function $\b (x, y, \xi)$ is defined by
\begin{equation}\label{eqn:busemann}  \b(x, y, \xi ) \; = \; \lim\limits_{y_n \to \xi } \left(d(x, y_n ) - d(y, y_n)\right) .\end{equation}

Since  $\M$ is the universal cover of a closed manifold of negative curvature, we also use the thermodynamic formalism of the geodesic flow as in \cite{K1}, \cite{H1}, \cite{L2}.

The geodesic flow $\gg = \{ \gg_t \} _{t \in \R} $ is defined on the unit tangent bundles $SM $ and $S\M$. On $SM$, the geodesic flow is an Anosov flow. For a  $\gg$-invariant probability measure $m$ on $SM$, denote by $h_m (\gg) $ the measure-theoretic entropy of the time-1 map $\gg_1$ with respect to $m$  (see e.g. \cite{W}) . For any continuous function $\vf $, define the \emph{topological pressure $P(\v )$ of $\v$} by
\begin{equation} \label{pressure}  P(\v) \; := \; \sup_m \left( h_m (\gg) + \int_{SM} \v dm \right), \end{equation} where the supremum is taken over all $\gg$-invariant probability measures on $SM$.

For all $\l \in [0,\l_0)$,
the \emph{potential function associated to $\l$} is the function on $SM$ defined as
\[ \v _\l (v) : = - 2 \frac{d}{dt} \log k_\l (\g _{\wt {v}} (0),  \g _{\wt {v}} (t), \wt v^+  )\Big|_{t= 0 } .\]

We set $P(\l ) := P(\vf _\l)$ for $0 \leq \l  < \l_0.$ 
\begin{defi}
Define $m_\l$ to be the unique equilibrium {probability}\footnote{The uniqueness follows from H\"older continuity of $\v_\l$ (Proposition~\ref{HolderNonu}).} measure of $\vf_\l$, which attains the supremum in (\ref{pressure}).
\end{defi} The measure $m_\l$ is mixing  for the geodesic flow $\gg$ of $M$.  The \emph{generalized  family of Patterson-Sullivan measures associated to the potential function $\vf_\l$}, characterized by the following proposition, can be used to describe $m_\l$ as in \eqref{eqn:Hopf}.

\begin{prop}[\cite{L2}]\label{measureNonu}  Fix $\l \in [0,\l_0)$. There is a family of finite measures $\{ \mu _y^\l \}_{y \in \M} $ on $\pp \M$ all in the same measure class such that 
\begin{itemize} \item[1)] the family $y \mapsto \mu _y^\l $ is $\G $-equivariant: $ \mu _{\g y}^\l = \g_\ast (\mu _y^\l )$ for $\g \in \G $  and
\item[2)]given any $x,y \in \M$, for $\mu _x^\l $-a.e. $\xi \in \pp \M$, $$ \frac{d\mu _y^\l }{d\mu _x^\l }(\xi ) \; = \; k_\l^{2} (x, y, \xi ) e^{P(\l ) \b(x, y, \xi ) }.$$
\end{itemize}
The family is unique if we normalize by setting $\int _{M_0} \mu _y^\l (\pp \M ) d\mbox{Vol}(y)= 1.$ 
\end{prop}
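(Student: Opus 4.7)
The plan is to apply the Bowen-Ruelle thermodynamic formalism for Anosov flows to the potential $\vf_\l$ (H\"older continuous on $SM$ by Proposition~\ref{HolderNonu}) and to extract the family $\mu^\l_x$ as the one-sided factors of the Hopf disintegration of its unique equilibrium state. Since the geodesic flow on $SM$ is Anosov, Bowen-Ruelle theory yields a unique mixing probability measure $m_\l$ realizing the supremum in (\ref{pressure}); let $\wt m_\l$ denote its $\G$-invariant lift to $S\M$.

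The first step is to identify the cocycle $c_\l(x,y,\xi) := k_\l^{2}(x,y,\xi)\, e^{-P(\l)\b(x,y,\xi)}$ as the geometric cocycle attached to $\vf_\l - P(\l)$. Dividing and cancelling Green functions in (\ref{Martin}) gives the multiplicative Martin cocycle $k_\l(x,y,\xi)\,k_\l(y,z,\xi) = k_\l(x,z,\xi)$, and together with the standard additivity of the Busemann function this shows that $c_\l$ is a H\"older, positive, $\G$-invariant cocycle. To link it with $\vf_\l$, differentiate along a geodesic $\g_{\wt v}$ and use the Martin cocycle to shift the basepoint: $\ln k_\l(\g_{\wt v}(0),\g_{\wt v}(s),\wt v^+) = \ln k_\l(\g_{\wt v}(0),\g_{\wt v}(s_0),\wt v^+) + \ln k_\l(\g_{\wt v}(s_0),\g_{\wt v}(s),\wt v^+)$, so that
\[
\frac{d}{ds}\Big|_{s=s_0}\ln k_\l\bigl(\g_{\wt v}(0),\g_{\wt v}(s),\wt v^+\bigr) \;=\; -\tfrac12\, \vf_\l(g_{s_0}v).
\]
Combined with $\b(\g_{\wt v}(0),\g_{\wt v}(t),\wt v^+) = -t$, integration yields
\[
\ln c_\l\bigl(\g_{\wt v}(0),\g_{\wt v}(t),\wt v^+\bigr) \;=\; -\int_0^t \bigl(\vf_\l - P(\l)\bigr)(g_s v)\,ds,
\]
confirming that $c_\l$ is exactly the geometric cocycle attached to the normalized potential $\vf_\l - P(\l)$, whose pressure vanishes.

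The family $\mu_x^\l$ is then produced by disintegrating $\wt m_\l$ in Hopf coordinates based at $x$: in the parametrization $S\M \cong \{(\xi^-,\xi^+) \in (\pp\M)^2 : \xi^- \ne \xi^+\} \times \R$, $v \mapsto (\wt v^-, \wt v^+, \b(x, \pi(v), \wt v^+))$, the Gibbs property of $\wt m_\l$ forces a quasi-product representation
\[
d\wt m_\l \;=\; e^{\th_x^\l(\xi^-,\xi^+)}\, d\wt\mu_x^\l(\xi^-)\, d\mu_x^\l(\xi^+)\, dt,
\]
where $\mu_x^\l$ and $\wt\mu_x^\l$ are Radon measures on $\pp\M$ associated respectively to $\vf_\l$ and its time reversal, and the H\"older kernel $e^{\th_x^\l}$ is essentially the Na\"im kernel of Proposition~\ref{Naim}. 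Defining $\mu_y^\l$ by switching the basepoint from $x$ to $y$ in this construction and applying the cocycle identity for $c_\l$, a direct change-of-variables computation delivers $d\mu_y^\l/d\mu_x^\l = c_\l(x,y,\cdot)$, while $\G$-invariance of both $\wt m_\l$ and $c_\l$ yields $\mu_{\g y}^\l = \g_\ast \mu_y^\l$. The normalization $\int_{M_0} \mu_y^\l(\pp\M)\,d\Vol(y) = 1$ corresponds to $m_\l$ being a probability measure, so that uniqueness of the equilibrium state pins down the family. The main technical obstacle is precisely the quasi-product decomposition above: one must verify that the boundary factors $\mu_x^\l$ exist as genuine Radon measures and that the Gibbs bounds force the stated Radon-Nikodym derivative. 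This rests on the local product structure for Gibbs states of hyperbolic flows together with the H\"older estimates of Propositions~\ref{HolderNonu} and \ref{Naim}, and is the content of \cite{L2}.
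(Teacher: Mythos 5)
Your outline is correct and tracks the thermodynamic-formalism construction in Ledrappier's reference \cite{L2}, which is exactly what the paper cites for this statement without reproducing a proof: pass from the H\"older potential $\vf_\l$ on $SM$ to its unique equilibrium state, lift to $S\M$, and read off the boundary family from the local product (Hopf) structure of the Gibbs measure. The cocycle identification $\ln c_\l\bigl(\g_{\wt v}(0),\g_{\wt v}(t),\wt v^+\bigr)=-\int_0^t\bigl(\vf_\l-P(\l)\bigr)(g_sv)\,ds$ and the change-of-basepoint computation for $d\mu_y^\l/d\mu_x^\l$ are both sound. Two small remarks: by the unlabeled remark following equation~(\ref{eqn:Hopf}), $\vf_\l$ is cohomologous to its flip $\vf_\l\circ\iota$, so the two boundary factors $\wt\mu_x^\l$ and $\mu_x^\l$ in your quasi-product decomposition are in fact the \emph{same} measure --- this is why (\ref{eqn:Hopf}) writes $d\mu^\l_x(\xi)\times d\mu^\l_x(\eta)$ with a single family; and the paper later records an equivalent, more concrete characterization of $\mu^{\l}_x$ as a weak limit of the spherical measures $e^{-RP(\l)}G_\l^2(x,\cdot)\,dz$ on $S(x,R)$ (Corollary~\ref{renewal5} and the discussion at the end of Section~\ref{sec:5}), which provides a Patterson-type route bypassing the quasi-product disintegration.
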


\begin{cor}\label{boundonmeasures} There exists a constant $C >0 $, such that for all $\l \in [0, \l_0) $, all $x \in \M$, 
$$ C^{-1} \leq \mu _x^\l ( \pp \M ) \leq C. $$ \end{cor}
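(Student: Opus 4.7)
The plan is to show that the function $\Phi_\l(y) := \mu_y^\l(\pp \M)$, which is $\G$-invariant by Proposition~\ref{measureNonu}(1) and hence descends to a function on the compact quotient $M$, has multiplicative oscillation controlled uniformly in $\l$; combined with the normalization $\int_{M_0} \Phi_\l \, d\Vol(y) = 1$, this yields both inequalities.

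My first step is to obtain a uniform bound on the pressure. Because $k_\l(x,\cdot,\xi)$ is a positive $(\D + \l)$-harmonic function on all of $\M$ (no boundary), Proposition~\ref{Harnack} gives $\|\nabla_y \ln k_\l(x,y,\xi)\| \leq \ln C_0$ uniformly in $\l \in [0,\l_0]$. Since the potential
$$\v_\l(v) \; = \; -2\,\tfrac{d}{dt}\ln k_\l\bigl(\g_{\wt v}(0), \g_{\wt v}(t), \wt v^+\bigr)\Big|_{t=0}$$
is $-2$ times the directional derivative of $\ln k_\l$ along a unit vector, it satisfies $|\v_\l| \leq 2\ln C_0$ uniformly in $v\in SM$ and $\l\in [0,\l_0)$. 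The variational principle (\ref{pressure}) combined with the finiteness of the topological entropy $h_{\textrm{top}}(g)$ then yields
$$|P(\l)| \; \leq \; h_{\textrm{top}}(g) + 2\ln C_0 \; =: \; P_{\max}.$$

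Next, I use the cocycle of Proposition~\ref{measureNonu}(2) to compare $\Phi_\l$ at two points:
$$\Phi_\l(y) \; = \; \int_{\pp \M} k_\l^2(x,y,\xi)\, e^{-P(\l)\b(x,y,\xi)}\, d\mu_x^\l(\xi).$$
Integrating the Harnack gradient bound along a geodesic from $x$ to $y$, and using the normalization $k_\l(x,x,\xi) = 1$ coming from the definition (\ref{Martin}), we get $e^{-(\ln C_0)d(x,y)} \leq k_\l(x,y,\xi) \leq e^{(\ln C_0)d(x,y)}$, while $|\b(x,y,\xi)| \leq d(x,y)$. Hence
$$e^{-(2\ln C_0 + P_{\max})d(x,y)}\,\Phi_\l(x) \; \leq \; \Phi_\l(y) \; \leq \; e^{(2\ln C_0 + P_{\max})d(x,y)}\,\Phi_\l(x).$$

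Applying this estimate on the compact quotient $M$ with $d(x,y) \leq \mathrm{diam}(M)$ gives a multiplicative oscillation bound $K$ for $\Phi_\l$ that is independent of $\l$. Together with $\int_{M_0} \Phi_\l \, d\Vol = 1$ and $\Vol(M_0) < \infty$, this forces $(K\,\Vol(M_0))^{-1} \leq \Phi_\l \leq K/\Vol(M_0)$. The one conceptual point is to observe that Proposition~\ref{Harnack} is uniform in $\l \in [0,\l_0]$ and so simultaneously controls the potential $\v_\l$, the Martin kernel $k_\l$, and hence the Radon--Nikodym density of $\mu_y^\l$ against $\mu_x^\l$; in particular, no a priori continuity of $P(\l)$ up to $\l_0$ needs to be invoked.
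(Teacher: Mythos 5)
Your proof is correct and follows the same route as the paper: bound $\v_\l$ and hence $P(\l)$ via the Harnack inequality, bound the Radon--Nikodym cocycle $k_\l^2\,e^{-P(\l)\b}$ on a fundamental domain, and conclude from the normalization $\int_{M_0}\mu_y^\l(\pp\M)\,d\Vol(y)=1$. You have simply made explicit the quantitative estimates (on $|P(\l)|$ and on $|\ln k_\l|\leq (\ln C_0)\,d(x,y)$, $|\b|\leq d(x,y)$) that the paper compresses into a couple of sentences.
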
 

\begin{proof} 
By Proposition~\ref{Harnack} applied to $k_\l(x,y,\xi)$, for $x,y \in M_0$, $|\log k_\l^2(x,y,\xi)|$ are bounded. By Proposition \ref{Harnack}  again, the function $\vf_\l$ is bounded by $2\log C_0$. It follows that the pressure $P(\l) $ is bounded. Thus, the Radon-Nikodym derivatives $\frac{d\mu^\l_x}{d\mu^\l_y}$ are bounded for $x,y \in M_0$ uniformly in $\l$. Since the total measure is 1, the corollary follows.
\end{proof}

Fix $x_0 \in \M$. By the \emph{Hopf parametrization}, i.e. by associating $(v^-,v^+, \beta (x_0, \g_v(0), v^+))$ to $v$, we identify $S \wt M$ with $(\partial \wt M \times \partial \wt M  \backslash {\textrm{Diag}}(\pp \M)) \times \R$, where ${\textrm{Diag}}(\pp \M)$ is the diagonal embedding. 
Since $(\th^\l_x (\xi ,\eta))^2 e^{2P(\l) (\xi |\eta)_x}d\mu _x (\xi) d\mu_x(\eta) $ is independent of $x$, we define a $\G$-invariant, $\gg_t$-invariant measure $\wt m_\l$ by
\begin{equation}\label{eqn:Hopf} d\wt m_\l (\xi, \eta, t) =
 \Om _\l (\th^\l_x (\xi ,\eta))^2 e^{2P(\l) (\xi |\eta)_x} d\mu^\l _x (\xi) \x d\mu^\l_x(\eta)\x dt
 \end{equation}
on $S \wt M$, which does not depend on $x.$ 
Here, $\Om_\l$ is the normalizing constant chosen so that the measure $\wt m_\l$ is equal to the $\G$-invariant  lift of the \emph{probability} measure $m_\l$ to $S \wt{M}$.
\begin{remark} 
Note that we have a symmetric measure thanks to the fact that our potential function $\varphi_\l$ is cohomologous to $\varphi_\l \circ \iota$ where $\iota$ is the flip map $v \mapsto -v$ (compare with asymmetric measure in \cite{PPS} Section 3.7). Indeed, we can write, for $v \in SM, t>0,$
\begin{eqnarray*}
\int_0^t (\vf _\l- \vf_\l \circ \iota) ( \gg_s v) \, ds & = &\int_0^t \vf _\l (\gg_s v) \; ds -  \int_0^t \vf _\l (-\gg_{s} v) \; ds \\ 
&= &\log k_\l^{-2} (\g_v(0), \g_v (t) , \g_v(+\infty ) ) - \log k_\l^{-2} (\g_v(t), \g_v (0) , \g_v(-\infty ) )\\
&= &-2 \lim\limits_{s, s' \to \infty } \log \frac{G_\l (\g_v(t), \g_v (s) ) G_\l (\g_v(t), \g_v (-s') ) }{G_\l (\g_v(0), \g_v (s) ) G_\l (\g_v(0), \g_v (-s') ) }\\
&=&  \log \th _\l^2(\g _v(t )) -   \log \th _\l^2(\g _v(0 )) . \end{eqnarray*}
Note the role of $\log \theta^2_\l$ and its occurrence in the formula \eqref{eqn:Hopf}.
\end{remark}
We can also identify the orthogonal two frame bundle $S^2\M$ with the triples of pairwise distinct points in $\pp \M \times \pp \M \times \pp \M$ by associating $(v, w \in v^\perp)$ to $(v^+, v^-, w^+)$. The measure 
\begin{equation}\label{eq:tau}
d \wt \tau^\l_x (\xi,\eta, \zeta) : = \Upsilon_\l \th^\l_x(\xi, \eta) \th^\l_x(\eta, \zeta) \th^\l_x(\zeta ,\xi) e^{ P(\l) \left( (\xi|\eta)_x + (\eta|\zeta)_x + (\zeta |\xi )_x \right) } \, d\mu^\l_x(\xi) d\mu^\l_x(\eta) d\mu^\l_x(\zeta) 
\end{equation}
does not depend on $x$ and is $\G$-invariant. Here $\Upsilon_\l$ is the normalizing constant chosen so that the measure $\wt \tau^\l=\wt \tau^\l_x$ is equal to the $\G$-invariant  lift of the \emph{probability} measure $\tau^\l$ to $S^2 \wt{M}:$ for any fundamental domain $M_0$ for $\G$, \begin{equation}\label{Upsilon}
\wt \tau^\l(S^2M_0)=1.
\end{equation}

Let us recall dynamical foliations of $S\M$ in order to define measures associated to $\mu^\l_x$.
For every $v \in S\M$, define the \emph{strong stable manifold, strong unstable manifold, weak (or central) stable manifold and weak (or central) unstable manifold} of $v$ as follows:
\begin{eqnarray*} 
W^{ss}(v) & = & \{ w\in S\M: \underset{t \to +\infty}{\lim} d(\gg_t v, \gg_t w) =0 \},  \\
W^{uu}(v) & = & \{ w\in S\M: \underset{t \to -\infty}{\lim} d(\gg_t v, \gg_t w) =0 \} , \\
W^{cs}(v) & = & \{ w\in S\M:  \exists s, \underset{t \to +\infty}{\lim} d(\gg_{t+s} v, \gg_t w) =0 \},  \\
W^{cu}(v) & = & \{ w\in S\M: \exists s, \underset{t \to -\infty}{\lim} d(\gg_{t+s} v, \gg_t w) =0 \}.
\end{eqnarray*}
Recall that the homeomorphism $\sigma_x: S_x \M \to \partial \M$ sends $v$ to $v^+$. More generally, on  any manifold $T$ transversal to the foliation into $\wt W^{cs}$, the mapping $v \mapsto \s _{\pi v}v$ defines a local homeomorphism $\s : T \to \pp\M.$ For any family of measures $\{\nu _x\}_{x\in \pp \M}$ with continuous densities $\kk(x,y, \xi) := \frac{d\nu_y}{d\nu_x}(\xi) $, the measure on $T$ with density $\kk(x_0, \pi v, \s (v)) $ with respect to  $(\s ^{-1})_\ast \nu_{x_0}$ does not depend on $x_0$ (see \cite{PPS} Section 3.9 for example). 
Using the generalized Patterson-Sullivan measures $\mu^\l_x$ obtained in Proposition~\ref{measureNonu},  we can therefore define measures $\mu_\l^{uu}$ on any transversal $T$ by
$$ d\mu^{uu}_\l (w) := k_\l^2(x_0, \pi (w), w^+) e^{P(\l) \b (x_0, \pi(w), w^+)}d(\s ^{-1})_\ast \mu^\l_{x_0}(w),$$ for $w \in T$. They have the property that 
for two transversals through $\s _x^{-1} (\xi)$ and $\s_y^{-1}(\xi)$, respectively,
the Radon-Nikodym derivative $\rho_\l(\s _x^{-1}(\xi), \s^{-1}_y (\xi)) $ of the holonomy from $\s _x^{-1} (\xi)$ to $\s_y^{-1}(\xi)$ along the leaf $\wt M \times \{ \xi\}$ is given by
\begin{equation}\label{holonomy}   \rho_\l(\s _x^{-1}(\xi), \s^{-1}_y (\xi))   = k^2_\l (x,y, \xi) e^{P(\l)\beta(x,y,\xi)}.\end{equation}
Observe that moreover, the family $\mu^{uu}_\l$ is $\G$-equivariant and therefore defines a family of measures on transversals to the foliation into $W^{cs} $ in $SM.$ Similarly, using the mapping $v \mapsto \s_{\pi v} (-v)$, one associates to $\mu ^\l _x, x \in \pp\M$ an equivariant family of measures on the transversals to the foliation into $W^{cu}$:
$$ d\mu^{ss}_\l (w) = k_\l^2(x_0, \pi w, w^-) e^{P(\l) \b (x_0, \pi(w), w^-)}d(-\circ \s ^{-1})_\ast \mu^\l_{x_0}(w)$$
that satisfy  the same holonomy equation
\begin{equation}\label{holonomy2}   \rho_\l( -\s _x^{-1}(\eta), -\s^{-1}_y (\eta))   = k^2_\l (x,y, \eta) e^{P(\l)\beta(x,y,\eta)}.\end{equation}

Observe that $\mu^{uu}_\l$ on $S_x\M$ is $(\s^{-1}_x )_* \mu^\l_x$;
note that   
\begin{equation}\label{flowholonomy}
 \frac{d \mu^{uu}_\l }{d(\gg_{-t})_* \mu^{uu}_\l  } (v)  = e^{-t P(\l )} k_{\l }^2 (\g_{ \wt v} (t) ,  \g _{\wt v} (0) , \g_{\wt v} (\infty ) ), 
 \end{equation}
and  for any continuous functions $f$ and $h$ on $SM$,
\begin{eqnarray}\label{eqn:2.9}  \int_{S_{\ppi x} M }  f(v)\, d\mu _\l^{uu} (v) &= & \int _{\pp \M }  f( \ppi \circ \s ^{-1}_x \xi) d\mu _x^{\l_0 }(\xi ), \\ \label{eqn:2.10}  \int _{S_{\ppi y} M } h(-u) \, d\mu _\l^{ss} (u) &= & \int_{ \pp \M}  h (\Dppi \circ \s^{-1}_y \xi )   d\mu_y^{\l_0} (\xi),\end{eqnarray}
By a direct generalization of Margulis argument \cite{Ma} to Gibbs measures, one obtains the following proposition (see Section~\ref{sec:4} for details).
\begin{prop}\label{renewal} There exists a positive continuous function $D : (\M \x [0, \l_0 )) \to \R_+ $ such that
\[ \lim\limits_{R\to \infty } e^{-R P(\l)} \int _{S(x, R)} G_\l^2 (x, z) dz \; = \; D(x, \l )  .\]
\end{prop}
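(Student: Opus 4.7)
The plan is to adapt Margulis's classical equidistribution argument for large spheres in negative curvature, with the Liouville or Bowen--Margulis measure replaced by the Gibbs equilibrium measure $m_\l$ of the potential $\vf_\l$, and the topological entropy replaced by the pressure $P(\l)$.

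First, parametrize $S(x,R)$ by $\pp\M$ via $z=\g_{x,\xi}(R)$ and lift each $z$ to the unit tangent vector $-\dot\g_{x,\xi}(R)$ at $z$ pointing back toward $x$. This embeds $S(x,R)$ into a piece of the strong stable leaf in $S\M$ of vectors at $x$ pointing outward. Writing $dz = J_R(\xi)\, d\xi_x$ for the spherical Jacobian and visual measure at $x$, one has
\[ \int_{S(x,R)} G_\l^2(x,z)\, dz \;=\; \int_{\pp\M} G_\l^2\bigl(x, \g_{x,\xi}(R)\bigr)\, J_R(\xi)\, d\xi_x. \]

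Second, rewrite the integrand in Gibbs-cocycle terms. Integrating the definition of $\vf_\l$ along the geodesic yields $\ln k_\l(x,\g_{x,\xi}(R),\xi) = -\frac{1}{2}\int_0^R \vf_\l(\dot\g_{x,\xi}(s))\,ds$, while the Martin asymptotic of Proposition~\ref{Martinboundary} together with Ancona's boundary Harnack inequality allow one to replace $G_\l(x,\g_{x,\xi}(R))$ by the corresponding Martin-kernel expression multiplied by an $R$-dependent but $\xi$-independent factor, up to an $O(1)$ error uniform in $\xi$. Combining with the Anosov spherical Jacobian, the integrand rewrites, up to uniform $o(1)$, as $e^{RP(\l)}$ times the unstable conditional $\mu_\l^{uu}$ on $S_x\M$ described in Proposition~\ref{measureNonu}.

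Third, apply mixing. Push the lifted measure $G_\l^2(x,z)\,dz$ on the strong stable piece forward by $g_R$ onto a piece of strong unstable manifold in $SM$. By the Hopf decomposition~(\ref{eqn:Hopf}) of $\wt m_\l$ and mixing of the geodesic flow against $m_\l$, the $e^{-RP(\l)}$-normalized pushforwards converge weakly to a constant multiple of $\mu_\l^{uu}|_{S_x\M}$. Evaluating the total mass gives
\[ \lim_{R\to\infty} e^{-R P(\l)} \int_{S(x,R)} G_\l^2(x,z)\, dz \;=\; c(\l)\, \mu_x^\l(\pp\M) \;=:\; D(x,\l). \]
Continuity of $D$ then follows from joint continuity of $(x,\l)\mapsto \mu_x^\l(\pp\M)$, itself a consequence of Corollary~\ref{boundonmeasures} and the continuous dependence of $\vf_\l$ and $m_\l$ on $\l$, together with compactness of $M_0$.

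The main obstacle is the uniform control, in both $R$ and $\xi$, of the Gibbs-cocycle approximation in the second step: the pointwise Martin asymptotic $G_\l(y,z_n)/G_\l(x,z_n) \to k_\l(x,y,\xi)$ must be upgraded to a uniform-in-$R$ bound along the family $z_R=\g_{x,\xi}(R)$, with quantitative rate compatible with the mixing limit. This rests on Ancona-type boundary Harnack inequalities and the H\"older regularity of $\nabla\ln k_\l$ (Proposition~\ref{HolderNonu}); the latter is essential because weak-$*$ convergence of densities on the unstable leaf requires enough regularity in $\xi$.
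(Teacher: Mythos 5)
Your framework is the same as the paper's: Margulis's equidistribution argument for the geodesic flow, with the Bowen--Margulis measure replaced by the Gibbs measure $m_\l$ of $\vf_\l$ and the entropy by $P(\l)$. The paper in fact defers this proof to Section~\ref{sec:4}, proves the uniform statement Theorem~\ref{renewal2}, and notes just before Corollary~\ref{formula} that for a \emph{fixed} $\l$ the ordinary mixing of $m_\l$ suffices, giving Proposition~\ref{renewal} with $D(x,\l)=\Om_\l\,\mu_x^\l(\pp\M)$. However, there is a genuine gap in the middle of your argument. In Step~2 you assert that ``the integrand rewrites, up to uniform $o(1)$, as $e^{RP(\l)}$ times the unstable conditional $\mu_\l^{uu}$.'' If that pointwise claim were true you would be done by integrating, with no appeal to mixing at all; and it is not true. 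What the Ancona and Na\"im-kernel estimates give, uniformly in $\xi$ and $R$, is the approximate cocycle relation $e^{-RP(\l)}G_\l^2(\g_{\wt v}(0),\g_{\wt v}(R))\approx\theta_\l^{-2}(g_R v)\,\tfrac{d\mu_\l^{uu}}{dg_{-R}\mu_\l^{uu}}(v)$, and the factor $\theta_\l^{-2}(g_R v)$ --- together with the mismatch between $dz$ on $S(x,R)$ and the conditional measure $\mu_\l^{uu}$ transported by $g_R$ --- is an $O(1)$ oscillation in $\xi$, not $o(1)$. Averaging away this oscillation is exactly what mixing is for; Step~3 invokes ``mixing of the geodesic flow against $m_\l$'' without supplying the mechanism that makes it applicable.

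Concretely, what is missing is Margulis's thickening-and-sandwiching step, carried out in the paper as Proposition~\ref{LC} and Steps~1--5 of Section~\ref{sec:4.1}: thicken the sphere to a $\d'$-annulus via a window $\chi_{\d'}(d(x,y)-R)$, integrate over a fundamental domain $M_0$ to rewrite the quantity as a sum over the finite set of pairs $(v,t)$ with $v\in S_{\ppi x}M\cap g_{-t}S_{\ppi y}M$, and sandwich that discrete sum between genuine correlation integrals $\int F^\pm_\l\,(H^\pm_\l\circ g_R)\,dm_\l$ of H\"older bump functions $F^\pm_\l,H^\pm_\l$ supported near $S_{\ppi x}M$ and $S_{\ppi y}M$. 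Only then does mixing of $m_\l$ produce the limit, and only then does the constant $\Om_\l$ (the Hopf-parametrization normalization) emerge from the local product structure of $m_\l$. Without some version of this, your statement ``the pushforwards converge weakly to a constant multiple of $\mu_\l^{uu}|_{S_x\M}$'' is an assertion, not a consequence. A smaller point: the lift $\{v_z^x:z\in S(x,R)\}=g_{-R}(S_x\M)$ is a sphere that is $\e$-transversal to $W^{cs}$ (the property exploited in Proposition~\ref{Lemma2.5} and Corollary~\ref{lem:2.15}), not a piece of a single strong stable leaf, and its image under $g_R$ is $S_x\M$, not a piece of a strong unstable leaf.
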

Clearly, $x \mapsto D(x, \l) $ is $\G$-invariant and depends only on $\ppi (x)  \in M.$ {The function $D(x, \l)$ will be described in Corollary~\ref{formula}.}

\begin{cor}\label{Pnegative} For all $\l \in [0, \l_0) $, we have $ P(\l ) <0.$  \end{cor}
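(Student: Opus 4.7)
The plan is a short proof by contradiction. Suppose, towards a contradiction, that $P(\l) \geq 0$ for some $\l \in [0, \l_0)$. I will show that $\int_{\M \setminus B(x, R_0)} G_\l^2(x, z) \, d\Vol(z)$ is simultaneously infinite, via Proposition~\ref{renewal}, and finite, via (\ref{2.5}) together with the uniform Harnack inequality of Proposition~\ref{Harnack}, yielding the desired contradiction.

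For the lower estimate, expanding the volume integral in geodesic polar coordinates centered at $x$ gives
$$ \int_{\M \setminus B(x, R_0)} G_\l^2(x, z) \, d\Vol(z) \; = \; \int_{R_0}^{\infty} \left( \int_{S(x, R)} G_\l^2(x, z) \, dz \right) dR. $$
By Proposition~\ref{renewal}, the inner integral is asymptotic to $D(x, \l) e^{R P(\l)}$ with $D(x, \l) > 0$. Under the assumption $P(\l) \geq 0$, the integrand is then bounded below by a positive constant for all sufficiently large $R$, so the outer integral diverges.

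For the upper estimate, fix any $y \in \M$ with $d(x, y) = 1$ and take $R_0 = 3$. Then for every $z \in \M \setminus B(x, R_0)$, the function $G_\l(\cdot, z)$ is positive and $(\Delta + \l)$-harmonic on $B(x, 2)$, hence by Proposition~\ref{Harnack} (applied along the unit geodesic from $x$ to $y$, all of whose points lie at distance $> 1$ from $\pp B(x,2)$) there is a constant $C$, independent of $\l \in [0, \l_0)$ and of $z$, such that $G_\l(x, z) \leq C\, G_\l(y, z)$, whence $G_\l^2(x, z) \leq C\, G_\l(x, z) G_\l(y, z)$. Integrating and using Proposition~\ref{prop:derivative} combined with (\ref{2.5}),
$$ \int_{\M \setminus B(x, R_0)} G_\l^2(x, z) \, d\Vol(z) \; \leq \; C \int_{\M} G_\l(x, z) G_\l(y, z) \, d\Vol(z) \; = \; C \frac{\pp}{\pp \l} G_\l(x, y) \; < \; \infty, $$
since $x \neq y$. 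This contradicts the lower estimate, so we must have $P(\l) < 0$.

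The only delicate point is the uniformity of the Harnack constant $C$ across $\l \in [0, \l_0)$, but this is precisely what is built into the statement of Proposition~\ref{Harnack}; the rest is bookkeeping between spherical and radial coordinates.
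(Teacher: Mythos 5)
Your proof is correct and follows the same route as the paper: use Proposition~\ref{renewal} to show the sphere integrals $\int_{S(x,R)} G_\l^2(x,z)\,dz$ are bounded below for large $R$ when $P(\l)\ge 0$, apply the uniform Harnack inequality of Proposition~\ref{Harnack} to compare $G_\l(x,z)$ with $G_\l(y,z)$ for distant $z$, and contradict the finiteness in (\ref{2.5}) of $\int_{\M} G_\l(x,z)G_\l(z,y)\,d\Vol(z)=\partial_\l G_\l(x,y)$. The paper packages this as a single chain of inequalities rather than as a two-sided estimate on $\int_{\M\setminus B(x,R_0)}G_\l^2$, but the ingredients and their roles are identical.
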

\begin{proof}  Indeed, otherwise, we have by Proposition~\ref{Harnack} and Proposition~\ref{renewal},$$\int _{\M} G_\l (x, z) G_\l (z, y) d\mbox{Vol} (z) \gtrsim \int _{1 + d(x,y)}^{+ \infty } \left(\int _{S(x, R)} G_{\l}^2 (x, z)  dz \right) dR \gtrsim D(x,\l) \int _{1+d(x,y)}^{+ \infty }  dR.$$
The integral diverges, which is in contradiction with  (\ref{2.5}) for any $x \neq y$. 
\end{proof}

The rest of this section is devoted to the proof of Proposition \ref{Lemma2.5}, originally due to Hamenst\"adt, and of Corollary \ref{powerT}. Firstly we observe that the  easy side of the Ancona inequality is uniform in $\l \in [0,\l_0]$. For later use, we state this relation for the {\it{relative Green function }} $G_\l (x,y:\DD)$ associated to an open set $\DD$ (see equation (\ref{stoppingtime}) for definition). If $\DD = \M,$ then $G_\l(x,y:\M) = G_\l (x,y).$
\begin{prop}  There is a constant $C'_0$ such that for any open set $\DD$, 
 any $0 \leq \l \leq \l_0 $ and any $x, y,z \in \DD$ such that $d(x,z), d(x,y), d(x,\pp\DD), d(y,\pp\DD), d(z,\pp\DD) $ are all at least 1, we have
\begin{equation}\label{easyAncona} G_\l (x,z:\DD) G_\l (x,y:\DD) \leq C'_0 G_\l (z,y:\DD). \end{equation}
\end{prop}
\begin{proof}
By Corollary~\ref{coro:Harnack}  for $0 \leq \l \leq \l_0 $ and $x,y,z$ such that $d(x,z), d(x,y), d(x, \pp \DD),$ $d(y,\pp\DD), d(z,\pp\DD)$ are all at least 1, we have
\[ G_\l (x,z:\DD) G_\l (x,y:\DD) \leq C_0 \max \{ G_\l (x,y:\DD);  d(x,y )\geq 1\}   G_\l (z,y:\DD) .\]
For a fixed $\l < \l_0$, $G_\l (x,y:\DD) \leq G_\l(x,y)$ goes to 0 as $d(x,y ) \to \infty $ (see\cite{A}, Remark 2.1 page 505). By the maximum principle, \[\max \{ G_\l (x,y);  d(x,y )\geq 1\} = \max \{ G_\l (x,y);  d(x,y )=  1\}.\] Moreover, $  \max \{ G_\l (x,y);  d(x,y )=  1\} \leq \max \{ G_{\l_0} (x,y);  d(x,y ) = 1\}  .$
Set $$C'_0 :=C_0 \max \{ G_{\l_0} (x,y);  d(x,y )= 1\} $$ which is finite by compactness. Relation (\ref{easyAncona}) holds for all $\l < \l_0$, thus for $\l_0 $ as well.
\end{proof}

\begin{cor} For $0 \leq \l < \l_0 $,  $x,z$ such that $d(x,z)\geq 1 $ and $\xi \in \pp \M$, we have
\begin{equation}\label{easyAnconaII} G_{\l} (x,z) \leq C'_0 k_\l (x,z, \xi) .\end{equation}
\end{cor} \begin{proof} Divide the relation (\ref{easyAncona}) by $G_\l (x,y) $ and let $y \to \xi $.\end {proof}

Two submanifolds $A,B$ of $S\M$ are said to be $\e$-transversal at an intersection point $x$ if the angle between the spaces $T_xA $ and $T_xB$ is greater than $\e$, and transversal if the angle is positive. If $\W$ is a lamination of $S\M$ with smooth leaves $W(x), x \in S\M$, $A$ is said to be $\e$-transversal to $\W$ if at each $x \in A$, $A$ and $W(x)$ are $\e$-transversal. For example, by the Anosov property, the unit sphere $S_x\M$ at $x$ and its images by the geodesic flow $\gg_t$ for $t \geq 0$,  are all  $\e _0$-transversal to the central stable foliation $\W^{cs}$, for some $\e_0$.

\begin{prop}\label{covering} Assume $A$ is $(m-1)$-dimensional and $\e$-transversal to $\W^{cs}$ and let $\d >0$. There exists $R = R(\e,\d) $ such that for any ball  $B_A (x, \d) \subset A, $ \[ p \left(\cup_{x \in B_A(x, \d) } B^{cs} (z,R) \right)  \; = SM.\] \end{prop}
\begin{proof}
It suffices to prove it for spheres. Consider the open set $$V_R = \{ (x,z) \in SM \times SM : B^{cs}(z,R) \cap B^{S}(x, \d) \neq \emptyset \},$$
where $S=S_{p(x)}(M)$. 
By minimality of $\W^{cs}$ and the transversality of $S$ to $\W^{cs}$, we have $\underset{R >0}{\cup} V_R  = SM \times SM.$ Therefore,  $V_{R_0} = SM \times SM$ for some $R_0=R(\d).$
It follows that for any $(x,z),$ there exists $y \in B^{cs}(z,R_0) \cap B^{S}(x, \d),$ i.e.
$ z \in B^{cs}(y,R_0)$ for some $y \in B^{S}(x, \d).$ 
\end{proof}

If $A_1, A_2$ are two $(m-1)$-dimensional submanifolds both transversal to $\W^{cs}$ and $x_1 \in A_1, x_2 \in A_2$ belong to the same leaf $W^{cs}$ of $\W^{cs}$, then the holonomy from a neighborhood $B_{A_1}(x_1)$ of $x_1$ in $A_1$, to a neighborhood $B_{A_2}(x_2)$ of $x_2$ in $A_2$ is defined  by continuously extending the intersection mapping which sends $x_1$ to $x_2$. 

We defined above for $0 \leq \l < \l_0$  a  family of measures $\mu_\l^{uu}$ on $m-1$ dimensional transversals to $W^{cs}$ that are quasi invariant under the holonomy with Radon-Nykodym derivative 
$$   \rho_\l(\s _x^{-1}(\xi), \s^{-1}_y (\xi))   = k^2_\l (x,y, \xi) e^{P(\l)\beta(x,y,\xi)}$$
and that coincide with $(\s^{-1}_x )_* \mu^\l_x$ on $S_x \M.$

\begin{cor}\label{measureofballs} Let $A$ be a  $(m-1)$-dimensional submanifold of $S\M$, $\e$-transversal to $\W^{s}$ and a ball  $B_A (w, \d) \subset A. $ There is a constant $C= C (\e,\d)  $ such that,  for $0 \leq \l < \l_0,$ 
\[ \mu _\l^{uu } (B (w, \d)) \; \geq \; C^{-1} .\] 
\end{cor}
\begin{proof} By Lemma~ \ref{covering}, there is $R= R(\e,\d)$ such that $$p \left(\cup_{x \in B(w, \d) } B^{cs} (x,R) \right)  \; = SM.$$ In particular any sphere $S_yM$ is covered by $K$ holonomy images of $B(w,\d),$ with $K$ bounded by some $K_0(\e, \d)$. There is $ C_0 (\e, \d) $ such that the Radon-Nykodym derivative of the measure $\mu_\l^{uu} $ under these holonomies are bounded by $C_0(\e,\d)$. Therefore,  for all $y \in M$, $\mu _\l^{uu} (S_yM) \leq K_0(\e, \d) C_0(\e,\d)  \mu _\l^{uu } (B (w, \d)).$ By our choice of normalisation, $\int _M \mu _\l^{uu} (S_yM) \,  d\Vol (y) \; = \; 1.$ Corollary~\ref{measureofballs} follows with $C = K_0(\e, \d) C_0(\e,\d) \Vol (M).$ \end{proof}

The following proposition corresponds to \cite{G1}, Lemma 2.5.
\begin{prop} [\cite {H3}, Corollary 5.5.1)] \label{Lemma2.5}  There is a constant $C>0 $ such that for all $x \in \M$ and all $R \geq 1$, \[ \int _{S(x,R)} G_{\l_0}^2 (x, z)  dz \leq C.\]
 \end{prop}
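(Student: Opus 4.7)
The plan is to pass to the limit $\l \to \l_0$ from below, making the renewal asymptotic of Proposition~\ref{renewal} uniform in $\l$. Since $G_\l \nearrow G_{\l_0}$ pointwise, monotone convergence reduces the problem to establishing the uniform bound
\[
\sup_{R \geq 1,\; \l \in [0,\l_0)} \int_{S(x,R)} G_\l^2(x,z) \, dz \; < \; \infty,
\]
from which the proposition follows by letting $\l \nearrow \l_0$ on each fixed sphere.

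For each $\l \in [0, \l_0)$, Proposition~\ref{renewal} together with $P(\l) < 0$ from Corollary~\ref{Pnegative} yields $\int_{S(x,R)} G_\l^2 \, dz \sim D(x,\l) \, e^{R P(\l)}$ as $R \to \infty$, which in particular stays bounded in $R$ for each fixed $\l$. The heart of the argument is to upgrade this to a bound uniform in $\l$. Inspecting the Margulis-type proof of Proposition~\ref{renewal}, the normalization $D(x,\l)$ factors as a product of (i) the total mass $\mu_x^\l(\partial \M)$, which is uniformly bounded by Corollary~\ref{boundonmeasures}, and (ii) a normalization attached to the equilibrium state $m_\l$ of the potential $\vf_\l$, which is uniformly controlled thanks to Harnack's inequality (Proposition~\ref{Harnack}) and the resulting uniform Hölder control of the family $\{\vf_\l\}_{\l \in [0, \l_0]}$.

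For the complementary short-range piece $R \in [1, R_0]$, the pointwise monotonicity $G_\l \le G_{\l_0}$ combined with the local finiteness of $G_{\l_0}$ away from the diagonal (implicit in Lemma~\ref{lem:1} together with Harnack) yields a uniform bound on $\int_{S(x,R)} G_\l^2 \, dz$ independent of $\l$.

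The main obstacle is the uniformity in $\l$ of the rate of convergence in Proposition~\ref{renewal} as $\l \nearrow \l_0$: one must control the error in the renewal asymptotic uniformly in the parameter. This rests on uniform mixing of the geodesic flow for the family of Gibbs measures $\{m_\l\}$, which in the present paper is furnished by the uniform Dolgopyat-type estimates developed in Appendix~\ref{Appendix}. Hamenstädt's original argument in~\cite{H3} reaches the same conclusion by means of a symbolic dynamics representation in which the relevant spectral data for the transfer operators depend continuously on $\l$, so that the $\l$-family of asymptotics can be bundled together at $\l = \l_0$.
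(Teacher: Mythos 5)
Your strategy is natural-sounding but it is circular in this paper's architecture, and the circularity is fatal rather than cosmetic.

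You propose to make the renewal asymptotic of Proposition~\ref{renewal} \emph{uniform} in $\l$ by invoking the uniform mixing results of Section~\ref{Appendix}, then passing to the limit $\l \nearrow \l_0$. The obstruction is that in order to \emph{apply} the uniform mixing theorem of Section~\ref{Appendix} to the family of Gibbs measures $m_\l$, one needs to know that the potentials $\vf_\l$ converge to $\vf_{\l_0}$ in a Hölder class $\K_\a$ as $\l \to \l_0$ (this is exactly the hypothesis under which Propositions~\ref{4.1} and~\ref{4.2} are derived from Theorem~\ref{theo:uniform2mixing}). That convergence is the content of Proposition~\ref{derivativeHolder}, whose proof relies on Lemmas~\ref{unifstrong} and~\ref{unifstrong2}, which in turn rest on the Ancona--Gou\"ezel inequality (Theorem~\ref{Alano}). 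But the proof of Theorem~\ref{Alano} explicitly uses Proposition~\ref{Lemma2.5} (in the estimate of $\E(f_i)$ in Proposition~\ref{preAlano}). So your chain of reasoning runs: uniform renewal $\Leftarrow$ uniform mixing applied to $\{m_\l\}$ $\Leftarrow$ Hölder continuity of $\l \mapsto \vf_\l$ at $\l_0$ $\Leftarrow$ Ancona--Gou\"ezel $\Leftarrow$ Proposition~\ref{Lemma2.5}, which is the statement you are trying to prove. The same issue afflicts your claim that $D(x,\l)=\Om_\l \mu_x^\l(\pp\M)$ is bounded uniformly in $\l$: boundedness of $\mu_x^\l(\pp\M)$ is indeed Corollary~\ref{boundonmeasures}, but the control of $\Om_\l$ near $\l_0$ (Corollary~\ref{cor:3.10}) again sits downstream of Theorem~\ref{Alano}.

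The paper's actual proof is deliberately more elementary precisely to break this cycle. It does not use Proposition~\ref{renewal} at all. The key inputs are: (i) the uniform (in $\l\leq\l_0$) pointwise bound $G_\l(x,z)G_\l(x,y)\leq C'_0 G_\l(z,y)$ from Harnack (this is Ancona's observation, valid without boundary Harnack), which after sending $y\to\xi$ gives $G_\l^2(x,z)e^{-P(\l)d(x,z)} \leq {C'_0}^2 k_\l^2(x,z,\xi)e^{-P(\l)d(x,z)}$, a density bound for the pushed-forward conditional measures $g_R\mu^{uu}_\l$; and (ii) the measure-theoretic lower bound $\mu^{uu}_\l(B(w,1)\cap T)\geq C_\e^{-1}$ on transversals, which is uniform in $\l$ thanks to the normalization in Proposition~\ref{measureNonu} and Harnack. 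Covering $S(x,R)$ by unit balls with bounded multiplicity gives $\int_{S(x,R)}G_\l^2 e^{-P(\l)R}\,dz\leq C$ directly, for all $\l<\l_0$ and $R\geq R_0$, with no renewal asymptotic and no rate-of-mixing input; the final passage $\l\to\l_0$ via monotone convergence is the same step you have in mind, but only after the uniform bound has been obtained by elementary means.
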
 
 \begin{proof} We first lift $S(x,R) \subset \M$ to $\gg_R S_x\M \subset S\M.$ Let $w \in \gg_R S_x\M $ and consider the ball  $B(w,1)$ of radius 1 in $\gg_R S_x\M $.  The $(m-1)$-dimensional volume of $B(w,1)$ is bounded from above, uniformly in $R \geq 1$ and $w$, whereas by Corollary \ref{measureofballs}, $\mu_\l^{uu}(B(w,1))$ is  bounded from below, uniformly in $\l, 0 \leq \l <\l_0.$ Finally, by Proposition \ref{Harnack}, the function $G_{\l}^2 (x, z) $ has a bounded oscillation on that set, uniformly in $\l, 0 \leq \l \leq \l_0.$ It follows that there is a constant $C$ such that  for any $R \geq 1$, $0 \leq \l < \l_0$ and a ball $B(w,1)$ of radius 1 in $\gg_R S_x\M $,
\[ \int_{B(w,1)} G^2_\l (x,\pi v) e^{-P(\l)R} dv \; \leq \; C  \int_{B(w,1)} G^2_\l (x,\pi v) e^{-P(\l)R} \, d\mu_\l^{uu} (v) .\]
By (\ref{easyAnconaII}) and \eqref{flowholonomy}, \[ G^2_\l (x,\pi v) e^{-P(\l)R}  \leq C'_0  k_\l^2 ( \pi v,x, \g_v (+\infty) )e^{P(\l) \b ( \pi v, x, \g_v(+\infty))}  = C'_0 \frac{d\gg_R \mu^{uu}_\l}{d \mu^{uu}_\l} (v).\]
Altogether, we get, for any ball of radius 1 in $\gg_R S_x\M $, for $0 \leq \l < \l_0,$ 
\[  \int_{B(w,1)} G^2_\l (x,\pi v) e^{-P(\l)R} dv \leq C C'_0 \int_{B(w,1)} \frac{d\gg_R \mu^{uu}_\l}{d \mu^{uu}_\l} ( v) \, d\mu_\l^{uu} (v) = CC'_0 \mu_\l^{uu} (\gg_{-R}( B(w,1)) ).\]
The sets $\gg_R S_x\M , R\geq 1$ are locally uniformly Lipschitz homeomorphic to open subsets of Euclidean $\R^{n-1}.$ Therefore we obtain a Besicovitch cover, i.e. there  is an integer $N$, independent of $R$, and  covers of $\gg_R S_x\M $ by balls of radius 1 such that any point can belong to at most $N$ distinct balls. The images of the balls in that cover by $\gg_{-R}$ form a cover of $S_x\M $ such that any point can belong to at most $N$ such images. Thus, 
 \[ \int _{\gg_R S_x\M} G^2_\l (x,\pi v) e^{-P(\l)R} dv\; \leq \; N C C'_0 \mu _\l^{uu} (S_x\M) .\] 
Since $\mu _\l^{uu} (S_x\M) = \mu _x^\l (\pp \M) $  is bounded by Corollary \ref{boundonmeasures}, we  found a constant $C$ such that for all $\l<\l_0$ and for $R\geq 1,$ 
\begin{equation}\label{2.9} \int_{S(x,R)} G^2_\l (x,z) e^{-P(\l)R} dz \leq C.\end{equation}
Here, we used the fact that the measures $\pi _\ast dv$, the projection of the Lebesgue measure for the restriction of the Sasaki metric to $\gg_R S_x\M$, and $dz$, the Lebesgue measure on $S(x,R), $ are  equivalent with bounded density. 

Since $P(\l ) <0 $ for all $\l < \l_0 $ by Corollary \ref{Pnegative}, there is a constant $C>0$ such that  for all $\l \in [0, \l_0) $, all $x \in \M$, all $R \geq 1$, \[  \int _{S(x, R)} G_{\l}^2 (x, z) dz \leq C.\]
Proposition \ref{Lemma2.5} follows by letting $\l $ go to $\l_0$. \end{proof}

\begin{cor}\label{powerT} For $T >0 $,  let $P_T (\l) $ be the pressure of the function $\frac{T}{2} \vf _\l $. Then there exists a constant $C(T)$  such that  for all $\l \in [0,\l_0), R \geq 1 , x \in \M$,
\[ e^{-R P_T(\l)} \int _{S(x, R)} G_{\l}^{T} (x, z) dz \; \leq C(T) .\] \end{cor}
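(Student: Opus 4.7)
The plan is to repeat the proof of Proposition~\ref{Lemma2.5} verbatim, with the potential $\vf_\l$ replaced by $\frac{T}{2}\vf_\l$ throughout. I would first observe that, since $\vf_\l$ is H\"older continuous and uniformly bounded on $SM$ for $\l\in[0,\l_0)$ (Proposition~\ref{Harnack}), so is $\frac{T}{2}\vf_\l$. Consequently the thermodynamic formalism of Section~\ref{sec:2} transfers: one gets a pressure $P_T(\l)$, a $\G$-equivariant generalized Patterson--Sullivan family $\{\mu^{T,\l}_x\}_{x\in\M}$ on $\pp\M$ with cocycle
\[ \frac{d\mu^{T,\l}_y}{d\mu^{T,\l}_x}(\xi) \;=\; k_\l^T(x,y,\xi)\,e^{-P_T(\l)\,\b(x,y,\xi)}, \]
and transverse conditionals $\mu^{uu,T}_\l$ on $\e$-transversals to $W^{cs}$ whose holonomy Radon--Nikodym derivative under $g_R$ is $e^{-RP_T(\l)}k_\l^T(\g_{\wt v}(R),\g_{\wt v}(0),\wt v^+)$. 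By the same argument as in Corollary~\ref{boundonmeasures}, $P_T(\l)$ is bounded and $\mu^{T,\l}_x(\pp\M)$ is bounded above and below, uniformly in $\l\in[0,\l_0)$.

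Next, the Harnack estimate $G_\l(x,z)\leq C'_0\,k_\l(x,z,\xi)$ (for $d(x,z)\geq 1$ and any $\xi\in\pp\M$) derived at the opening of the proof of Proposition~\ref{Lemma2.5} is $T$-independent, so raising to the $T$-th power gives $G_\l^T(x,z)\leq (C'_0)^T k_\l^T(x,z,\xi)$. I would then reproduce the internal Lemma from that proof: by minimality of $W^{cs}$ on $SM$ and uniform Harnack applied to the cocycle $k_\l^T\,e^{-P_T(\l)\b}$, there is a constant $C_\e$ such that $\mu^{uu,T}_\l(B(w,1)\cap\mathcal{T})\geq C_\e^{-1}$ on any $\e$-transversal $\mathcal{T}$, uniformly in $\l$.

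With these two ingredients in hand, choose $R_0$ large enough that $g_R(S_x\M)$ is $\e$-transverse to $W^{cs}$ for $R\geq R_0$. For a unit ball $B(w,1)\subset S(x,R)$, the same five-line chain of inequalities as in the proof of Proposition~\ref{Lemma2.5} (Harnack on $G_\l^T$, the Ancona bound, Harnack on the cocycle, the lower bound on $\mu^{uu,T}_\l(B(w,1))$, and $g_R\mu^{uu,T}_\l(B(w,1))=\mu^{uu,T}_\l(g_{-R}B(w,1))$) gives
\[ \int_{B(w,1)} G_\l^T(x,z)\,e^{-RP_T(\l)}\,dz \;\leq\; C(T)\,\mu^{uu,T}_\l\bigl(g_{-R}B(w,1)\bigr), \]
with $C(T)$ depending only on $C_0$, $C'_0$, $C_\e$ and $T$. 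Covering $S(x,R)$ by such balls of bounded multiplicity, the $g_{-R}$-preimages form a bounded-multiplicity family of subsets of $S_x\M$ (as $g_{-R}$ contracts along $W^{uu}$), so summing yields a total bounded by a universal constant times $\mu^{T,\l}_x(\pp\M)$, hence a uniform constant $C(T)$.

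The hard part will be checking that every constant is truly uniform in $\l\in[0,\l_0)$: H\"older norms of $\vf_\l$, boundedness of $P_T(\l)$, the Harnack constants for the cocycle $k_\l^T\,e^{-P_T(\l)\b}$, and the total mass $\mu^{T,\l}_x(\pp\M)$. I expect this to reduce, as in the original Corollary~\ref{boundonmeasures}, to the uniform Harnack inequality of Proposition~\ref{Harnack} applied to $k_\l$ and to $k_\l^T$. A secondary technical point is that the $\e$-transversality of $g_R(S_x\M)$ for $R\geq R_0$ depends only on the Anosov structure of the flow, not on $T$ or $\l$, so the value of $R_0$ can be chosen independently of $T$ and $\l$.
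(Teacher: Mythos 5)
Your proposal is correct and follows exactly the paper's own (terser) proof: replace $\vf_\l$ by $\tfrac{T}{2}\vf_\l$, invoke Proposition~\ref{measureNonu} to get a conformal family with cocycle $k_\l^T e^{-P_T(\l)\b}$, build the associated transverse conditionals, and rerun the chain of inequalities leading to~(\ref{2.9}). You have merely unpacked the step the paper abbreviates as ``the same computation yields the analog of~(\ref{2.9})''; the uniformity checks you flag at the end are indeed handled by Proposition~\ref{Harnack} exactly as in Corollary~\ref{boundonmeasures}.
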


\begin{proof} We have as above
 $$G^T_\l (x,z) e^{-P_T(\l)d(x,z)} \leq {C_0'}^T k^T_\l (x, z, \xi) e^{-P_T(\l)d(x,z)}.$$
 We can also apply Proposition \ref{measureNonu} to the H\"older continuous function  $\frac{T}{2} \vf _\l $ instead of  $ \vf _\l $. We obtain a family of measures $\mu _x^{\l,T} $ on $\pp \M$ such that for all $\l \in [0,\l_0)$, $\mu _x^{\l ,T}$-a.e. $\xi \in \pp \M$, $$ \frac{d\mu _y^{\l,T} }{d\mu _x^{\l,T} }(\xi ) \; = \; k_\l^{T} (x, y, \xi ) e^{P_T(\l ) \b(x, y, \xi ) } $$  and  $\int _{M_0} \mu _y^{\l,T} (\pp \M ) d\Vol(y)= 1.$
 We can therefore associate measures $\mu ^{uu}_{\l,T} $ on transversals to the central stable manifolds such that  the holonomy from $\s_x^{-1} (\xi)$ to $\s_y^{-1}(\xi)$ along the leaf $\wt M \times \{ \xi\}$ is given by
$$   \rho_\l(\s _x^{-1}(\xi), \s^{-1}_y (\xi))   = k^T_\l (x,y, \xi) e^{P_T(\l)\beta(x,y,\xi)}.$$
The same computation yields the analog of (\ref{2.9}). \end{proof}   

\section{Ancona-Gou\"ezel inequality}\label{section3}

\begin{defi} Let $v \in S\M$. The {\it {cone $\CC(v) $ based on $v$}}  is defined by:
\[ \CC(v) \; := \; \{ y; y \in \M, \angle_x (v, y ) \leq \pi / 2 \}, \] 
where $\angle_x (v,y) $ denotes the angle between $v \in T_x \M$ and the geodesic going from $x $ to $y$. \end{defi} 
We denote $\pp \CC(v) : =  \{ y; y \in \M, \angle_x (v, y ) = \pi / 2 \}. $ Observe that $\M = \CC(v) \cup \CC (-v) $ and $\pp \CC(v) = \CC (v) \cap \CC (-v) .$

\subsection{Ancona-Gou\"ezel inequality}

The key property of the $\l$-Green functions for $ 0 \leq \l \leq \l_0$ is the following \emph{uniform} Ancona inequality, which we call \emph{Ancona-Gou\"ezel inequality}. Recall the definition (\ref{stoppingtime}) of the relative Green function $G_\l(x,y:\DD)$, where $\DD$ is an open subset of $\M$ and $x\neq y \in \DD.$
\begin{theo}\label{Alano} There are constants $C_4,R_0$ such that for all $\l \in [0, \l_0 ]$, all points $(x,y,z) $ such that $y$ is on the geodesic segment $[xz]$ from $x$ to $z$ and $d(x,y) \geq R_0, d(y,z) \geq R_0$,
\begin{equation}\label{Ancona} C_4^{-1} G_\l (x,y:\DD) G_\l (y,z:\DD) \; \leq \; G_\l (x, z:\DD) \; \leq \;  C_4 G_\l (x,y:\DD) G_\l (y,z:\DD) \end{equation} 
for all  open sets $\DD$  containing $\CC(\gg_{-1}v) \cap \CC( -\gg_{d(x,z) +1}v),$
where $v \in S_x\M$  is the initial vector of the geodesic $[xz]$. \end{theo}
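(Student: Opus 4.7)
My approach is to establish both halves of (\ref{Ancona}) by applying the balayage representation (\ref{Poisson}) to the transversal hyperplane $A := \pp\CC(g_{d(x,y)}v)$ through $y$ perpendicular to the geodesic $[xz]$. Since the hypothesis $\DD \supset \CC(g_{-1}v) \cup \CC(-g_{d(x,z)+1}v)$ guarantees that a unit neighborhood of $A$ lies inside $\DD$ and that $A$ separates $x$ from $z$, the killed Green function satisfies $G_\l(x,z:\DD\setminus A) = 0$, so
\[ G_\l(x,z:\DD) = \int_A G_\l(w,z:\DD)\, d\varpi_x^\l(w). \]
The key geometric observation is that for $d(x,y), d(y,z) \geq R_0$ the lifts $\wt A^x = \{(w,-v_w^x): w\in A\}$ and $\wt A^z = \{(w,-v_w^z): w\in A\}$ are $\e$-transversal to the central stable foliation with $\e$ depending only on the geometry. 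Hence Corollary~\ref{lem:2.15} provides the uniform bounds $\int_A G^2_\l(x,\cdot)\, dw \leq C_2$ and $\int_A G^2_\l(\cdot,z)\, dw \leq C_2$ for every $\l\in[0,\l_0]$.

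For the lower bound I would restrict the balayage integral to $A_0 := B(y,1)\cap A$. Uniform Harnack (Proposition~\ref{Harnack}) gives $G_\l(w,z:\DD) \geq c\, G_\l(y,z:\DD)$ for $w\in A_0$ with $c$ independent of $\l$. Feeding a smooth cutoff $F$ supported on $A_0$, with bounded multiplicative Lipschitz constant, into Proposition~\ref{regularity2} yields $\varpi_x^\l(A_0)\geq c'\, G_\l(x,y:\DD)$. The product gives the lower half of (\ref{Ancona}).

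For the upper bound, Proposition~\ref{regularity} applied with the weight $F(w) := G_\l(w,z:\DD)$, whose logarithmic gradient is uniformly bounded by Harnack, produces
\[ G_\l(x,z:\DD) \leq C_3 L(F)^2 \int_A G_\l(x,w:\DD)\, G_\l(w,z:\DD) \, dw. \]
The whole game is to bound this integral by a constant multiple of $G_\l(x,y:\DD)G_\l(y,z:\DD)$ uniformly in $\l\in[0,\l_0]$. I would split $A$ into $A_0$ and the annuli $A_n = \{w\in A : n\leq d(w,y)<n+1\}$ for $n\geq 1$. On $A_0$, two applications of Harnack replace $G_\l(x,\cdot)G_\l(\cdot,z)$ by a constant multiple of $G_\l(x,y)G_\l(y,z)$, and $\Vol(A_0)$ is bounded. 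For the tail, I would use the CAT$(-a^2)$ hyperbolic Pythagorean identity applied to the right triangle $(x,y,w)$: for $w\in A_n$, $d(x,w)\geq d(x,y)+an-O(1)$ and $d(w,z)\geq d(y,z)+an-O(1)$. Combined with classical (non-uniform) Ancona at the nearest point of $[xz]$ to $w$, Harnack chaining, and the $L^2$-bound of Corollary~\ref{lem:2.15} applied to $A_n$ (which is still $\e$-transversal as viewed from both $x$ and $z$), this produces a tail geometrically summable in $n$, controlled by $G_\l(x,y:\DD)G_\l(y,z:\DD)$ with an exponential gain in $n$.

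The principal obstacle is precisely the uniformity as $\l \uparrow \l_0$: Ancona's classical boundary Harnack inequality produces (\ref{Ancona}) for each $\l<\l_0$, but its constant a priori blows up at $\l_0$. What forbids the blow-up is Hamenst\"adt's sphere $L^2$-bound (Proposition~\ref{Lemma2.5}) and its descendant Corollary~\ref{lem:2.15}, which are the only ingredients in the tail analysis that must survive the limit and which do so uniformly on $[0,\l_0]$. Once (\ref{Ancona}) is established for all $\l<\l_0$ with a uniform constant $C_4$, monotone convergence of $\l\mapsto G_\l$ (and of the associated hitting measures $\varpi_x^\l$) extends the inequality to $\l=\l_0$.
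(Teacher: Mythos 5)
Your lower bound is fine, and indeed the paper treats that side as routine (the ``chain rule'' $G_\l(x,y)G_\l(y,z)\leq C'_0\,G_\l(x,z)$ follows from Harnack alone, uniformly in $\l$). The problem is the upper bound, where your route and the paper's diverge completely, and yours has a circularity that destroys precisely the uniformity the theorem asserts.

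You balayage through the single hyperplane $A=\pp\CC(g_{d(x,y)}v)$ at $y$, reduce to $\int_A G_\l(x,w:\DD)\,G_\l(w,z:\DD)\,dw$, and in the tail annuli $A_n$ you invoke ``classical (non-uniform) Ancona at the nearest point of $[xz]$ to $w$,'' which is near $y$, to split $G_\l(x,w)\lesssim C(\l)G_\l(x,y)G_\l(y,w)$ and $G_\l(w,z)\lesssim C(\l)G_\l(y,z)G_\l(y,w)$, and then control $\int_A G_\l^2(y,w)\,dw$ by Corollary~\ref{lem:2.15}. That chain does sum in $n$, but it produces $G_\l(x,z:\DD)\leq C(\l)^2 C_2 C_3 C_0^2\,G_\l(x,y)G_\l(y,z)$, and the factor $C(\l)$ is exactly the Ancona constant whose boundedness as $\l\uparrow\l_0$ is the content of Theorem~\ref{Alano}. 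You cannot recover uniformity afterward by ``monotone convergence'': if $C(\l)$ is allowed to diverge, the limit inequality at $\l_0$ has no constant at all. Nor can you drop Ancona and fall back on Cauchy--Schwarz with Corollary~\ref{lem:2.15} alone, because that only gives $\int_A G_\l(x,\cdot)G_\l(\cdot,z)\,dw\leq C_2$, a constant, while the target $G_\l(x,y)G_\l(y,z)$ decays to zero as $d(x,y),d(y,z)\to\infty$; and the exponential decay of $G_{\l_0}$ that would rescue the annulus sum (Proposition~\ref{prop:expdecay}) is itself proved in the paper \emph{after} and \emph{from} Theorem~\ref{Alano}.

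The paper avoids this circle with two ingredients you are missing. First, the \emph{pre-Ancona} estimate (Proposition~\ref{preAlano}): the Green function killed on a ball $B(y,r)$ is doubly exponentially small, $G_{\l_0}(x,z:B(y,r)^c\cap\cdots)\leq 2^{-e^{\e r}}$. Its proof uses not one cross-section at $y$ but $N=e^{\e r}$ nested ``angular'' barriers $A_i$ randomly chosen in $[\pi/4,3\pi/4]$; the transfer operators $L_i$ between successive barriers are bounded in $L^2$ via Hamenst\"adt's sphere estimate (Proposition~\ref{Lemma2.5}), and a first-moment (expectation over angles) argument shows a barrier configuration with all $\|L_i\|^2<\tfrac{1}{4C_5}$ exists; crucially this uses no Ancona inequality whatsoever. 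Second, the deduction of Theorem~\ref{Alano} from pre-Ancona is not a single decomposition but a \emph{bootstrap}: writing $\Psi(\a,\b)=\sup G_\l(x,z:\DD)/(G_\l(x,y:\DD)G_\l(y,z:\DD))$ over admissible configurations with $d(x,y)\leq\a$, $d(y,z)\leq\b$, and cutting at a sphere of radius $0.1\a$ around a point $0.4\a$ from $y$, one shows $\Psi(\a,\b)\leq(1+\theta^\a)\Psi(\a/2,\b)$, where the pre-Ancona estimate shows the bypass term is negligible and exponential divergence of geodesics controls the relocation error. Iterating gives a convergent product, hence a finite uniform $C_4$. Your proposal replaces this two-stage scheme by one cross-section plus classical Ancona, which is exactly the move the paper is engineered to avoid.
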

Theorem \ref{Alano} was proven by A. Ancona for $\l < \l_0 $ (\cite {A}). The first inequality in (\ref{Ancona}) is uniform for $\l \in [0, \l_0]$ (see (\ref{easyAncona})). The new fact here is that the second inequality (\ref{Ancona}) holds when $\l = \l_0$ as well, with the same constant $C_4$, so that  the consequences of Theorem \ref{Alano} are now uniform in $\l \in [0, \l_0 ]$. The Ancona inequality follows from the pre-Ancona inequality in the following Proposition.

\begin{prop}\label{preAlano} Let $x,y,z$ be points on a geodesic $\g$ in this order, $v$ the tangent vector to $\g$ at $x$.   Then,  there exists $\e>0, R_2>1$ such that if $r\geq R_2$ and $d(x,y) > r +1, d(y,z) > r +1,$ we have
$$G_{\l_0} (x,z: B(y,r)^c \cap \CC(g_{-1}v)\cap \CC( -\gg_{d(x,z) +1}v)) \leq 2^{-e^{\e r}}.$$
\end{prop}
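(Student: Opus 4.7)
My plan is to follow the Ancona--Gou\"ezel induction-on-scales \cite{A, G1}, adapted to the Riemannian setting at the critical spectral value $\l = \l_0$. The essential new inputs, available uniformly on $[0, \l_0]$, are the $L^2$-bound on transversal submanifolds (Proposition \ref{Lemma2.5} and Corollary \ref{lem:2.15}), the balayage estimates (Propositions \ref{regularity}, \ref{regularity2}), and Harnack's inequality (Proposition \ref{Harnack}).

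Writing $\DD^{*}_{r} := B(y,r)^c \cap \CC(g_{-1}v) \cap \CC(-g_{d(x,z)+1}v)$, define
\[ F(n) := \sup \bigl\{ G_{\l_0}(x, z : \DD^{*}_{R_2+n}) : (x,y,z,v) \text{ satisfying the hypotheses}\bigr\}, \quad n \in \N. \]
The goal is to establish a \emph{squaring recursion}
\[ F(n+1) \leq C_5 \, F(n)^2 \]
for some absolute constant $C_5 > 0$. The base case $F(0) \leq 1/(2C_5)$ is secured by choosing $R_2$ large, using the uniform bound $\int_{S(x,R)} G_{\l_0}^{2} \leq C$ from Proposition \ref{Lemma2.5} combined with Harnack's inequality. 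Iterating the recursion gives $F(n) \leq (2C_5)^{-1} \cdot 2^{-(2^n - 1)}$, and setting $r = R_2 + n$ yields $G_{\l_0}(x, z : \DD^{*}_r) \leq 2^{-e^{\e r}}$ for any $\e < \log 2$ and $r$ sufficiently large.

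The squaring recursion is produced by inserting two separating hypersurfaces $A_1, A_2$---portions of the sphere $S(y, R_2+n+1/2)$ intersected with $\CC(g_{-1}v)$ and $\CC(-g_{d(x,z)+1}v)$ respectively---and applying the balayage identity $(\ref{Poisson})$ on each side. This factors $G_{\l_0}(x, z : \DD^{*}_{R_2+n+1})$ as a double integral over $A_1 \times A_2$ of a kernel times a product of two restricted Green functions, each of which can be arranged to correspond to a pre-Ancona sub-problem at scale $n$ (one on the $x$-side of $y$, one on the $z$-side), thereby contributing the product $F(n)^2$. The kernel is controlled by Cauchy--Schwarz together with Corollary \ref{lem:2.15}, valid because $A_1, A_2$ are uniformly $\e$-transverse to the weak-stable foliation $W^{cs}$; the Lipschitz constants entering Proposition \ref{regularity} are bounded uniformly on $[0, \l_0]$ via Harnack's inequality, so every absolute constant is absorbed into $C_5$.

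The main obstacle is the geometric bookkeeping ensuring that (i) for a sufficiently large set of $(w_1, w_2) \in A_1 \times A_2$, both factored sub-problems genuinely fall under the pre-Ancona hypothesis at the smaller scale $n$ rather than at $n+1$; (ii) the hypersurfaces $A_1, A_2$ remain $\e$-transverse to $W^{cs}$ independently of the configuration $(x,y,z)$; and (iii) the nested cones and balls fit together consistently across the iteration. The key technical point is that Proposition \ref{Lemma2.5} provides the required $L^2$-control uniformly up to and including $\l = \l_0$, which is precisely what allows the squaring recursion to close at the critical value, where the classical Ancona argument for $\l < \l_0$ would degenerate.
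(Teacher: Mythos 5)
Your squaring recursion is a genuinely different route from the paper's argument, and unfortunately it has a gap that I do not see how to close. The paper (following Gou\"ezel \cite{G1}) works at a \emph{single} scale $r$: it constructs $N = e^{\e r}$ barriers $A_1,\dots,A_N$, each of the form $\{w : \angle_y(x,w) = \theta_i\}\cap B(y,r-1)^c$, i.e.\ cone boundaries through $y$ at angles $\theta_i \in [\pi/4, 3\pi/4]$. These are hypersurfaces that genuinely separate $x$ from $z$ inside $\DD$, so iterating the balayage identity $N$ times yields a chain $\langle \delta_x, L_1\cdots L_N G(\cdot,z)\rangle$ with $L_if(u_i)=\int G_{\l_0}(u_i,u_{i+1})f(u_{i+1})du_{i+1}$. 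The crux is then a \emph{probabilistic selection of angles}: choosing $\theta_i$ uniformly at random in $X_i$ and showing $\E(\|L_i\|^2)$ is exponentially small in $r$, which yields a deterministic choice of $\theta_1,\dots,\theta_N$ with $\|L_i\|^2 < 1/(4C_5)$ for all $i$ simultaneously. This bound uses the exponential spread of distinct $A_i$'s (curvature bounded away from $0$) plus the uniform $L^2$-estimate $\int_{S(y,R)} G_{\l_0}^2 \leq C$ to control the Green kernel on $A_i\times A_{i+1}$. The final bound is $C_5^N \prod_i \|L_i\| \cdot (\int_{A_N} G_{\l_0}^2)^{1/2} \leq 2^{-N}$, and $N = e^{\e r}$ gives the doubly exponential decay directly, with no recursion across scales.

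The gap in your proposal is the claimed factorization. First, a basic point: the hypersurfaces you nominate as $A_1, A_2$, portions of $S(y, R_2+n+\tfrac12)$, lie \emph{inside} the excluded ball $B(y, R_2+n+1)$, hence entirely outside the domain $\DD^*_{R_2+n+1}$; the process killed upon exiting $\DD^*_{R_2+n+1}$ never visits them, so the balayage identity on them is vacuous. But even after correcting the radius (say to $R_2+n+\tfrac32$), a piece of a sphere does not separate $x$ from $z$ in $\M$: a path can simply detour around it. More fundamentally, the two ``sub-problems'' you want to extract --- $G(x, w_1 : \cdot)$ with $w_1 \in A_1$ on the $x$-side of $y$, and $G(w_2, z : \cdot)$ with $w_2 \in A_2$ on the $z$-side --- are not instances of the pre-Ancona hypothesis at scale $n$. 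That hypothesis demands a triple of points lying in order on a geodesic with the \emph{middle} one the center of an excluded ball of radius $r_n$ on both sides. Here $x$, $y$, $w_1$ are roughly collinear but $w_1$ lies between them and the excluded ball is centered at $y$, an endpoint; similarly for $(w_2, y, z)$. Neither configuration matches the inductive hypothesis, so you cannot bound these pieces by $F(n)$. The recursion $F(n+1) \leq C_5 F(n)^2$ therefore does not close, and I do not see a geometric rearrangement that would make it close: the radius of the obstacle ball does not shrink as you pass from $n+1$ to $n$, so there is nothing for the induction to bootstrap on. The correct source of the $2^{-e^{\e r}}$ bound is combinatorial --- exponentially many forced crossings, each costing a fixed factor --- not a recursion in the scale parameter. (Your base-case reasoning is sound, incidentally; the uniform $L^2$ bound of Proposition~\ref{Lemma2.5} plus the exponential growth of spheres and Harnack's inequality do give pointwise exponential decay of $G_{\l_0}$ before Theorem~\ref{Alano} is available.)
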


\begin{proof}
As in \cite{G1}, we will construct $N=e^{\e r}$ barriers, for a positive constant $\e$ which we will specify as follows. 
\begin{figure}[!htb]
\centering
\includegraphics[scale=0.9]{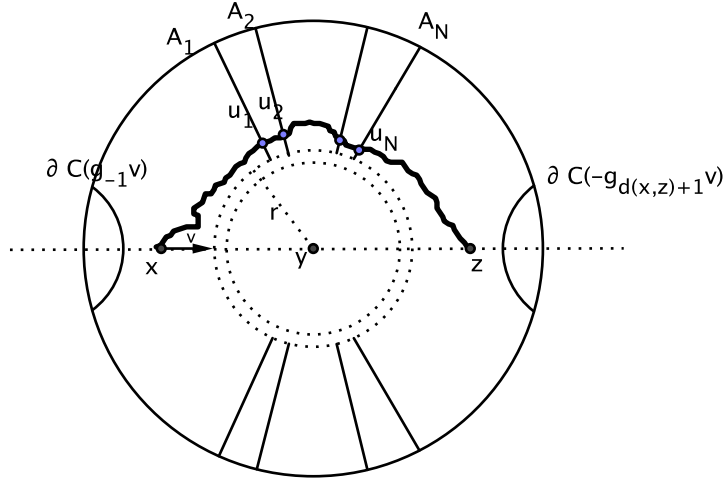}
\caption{Ancona-Gou\"ezel inequality}
\label{fig5}
\end{figure}

For $i =1 ,\cdots ,N$, let $X_i = ((N +2i-1)\pi/4N , (N+2i)\pi /4N ) \subset [\pi /4, 3\pi /4]$. Choose $\theta_i$ from $X_i$, for $i =1 ,\cdots ,N$.

By negative curvature, the intersections $\{A_i \}$'s of $B(y,r-1)^c$ and the cones $\{ w : \angle_y (x,w) = \theta_i\}$ of angle $\theta_i$ at $y$, are of distance between them bounded below by $1$ for all $r$ large enough. Set $\DD : = B(y,r)^c\cap \CC(\gg_{-1} v) \cap \CC( -\gg_{d(x,z) +1}v)$. Each set $A_i \cap \DD$ separate $\DD $ into two disjoint open sets. Let $\CC _i $ be the one containing $x$. Then $z \not \in \CC_i $. Moreover, the sets $A_i \cap \DD $ have  bounded geometry and do not intersect $\pp \CC(\gg_{-1}v) \cup \CC( -\gg_{d(x,z) +1}v)$ (see Figure~\ref{fig5}).

By  (\ref{Poissonbarrier}), we may write:
\begin{eqnarray*} G_{\l_0} (x,z :\DD ) &=& \int_{A_1\cap \DD}   G_{\l_0}(u_1, z:\DD) \, d\varpi _x^{\l_0} (u_1) \\
& =& \int _{A_1\cap \DD} \int _{A_2\cap \DD}    G_{\l_0}(u_2, z:\DD) \, d\varpi _x^{\l_0}  (u_1)  d\varpi _{u_1}^{\l_0} (u_2) \\
& =& \int _{A_1\cap \DD} \cdots \int _{A_N\cap \DD}  G_{\l_0}(u_N, z:\DD) \, d\varpi _x^{\l_0}  (u_1) \cdots  d\varpi _{u_{N-1}}^{\l_0} (u_N) \\
&\leq & \int _{A_1\cap \DD} \cdots \int _{A_N\cap \DD}  G_{\l_0}(u_N, z) \, d\varpi  _x^{\l_0}  (u_1) \cdots  d\varpi  _{u_{N-1}}^{\l_0}  (u_N)
\end{eqnarray*}
 where  $\varpi  _u^j $ is the distribution on $A_j \cap \DD$ given by (\ref{Poisson}). (Observe that $G_\l(u_j, z : \DD \setminus A_j)=0$ since $u_j,z$ are separated by $A_j$.) 
Observe that, by Proposition~\ref{Harnack}, for  all $u_N \in A_N , ||\nabla _{u_N} \log G_{\l _0} (u_N , z) || \leq \log C_0$. By construction, $d(A_N \cap \DD , B(y, r-1)) \geq 1$ and for all $ u_{N-1} \in A_{N-1}, d(u_{N-1}, B(y,r-1)) \geq 1$.  So,  we may apply   Proposition \ref{regularity} and obtain a constant $C_5= C_3 C_0^2$  such that 
\[ \int _{A_N\cap \DD}  G_{\l_0}(u_N, z) \,   d\varpi  '_{u_{N-1}} (u_N) \; \leq \;C_5  \int _{A_N} G_{\l_0}(u_{N-1}, u_N)  G_{\l_0}(u_N, z) \, du_N, \]
where $\varpi ' _z$ is the distribution on $\ov {A_N \cap \DD}$ associated with (\ref{Poisson}) for the domain 
$B(y,r-1)^c\cap \CC(\gg_{-1} v) \cap \CC( -\gg_{d(x,z) +1}v).$ Since $\DD \subset B(y,r-1)^c\cap \CC(\gg_{-1} v) \cap \CC( -\gg_{d(x,z) +1}v),$ we have $\varpi _z^{\l_0}  \leq \varpi '_z$ on $A_N \cap \DD$ and therefore
\[ \int _{A_N\cap \DD}  G_{\l_0}(u_N, z) \,   d\varpi  _{u_{N-1}}^{\l_0}  (u_N) \; \leq \;C_5  \int _{A_N} G_{\l_0}(u_{N-1}, u_N)  G_{\l_0}(u_N, z) \, du_N. \]
The right hand side satisfies  for  all $u_{N-1} \in A_{N-1}  ,$\[ ||\nabla _{u_{N-1}} \int _{A_N} G_{\l_0}(u_{N-1}, u_N)  G_{\l_0}(u_N, z) \, du_N|| \leq  C_0 \int _{A_N} G_{\l_0}(u_{N-1}, u_N)  G_{\l_0}(u_N, z) \, du_N\] because it is an integral in the variable $u_N$ of the functions $G_{\l_0}(u_{N-1}, u_N) $ with that property. We can iterate the application of Proposition \ref{regularity} and obtain
\begin{eqnarray*} 
G_{\l_0} (x,z : \DD) & \leq & C_5^N\int_{A_1} \cdots \int_{A_N}  G_{\l_0} (x,u_1)G_{\l_0}(u_1, u_2) \cdots G_{\l_0}(u_N,z) \, du_1 \cdots du_N  \\
&=& C_5^N \int G_{\l_0} (x, u_1) \left(L_1 \cdots L_{N-1} G_{\l_0}(u_N, z) \right) (u_1) du_1\\
& = &  C_5^N || G_{\l_0}(x, u_1)||_{L^2(A_1)} \cdot || L_1 \cdots L_{N-1} G_{\l_0} (u_N, z) ||_{L^2(A_1)} \\ &\leq  & C_5^N || G_{\l_0}(x, u_1)||_{L^2(A_1)}  \prod_{i=1}^{N-1} ||L_i|| \cdot || G_{\l_0} (u_N, z) ||_{L^2(A_N)}, 
  \end{eqnarray*}
where $L_i : L^2(A_{i+1}) \to L^2(A_i)$ is defined by $L_i h(u_i) = \int G_{\l_0} (u_i, u_{i+1}) h(u_{i+1}) du_{i+1}$, $||\cdot||_{L^2(A_i)}$ is the $L^2$-norm on $A_i$  and $||L_i||$ is the operator norm.
Set $$ f_0 :=  || G_{\l_0}(x, u_1)||_{L^2(A_1)} , \; f_i =  ||L_i|| {\textrm{ for }} i = 1, \cdots, N-1,$$
$$ {\textrm{ and }} f_N := || G_{\l_0} (u_N, z) ||_{L^2(A_N)} . $$
Thus, to prove Proposition \ref{preAlano},  it suffices to show that there exist $\theta_1, \cdots, \theta_N$ such that for all $i = 0, \cdots, N$, $f_i(\theta_1, \cdots, \theta_N)  < \frac{1}{4C_5}.$

Now choose $\theta_i$ uniformly from $X_i$.
We claim that, for all $i$, the expectation of $f_i^2=f_i^2(\theta_i, \theta_{i+1})$ with respect to normalized measures $\frac{16}{\pi^2} N^2 d\theta_i d\theta_{i+1}$ satisfies
$$\E(f_i^2) \leq \frac{e^{-\e r}}{20C_5^2} ,$$ 
if $\e $ is small enough. It will imply that $\E(\sum f_i^2) \leq \frac{(N+1)  e^{-\e r}}{20C_5^2} < \frac{1}{16C_5^2}$, which will  in turn imply that $\sum f_i^2(\theta_1, \cdots, \theta_N) < \frac{1}{16C_5^2}$ for some $\{\theta_1, \cdots, \theta_N\}$, thus $f_i(\theta_1, \cdots, \theta_N) < \frac{1}{4C_5}$ for all $i$ for that choice of $\{\theta_1, \cdots, \theta_N\}$ and Proposition~\ref{preAlano} will follow.

Now it remains to prove the claim. 
Fix a set $S$ of generators for $\G$, an order on $S$ and its induced lexicographical order on $\G$.
For $\aa\in A_i, \bb \in A_{i+1}$, let $\g_0$ and $\g_1$ be the first elements of $\G$ in the lexicographical order such that 
$$d(\g_0 y, \aa) < \mathrm{diam} \; M \; \mathrm{and} \; d(\g_1 y, \bb) < \mathrm{diam} \; M . $$
Set $\Phi(\aa,\bb, \theta_i, \theta_{i+1}) =\g_0^{-1} \g_1  \in \G.$ 
 
Denote by $d\mu(\aa,\bb,\theta_i, \theta_{i+1})$ the {product} of the Lebesgue measures on $A_i , A_{i+1}$ and of  $\frac{16}{\pi ^2} N^2 d\theta_i d \theta_{i+1}$
and  define $$\AA(z) = \mu (\{ (\aa,\bb, \theta_i, \theta_{i+1}) : z \in \Phi(\aa,\bb,\theta_i, \theta_{i+1}) M_0\} ) /\mathrm{vol}(M).$$ Here, for convenience, we choose $M_0$ to be a fundamental domain containing $y$.
We have \[  G_{\l_0} (\aa,\bb) =  G_{\l_0} (\g_0^{-1} \aa, \g_0^{-1} \bb) \leq C_0^{2\mathrm{ diam }  M} G_{\l_0} (y, \g_0^{-1} \g_1 y),\] 
where $C_0^{2\mathrm{ diam }  M}$ comes from Proposition~\ref{Harnack}.
Thus,
\begin{eqnarray*}  
\E (f_i^2) & = & \int G_{\l_0}^2 (\aa,\bb) d\mu(\aa,\bb, \theta_i, \theta_{i+1}) \\
&\leq& C_0^{2  \mathrm{ diam }  M} \sum_{\g \in \Gamma} G^2_{\l_0} (y, \g y) \mu (\{ (\aa,\bb, \theta_i, \theta_{i+1}) : \Phi(\aa,\bb,\theta_i, \theta_{i+1} ) = \g \})\\
&\leq & C_0^{4\mathrm{ diam }  M} \int_{\wt{M}} G^2_{\l_0} (y,w) \AA(w) d\mathrm{\Vol}(w),
  \end{eqnarray*}  
  Let us estimate $\AA(w)$ for a fixed $w \in \M$. First $w$ determines $\gamma$ such that $w \in \g M_0$. For arbitrary $\g_0$, set
$$\AA(w, \gamma_0)  := \mu \{ (\aa,\bb, \theta_i, \theta_{i+1}) : \aa \in \g_0 M_0, \bb \in \g_0 \g M_0 \}.  $$
For such $(\aa,\bb,\theta_i, \theta_{i+1})$, $\theta_i, \theta_{i+1}$ vary in intervals of size $e^{-a_0 d(y,\aa)}$, $e^{-a_0 d(y,\bb)},$ respectively, for some constant $a_0$ depending on the upper bound of the sectional curvature. 
Therefore,
 $$\AA(w, \g_0)  \leq \frac{16}{\pi^2} N^2 e^{-a_0 (d(y,\aa) + d(y,\bb))}\leq \frac{16}{\pi^2} N^2 e^{-a_0 d(\aa,\bb)}.$$
Now let us bound the number of possible $\g_0$ . 
Observe that the angles $\angle _y (\g_0y, \aa)  , \angle _y (\g_1y, \bb) $  are at most $\mathrm{diam} M \cdot  e^{-a_0 r}$.
 If $\e $ is chosen small enough, this implies that  $\angle _y (\g_0y, \g_1 y)  \geq e^{-\e r}/2 $.
It follows that the distance from $y$ to the geodesic $[\g_0y, \g_1 y] $ is at most $a_1 \e r$,  for some constant $a_1$ depending on the upper bound of the sectional curvature. 
The number of possible choices for $\g_0$ is proportional to the volume of an $a_1 \e r$-neighborhood of  the geodesic $[\g_0y, \g_1 y] $. The distance $d (\g_0y, \g_1 y)$ is $d (y, (\g_0)^{-1}\g_1 y) \leq d(y,w) + 2 {\textrm { diam}}M_0. $ We also have $d(\aa,\bb) \leq d(y,w) + 2 {\textrm { diam}}M_0.$ Thus,   
$$\AA(w)\; \ls \; d(y,w) e^{a_1a_2 \e r} e^{2\e r} e^{-a_0 d(y,w)},$$
where $a_2$ is a constant coming from Bishop comparison theorem (thus depends on the lower bound of the sectional curvature).
It follows that there exists $R_2$ such that if $\e$ is chosen small enough and $r \geq R_2$,
\begin{eqnarray*}
\E(f_i^2) & \ls &  e^{(2+ a_1a_2 ) \e r} \int_r^{\infty} R e^{-a_0  R} \int_{S(y,R)} G^2_{\l_0} (y,z) dR \\
& \ls &  e^{(2+ a_1a_2 )\e r} \int_r^\infty Re^{-a_0  R} dR \; \ls \;e^{((3+ a_1 a_2)\e-a_0) r}  < \frac{ e^{-\e r}}{20C_5^2},
\end{eqnarray*}
where we used Proposition~\ref{Lemma2.5} for the second inequality.

The proof that one can choose $\e $ and $R_2$ so that $\E f_0^2$ and $\E f_N^2$ are less than $ e^{-\e r}/{20C_5^2}$ as well is similar. For instance, let us estimate
\[ \E f_0^2 \; = \; \frac{4 N}{\pi} \int _{A_1\x X_1} G_{\l_0}^2 (x,u_1) \, du_1 d\theta _1 \; \ls e^{\e r} \sum_{\g, d(y,\g x) \geq r} G_{\l_0}^2 e^{-a_0 d(y, \g x)}.\]
There is a constant $a_3$ depending only on the upper bound of the curvature such that $ 0\leq d(x,y) + d(y, \g x) - d(x, \g x) \leq a_3.$ It follows that 
\[ \E f_0^2 \;\ls \; e^{\e r} e^{a_0 d(x,y)} \int _{r + d(x,y) -a_3}^\infty  e^{-a_0 s} \, ds \; \ls e^{-(a_0 -\e)r} ,\]
where we used Proposition~\ref{Lemma2.5} for the first inequality.
\end{proof}

\noindent \textit{Proof of Ancona-Gou\"ezel inequality.}
Theorem \ref{Alano}  follows from Proposition \ref{preAlano} by an inductive  argument (see also \cite{G1}, \cite {GL}).
Indeed, let $x,y,z, \DD $ be as in  Theorem \ref{Alano}, $\l \in [0,\l_0]$. We want to estimate from above \[ \frac{G_\l(x, z: \DD)}{G_\l(x, y: \DD)G_\l(y, z: \DD)} .\]
Set $\Psi (r,r') $ the highest possible value of this ratio for  $x,y,z, \DD $ as in  Theorem \ref{Alano}, with $d(x,y ) \leq r, d(y,z) \leq r'$, and $\l \in [0,\l_0]$. By Proposition \ref{Harnack}, this quantity is well defined. Moreover, by definition, the functions $r, r' \mapsto \Psi (r, r') $ are nondecreasing.
 Assume without loss of generality that $r \geq r'$. 
\begin{lem}\label{lem:3.6}
There is $\theta, 0< \theta < 1 $ and $R$  such that,  if $r \geq r'  \geq R$, \begin{equation}\label{nesting}  \Psi (r,r') \; \leq \;e^{ \theta ^r}\Psi (r/2, r') .\end{equation}\end{lem}
It follows that for all $(r,r')$, 
\[ \Psi (r, r') \; \leq \Pi_{k \in \N} e^{2 \theta ^{2^k R }} \Psi (R,R) .\]
This shows Theorem \ref{Alano} since the infinite product is converging and $ \Psi (R,R) $ is finite.

It remains to prove Lemma~\ref{lem:3.6}.
\begin{proof}
Consider  $(x,y,z, \DD) $ as in  Theorem \ref{Alano}, with $d(x,y ) \leq r, d(y,z) \leq r' ,$  and $\l \in [0,\l_0]$ such that 
\[ \frac{G_\l(x, z: \DD)}{G_\l(x, y: \DD)G_\l(y, z: \DD)}\; \geq \; e^{-\th^r /3} \Psi (r, r') .\]
for some $\theta , 0 < \theta  <1$ chosen later. There is nothing to prove  if $d(x,y) 
\leq  r/2$. Assume $d(x,y) >r/2$ and let  $x'$ be the point in the segment $[x,y]$ with $d(x',y ) = 0.3 \,r$. 
Using (\ref{Poisson}) with the sphere $S(x', 0.1 r)$ of points at distance $0.1 r$ from $x'$, we see that we can write
 \begin{equation}\label{induction} G_\l (x,z : \DD) \; = \; \int _{S (x', 0.1 r)} G_\l (w,z:\DD) \, d\varpi _{x}^\l (w)  + G_\l (x,z : \DD \cap B(x',0.1r)^c ). \end{equation}

By hypothesis, the domain $\DD$ contains $\CC(g_{-1}v)\cap \CC( -\gg_{d(x,z) +1}v) $. Recall  $R_2$ is the constant in Proposition~\ref{preAlano}. If $r > 10 R_2$, we can apply Proposition \ref{preAlano} to $x,x'$ and $z$ (we indeed have $d(x,x') \geq 0.2 r > 0.1 r +1$) and get,  for all $\l, 0 \leq \l \leq \l_0 $, \[  G_\l (x,z : \DD \cap B(x', 0.1\,r)^c )\leq  G_{\l_0} (x,z : \DD \cap B(x', 0.1\,r)^c ) \leq 2^{-e^{\e(0.1 \, r)}} .\]  

On the other hand, for $w\in S (x', 0.1 r),$ $d(w,z_{-1}) \leq 1.4 r$ and $d(w,x_1) \leq 0.8 r$, where $x_1 = \g_v (1), z_{-1}= \g_v(d(x,z) -1)$, so that, by Propositions \ref{regularity2} and \ref{Harnack} 
\begin{eqnarray*} \int _{S (x', 0.1 r)} G_\l (w,z:\DD)\, d\varpi _x^\l (w) &\geq & C_3^{-1} C_0^{-2} \int _{S (x', 0.1 r)} G_\l (w,z :\DD) G_\l (w, x: \DD) \, dw \\
&\geq & C_3^{-1} C_0^{-2-2.2 r} \kappa ^2 \int _{S (x', 0.1 r)} dw \\ &\geq & c^r \end{eqnarray*} 
for some $c >0$ if $r $ is large enough, where $\kappa >0$ is given by $\kappa := \inf_{x,z, \DD} \{G_{0} (x, x_1): \DD), G_{0}(z, z_{-1}:\DD )\} $. For all  $\th $ there is  $R$ such that for $r \geq R$, \[ 2^{-e^{\e(0.1 \, r)}}   \; \leq \; \left(e^{\th^r /3} -1\right) c^r, \quad {\textrm{so that }} \]
\[G_\l (x,z : \DD \cap B(x',0.1r)^c )\; \leq \; \left(e^{\th^r /3} -1\right) \int _{S (x', 0.1 r)} G_\l (w,z:\DD) \, d\varpi _x^\l (w) \quad {\textrm {and thus}}\] 
\begin{equation}\label{eqn:3.5}  G_\l (x,z : \DD) \leq  e^{\th^r /3}  \int _{S (x', 0.1 r)} G_\l (w,z:\DD) \, d\varpi _x^\l (w). \end{equation} 

Let $z_1$ be the point $z_1 := \g_v(d(x,z)+1) \in \DD.$ Consider on the geodesic segment $[w,z_1]$ the point  $y'$ such that $d(y',z_1) = d(y,z_1) $ and $z'$ the  point closest to $z $ with the  property that $ \CC_{-v_{z_1}^x} \subset \CC _{-\gg_{-1}v_{z'}^{w}} . $ With such a choice, each $(w, y',z', \DD)$ satisfies the hypotheses of theorem \ref{Alano} with $d(w,y') \leq r/2, d(y',z') \leq r'$ so that $G_\l (w,z' : \DD)  \leq  \Psi(r/2,r')  G_\l (w,y' : \DD) G_\l(y',z':\DD).$\\
Moreover, there are constants $a_0, a_1$, depending only on the curvature such that \footnote{Let $w'$ be the point in the segment $[x,z]$ that is closest to $w$. The estimate on $d(y,y') $ follows from the comparison of the geodesic triangle $wz_1w'$. Since $d(z_1,y) = d(z_1, y') = r' +1 \geq R+1$, the angle at $z_1$ in the geodesic triangle $wz_1w'$ is  at most $d(y,y')$ for $R$ large enough. Then $d(z,z') = d(z_1,z'_1)$, where $z'_1$ is the closest point to $z_1$ in the segment $[w,z_1]$ with the  property that $\CC _{\pm} (v_{z'_1}^w) $ does not intersect $\CC_{\pm}(v_{z_1}^x). $ There is an ideal triangle based on the segment $[z_1z'_1]$ with angle $\pi /2 $ at $z'_1$ and at least  $\pi /2 - d(y,y') $ at $z_1$. The estimate on   $d(z,z') = d(z_1,z'_1)$ follows by comparison.}  \[ d(y,y') \; \leq e^{-0.3 a_0 r} 0.1 r \;\;  {\textrm { and }}  d(z',z) \; \leq \; \frac {d(y,y')}{a_1} .\]
So, by Proposition \ref{Harnack}, we obtain, replacing $y'$ by $y$ and $z'$ by $z$, \begin{eqnarray*} G_\l (w,z : \DD)  & \leq & C_0^{\frac {d(y,y')}{a_1}}  G_\l (w,z' : \DD) \; \leq \; C_0^{\frac {d(y,y')}{a_1}} \Psi(r/2,r')  G_\l (w,y' : \DD) G_\l(y',z':\DD) \\ 
& \leq &  C_0^{(2  + 2/a_1) d(y,y') } \Psi(r/2,r')  G_\l (w,y: \DD) G_\l(y,z:\DD) .\end{eqnarray*}
We choose $\th  $ and $R$ such that (\ref{eqn:3.5}) holds and that for $r \geq R,$ \[ C_0^{(2  + 2/a_1)e^{-0.3 a_0r} 0.1 r } \; \leq \; e^{\th^r /3} \] (take for instance $ e^{-0.2 a_0}< \th <1  $ and $R$ large). We obtain 
 \[  G_\l (x,z : \DD) \;\leq \; e^{2\th^r /3} \Psi (r/2, r') G_\l (y,z:\DD)  \int _{S (x', 0.1 r)} G_\l (w,y:\DD) \, d\varpi _x^\l (w) .\]
By (\ref{Poisson}), the last integral is at most $G_\l (x,y:\DD) $ and  Lemma~\ref{lem:3.6} follows:
\[ \Psi (r,r') \; \leq \; e^{\th^r /3}\frac{G_\l(x, z: \DD)}{G_\l(x, y: \DD)G_\l(y, z: \DD)} \; \leq \; e^{ \theta ^r}\Psi (r/2, r') .\]
\end{proof}

We use the following notation throughout this article: $\sim^{a}$ means that the ratios between the two sides  are bounded by $a$.

\begin{cor} There are constants $C_8,R_1$ such that, for all $\l \in [0,\l_0]$, all $v \in S\M$,  all  $y,y' \not \in \CC(\gg_{-R_1} v) $ and all $z \in \CC(\gg_{R_1} v) $,
\begin{equation}\label{AndersonSchoen2} G_\l(y,z)\;\sim^{C_8}\;  G_\l(y, \pi(v)) G_\l( \pi(v),z), \quad \frac{ G_\l(y,z)}{ G_\l(y', z)} \sim^{C_8^2} \frac{ G_\l(y, \pi(v))}{ G_\l(y', \pi(v))} .\end{equation} \end{cor}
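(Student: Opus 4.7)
The plan is to reduce the statement to a direct application of the uniform Ancona-Gou\"ezel inequality (Theorem~\ref{Alano}), by showing that under the cone hypotheses the geodesic from $y$ (respectively $y'$) to $z$ passes within a uniformly bounded distance of $\g_v(0)$. This will let us insert an intermediate point $p \in [y,z]$ very close to $\g_v(0)$, apply Theorem~\ref{Alano}, and then swap $p$ for $\g_v(0)$ using the uniform Harnack inequality.

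The first step is a purely geometric lemma: I claim that if $R_1$ is chosen large enough (depending only on the curvature pinching), then for any $y \not\in \CC(g_{-R_1}v)$ and $z \in \CC(g_{R_1}v)$ the geodesic segment $[y,z]$ comes within some uniform distance $K$ of $\g_v(0)$, and the nearest point $p \in [y,z]$ to $\g_v(0)$ splits $[y,z]$ into two subsegments each of length at least the threshold $R_0$ of Theorem~\ref{Alano}. This is standard in a CAT$(-a^2)$ space: the cone condition at $\g_v(-R_1)$ forces the nearest-point projection of $y$ onto the complete geodesic $\g_v$ to lie at parameter $\leq -R_1 + O(1)$, and likewise the projection of $z$ to lie at parameter $\geq R_1 - O(1)$; convexity of distance and thin-triangle stability then give a uniform bound on $d(p,\g_v(0))$ and on the two half-lengths.

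With this in hand the rest is routine. Applying Theorem~\ref{Alano} to the triple $(y,p,z)$ on $\DD = \M$ (which trivially contains any cone that might be required) yields
\[ G_\l(y,z) \;\sim^{C_4}\; G_\l(y,p)\, G_\l(p,z), \]
uniformly in $\l \in [0,\l_0]$. Since $d(p,\g_v(0)) \leq K$ with $K$ fixed, iterating Proposition~\ref{Harnack} a bounded number of times along a geodesic chain joining $p$ to $\g_v(0)$ (applied to the positive $(\Delta+\l)$-harmonic functions $G_\l(y,\cdot)$ and $G_\l(\cdot,z)$, which have no singularity in a neighborhood of $\g_v(0)$ thanks to the length conditions) gives $G_\l(y,p)\sim^{H} G_\l(y,\g_v(0))$ and $G_\l(p,z)\sim^{H} G_\l(\g_v(0),z)$ for $H := C_0^{2K}$. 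Combining yields the first estimate with $C_8 := C_4 H^2$. For the second estimate I would apply the first inequality separately to $(y,z)$ and $(y',z)$; the common factor $G_\l(\g_v(0),z)$ cancels in the ratio, so the two-sided comparison degrades from $C_8$ to $C_8^2$.

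The main obstacle is really only the geometric step, and it is essentially classical comparison geometry; the one point that deserves attention is keeping all multiplicative constants uniform in $\l$ up to and including $\l = \l_0$. This is exactly where Theorem~\ref{Alano} is stronger than the original Ancona inequality, and Proposition~\ref{Harnack} has already been stated with the required uniformity, so there is no loss at the endpoint.
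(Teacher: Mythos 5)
Your proposal is correct and follows essentially the same route as the paper: identify a point on $[y,z]$ uniformly close to $\g_v(0)$, apply the uniform Ancona--Gou\"ezel inequality (Theorem~\ref{Alano}) at that point, then transfer to $\g_v(0)$ by a bounded chain of Harnack inequalities, with the second estimate obtained by quotienting two instances of the first. The paper's proof is a compressed version of the same argument (it simply asserts $d(w(y,z),\g_v(0))\le 1$ for $R$ large), so your more explicit treatment of the geometric step and the uniformity in $\l$ adds nothing new but is consistent with the intended reasoning.
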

\begin{proof}
Let  $y \not \in \CC(\gg_{-R} v) ,z \in \CC(\gg_{R} v) $. If $R$ is large enough, on the geodesic $[yz],$  the closest point $w(y,z) $ to $\pi(v)$  satisfies $d (w(y,z) ,\pi(v)) \leq 1$. The first inequality in (\ref{AndersonSchoen2})
 follows directly from  (\ref{Ancona}) and Proposition~\ref {Harnack}, the second from the first applied to $y,y' \not \in \CC(\gg_{-R_1} v) $.
\end{proof}

\subsection{$\l_0$-Martin boundary}

 We now follow Section 6 of \cite{AnS} simultaneously for all $\l \in [0,\l_0]$ to obtain Propositions~\ref{Martinboundary}, ~\ref{HolderNonu}, ~\ref{Naim}  uniformly in $\l \leq \l_0$. For $x,y, z \in \M , \l \in [0,\l_0],$ set $$ k_\l (x,y,z) \; := \; \frac {G_\l (y,z) }{G_\l (x,z)}.$$ The function $k_\l (x,y,z) $ is clearly $\l$-harmonic in $y$ on $\M \setminus \{z\}.$
 
 \begin{lem}\label{unifstrong} There are constants $C >1, K<1$ such that for all geodesic $\g$ and all $x, y \notin \CC (\dot \g(-2R_1-T)),z, w\in \CC (\dot \g (2R_1)), \l \in [0,\l_0],  T>0$, 
\begin{equation*} \Big| \log  \frac {k_\l (x,y, z)}{k_\l (x,y, w )} \Big|  \; \leq \; CK^T . \end{equation*} \end{lem}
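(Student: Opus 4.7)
The plan is to prove Lemma~\ref{unifstrong} as a uniform Anderson--Schoen-style strong Ancona inequality, deduced from the uniform Ancona-Gou\"ezel inequality of Theorem~\ref{Alano} by an iterative contraction of oscillation along the geodesic $\g$.

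Expanding,
\[ \frac{k_\l(x,y,z)}{k_\l(x,y,w)} = \frac{G_\l(y,z)\, G_\l(x,w)}{G_\l(x,z)\, G_\l(y,w)}.\]
Set $F_y(\cdot) := G_\l(y,\cdot)$ and $F_x(\cdot) := G_\l(x,\cdot)$, both positive $(-\l)$-harmonic on $\M \setminus \{x,y\}$. The lemma is equivalent to the assertion that the oscillation of $\ln(F_y/F_x)$ over $\CC(\dot\g(2R_1))$ is at most $CK^T$. Introduce the vectors $v_t := \dot\g(-2R_1-T+t)$ for $0 \leq t \leq T$ and set
\[ \omega(t) := \sup_{z \in \CC(v_t)} \ln\frac{F_y(z)}{F_x(z)} - \inf_{z \in \CC(v_t)} \ln\frac{F_y(z)}{F_x(z)}.\]
Inequality~(\ref{AndersonSchoen2}) gives $\omega(0) \leq 2\ln C_8^2$, so it suffices to prove a universal contraction $\omega(t+1) \leq \rho\, \omega(t)$ for some $\rho < 1$ independent of $\l \in [0, \l_0]$.

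The contraction step is derived from the Ancona factorization at the cone tip $q_t := \g(-2R_1-T+t)$. For $z \in \CC(v_{t+1})$, Theorem~\ref{Alano} yields
\[ F_y(z) = G_\l(y, q_t)\, G_\l(q_t, z)\cdot r_y(z), \qquad F_x(z) = G_\l(x, q_t)\, G_\l(q_t, z)\cdot r_x(z),\]
with $r_y(z), r_x(z) \in [C_4^{-1}, C_4]$. Therefore
\[ \ln\frac{F_y(z)}{F_x(z)} = \ln\frac{G_\l(y, q_t)}{G_\l(x, q_t)} + \ln\frac{r_y(z)}{r_x(z)},\]
and the oscillation of $\ln(F_y/F_x)$ on $\CC(v_{t+1})$ equals that of $\ln(r_y/r_x)$. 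The latter is controlled by Harnack (Proposition~\ref{Harnack}) applied at $q_t$ together with the uniform Ancona bounds: intuitively, the factorization absorbs the dominant dependence on $z$, and what remains is a second-order correction whose oscillation contracts by a universal factor $\rho < 1$ relative to $\omega(t)$.

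The main technical obstacle is verifying this strict contraction $\rho < 1$, uniformly in $\l \in [0, \l_0]$. This is the Riemannian analogue of Ancona's exponential Martin convergence at subcritical $\l$; the new input enabling its extension up to $\l = \l_0$ is precisely the uniform Ancona-Gou\"ezel inequality of Theorem~\ref{Alano}, itself resting on Proposition~\ref{Lemma2.5}. Carrying out the oscillation contraction rigorously---with universal constants and rate $\rho < 1$ independent of $\l$---mirrors the ``boundary Harnack with rate'' step in \cite{AnS} and \cite{G1} and is, in my view, the most delicate part of the argument.
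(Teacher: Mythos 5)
Your high-level strategy is the same as the paper's: reduce the lemma to an iterative contraction of the oscillation of $\ln\big(G_\l(y,\cdot)/G_\l(x,\cdot)\big)$ along a chain of nested cones, seeded by the bound from~(\ref{AndersonSchoen2}), with uniformity in $\l$ supplied by Theorem~\ref{Alano}. That is the right skeleton. However, the proposal has a genuine gap exactly where you flag it, and the factorization you set up does not close it.

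Here is the problem with the contraction step as written. After factorizing $F_y(z)=G_\l(y,q_t)G_\l(q_t,z)r_y(z)$ and $F_x(z)=G_\l(x,q_t)G_\l(q_t,z)r_x(z)$ with $r_y,r_x\in[C_4^{-1},C_4]$, the oscillation of $\ln(F_y/F_x)$ over $\CC(v_{t+1})$ equals the oscillation of $\ln(r_y/r_x)$, and the only thing you control a priori is that this oscillation is bounded by $4\ln C_4$. That is a fixed bound, not a bound of the form $\rho\,\omega(t)$ with $\rho<1$; nothing in the factorization couples the new oscillation to the previous one, so no decay in $T$ follows. Applying Harnack at $q_t$ likewise only produces constants, not a gain proportional to $\omega(t)$.

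The paper's proof supplies the missing mechanism. Working with relative Green functions on the cones $\CC(\dot\g(-2nR_1-2))$, it introduces $\ov\th(n),\un\th(n)$ as the sup/inf of the ratio $k_\l(x,y,z;n)/k_\l(x,y,w;n)$ over $x,y$ in $\CC_{\pm1}(\dot\g(-2nR_1))$. The crucial step is to study not the ratio itself but the nonnegative quantity
\[
\frac{k_\l(x,y,z;n+1)}{k_\l(x,y,w;n+1)} - \un\th(n),
\]
expressed via the balayage decomposition (Proposition~\ref{Markov}) on the separating barrier $\pp\CC(\dot\g(-2nR_1))$, with the hitting densities controlled from above and below by Propositions~\ref{regularity} and~\ref{regularity2} and the kernel further normalized via~(\ref{AndersonSchoen2}). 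This renders the sup and inf of that difference comparable by a universal constant $C'=(C_8C_3C_0)^8$, and the symmetric argument for $\ov\th(n)-\frac{k_\l(x,y,z;n+1)}{k_\l(x,y,w;n+1)}$ gives the companion inequality. Adding the two yields the contraction $\ov\th(n)-\un\th(n)\le\big(\tfrac{C'-1}{C'+1}\big)^{n-1}(\ov\th(1)-\un\th(1))$. It is this pass through the balayage representation and the two-sided density estimate — comparing the differences with the running extremes rather than just the ratio — that produces the strict $\rho<1$ you correctly identified as the heart of the matter; without it, the argument stalls at a bounded oscillation.
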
 
\begin{proof} 
It suffices to prove the case $T = 2nR_1 $ for $n \in  \N$. For $v \in S\M$, denote $\CC _{\pm 1} (v) := \CC (\gg_{-1} v) \cap \CC (- \gg_1(v))$. Fix a geodesic $\g$ and points $z,w \in \CC (\dot \g (2R_1)).$  for $x,y \in \CC_{\pm 1} (\dot \g(-2n R_1)) ,$
denote \[ k_\l (\xx,\yy,\zz;n) = \frac {G_\l (\yy,\zz: \CC (\dot \g(-2nR_1 -2))) }{G_\l (\xx,\zz:\CC (\dot \g(-2nR_1 -2) ))}.\] 

The following numbers $ \ov \th (n), \un \th(n)  $ are well defined for $n \in \N$ since by \eqref{AndersonSchoen2}, they are between $\left(C_8^4\right)^{-1}$ and $C_8^4$, independently of $\l \in [0,\l_0],$ the geodesic  $\g$ and $z,w \in \CC (\dot \g (2R_1))$  :
	\[\ov \th(n) \; : = \; \underset{\xx,\yy \in  \CC_{\pm 1} (\dot \g ( -2nR_1))}{ \sup} \frac {k_\l (\xx,\yy, \zz ; n)}{k_\l (\xx,\yy, \ww ; n)} \quad \quad \un \th(n) \; : = \; \underset{\xx,\yy \in  \CC_{\pm 1} (\dot \g ( -2nR_1))}{ \inf}\frac {k_\l (\xx,\yy, \zz ; n)}{k_\l (\xx,\yy, \ww ; n)}.\]

Let $x,y \in \CC_{\pm 1} (-2(n+1)R_1)$. We apply Proposition~\ref{Markov} with $\DD = \M$ and the separating $A = \pp \CC (\dot \g (-2nR_1)).$ Denote $\varpi _\xx^\l , \varpi _\yy^\l $ the hitting distributions on $\pp \CC (\dot \g (-2nR_1)).$ Any continuous curve from $\xx$ or $\yy$ to $\zz$ or $\ww$ 
crosses $\pp \CC (\dot \g (-2nR_1))$, so that we have the following estimates. 
(For simplicity, we omit the domain $\CC (\dot \g(-2(n+1)R_1 -2))$ of the Green functions in the following paragraph.)
\begin{eqnarray*}
&   &  \frac {k_\l (\xx,\yy, \zz; n+1)} {k_\l (\xx,\yy, \ww; n+1 )} - \un \th (n) \; = \;  \frac{G_\l(\yy,\zz) G_\l(\xx,\ww) -\un \th(n) G_\l(\xx,\zz) G_\l(\yy,\ww) }{G_\l(\xx,\zz) G_\l(\yy,\ww) }\\ 
&=&\frac{\int_{a, b \in \pp \CC (\dot \g (-2nR_1))}  \left[ G_\l(a,\zz) G_\l(b,\ww) - \un \th (n) G_\l(b,\zz) G_\l(a,\ww) \right] \, d\varpi _\xx^\l (b) d \varpi _\yy^\l (a) }
{\int_{a, b \in \pp \CC (\dot \g (-2nR_1))}  G_\l(a,\ww) G_\l(b,\zz) \,  d\varpi _\xx^\l (b) d \varpi _\yy^\l (a)  }\\
 &\sim^{(C_3C_0)^4} & \frac{\int_{a, b \in \pp \CC (\dot \g (-2nR_1))}G_\l(\yy,a) G_\l(\xx,b)\left[ G_\l(a,\zz) G_\l(b,\ww) - \un \th (n) G_\l(b,\zz) G_\l(a,\ww) \right] \, da db }
{\int_{a, b \in \pp \CC (\dot \g (-2nR_1))}  G_\l(\yy,a) G_\l(\xx,b)G_\l(a,\ww) G_\l(b,\zz) \, da db },  \end{eqnarray*}
where we used Propositions~\ref{regularity}  and \ref{regularity2} to write the last line and $C_0$ comes from Proposition~\ref{Harnack}. This is possible since both functions \[ 
G_\l(a,\zz) G_\l(b,\ww) - \un \th (n) G_\l(b,\zz) G_\l(a,\ww)  \, {\textrm { and }} \, G_\l(a,\ww) G_\l(b,\zz) \] are positive harmonic in $a$ and in $b$ on a neighbourhood of size at least 1 of $\pp \CC (\dot \g (-2nR_1))$.
Using (\ref{AndersonSchoen2}) with the point $x_n : = \g(-(2n+1)R_1),$ we obtain 
\begin{eqnarray*} &  & \frac {k_\l (\xx,\yy, \zz;n+1)} {k_\l (\xx,\yy, \ww; n+1 )} - \un \th (n) \\ &\sim^{(C_8C_3C_0)^4} & \frac{\int_{a, b \in \pp \CC (\dot \g (-2nR_1))}G_\l(x_n,a) G_\l(x_n,b)\left[ G_\l(a,\zz) G_\l(b,\ww) - \un \th (n) G_\l(b,\zz) G_\l(a,\ww)  \right] \, da db }{\int_{a, b \in \pp \CC (\dot \g (-2nR_1))}G_\l(x_n,a) G_\l(x_n,b) G_\l(b,\zz) G_\l(a,\ww) \, dadb }.
 \end{eqnarray*}
  Since the last line above doesn't depend on $\xx$ and $\yy$, we have, setting $C' = (C_8C_3C_0)^8$,
\begin{eqnarray*} \ov  \th (n+1) - \un \th (n) & = & \sup \{ \frac {k_\l (\xx,\yy, \zz;(n+1))} {k_\l (\xx,\yy, \ww;(n+1) )} - \un \th (n) \} \\ &\leq & C'\inf  \{ \frac {k_\l (\xx,\yy, \zz;(n+1) ))} {k_\l (\xx,\yy, \ww;(n+1) )} - \un \th (n)\} \\ &=& C'\left( \un \th(n+1) - \un \th (n) \right). \end{eqnarray*}
Applying an analogous argument to the function   $\ov \th (n) - \frac {k_\l (\xx,\yy, \zz;(n+1))} {k_\l (\xx,\yy, \ww;(n+1) )}$, we get 
\[  \ov \th (n) - \underline \th (n+1) \leq C'\left(\ov \th (n) - \ov \th (n+1) \right). \]
Therefore,   by adding the two inequalities and multiplying the results, \[ \ov \th (n ) - \un \th (n ) \; \leq \left(\frac{C'-1}{C'+1} \right)^{n-1}   (\ov \th (1) - \un \th (1) )  \leq C_8^2\left(\frac{C'-1}{C'+1} \right)^{n-1}  .\] 
 Since both $k(\xx,\yy,\zz)$ and $k(\xx,\yy,\ww)$ are 1 for $\xx=\yy$, we have $ \un \th \leq 1\leq \ov \th $.  Since the difference $\ov \th (n)- \un \th(n)$ is small, they are both close to 1 and the ratio  is between 
$\log \un \th$ and $\log \ov \th$, which are of the same order as $\max\{ \ov \th -1, 1- \un \th \} \leq \ov \th - \un \th$.
Finally, we obtain constants $C   $ and $ K <1 $  such that, for all geodesic $\g$, all $\l \in [0,\l_0],$ all   $\xx,\yy \in  \CC_{\pm 1} (\dot \g (-2nR_1))$ and $ \zz,\ww \in \CC (\dot \g (2R_1))$ \begin{equation}\label{preunifstrong}
\Big| \log  \frac {k_\l (\xx,\yy, \zz; n)}{k_\l (\xx,\yy,\ww;n)} \Big|  \; \leq \; C K^n. \end{equation}
Consider now $\g, x,y, z,w, T$ in the statement  of Lemma~\ref{unifstrong}. Choose $N$ so that $2N R_1 \leq T < 2(N+1)R_1 .$ Setting $A = \pp \CC (\dot \g (-2NR_1))$ we can write, using (\ref{hitting})
\[  \frac {k_\l (x,y, z)}{k_\l (x,y, w )} = \frac{G_\l (y,z)G_\l (x,w)}{G_\l (x,z)G_\l (y,w)} = \frac{\int_{A\x A} G_\l (a,z)G_\l (b,w) \, d\varpi _y(a) d\varpi_x(b)}{\int_{A\x A} G_\l (b,z)G_\l (a,w) \, d\varpi _y(a) d\varpi_x(b)}.\]
Since $(a,b) \in A\x A \subset  \CC_{\pm 1} (\dot \g (-2NR_1))$ and $ \zz,\ww \in \CC (\dot \g (2R_1))$, Lemma~\ref{unifstrong} follows from (\ref{preunifstrong}).
\end{proof}

In the rest of this section, we use lemma \ref{unifstrong} to obtain the properties from Propositions  \ref{Naim}, \ref{HolderNonu},   \ref{Martinboundary} and \ref{measureNonu} at $\l_0$ and that the corresponding objects depend continuously  on $\l $ as $\l \to \l_0.$ 

\begin{prop}\label{demiMartin} (1) Let $\xi \in \pp \M, x,y \in \M$ and $ \l \leq \l_0 $. The following limit exists and defines a positive $\l$-harmonic function in $y$ $$ k_\l (x,y, \xi) \; = \; \lim _{z \to \xi} k_\l (x,y,z),$$
 which we call the $\l$-Martin kernel.

(2) Fix $x,y \in \M.$ There exist $\a$ and $C = C(\max \{d(x,y),1\} )>0$ such that for any $\l \in [0,\l_0] $,
\[  \Big| \log \frac{k_\l (x,y,\xi )}{k_\l (x,y , \eta)} \Big| \; \leq\; C (d_x(\xi , \eta))^\a ,\] where $d_x$ is the Gromov metric on $\pp \M$. Moreover, for $\a' < \a$, the function $\l \mapsto k_\l (x,y, \xi) $ is continuous from $[0,\l_0] $ into the space  of $\a'$-H\"older continuous functions on $\pp \M$. \end{prop}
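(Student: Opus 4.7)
The plan is to derive both parts from the uniform exponential contraction in Lemma~\ref{unifstrong}, supplemented with Harnack's inequality (Proposition~\ref{Harnack}) and monotone convergence for $G_\l$ as $\l \uparrow \l_0$.

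\emph{Existence of the boundary limit and harmonicity.} Fix $\xi \in \pp \wt M$ and a unit-speed geodesic $\g$ with $\g(+\infty) = \xi$, parametrized so that $\g(0)$ lies within bounded distance of $x$. For any two points $z_n, z_m$ from a sequence tending to $\xi$, both eventually lie in the forward cone $\CC(\dot\g(S))$ for arbitrarily large $S$, and the reparametrization $s \mapsto \g(s + S - 2R_1)$ puts us in the setting of Lemma~\ref{unifstrong}. The hypothesis that $x,y$ sit outside the deep past cone $\CC(\dot\g(-2R_1 - T))$ is automatic once $T$ exceeds some multiple of $\max\{d(x,y),1\}$, and we obtain
\begin{equation*}
\left| \ln \frac{k_\l(x, y, z_n)}{k_\l(x, y, z_m)} \right| \leq C K^{T},
\end{equation*}
uniformly in $\l \in [0, \l_0]$, with $T \to \infty$ as $\min(n,m) \to \infty$. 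Thus the sequence $\{k_\l(x, y, z_n)\}$ is Cauchy, and the limit $k_\l(x, y, \xi)$ is independent of the approximating sequence. By Proposition~\ref{Harnack} applied to the positive $(\Delta + \l)$-harmonic functions $y \mapsto k_\l(x, y, z)$, together with the usual compactness argument for positive harmonic functions, the limit $y \mapsto k_\l(x, y, \xi)$ is a positive $(\Delta + \l)$-harmonic function on all of $\wt M$.

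\emph{H\"older bound in $\xi$.} For $\xi, \eta \in \pp \wt M$, set $r := (\xi|\eta)_x$, so $d_x(\xi, \eta) = e^{-ar}$. The rays $\g_{x,\xi}$ and $\g_{x,\eta}$ stay within bounded distance up to time $r$, so there is a unit-speed geodesic $\g$ passing near $\g_{x,\xi}(r)$ such that sequences $z_n \to \xi$ and $w_n \to \eta$ eventually land in $\CC(\dot\g(2R_1))$, while for $T := r - C_0 \max\{d(x,y), 1\}$ (with $C_0$ depending only on the curvature bounds) we have $x, y \notin \CC(\dot\g(-2R_1 - T))$. Lemma~\ref{unifstrong} gives
\begin{equation*}
\left| \ln \frac{k_\l(x, y, z_n)}{k_\l(x, y, w_n)} \right| \leq C K^{T},
\end{equation*}
and passing to the limit with $\a := -\ln K / a > 0$ yields
\begin{equation*}
\left| \ln \frac{k_\l(x, y, \xi)}{k_\l(x, y, \eta)} \right| \leq C(\max\{d(x,y), 1\})\, d_x(\xi, \eta)^\a,
\end{equation*}
uniformly in $\l \in [0, \l_0]$.

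\emph{Continuity in $\l$.} For fixed $y \neq z$ in $\wt M$, monotone convergence in $G_\l(y, z) = \int_0^\infty e^{\l t} \Pp(t, y, z)\, dt$ combined with Guivarc'h's finiteness $G_{\l_0}(y, z) < \infty$ shows that $\l \mapsto G_\l(y, z)$ is continuous on $[0, \l_0]$, hence so is $\l \mapsto k_\l(x, y, z)$. Passing to $z \to \xi$ using the uniformity in $\l$ of the Cauchy bound from the first step, we obtain that $\l \mapsto k_\l(x, y, \xi)$ is continuous pointwise in $\xi$ on $[0, \l_0]$. Combining pointwise continuity with the uniform $\a$-H\"older bound and the interpolation inequality $\|f\|_{C^\b} \leq C \|f\|_{C^0}^{1 - \b/\a} \|f\|_{C^\a}^{\b/\a}$ applied to $\ln k_\l(x, y, \cdot) - \ln k_{\l'}(x, y, \cdot)$ gives continuity of $\l \mapsto k_\l(x, y, \cdot)$ into $C^\b(\pp \wt M)$ for every $\b < \a$.

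The chief technical hurdle is the geometric setup of the second step: the auxiliary geodesic $\g$ must be arranged so that both $x$ and $y$ lie outside a deep past cone, so that the additive loss $C_0 \max\{d(x,y), 1\}$ in $T$ gets absorbed into the prefactor $C$ rather than into the exponent, keeping $\a$ and the decay rate uniform in $x, y$ and in $\l \in [0, \l_0]$.
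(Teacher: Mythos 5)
Your proposal is correct and follows essentially the same route as the paper: both parts rest on the uniform contraction in Lemma~\ref{unifstrong}, with the H\"older exponent extracted from the Gromov product and the continuity in $\l$ obtained by combining the uniform H\"older bound with pointwise convergence of $k_\l(x,y,z)$ (via $G_\l \uparrow G_{\l_0}$) and then interpolating into $C^\b$. You merely spell out a few steps the paper leaves implicit (the Harnack/compactness argument for harmonicity of the limit, and the Arzel\`a--Ascoli/interpolation step from pointwise to $C^\b$ continuity), but the substance and the key lemma are identical.
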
 
\begin{proof} (1) It suffices to show it for a fixed $x = x_0$ and a sequence $z_n  \to \xi .$
 Let $\g$ be the geodesic going from $x_0$ to $\xi $. There is $T$ such that $x_0, y \not \in \CC(\dot \g(T-2R_1))$. As $n \to \infty $,  $z_n  \in \CC(\dot \g(T_n +2R_1)),$ with $T_n \to \infty .$ By Lemma~\ref{unifstrong}, the sequence $k_\l (x_0, y, z_n) $ converges.
 
 (2) Let $\g $ be the geodesic such that $\g (0) =x, \g(+\infty ) = \xi$. There is $\d_0 $ depending only on the curvature bound such that if  the Gromov distance  $d_x(\xi, \eta )$ is smaller than $ \d_0, $ and  $T \leq - C \log d_x(\xi , \eta),$ then $\xi  , \eta $ lie in the closure of $ \CC (\dot \g (T)).$\footnote{By negative curvature, the function $\a:\R \to (0,\pi), \a (t) := \angle_{\g (t)} (\xi, \eta)$ is increasing. There is $T_0$ such that $\a(T_0) = \pi /2.$ By comparison with the space of constant curvature $-a^2$, \[T_0 \geq -a \log \tan \angle_x(\xi, \eta) \sim - \log d_x(\xi, \eta).\] } We choose $\d=\d(x,y) <\d_0$ small enough so that one can choose $T > \max \{d(x,y),1\} + 4R_1.$ Then, Lemma~\ref{unifstrong} applies to the limits $k_\l(x,y, \xi ) $ and $k_\l (x,y, \eta) $ so that for $\eta, \xi $ with $d_x(\eta, \xi)<\d$, \[ \Big| \log  \frac {k_\l (x,y, \xi)}{k_\l (x,y, \eta)} \Big|  \; \leq \; CK^{-d(x,y)}  K^{-C\log d_x(\xi, \eta)} \; = \; C(x,y) (d_x (\xi , \eta))^ \a ,  \] 
where $\a = -C \log K>0 $. 
For $\eta, \xi$ with $d(\eta, \xi) > \d$, the estimate follows from Harnack inequality \ref{Harnack}.

 As $\l$ varies, by Lemma~\ref{unifstrong}, the functions $k_\l (x,y,z) $ are uniformly $\a$-H\"older continuous on a neighborhood of $\xi $ in $\M \cup \pp \M$ and depend continuously on $\l \leq \l_0$. The $\a'$-H\"older continuity in $\l$  follows for any $\a' <\a$.\end{proof}

Recall from (\ref{eq:Naim}) that $\th_x^\l (y , z) \; :=  \frac{ G_\l (y,z) }{G_\l (y,x) G_\l (x, z) } $ for $x,y,z \in \M$, $\l \leq \l_0$.  

\begin{prop}\label{Naim2} Fix $x \in \M, \xi \not = \eta \in \pp \M$, $\l \in [0,\l_0].$ As $y \to \xi , z\to \eta$, the following limit exists and defines 
 the Na\"im kernel $\th_x^\l (\xi, \eta) $:
 \[ \th_x^\l (\xi , \eta) \; := \lim _{y \to \xi, z \to \eta} \th_x^\l (y , z) = \lim _{y \to \xi, z \to \eta}  \frac{ G_\l (y,z) }{G_\l (y,x) G_\l (x, z) }. \]
 The limit is uniform in $\l$ on the set of triples $(x,\xi, \eta)$  with $d_x(\xi, \eta) $ bounded away from 0. 
Set, for $v \in SM,$ $\th _{\l _0}(v)  : = \th _{\g_{\wt v} (0)}^{\l_0}  (\wt v^-, \wt v^+) $  as $\theta_\l$ in (\ref{thetalambda}). 
 Then there is $\a' $ such that the mapping $\l \mapsto \th _\l $ is continuous from $[0,\l_0]$ to the space of $\a' $-H\"older continuous functions on $SM.$ \end{prop}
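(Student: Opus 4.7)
The plan is to reduce the existence of $\theta_x^\lambda(\xi,\eta)$ to the Martin kernel convergence of Proposition~\ref{demiMartin} by fixing a reference point on the geodesic $(\xi,\eta)$. Since $d_x(\xi,\eta)$ is bounded below by some $\delta>0$, we may pick $w_0$ on this geodesic with $d(x,w_0)\leq C(\delta)$. Write
\[ \theta_x^\lambda(y,z)\;=\;Q^\lambda(y,z)\cdot\frac{G_\lambda(y,w_0)}{G_\lambda(y,x)}\cdot\frac{G_\lambda(w_0,z)}{G_\lambda(x,z)},\qquad Q^\lambda(y,z)\;:=\;\frac{G_\lambda(y,z)}{G_\lambda(y,w_0)\,G_\lambda(w_0,z)}. \]
By Proposition~\ref{demiMartin}(1) applied symmetrically (using $G_\lambda(y,w_0)=G_\lambda(w_0,y)$), the last two factors converge to $k_\lambda(x,w_0,\xi)$ and $k_\lambda(x,w_0,\eta)$ as $y\to\xi$ and $z\to\eta$ respectively, at exponential rates uniform in $\lambda\in[0,\lambda_0]$ thanks to Lemma~\ref{unifstrong}.

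The crux is that $Q^\lambda(y,z)=k_\lambda(w_0,y,z)/G_\lambda(y,w_0)$. For fixed $y$ near $\xi$, apply Lemma~\ref{unifstrong} to a geodesic pointing toward $\eta$, parameterized so that both $w_0$ and $y$ lie in its back half (possible because $y$ is close to $\xi\neq\eta$ and $w_0$ is bounded). This gives the exponentially fast limit
\[ \lim_{z\to\eta}Q^\lambda(y,z)\;=\;R^\lambda(y)\;:=\;\frac{k_\lambda(w_0,y,\eta)}{G_\lambda(y,w_0)}, \]
uniformly in $y$ in a fixed cone around $\xi$ and in $\lambda\in[0,\lambda_0]$, with rate $CK^{T(z)}$ where $T(z)$ is the depth of $z$ in the cone toward $\eta$ and $K<1$. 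Symmetrically $\lim_{y\to\xi}Q^\lambda(y,z)=S^\lambda(z):=k_\lambda(w_0,z,\xi)/G_\lambda(w_0,z)$ with rate $CK^{T(y)}$.

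To glue these one-sided limits, fix an intermediate $z^\ast$ deep toward $\eta$. The two exponential estimates combined via triangle inequality give
\[ \bigl|\ln R^\lambda(y)-\ln S^\lambda(z^\ast)\bigr|\;\leq\;CK^{T(z^\ast)}+CK^{T(y)}\qquad\text{for $y$ deep toward $\xi$}, \]
so $R^\lambda(y)$ is Cauchy as $y\to\xi$ (uniformly in $\lambda$) and its limit equals $\lim_{z^\ast\to\eta}S^\lambda(z^\ast)$. Denote this common value $Q^\lambda(\xi,\eta)$; then $Q^\lambda(y,z)\to Q^\lambda(\xi,\eta)$ jointly, uniformly on triples with $d_x(\xi,\eta)\geq\delta$ and in $\lambda\in[0,\lambda_0]$. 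Setting $\theta_x^\lambda(\xi,\eta):=Q^\lambda(\xi,\eta)\,k_\lambda(x,w_0,\xi)\,k_\lambda(x,w_0,\eta)$ establishes the uniform existence claim.

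For H\"older regularity and $\lambda$-continuity: on $SM$ the endpoints $\tilde v^\pm$ are uniformly separated at the basepoint, so $w_0$ depends H\"older-continuously on $v$. The Martin kernel factors are H\"older in $\xi,\eta$ and continuous in $\lambda$ by Proposition~\ref{demiMartin}(2); the same type of Ancona-G\"ouezel argument (applied to pairs of nearby endpoints) combined with $\lambda$-continuity of the Green functions (analytic on $[0,\lambda_0)$ as a resolvent, continuous at $\lambda_0$ by monotone convergence and Guivarc'h's finiteness) gives the analogous statements for $Q^\lambda$. Hence $\lambda\mapsto\theta_\lambda$ is continuous from $[0,\lambda_0]$ into $C^\beta(SM)$ for $\beta<\alpha$. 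The main obstacle is the Cauchy step: Ancona-G\"ouezel alone only bounds $Q^\lambda(y,z)$ between two positive constants but does not produce a limit; extracting a genuine joint limit requires the exponentially fast sharpening of Lemma~\ref{unifstrong}, applied in both the $y$ and $z$ directions and then glued by the estimate above.
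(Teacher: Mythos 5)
Your proof is correct and rests on the same engine as the paper's: the uniform exponential estimate of Lemma~\ref{unifstrong} applied separately in the $y$-variable and the $z$-variable, which is exactly what underlies the paper's compressed claim that $\theta_x^\lambda(y,z)$ is "uniformly $\alpha$-H\"older continuous in $y$ and in $z$" (in the Gromov metric $d_x$, uniformly in $\lambda$) and from which the joint limit and $\lambda$-continuity then follow. Your extra step of factoring $\theta_x^\lambda(y,z)$ through a point $w_0\in[\xi,\eta]$, peeling off two Martin-kernel cocycle factors and running a two-sided Cauchy argument on the core $Q^\lambda=\theta_{w_0}^\lambda$, is a somewhat more explicit packaging of the same idea rather than a genuinely different route; the paper simply works with $x$ directly and leaves the gluing implicit, while your $w_0$ makes the cone positioning in Lemma~\ref{unifstrong} slightly cleaner but buys nothing essential.
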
 
 \begin{proof} Let us give a proof which is uniform for $\l$ up to $\l_0$. Observe  that, by (\ref{AndersonSchoen2}),  for $d_x (y,z) :=e^{-a(d(x,z)+d(x,y) - d(x,z))}$ bounded away from 0, the functions $\th_x^\l (y , z) $ are uniformly bounded. As before, by (\ref{unifstrong}), the functions $y,z \mapsto  \th_x^\l (y , z) $  are uniformly $\a$-H\"older continuous in $y$ and in $z$ as long as $d_x(y,z) $ remains bounded away from 0  and $ \th_x^\l (y , z)  \to  \th_x^{\l_0} (y , z) $ as $\l \to \l_0.$ The convergence and the continuity follow. Observe also that the function $\th _{\g_{\wt v} (0)}^\l  (\wt v^- , \wt v ^+) $ is $\G $-invariant and so $\th _\l $ is indeed a function on $SM$. Since $d_{\g_{\wt v} (0)}  (\wt v^- , \wt v^+ ) =1$, the mapping $\l \mapsto \th _{\g_{\wt v} (0)}^\l  (\wt v^- , \wt v^+ ) $ is continuous from $[0,\l_0 ] $ to the space of $\a''$-H\"older continuous functions  on $S\M$ endowed with the metric coming from the identification with $\pp \M \x \pp \M \x \R$ for some $\a'' < \a$. This identification being itself H\"older continuous (\cite{AnS} Proposition 2.1), the last statement of Proposition~\ref{Naim2} follows. 
\end{proof}
 For $v \in SM$, $x \in \M$ , $\xi, \eta \in \pp \M$, we set 
 \begin{equation}\label{theta} \th (v) \; := \; \th _{\l_0} (v), \;\;\; \th_x (\xi, \eta) : = \th^{\l_0}_x(\xi, \eta) . \end{equation} 

Fix $x,z \in \M , d(x,z) \geq 1$ and $\xi \in \pp \M$. The functions $y \mapsto k_\l (x,y, z )$ and $ y \mapsto k_\l (x,y, \xi) $ are $\l$-harmonic in $y$ in a neighborhood of $x$. Let $v \in S_x\M$. The directional derivative $\pp _v k_\l (x,., z) $ exists. Since $k_\l(x,y,z)$ is a $\l$-harmonic function of $y$ away from $z$, by Proposition \ref{Harnack},  $|\pp _v \log k_\l (x, y, z)|_{y=x}| \leq \log C_0 $ where the constant $\log C_0$ does not depend on $\l \in [0,\l_0]$.  Following   \cite {H1} Lemma 3.2, we have:

\begin{prop}\label{derivativeHolder}  For fixed $x \in \M$ and $\wt v\in S_{x}\M$, the mapping $\xi  \mapsto \pp _{\wt v} k_\l (x, y , \xi ) |_{y=x}$ is  $\a $-H\"older continuous, uniformly in $\l \in [0,\l_0]$ and $\wt v \in S_x\M$.  
Let us define 
\[ \varphi _\l (v) := -2 \pp _{\wt v } \log k_\l (\g _{\wt v}(0), \cdot , \g_{\wt v} (+\infty)) \; =\;  -2 \lim _{\e \to 0 } \frac{1}{\e}\log k_\l (\g _{\wt v}(0), \g_{\wt v} (\e), \g_{\wt v} (+\infty)) ,\]
where ${\wt v }$ is a lift of $v \in SM.$
Then
there is $\a' >0$ such that the  function $\l \mapsto \varphi _\l  $ 
is continuous from $[0,\l_0] $ to the space of $\a'$-H\"older continuous functions on $SM.$  \end{prop}

\begin{proof} 
Let
$x \in \M, v \in S_x\M.$ For $\e >0$, set $x_\e := \g_v (\e)$. Then, for $\xi \in \M,$
\[  \pp _v k_\l (x,., \xi)  +2\log C_0 \; = \;  \lim _{\e \to 0 } \lim_{z \to \xi }  \frac{\e^{-1} (G_\l(x_\e,z ) - G_\l (x,z)) +  2(\log C_0) G_\l (x, z) }{ G_\l (x,z)}. \]
Let $\g$ be the geodesic with $\g(0) = x, \g(+ \infty ) = \xi $. For $T>3,$ a point $z \in \CC (\dot \g (T)) ,$ and  $\e <1$, we write,  using (\ref{Poissonbarrier}) and Proposition~\ref{barrier} for $S:= \pp B(x,2 ) $ and $B(x,2) \subset \M \setminus \CC (\dot \g (3)) ,$
\begin{eqnarray*} &  &\frac{G_\l(x_\e,z ) - G_\l (x,z)}{\e}  +  2 (\log C_0) G_\l (x, z)\\ & &\quad \quad \quad  = \int_{S} \left(\int _{\pp \CC (\dot \g (3) )} G_\l(a,z) \, d\varpi ^\l_s(a)\right)\left[\frac{\rho ^\l_{x_\e} (s) - \rho^\l_x(s)}{\e}  +  2 (\log C_0) \rho^\l_x(s)\right] \, \, ds,\end{eqnarray*}  
where $\rho_x^\l$ is the density of the hitting measure with respect to the Lebesgue measure (see Proposition \ref{prop:8.8})
By (\ref{Harnackdensity}), the expression $\displaystyle \frac{\rho ^\l_{x_\e}(s) - \rho^\l _x(s)}{\e}  + 2 (\log C_0) \rho^\l _x( s) $ is nonnegative and  at most $4 (\log C_0) \rho^\l _x( s)$ if $\e$ is small enough. Moreover, by (\ref{AndersonSchoen2}), if $z \in \CC(\dot \g(2R_1 +3)),$ $ k_\l (x,a, z) \leq C_8^2 k_\l (x, a, \g ( R_1 +3)).$ Consider  $\eta $ close to $\xi$ in $\pp \M$. In the formula  
 \begin{eqnarray*} &  & \pp _v k_\l (x,., \eta)  + 2 \log C_0 \\ & &\quad  = \lim _{\e \to 0 } \lim_{z \to \eta } \int_{S} \left(\int _{\pp \CC (\dot \g (3) )} k_\l(x,a,z) \, d\varpi ^\l_s(a)\right)\left[\frac{\rho ^\l_{x_\e} (s) - \rho^\l_x(s)}{\e}  + 2(\log  C_0) \rho^\l_x(s)\right] \, \, ds ,\end{eqnarray*}
 the integrand  is at most $4 (\log C_0) C_8^2 k_\l (x, a ,\g(R_1+3) ) \rho ^\l _x(s) $ for all $\e$ small and all $z \in \CC(\dot \g(2R_1 +3)). $ Since \[ \int_{S} \left(\int _{\pp \CC (\dot \g (3) )} k_\l(x,a,\g(R_3+1)) \, d\varpi ^\l_s(a)\right)\rho^\l_x(s) \, ds  \; = \;  k_\l(x,x,\g(R_3+1))  \; = \; 1, \]  we may exchange the limits and the integrals. Set 
 \[ F(x,v,s ) \; := \;  \lim _{\e \to 0} \frac{\rho ^\l_{x_\e}(s) - \rho^\l _x(s)}{\e}  + 2 (\log C_0) \rho^\l _x( s) \; = \; \pp _v \rho_{x'}^\l (s)|_{x'=x} +  2(\log C_0) \rho^\l_x (s). \] 
 There is $\theta _0 $ such that, if $d_x (\xi, \eta ) \leq \theta _0 ,$ then $\eta  \in \ov {\CC(\dot \g(4R_1 +3))} \cap \pp \M  $ and we can find $z_n \to \eta$ with all $z_n \in \CC(\dot \g(2R_1 +3)). $ This gives, for $d_x (\xi, \eta ) \leq \theta _0 ,$
\[ \pp _v k_\l (x,., \eta) +  2 \log C_0 \; = \; \int_S \left(\int _{\pp \CC (\dot \g (3) )}  k_\l (x,a, \eta)  \, d\varpi ^\l_s(a)\right) F(x,v,s) \, ds. \]
 It follows from Lemma~\ref{unifstrong} and (\ref{AndersonSchoen2}) that for $\xi , \eta \in  \pp \M , d_x (\xi, \eta ) \leq \theta _0 ,$
 \begin{equation}\label{almost3.12} \frac { \pp _v k_\l (x,., \eta ) + 2 \log C_0 } { \pp _v k_\l (x,., \xi ) +  2 \log C_0 } \; \leq \; e^{C d_x(\xi, \eta)^\a}. \end{equation}
Assume $\pp _v k_\l (x,., \xi )  \leq \pp _v k_\l (x,., \eta ) $ and recall that $| \pp _v k_\l (x,., .) | \leq \log C_0 $. For $d_x(\xi, \eta) $ small enough, it follows from (\ref{almost3.12}) that $\pp _v k_\l (x,., \eta )  -\pp _v k_\l (x,., \xi )  \leq 3C (\log C_0) (d_x (\xi, \eta))^\a.$ The Proposition follows.  
\end{proof}

\begin{cor}\label{Pnonpositive} The pressure $P(\l_0) := P(\varphi_{\l_0})$ of the function $\vf _{\l _0}$ is non-positive. \end{cor} 
Indeed we know by Corollary~\ref{Pnegative} that the pressure of the function $\vf _\l $ is negative, and by Proposition~\ref{derivativeHolder} that the mapping $\l \mapsto \vf _\l $ is continuous at $\l_0$. 

\begin{cor}\label{cor:3.10} The measures $\mu_\l$ and the normalising constants $\Om_\l, \Upsilon_\l$ are continuous functions of $\l$ as $\l \to \l_0$ in $[0, \l_0].$
\end{cor}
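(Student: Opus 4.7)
The plan is to deduce continuity of $\mu_\l$, $\Om_\l$, and $\Upsilon_\l$ at $\l_0$ from the continuity statements already established for $\vf_\l$, $k_\l$, and $\th_\l$, combined with standard thermodynamic formalism and the uniqueness of the Patterson--Sullivan family.

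First, I would establish continuity of the pressure $P(\l)$ and the equilibrium state $m_\l$ at $\l_0$. By Proposition \ref{derivativeHolder}, $\l \mapsto \vf_\l = -2\Phi_\l$ is continuous from $[0,\l_0]$ into $C^\b(SM)$ for some $\b > 0$. Standard thermodynamic formalism for topologically mixing Anosov flows with H\"older potentials (Bowen--Ruelle, see \cite{K1}, \cite{PPS}) asserts that the pressure functional is real-analytic, hence continuous, in the H\"older topology and that the map sending a H\"older potential to its unique equilibrium state is continuous in the weak-$\ast$ topology. This yields $P(\l) \to P(\l_0)$ and $m_\l \to m_{\l_0}$ weakly as $\l \to \l_0$.

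Next, I would show weak-$\ast$ continuity of $\mu^\l_x$ in $\l$. By Corollary \ref{boundonmeasures}, whose proof extends up to $\l_0$ using the H\"older regularity of $\vf_{\l_0}$ and the uniform Harnack bound on $\vf_\l$, the total masses $\mu^\l_x(\pp \M)$ are uniformly bounded on $[0,\l_0]$. Thus $\{\mu^\l_x\}$ is relatively weak-$\ast$ compact on the compact set $\pp \M$. Let $\mu^*_x$ be any subsequential limit along $\l_n \to \l_0$. The uniform convergence $k_{\l_n} \to k_{\l_0}$ in H\"older norm (Proposition \ref{demiMartin}) together with $P(\l_n) \to P(\l_0)$ ensures that the cocycles
\[ \frac{d\mu^{\l_n}_y}{d\mu^{\l_n}_x}(\xi) \; = \; k^2_{\l_n}(x,y,\xi)\, e^{-P(\l_n)\b(x,y,\xi)} \]
converge uniformly to the corresponding $\l_0$-cocycle, and the normalization $\int_{M_0}\mu^\l_y(\pp \M)\,d\Vol(y) = 1$ passes to the limit. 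By the uniqueness clause of Proposition \ref{measureNonu}---which extends to $\l_0$ because $\vf_{\l_0}$ is H\"older and therefore admits a unique equilibrium state---any such limit must coincide with $\mu^{\l_0}_x$. Therefore the full family converges, $\mu^\l_x \to \mu^{\l_0}_x$ weakly.

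Finally, the normalizing constants are continuous by the defining integral formulas. From (\ref{eqn:Hopf}), the probability condition $m_\l(SM)=1$ reads
\[ \Om_\l^{-1} \; = \; \int_F \th^2_x(\xi,\eta)\, e^{2P(\l)(\xi|\eta)_x} \, d\mu^\l_x(\xi)\, d\mu^\l_x(\eta)\, dt, \]
for $F$ a fundamental domain in $S\M$; each factor in the integrand depends continuously on $\l$ (by Proposition \ref{Naim2} and the continuity of $P(\l)$ and $\mu^\l_x$ established above), so the right-hand side is continuous and $\Om_\l$ is continuous. An identical argument using $\tau^\l_x(S^2M) = 1$ and the continuity of $\th_\l$ and $\mu^\l_x$ yields continuity of $\Upsilon_\l$. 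The main obstacle lies in the second step: extending the uniqueness statement of Proposition \ref{measureNonu} to $\l_0$, which rests on the H\"older regularity of $\vf_{\l_0}$ delivered by Proposition \ref{derivativeHolder}, itself a consequence of the new uniform boundary Harnack inequality (Theorem \ref{Alano}).
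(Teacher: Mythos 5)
Your argument is correct and fills in, with standard compactness-plus-uniqueness reasoning, exactly the gap the paper leaves implicit behind ``Corollary follows.'' The ingredients you cite (Proposition~\ref{derivativeHolder} for $\vf_\l$, Proposition~\ref{demiMartin} for $k_\l$, Proposition~\ref{Naim2} for $\th_\l$, the normalization in Proposition~\ref{measureNonu}, and the defining relations (\ref{eqn:Hopf}) and (\ref{Upsilon}) for $\Om_\l$ and $\Upsilon_\l$) are precisely those the paper invokes, so this is the same route with the missing details supplied rather than a genuinely different proof.
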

\begin{proof} Indeed, the measures $\mu_{\l_0}$ satisfy the conditions in Proposition~\ref{measureNonu} and $\Om_{\l_0}$ satisfies the expression (\ref{eqn:Hopf}). Since  the functions involved are continuous by  Proposition~\ref{demiMartin} and Proposition~\ref{derivativeHolder},  Corollary~\ref{cor:3.10} follows. The argument is the same for $\Upsilon_\l$.
\end{proof}

We can now prove Theorem~\ref{expdecay} giving the exponential decay of $G_{\l_0} (x,y)$ with the distance. More precisely, we have:
\begin{prop}\label{prop:expdecay} 
Let $\tau _0 := \sup \{ \int \varphi _{\l_0} \, dm \}$, where the supremum is taken over all  $\bg$-invariant probability measures. Then, $\tau_0 <0$ and
 \[ \lim _{R \to \infty } \frac{1}{R}\log  \max \{ G_{\l_0} (x,y): d(x,y) = R \} \; = \frac{ \tau_0}{2}. \] \end{prop}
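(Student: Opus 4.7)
My plan is to reduce the exponential-decay claim to a Birkhoff integral computation along geodesics and then apply the variational principle. The key preliminary identity is that, uniformly in $R$ and in the unit-speed geodesic $\g$ with $\g(0) = x$,
\[
-\ln G_{\l_0}(x, \g(R)) \;=\; \int_0^R \Phi_{\l_0}(\dot\g(t))\,dt \;+\; O(1).
\]
To establish this, set $y := \g(R)$ and $\xi := \g(+\infty)$; for $R' \gg R$ apply Ancona-Gou\"ezel (Theorem~\ref{Alano}) to the aligned triple $(x, y, \g(R'))$ to get $G_{\l_0}(x, \g(R')) \sim^{C_4} G_{\l_0}(x, y)\, G_{\l_0}(y, \g(R'))$. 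Passing to $R' \to \infty$ in the definition of the Martin kernel (Theorem~\ref{martin}) yields $k_{\l_0}(x, y, \xi) \sim^{C_4} 1/G_{\l_0}(x, y)$, so $-\ln G_{\l_0}(x,y) = \ln k_{\l_0}(x, y, \xi) + O(1)$ with error bounded by $\ln C_4$. The Martin cocycle $k_{\l_0}(x, z, \xi) = k_{\l_0}(x, y, \xi)\, k_{\l_0}(y, z, \xi)$ applied with $y = \g(t), z = \g(t+s)$ and differentiated at $s = 0$ gives $\tfrac{d}{dt}\ln k_{\l_0}(x, \g(t), \xi) = \Phi_{\l_0}(\dot\g(t))$, so integration from $0$ to $R$ (using $k_{\l_0}(x, x, \xi) = 1$) produces the identity.

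Next, maximizing $G_{\l_0}(x, \cdot)$ over $S(x, R)$ amounts to minimizing the Birkhoff sum $\tfrac{1}{R}\int_0^R \Phi_{\l_0}(g_t v)\,dt$ over $v \in S_x\wt M$. Since $\Phi_{\l_0}$ is $\G$-invariant and continuous on $SM$ (Proposition~\ref{derivativeHolder}), this descends to a minimization on the compact space $SM$, and a classical variational argument gives
\[
\lim_{R \to \infty}\,\min_{v \in SM} \tfrac{1}{R}\!\int_0^R \Phi_{\l_0}(g_t v)\,dt \;=\; \inf_m \int \Phi_{\l_0}\,dm \;=\; \tau_0,
\]
with infimum over $g$-invariant probability measures on $SM$. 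The upper bound follows by applying Birkhoff's theorem to ergodic components of any invariant $m$; the lower bound by extracting a weak-$*$ cluster point of the empirical measures $\tfrac{1}{R}\!\int_0^R \delta_{g_t v_R}\,dt$ along a near-minimizing sequence $v_R$. (The minima over $S_x\wt M$ and over all of $SM$ have the same limit, by topological transitivity of the geodesic flow.) Combined with the preliminary identity, this proves the limit formula.

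It remains to show $\tau_0 > 0$. From $P(\l_0) \leq 0$ (Corollary~\ref{Pnonpositive}) one has $h_m(g) \leq 2\int \Phi_{\l_0}\,dm$ for every invariant $m$, so $\tau_0 \geq 0$. The dichotomy is: either $\vf_{\l_0} = -2\Phi_{\l_0}$ is cohomologous to a constant $c$, whence $c = P(\l_0) - h_{\mathrm{top}}(g) < 0$ because the geodesic flow has strictly positive topological entropy, giving $\tau_0 = -c/2 > 0$; or $\vf_{\l_0}$ is not cohomologous to a constant, in which case strict positivity requires excluding invariant measures (in particular periodic orbit measures) on which $\Phi_{\l_0}$ averages to zero. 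This last alternative is the main obstacle: the strict positivity does not follow from Steps 1-2 alone, and its resolution calls on the paper's finer estimates (Ancona-Gou\"ezel together with the rapid-mixing machinery of the appendix), ultimately encoding the non-amenability of $\G$ and the strict growth of the minimal positive $(-\l_0)$-eigenfunction along all geodesic rays.
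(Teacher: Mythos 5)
Your proof of the limit formula is correct in outline and takes a genuinely different route from the paper's. The key move is the identity $-\ln G_{\l_0}(x,\g(R)) = \int_0^R\Phi_{\l_0}(\dot\g(t))\,dt + O(1)$, which follows cleanly from Ancona--Gou\"ezel and the Martin cocycle; with it, both the upper and lower bounds reduce to the ergodic-optimization fact that the extremal time average of a continuous function on a compact flow equals the extremal space average over invariant measures. The paper instead proves the two bounds by separate Green-function arguments: the lower bound via the uniform estimate $e^{-RP_T(\l_0)}\int_{S(x,R)}G_{\l_0}^T\,dz \leq C(T)$ of Corollary~\ref{powerT} (letting $T\to\infty$ so that $-P_T(\l_0)/T\to\tau_0$), and the upper bound via a specific closed geodesic $\e$-realizing $\tau_0$, along which the Martin-kernel bound is propagated using Lemma~\ref{unifstrong}. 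Your version makes the ergodic-optimization structure explicit and is arguably cleaner. Note also that since the base point $x$ is allowed to vary in the max, the initial vector $v$ already ranges over all of $SM$, so the parenthetical appeal to topological transitivity is unnecessary.

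The genuine gap is the claim $\tau_0 > 0$: you correctly deduce $\tau_0\geq 0$ from $P(\vf_{\l_0})\leq 0$ (Corollary~\ref{Pnonpositive}), but in the non-coboundary branch of your dichotomy you stop and assert that strict positivity must rest on Ancona--Gou\"ezel together with the rapid-mixing machinery. That is not where the argument lives; the paper closes it with elementary thermodynamic formalism. Since the simplex of $g$-invariant probability measures on $SM$ is weak-$*$ compact and $m\mapsto\int\Phi_{\l_0}\,dm$ is continuous, the infimum $\tau_0$ is attained by some $m_1$. If $\tau_0=0$ then $\int\vf_{\l_0}\,dm_1 = -2\tau_0 = 0$, hence $h_{m_1} = h_{m_1} + \int\vf_{\l_0}\,dm_1 \leq P(\vf_{\l_0}) \leq 0$, forcing $h_{m_1}=0$ and $P(\vf_{\l_0})=0$; thus $m_1$ realizes the supremum in the variational principle, i.e.\ $m_1$ is the equilibrium state of the H\"older potential $\vf_{\l_0}$. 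But equilibrium states of H\"older potentials for transitive Anosov flows have strictly positive entropy, contradicting $h_{m_1}=0$. No mixing estimates enter beyond what is already needed to know that $\vf_{\l_0}$ is H\"older and $P(\vf_{\l_0})\leq 0$, both of which you have already invoked.
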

 
\begin{proof} First we prove that $\tau _0 <0 .$ 
First note that $\sup \int \vf_{\l_0} dm$ is attained by compactness of $M$. Suppose that $m_1$ attains the supremum of $ \int \vf_{\l_0} dm$ and that $\int \vf_{\l_0}\, dm_1 \geq 0$. Then $h_{m_1} + \int \vf_{\l_0}\, dm_1 \geq 0$.
However, since $P(\vf_{\l_0} )\leq 0$ by Corollary~\ref{Pnonpositive}, it follows that $h_{m_1}=0$ and $\int \vf_{\l_0} dm_1 =0$, and therefore $m_1$ is the equilibrium state of $\vf _{\l_0}$. This is a contradiction since $h_{m_1} >0$ if $m_1$ is an equilibrium state of a H\"older continuous function.
This proves that $\tau_0=  \sup\{ \int \vf _{\l_0} \, dm \} <0$. 

 It follows from the definition (\ref{pressure}) of the pressure that 
\[ \lim _{t\to \infty } \frac {1}{t} P( t \varphi _{\l_0} ) \; = \;  \tau _0. \] For $\tau_0 < \tau' <0$, we can find $T$ large enough that $P_T(\l_0) = P(T\varphi _{\l_0} /2) < T\tau' /2$. By letting $\l \to \l_0$ in Corollary \ref{powerT}, there exists a constant $C(T)$ such that for all $R \geq 1, x \in \M$,
\[ e^{-(R P_T(\l_0))} \int _{S(x, R)} G_{\l_0}^{T} (x, z) dz \; \leq C(T) .\] 

Set $$\tau (R)   : = \frac{1}{R }\max \{ \log G_{\l_0} (x,z): d(x,z ) =R \} .$$ By compactness, there exist $x,y$ with $ d(x,y) = R $ and $G_{\l _0} (x,y) = e^{R \tau (R)} $. We have, for $z \in S(x,R), d(y,z) \leq 1 ,$ \[ G_{\l _0} (x,z) \geq C_0^{-1} e^{R \tau (R) } \quad  {\textrm { and thus }} \quad  G_{\l _0}^T (x,z) \geq C_0^{-T} e^{TR \tau (R) }.\]
Therefore, we have for all $R \geq 1$, 
\[ C(T) \geq  e^{-RT\tau'/2 } \int _{S(x, R)\cap B(y,1) } G_{\l_0}^{T} (x, z) dz \;\geq  C_0^{-T}  e^{R T (\tau(R)  -\frac{\tau '}{2})} \Vol_{m-1} (S(x, R)\cap B(y,1) ).\]  Since for $R \geq 1, \Vol (S(x, R)\cap B(y,1) ) $ is greater than a positive constant,  this is possible only if $ \limsup _R \tau (R) \leq \tau ' /2 .$  Since $\tau ' >\tau _0 $ was arbitrary, this proves that 
 \[ \limsup   _{R \to \infty } \frac{1}{R}\log  \max \{ G_{\l_0} (x,y): d(x,y) = R \} \; \leq \frac{ \tau_0}{2} .\]

 Conversely, recall that invariant probability measures supported by single closed geodesics are dense in the set of invariant probability measures (\cite{S}). Therefore, for all $\e>0 $, there exists a {closed geodesic}, say of length $\ell $, such that for $v $ tangent to that {geodesic}, 
 \[ \int _0^\ell \varphi _{\l _o} (\gg_sv) \, ds \;  \geq \; (\tau_0 - \e) \ell  .\]

 Let $\wt v $ be a lift of $v$. The geodesic $\g_{\wt v} $ is a periodic axis and for all $j \in \N$,
\[ k_{\l_0} (\g_{\wt v}(j\ell) , \g_{\wt v}((j+1) \ell), \g_{\wt v}(+ \infty ) ) \; \leq \; e^{ - (\tau_0 - \e) \ell /2} .\]
 By Lemma~\ref{unifstrong}, 
 we have
\begin{eqnarray*} \frac {G_{\l_0} (\g_{\wt v}(j\ell) , \g_{\wt v}(N \ell))}{G_{\l_0} (\g_{\wt v}((j+1) \ell) , \g_{\wt v}(N \ell))} &\geq &e^{- CK^{(N-j)\ell } }  k_{\l_0} (\g_{\wt v} ((j+1) \ell),  \g_{\wt v}( j\ell), \g_{\wt v} (+ \infty ))\\ &\geq &e^{- CK^{(N-j)\ell }}    e^{ (\tau_0 - \e) \ell /2 }.\end{eqnarray*}
Since the sum $\Si _0^\infty  CK^{j\ell }  $ converges, we have
 \[ \frac {G_{\l_0} (\g_{\wt v}(0) , \g_{\wt v}(N \ell))}{G_{\l_0} (\g_{\wt v}((N-1) \ell) , \g_{\wt v}( N\ell))} = \prod_{j = 0}^{N-2} \frac {G_{\l_0} (\g_{\wt v}(j\ell) , \g_{\wt v}(N \ell))}{G_{\l_0} (\g_{\wt v}((j+1) \ell) , \g_{\wt v}(N \ell))} \; \geq C  e^{ N (\tau_0 - \e) \ell /2}. \]
This shows  that, for all $\e >0$, 
\begin{eqnarray*} \liminf  _{R \to \infty } \frac{1}{R}\log  \max \{ G_{\l_0} (x,y): d(x,y) = R \} &\geq & \liminf _{N \to \infty } \frac{1}{N\ell} \log G_{\l_0} (\g_{\wt v}(0) , \g_{\wt v}(N \ell)) \\ &\geq & \;  \frac{\tau_0 - \e}{2}.\end{eqnarray*}
 \end{proof}

\begin{cor}\label{lem:3.15}There exists $C>0$ such that for any $\l \in [0, \l_0]$ and $x, \xi, \eta$, there exists $x_0 \in [\xi, \eta]$ such that if $y$ is in the geodesic ray from $x$ to $\xi$ and   $d(x,y) \geq d(x, [\eta , \xi ]) + 4R_0, $ then 
$$\frac{k_{\l}(x,y,\eta)}{k_{\l}(x,y,\xi)} \leq C G^2_\l(x_0,y).$$ 
\end{cor}
\begin{proof}
We first claim that by $\delta$-hyperbolicity, there exist points $x_0 \in [\xi, \eta], x_1 \in [x, \eta], x_2 \in [y, \eta],  x_3 \in [x, \xi]$ such that the distance between them is bounded above by $3\d$. Indeed, for $x'$ in the geodesic from $\eta$ to $\xi$, the distance function $x' \mapsto d(x', [x,\xi])$ is a decreasing function. Let $x'$ the first point where $d(x', [x,\xi])\leq \d$ and choose $x_0 \in [x', \eta]$ to be the point $\d$-apart from $x'$. By definition, $\d < d(x_0, [x, \xi]) \leq 2\d$, thus there exists $x_3 \in [x, \xi]$ of distance $2\d$-close to $x_0$. Choose $x_1 \in [x, \eta], x_2 \in [y, \eta]$ $\d$-close to $x_0$. The claim follows.\\
Let $[x, \xi] \ni w  \to \xi$ and $[x, \eta] \ni z \to \eta$. Let us write $G(x,y)=G_{\l}(x,y)$ for simplicity.

Choose $\theta_0$ such that if $\angle_x (\xi, \eta) \leq \theta_0$, then $x$ is $R_0$-apart from $x_0,\cdots, x_3$. For $x, \xi, \eta$ such that $\angle_x (\xi, \eta) \leq \theta_0, $ by Theorem~\ref{Alano} (which gives estimates up to $C_4$ since $d(x,y) >d(x,x_i)+R_0 +3\d , i = 2,3$) and Harnack inequality (which gives estimates up to $C_H$), we have
\begin{eqnarray*}
\frac{k_\l(x,y,z)}{k_\l(x,y,w)}  &=& \frac{G(y,z)G(x,w)}{G(x,z)G(y,w)} \\ &\sim^{(C_4 C_H)^4}& \frac{G(y,x_0)G(x_0, z)}{G(x,x_0)G(x_0,z)}\frac{G(x,x_0)G(x_0,y) G(y,w)}{G(y,w)}=G^2(x_0,y).
\end{eqnarray*}
For $x, \xi, \eta$ such that $\angle_x (\xi, \eta) > \theta_0$, $d(y,x)>3 R_0-3 \d$ and $d(y,x_2)>3R_0 -3\d$, so that we have
$$\frac{k_\l(x,y,z)}{k_\l(x,y,w)} = \frac{G(y,z)G(x,w)}{G(x,z)G(y,w)} \sim^{C_4^2} \frac{G(y,x_2)G(x_2, z)}{G(x,z)}\frac{G(x,y)G(y,w)}{G(y,w)}\sim^{C_H^3}G^2(x_0,y).$$
\end{proof}

\begin{proof}[Proof of Theorem~\ref{thm:1.4}]  Recall that Martin compactification of the operator $\D -\l_0$ is given by all possible limits of $k_{\l_0} (x,y,z) $ as $z \to \infty.$ Proposition~\ref{demiMartin} and its proof show that there is a continuous mapping from the geometric compactification of $\M$ onto the Martin compactification. So it suffices to show that this mapping is  one-to-one. 
    If $\eta \neq \xi$, by Corollary~\ref{lem:3.15}, $k_{\l_0}(x,y, \eta)/k_{\l_0}(x,y, \xi) \to 0 $ as $y \to \xi$ and thus  $k_{\l_0}(x,., \xi) $ does not  coincide with $k_{\l_0} (x, . ,\eta)$.
The decomposition of positive $\l_0$-harmonic functions follows then by general Martin theory.
\end{proof}
Since by Proposition~\ref{prop:expdecay},  $G_{\l_0} (x_0, \cdot ) $ goes to 0 at infinity uniformly, we get the following estimate for small $d(x,y)$:
\begin{cor}\label{Gequivalent}
For any compact neighborhood $K$ of x, there is a constant $C=C(m)$ such that, if $y \in K, 0\leq \l \leq \l_0,$
\begin{equation}\label{eq:Gequivalent}
C^{-1}  \leq  (d(x,y))^{m-2} G_{\l} (x,y )  \leq C {\textrm{ for }} m >2, \quad  C^{-1} \leq \frac{ G_{\l} (x,y )}{1+ |\log d(x,y)| }  \leq C {\textrm{ for }} m =2.\end{equation}\end{cor}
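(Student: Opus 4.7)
The plan is to analyze $G_{\l_0}(x,y)$ for $d(x,y) \leq 1$ by separating the contribution of small and large scales. Fix a reference point $x_0 \in \M$, and by $\G$-invariance it suffices to prove the estimate for $x,y$ varying in a fixed compact set, say $x \in M_0$ and $y \in B(x,1)$. Introduce the ball $B := B(x, 2)$ and consider the Dirichlet Green function $G^B_{\l_0}(x,\cdot)$ of $\Delta + \l_0$ on $B$. This operator is invertible on $B$ because the bottom of the Dirichlet spectrum of $-\Delta$ on a relatively compact domain is strictly greater than $\l_0$ (strict domain monotonicity, since $B \subsetneq \M$). By the balayage identity (\ref{Poisson}) applied with $\DD = \M$ and $A = \partial B$,
\[
G_{\l_0}(x,y) \; = \; G^B_{\l_0}(x,y) \; + \; \int_{\partial B} G_{\l_0}(z,y)\, d\varpi_x^{\l_0}(z).
\]

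The first step is to verify that $G^B_{\l_0}(x,y)$ carries the expected Euclidean-type singularity on the diagonal, namely $G^B_{\l_0}(x,y) \asymp d(x,y)^{2-m}$ for $m>2$ and $G^B_{\l_0}(x,y) \asymp |\ln d(x,y)|$ for $m=2$. This is classical elliptic theory: using the small-time heat kernel asymptotic $\Pp^B(t,x,y) \sim (4\pi t)^{-m/2} e^{-d(x,y)^2/(4t)}$ on the bounded geometry ball $B$, and writing $G^B_{\l_0}(x,y) = \int_0^\infty e^{\l_0 t}\Pp^B(t,x,y)\, dt$, the substitution $u = d(x,y)^2/(4t)$ in the integral on $(0,1]$ produces the stated singularity with explicit constants, while the integral on $(1,\infty)$ is uniformly bounded for $x,y$ in a compact subset of $B$ (the Dirichlet semigroup on $B$ decays exponentially since $\l_0$ is below the Dirichlet spectrum). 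By $\G$-invariance the constants can be taken uniform in $x_0 \in \M$.

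The second step is to bound the boundary integral. For $x,y \in B(x_0,1)$ and $z \in \partial B$, we have $d(z,y) \geq 1$, so Proposition~\ref{prop:expdecay} yields $G_{\l_0}(z,y) \leq Ce^{-\tau}$ uniformly. Since $\varpi_x^{\l_0}$ is a finite measure on $\partial B$ with mass uniformly bounded (this follows from Proposition~\ref{regularity} applied to $F \equiv 1$ together with the fact that $\int_{\partial B} G_{\l_0}(x,z)\,dz$ is uniformly bounded, e.g.\ by Corollary~\ref{lem:2.15}), the boundary integral is bounded by a constant depending only on the geometry of $M$. Combining the two steps gives both the upper and lower bounds in (\ref{eq:Gequivalent}), the lower bound being immediate from $G_{\l_0} \geq G^B_{\l_0}$ by the maximum principle.

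The main obstacle is purely technical: making the on-diagonal asymptotic of $G^B_{\l_0}$ truly uniform in the center $x_0$, including matching constants. This is standard once one notes that the short-time heat kernel expansion depends only on local data (curvature, volume, injectivity radius) and these are uniformly controlled on $\M$ as the cover of the compact manifold $M$. Everything else is direct application of Proposition~\ref{prop:expdecay} and the representation (\ref{Poisson}).
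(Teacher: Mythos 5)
Your proof is correct, but it takes a genuinely different decomposition from the paper's. The paper splits the Green function \emph{in time}:
\[ G_{\l_0}(x,y) = \int_0^1 e^{\l_0 t}\Pp(t,x,y)\,dt + \int_1^\infty e^{\l_0 t}\Pp(t,x,y)\,dt, \]
with the first integral giving the Euclidean singularity via the Molchanov estimate $\Pp(t,x,y)(4\pi t)^{m/2}e^{-d(x,y)^2/(4t)} \sim^C 1$, and the second bounded by the semigroup identity $\int_1^\infty e^{\l_0 t}\Pp(t,x,y)\,dt = e^{\l_0}\int_{\M}\Pp(1,x,z)G_{\l_0}(z,y)\,dz$, then controlled on $B(x,2)$ by Lemma~\ref{lem:1} and outside by Theorem~\ref{expdecay}. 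You instead split \emph{in space} via the balayage formula (\ref{Poisson}) at $\partial B(x,2)$, analyze the Dirichlet Green function $G^B_{\l_0}$ via the same short-time heat kernel asymptotics (now for the Dirichlet kernel), and bound the boundary integral using Theorem~\ref{expdecay} together with a uniform mass bound on $\varpi_x^{\l_0}$ obtained from Proposition~\ref{regularity} and the $L^2$ sphere estimate. What each route buys: your approach sidesteps Lemma~\ref{lem:1} entirely --- a nontrivial economy, since that lemma's proof in the paper passes through a discretization and Varopoulos's transience criterion --- at the cost of invoking the balayage/hitting-measure machinery of Section~\ref{Appendix2} and the Dirichlet spectral gap. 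The paper's route is slightly more self-contained in the PDE sense but depends on the transience argument of Lemma~\ref{lem:1}.

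Two small points worth flagging. First, the claim that ``strict domain monotonicity'' gives $\l_1^D(B) > \l_0$ is a bit glib when the ambient domain is noncompact; the clean justification is that $G^B_{\l_0} \le G_{\l_0} < \infty$, and finiteness of the Dirichlet Green function at $\l_0$ forces $\l_0$ to lie strictly below the bottom of the Dirichlet spectrum of $B$ (otherwise the principal eigenfunction would make $\int_1^\infty e^{\l_0 t}\Pp^B(t,x,y)\,dt$ diverge). Second, when you invoke Corollary~\ref{lem:2.15} or Proposition~\ref{Lemma2.5} on the sphere $S(x,2)$, the transversality of the lifted sphere to $W^{cs}$ only holds for radii $\geq R_0$, so you should take $B = B(x,R_0)$ rather than $B(x,2)$; this changes nothing else in the argument.
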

\begin{proof} Observe that, for $x \not = y$,
\[ G_{\l_0} (x,y) \; = \;  \int_0^1 e^{\l_0 t}\Pp(t,x,y) \, dt + \int _1^\infty e^{\l_0 t}\Pp(t,x,y) \, dt \] and that the last term is uniformly bounded for $y \in K.$ Indeed, let $A$ be the diameter of $K$. Then, 
\begin{align*}  &   \int _1^\infty e^{\l_0 t}\Pp(t,x,y) \, dt\; = \;   e^{\l_0} \int _{\M} \Pp (1,x,z) G_{\l_0 }(z,y) \, d\Vol (z) \\
 = & \; e^{\l_0} \int _{B(x,A+1)} \Pp (1,x,z) G_{\l_0 }(z,y) \, d\Vol (z) + e^{\l_0} \int _{\M\setminus B(x,A+1)} \Pp (1,x,z) G_{\l_0 }(z,y) \, d\Vol (z)\\
\leq & \; e^{\l_0} \max_{B(x,A+1)} \Pp (1,x,z) \int _{B(y,2A+1)}  G_{\l_0 }(z,y) \, d\Vol (z)  + e^{\l_0} \max _{d(z,y) \geq A} G_{\l_0} (z,y).\end{align*}
We used (\ref{eqn2.3}) to bound uniformly $ \int _{B(y,2A+1)}  G_{\l_0 }(z,y) \, d\Vol (z) $ and Proposition~\ref{prop:expdecay} to bound $\max _{d(z,y) \geq A} G_{\l_0} (z,y) < \infty .$ For $0\leq \l \leq \l_0,$ $G_\l \leq G_{\l_0}$ and it suffices to show the estimate (\ref{eq:Gequivalent}) on $\int_0^1 e^{\l t}\Pp(t,x,y) \, dt.$ \\
Since the curvature is bounded, it follows from \cite{Mv} that for $0<t \leq 1, 0 < d(x,y) \leq A$ \[ \Pp(t,x,y) (4\pi t)^{m/2} e^{-\frac{d(x,y)^2}{4t}} \; \sim ^C \; 1.\]
Corollary~\ref{Gequivalent} follows by integration in $t$. \end{proof}
\begin{cor}\label{convolution} For any $A>0$, any $m \geq 2,$ there is a constant $C$ such that, for $d(x,y) < A, 0 \leq \l \leq \l_0$,
\[ \int _{B(x, 2A)} G_{\l} (x, z )G_{\l } (z,y) \, d\Vol (z) \; \leq \; C G_{\l} (x, y ).\] \end{cor}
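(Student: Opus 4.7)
The plan is to reduce to a direct local computation. Set $r := d(x_k, y) \in (0, 1]$. Combining the local behaviour from Corollary~\ref{Gequivalent} with the exponential decay from Proposition~\ref{prop:expdecay}, one obtains a uniform bound
\[
G_{\l_0}(w, z) \;\leq\; C\,\psi(d(w, z)) \qquad \text{for all } w, z \in \M \text{ with } d(w, z) \leq 5,
\]
where $\psi(s) := s^{2-m}$ for $0 < s \leq 1$ when $m \geq 3$ (respectively $\psi(s) := 1 + |\ln s|$ when $m = 2$), and $\psi(s) := 1$ for $s \geq 1$. Corollary~\ref{Gequivalent} also yields the matching lower bound $G_{\l_0}(x_k, y) \geq C^{-1}\psi(r)$; so it suffices to show that the integral $I$ in question is bounded by $C \psi(r)$.

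I would split $B(x_k, 4)$ into three regions: $A_1 := B(x_k, r/2)$, $A_2 := B(y, r/2)$, and $A_3 := B(x_k, 4) \setminus (A_1 \cup A_2)$. On $A_1$ one has $d(z, y) \geq r/2$, hence $G_{\l_0}(z, y) \leq C\psi(r)$, and the $A_1$ contribution is at most $C\psi(r) \int_{A_1} \psi(d(x_k, z))\, dz$. By bounded geometry of $\M$, polar coordinates around $x_k$ give $\int_{A_1} \psi(d(x_k, z))\, dz = O(r^2)$ for $m \geq 3$ (and $O(r^2(1+|\ln r|))$ for $m = 2$), so the total is $O(r^2 \psi(r)) \leq C \psi(r)$ since $r \leq 1$. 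The region $A_2$ is handled symmetrically with the roles of $x_k$ and $y$ swapped.

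The main work is $A_3$, where both $d(x_k, z)$ and $d(z, y)$ are at least $r/2$. I would further decompose into $A_3 \cap \{d(x_k, z) \leq 2r\}$ and $A_3 \cap \{d(x_k, z) > 2r\}$. On the first subregion, the integrand is $O(\psi(r)^2)$ on a set of volume $O(r^m)$, giving $O(r^{4-m})$. On the second, $d(z, y) \asymp d(x_k, z)$, so the contribution reduces to $\int_{2r}^{4} \psi(s)^2 s^{m-1}\, ds$; a short case analysis in $m$ yields $O(1)$, $O(|\ln r|)$, or $O(r^{4-m})$ for $m = 3$, $m = 4$, $m \geq 5$ respectively. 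In every dimension these are bounded by $C \psi(r)$ when $r \leq 1$, thanks to $r^2 \leq 1$ (and $r^2|\ln r|^k \leq C$ for the logarithmic cases). Summing over $A_1, A_2, A_3$ and using $G_{\l_0}(x_k, y) \geq C^{-1}\psi(r)$ finishes the argument.

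The main point of care is keeping the Riemannian volume element under control in the polar-coordinate integrals near the singularities of $G_{\l_0}$, but on balls of radius $O(1)$ in a manifold with bounded geometry this merely absorbs bounded factors into the constant, and the estimate reduces to the Euclidean calculation. No step looks like a serious obstacle once the pointwise bounds from Corollary~\ref{Gequivalent} and Proposition~\ref{prop:expdecay} are in hand.
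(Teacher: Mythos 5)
Your proof is correct and follows the paper's own route: reduce to the Euclidean‑type integral estimate via Corollary~\ref{Gequivalent} (together with the fact, from Proposition~\ref{prop:expdecay}, that $G_{\l_0}$ is bounded on $\{d \geq 1\}$), and then perform the direct computation. The paper simply states ``The statement reduces to the Euclidean case, where it can be shown by direct computation''; your decomposition into $A_1$, $A_2$, $A_3$ and the polar-coordinate case analysis is a correct write-up of exactly that omitted computation.
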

Indeed,  by Corollary~\ref{Gequivalent}, it suffices to show that there is a constant $C$ such that
\begin{eqnarray*} \int_{B(x,2A)} \frac{d\Vol(z)}{(d(x,z)d(y,z))^{m-2}} &\leq &\frac{C}{d(x,y)^{m-2} }\quad \textrm{ { for }} m>2,\\   \int_{B(x,2A)} |\log d(x,z) \log d(y,z)| d\Vol (z) &\leq & C | 1+ \log d(x,y) |\quad  \textrm{ { for }} m= 2.\end{eqnarray*}
The statement reduces to the Euclidean case, where it can be shown by direct computation.

\section{Renewal theory}\label{sec:4}
In this section, we use uniform mixing of the geodesic flow $\gg_t$ that will be established in Appendix I (Section~\ref{Appendix}) to control the convergence in Proposition~\ref{renewal} as $\l $ goes to $\l_0$. 
Throughout the section, let us denote $\chi (t) := 1$  for $|t| \leq 1/2$ and $0$ otherwise. Let $\chi _{\d'} (t) =\chi (t/\d') .$ Let $\psi(t) : = \max \{ 1- |t|, 0 \} $.

Thanks to Proposition~\ref{derivativeHolder}, for $\l $ close to $\l _0$, the functions $\vf _\l$ are close to $\varphi_{\l_0}$ in the space $\K _\a$ of $\a$-H\"older continuous functions, for some $\a=\a_0 >0 $  (see Section~\ref{sec:7.1} for definition of $\K_\a$).

\begin{prop}\label{4.1} There exist $\a>0$ and $\d_0>0$ with the following property. For every $\e>0$, $f,h \in \K_\a$ positive $\a$-H\"older continuous functions, there exists $t_0=t_0 ( f,h, \e)$,
 such that for $t \geq t_0$, for any $\l \in [\l_0 -\d_0, \l_0]$,
 $$ \int_{SM} f h\circ \gg_t \, dm_\l \sim^ {1+\e} \int_{SM} f \,dm_\l \int_{SM} h \,dm_\l.$$ 
Indeed, $t_0$ depends only on $\e, ||f||_\a, ||h||_\a, \inf_\l \int f \,dm_\l ,  \inf_\l \int h \,dm_\l $, in particular is independent of $\l\in [\l_0 - \d_0 , \l_0]$.
\end{prop}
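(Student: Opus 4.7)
The plan is to deduce Proposition~\ref{4.1} directly from the uniform mixing theorem for Anosov flows with a compact family of Hölder potentials (proved in Section~\ref{Appendix}), combined with the Hölder continuity of $\l \mapsto \vf_\l$ established earlier. There is essentially nothing new to invent here; the work is a reduction.

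First, I would fix the exponent. By Proposition~\ref{derivativeHolder}, there exists $\b>0$ such that $\l \mapsto \vf_\l = -2 \Phi_\l$ is continuous from $[0,\l_0]$ into $\K_\b$. Choose $\a \in (0,\b]$ compatible with the Hölder exponent demanded by Proposition~\ref{theo:uniformmixing}. Then for any $\d_0>0$, the family $\{\vf_\l : \l \in [\l_0-\d_0,\l_0]\}$ is a compact subset of $\K_\a$. In particular, its $\a$-Hölder norms are uniformly bounded in $\l$. Moreover, by Corollary~\ref{cor:3.10}, the equilibrium measures $m_\l$ depend continuously on $\l$ on the same interval, so $\l \mapsto \int f\, dm_\l$ and $\l \mapsto \int h\, dm_\l$ are continuous and strictly positive whenever $f,h$ are positive $\a$-Hölder.

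Second, I would apply Proposition~\ref{theo:uniformmixing} to this compact family of potentials. This produces a constant $C$ and a function $\rho(t) \to 0$ as $t \to \infty$, both \emph{independent of $\l$} in $[\l_0-\d_0,\l_0]$, such that for every pair of $\a$-Hölder functions $f,h$,
$$ \left| \int_{SM} f \cdot h \circ g_t \, dm_\l - \int_{SM} f\, dm_\l \int_{SM} h\, dm_\l \right| \;\leq\; C\,\|f\|_\a\, \|h\|_\a\, \rho(t). $$
Dividing by $\int f\, dm_\l \int h\, dm_\l$ and using the uniform positive lower bound on this product,
$$ \left| \frac{\int f\cdot h\circ g_t\, dm_\l}{\int f\, dm_\l \int h\, dm_\l} - 1\right| \;\leq\; \frac{C\, \|f\|_\a\, \|h\|_\a\, \rho(t)}{\bigl(\inf_\l \int f\, dm_\l\bigr)\bigl(\inf_\l \int h\, dm_\l\bigr)}. $$
Choose $t_0$ large enough that the right-hand side is at most $\e$; then the multiplicative estimate $\sim^{1+\e}$ holds. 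By construction $t_0$ depends only on $\e$, $\|f\|_\a$, $\|h\|_\a$ and the two infima, which is exactly the claimed dependence.

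The only real obstacle is external to this section: namely, the proof in Section~\ref{Appendix} of uniform rapid mixing for equilibrium states of a compact set of Hölder potentials, which generalizes Dolgopyat's theorem. Once that statement is granted in the form of Proposition~\ref{theo:uniformmixing}, Proposition~\ref{4.1} follows by pure bookkeeping: matching Hölder exponents and converting the additive correlation bound into the multiplicative bound via the uniform positive lower bounds on $\int f\, dm_\l$ and $\int h\, dm_\l$.
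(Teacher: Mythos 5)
Your proposal matches the paper's argument: the paper also deduces Proposition~\ref{4.1} by observing that $\l\mapsto\vf_\l$ is continuous into $\K_\a$ near $\l_0$ (Proposition~\ref{derivativeHolder}), so Proposition~\ref{theo:uniformmixing} applies uniformly to the family $\{m_\l\}$, and the multiplicative form then follows by dividing by the uniformly positive means. The bookkeeping you spell out (compactness of $\{\vf_\l\}$, the lower bound on $\int f\,dm_\l$) is exactly what the paper leaves implicit.
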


\begin{prop}\label{4.2} There exist $\a>0$ and $\d_0'>0$ with the following property. For every $\e>0$, $f,u,h \in \K_{\a}$ positive $\a$-H\"older continuous functions, there exists $t_0'=t_0' ( f,u, h, \e)$,
 such that for $t \geq t_0'$, for any $\l \in [\l_0 -\d'_0, \l_0]$,
	 $${\frac{1}{t} \int_0^t \left[  \int f \cdot (u \circ \gg_s) \cdot (h\circ \gg_t)  \,dm_\l \right] ds } \sim^{1+\e} {\int f \,dm_\l\int u \,dm_\l  \int h \,dm_\l } .$$
Indeed, $t_0'$ depends only on $\e, ||f||_\a, ||h||_\a,$ $\|u\|_\a, \inf_\l \int f \,dm_\l ,  \inf_\l \int u \,dm_\l$ and $ \inf_\l \int h \,dm_\l$, in particular is independent of $\l\in [\l_0 - \d_0' , \l_0]$.
\end{prop}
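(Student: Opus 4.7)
My plan is to deduce this averaged three-point mixing statement from the two-point mixing of Proposition~\ref{4.1} by exchanging orders of integration and then handling a single-function variance. Applying Fubini, I rewrite the left-hand side as
\[ \int f \cdot A_t u \cdot (h \circ g_t)\, dm_\l, \qquad \text{where}\quad A_t u \; := \; \frac{1}{t}\int_0^t u \circ g_s\, ds, \]
and decompose $A_t u = \int u\, dm_\l + R_\l^t$. This reduces the problem to two tasks: estimating the main term $\int u\, dm_\l \cdot \int f (h \circ g_t)\, dm_\l$, and controlling the error $\int f R_\l^t (h \circ g_t)\, dm_\l$.

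For the main term, Proposition~\ref{4.1} applied to the pair $(f,h)$ immediately gives $\int f (h \circ g_t)\, dm_\l \sim^{1+\e/2} \int f\, dm_\l \int h\, dm_\l$, uniformly in $\l \in [\l_0 - \d_0, \l_0]$ once $t$ is large enough. For the error, I use the crude bound
\[ \Big| \int f R_\l^t (h \circ g_t)\, dm_\l \Big| \; \leq \; \|f\|_\infty \|h\|_\infty \|R_\l^t\|_{L^2(m_\l)}, \]
and the real work is to show $\|R_\l^t\|_{L^2(m_\l)} \to 0$ uniformly in $\l$. The standard variance identity, combined with $g_s$-invariance of $m_\l$ and the change of variable $\tau = s' - s$, yields
\[ \|R_\l^t\|_{L^2(m_\l)}^2 \; = \; \frac{2}{t^2}\int_0^t (t-\tau) \left[\int u \cdot (u\circ g_\tau)\, dm_\l - \Big(\int u\, dm_\l\Big)^2 \right] d\tau. \]
I then apply Proposition~\ref{4.1} once more, this time to the pair $(u,u)$: the bracketed covariance is bounded in absolute value by $\e' \|u\|_\infty^2$ whenever $\tau$ exceeds a threshold $t_1 = t_1(u,u,\e')$ that is uniform in $\l$, and by the trivial bound $2\|u\|_\infty^2$ for smaller $\tau$. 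Splitting the $\tau$-integral at $t_1$ gives $\|R_\l^t\|_{L^2}^2 \leq 4 t_1 \|u\|_\infty^2 / t + \e' \|u\|_\infty^2$, which can be made as small as desired by first choosing $\e'$ small and then $t$ large.

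The main bookkeeping challenge is uniformity in $\l$. To convert the additive error estimate into the multiplicative $\sim^{1+\e}$ conclusion, I need a positive lower bound on the target $\int f\, dm_\l \int u\, dm_\l \int h\, dm_\l$ uniform in $\l \in [\l_0 - \d_0', \l_0]$; this follows from positivity of the H\"older functions $f, u, h$ together with continuity of $\l \mapsto m_\l$ on the closed interval $[\l_0 - \d_0, \l_0]$ (Corollary~\ref{cor:3.10}). The threshold $t_0'$ then depends only on $\e$, the H\"older norms of $f, u, h$ and these uniform lower bounds, exactly as claimed. No new dynamical input beyond Proposition~\ref{4.1} is required.
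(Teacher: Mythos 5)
Your proof is correct, and it takes a genuinely different route from the paper. The paper deduces Proposition~\ref{4.2} from Corollary~\ref{theo:uniformavmixing}, which is itself a consequence of the full uniform three-mixing estimate of Theorem~\ref{theo:uniform2mixing}; that theorem is established in Section~\ref{Appendix} via an analytic continuation of the two-variable Laplace transform $\widehat\rho(s_1,s_2)$ of the three-correlation function, so the paper's route requires a Dolgopyat-type control on products of three transfer operators. You instead bypass the three-mixing estimate entirely, using only the two-point uniform mixing of Proposition~\ref{4.1}. Your decomposition $A_t u = \int u\,dm_\l + R^t_\l$, the crude $L^\infty$--$L^1$ bound on the remainder term, and the variance identity together with Proposition~\ref{4.1} applied to $(u,u)$ are exactly the mechanism behind the remark in Section~\ref{sec:7.1} that ``mixing implies average 3-mixing''; your contribution is to check that this implication is uniform in $\l$, which it is because the thresholds in Proposition~\ref{4.1} depend only on H\"older norms and $\inf_\l \int \cdot\, dm_\l$, and because the continuity of $\l\mapsto m_\l$ (Corollary~\ref{cor:3.10}) together with positivity of $f,u,h$ keeps the target product $\int f\,dm_\l\int u\,dm_\l\int h\,dm_\l$ bounded below uniformly. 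This is more elementary than the paper's argument and avoids any need for Theorem~\ref{theo:uniform2mixing} in the derivation of Proposition~\ref{4.2} (it is still needed for Proposition~\ref{4.1}, which you rely on, since the paper proves Theorem~\ref{theo:uniform2mixing} and Proposition~\ref{theo:uniformmixing} simultaneously); the paper's extra work buys the genuine, non-averaged three-mixing, which your argument cannot deliver.
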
 
\begin{proof}
Noting that
$$ \big| \frac{ \int f h\circ \gg_t}{\int f \int h} -1 \big| \leq C\frac{||f||_\a ||h||_\a }{1+|t|^c} \frac{1}{\int f \int h},$$
we deduce Proposition~\ref{4.1} from Proposition~\ref{theo:uniformmixing} to the equilibrium measure $m_\l$ associated to $\vf_\l$. Proposition~\ref{4.2}  follows from Corollary~\ref{theo:uniformavmixing} in a similar way.
\end{proof}
\subsection{Integral on large spheres with respect to Green functions}

\label{sec:4.1}
  Let us introduce some more notations: for $x \not = z \in \M$, denote by $v_z^x$ the unit vector in $S_z\M $ pointing towards $x$ and $pv_z^x$ its projection on $SM$. The mapping $z \mapsto v_z^x$ identifies $\M \setminus \{x\} $ with a subset of $S\M$. 
 \begin{theo}\label{renewal2} 
Given $\e'>0$ and positive H\"older continuous functions $f,h$ on $SM$, there exist $R(f,h,\e')$ and $\d( f,h, \e')$ such that if $R >R( f,h, \e')$ and $\l \in [\l_0 - \d( f,h,\e' ), \l_0]$, for all $x \in \M$, 
 \begin{equation}\label{eqn:5.4} \hspace{-1.2 in}  e^{-R P({\l} )}  \int _{S(x, R)}  f(\Dppi v_x^y) h( \Dppi v_y^x)   G_\l^2 (x, y) dy \quad \quad \sim^{(1+\e')^3} \end{equation} 
 \begin{equation*}\Om _{\l}   \int _{\pp \M }  f( \Dppi \circ \s ^{-1}_x \xi) d\mu _x^{\l }(\xi )\int_{M_0}\left( \int_{ \pp \M}  h (\Dppi \circ \s^{-1}_y \xi )   d\mu_y^{\l} (\xi) \right)
  d\mathrm{Vol}(y).
 \end{equation*} 
Moreover, $R(f,h,\e')$ and $\d( f,h, \e')$ depends  only on $\e', ||f||_\a, ||h||_\a, \inf f $ and  $\inf h$.  \end{theo}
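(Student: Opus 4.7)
The plan is to extend the Margulis-style equidistribution argument underlying Proposition~\ref{renewal} from constant test functions to arbitrary H\"older $f,h$, while tracking uniformity in $\l$ near $\l_0$ via the uniform mixing Proposition~\ref{4.1}. First I would parameterize $y \in S(x,R)$ by $v = v_x^y \in S_x\M$, so that $y = \g_v(R)$ and the backward vector, on $S\M$, satisfies $v_y^x = \iota g_R v_x^y$, where $\iota$ is the flip. Writing $dy = J(R,v)\,dv$ for the Jacobian of the exponential map, the integral to estimate becomes
$$I_\l(R) := e^{-RP(\l)}\int_{S_x\M} f(\Dppi v)\,(h\circ\iota\circ g_R)(\Dppi v)\,G_\l^2(x,\g_v(R))\,J(R,v)\,dv.$$

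Next I would replace the weight $e^{-RP(\l)}G_\l^2$ by a Patterson-Sullivan conformal density. By Ancona-Gou\"ezel (Theorem~\ref{Alano}) and the definition of the Martin kernel, $G_\l(x,\g_v(R)) \sim^{C_4} k_\l^{-1}(x,\g_v(R),\g_v(+\infty))$ uniformly in $\l \in [0,\l_0]$ and $R\geq R_0$. Combining this with $d\mu_y^\l/d\mu_x^\l = k_\l^2 e^{-P(\l)\b}$ and $\b(x,\g_v(R),\g_v(+\infty)) = -R$ yields
$$e^{-RP(\l)}\,G_\l^2(x,\g_v(R)) \sim^{C_4^2} \frac{d\mu_x^\l}{d\mu_y^\l}\bigl(\g_v(+\infty)\bigr).$$
Identifying $S_x\M$ as a transversal to $W^{cs}$ via $\pi_x^{-1}$, the measure $e^{-RP(\l)}G_\l^2\,J\,dv$ is thereby comparable, up to $(1+\e')$ errors for $R$ large enough, to the Margulis unstable transversal measure $\mu_\l^{uu}$, whose total mass is described by (\ref{eqn:2.9}).

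I would then smear $I_\l(R)$ in the flow direction by a bump $\chi_\d/\d$, fold over $\G$ to pass to $SM$, and invoke the Hopf disintegration (\ref{eqn:Hopf}) of $\wt m_\l$. This recasts $I_\l(R)$, modulo $(1+\e')$ errors, as a two-point correlation
$$\int_{SM} F_\l(v)\,(H_\l\circ g_R)(v)\,dm_\l(v),$$
where $F_\l, H_\l \in \K_{\a'}$ are built from $f,h$ and the Gibbs densities $k_\l^2,\th_\l,\Phi_\l$. Their $\K_{\a'}$-norms are bounded uniformly in $\l \in [\l_0-\d_0, \l_0]$ thanks to Propositions~\ref{HolderNonu}, \ref{Naim}, and \ref{derivativeHolder}. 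The uniform mixing Proposition~\ref{4.1} then splits the correlation into a product $\bigl(\int F_\l\,dm_\l\bigr)\bigl(\int H_\l\,dm_\l\bigr)$ up to a factor $(1+\e')$, and unwinding (\ref{eqn:2.9}), (\ref{eqn:2.10}), the Hopf decomposition, and Corollary~\ref{cor:3.10} produces the claimed right-hand side involving $\Om_\l$ and the two boundary integrals against $\mu_x^\l$ and $\int_{M_0}\mu_y^\l(\pp\M)\,d\Vol(y)$.

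The main obstacle is the quantitative identification in the second step: showing that the measure $e^{-RP(\l)}G_\l^2(x,\g_v(R))J(R,v)\,dv$, pushed via $\pi_x^{-1}$ to $\pp\M$, is asymptotically $(1+\e')$-close to $d\mu_x^\l$ \emph{uniformly} in $\l\in[\l_0-\d,\l_0]$ and in $x\in\M$ (and equivariantly under $\G$). Beyond sharpening Ancona-Gou\"ezel, this requires H\"older control on $k_\l,\th_\l,\Phi_\l$ uniform in $\l$ (Propositions~\ref{HolderNonu}, \ref{Naim}, \ref{derivativeHolder}) and the continuity of $\Om_\l$ and $\mu_\l$ at $\l_0$ from Corollary~\ref{cor:3.10}. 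The parallel bookkeeping of $\K_{\a'}$-norms of $F_\l, H_\l$, needed to legitimately invoke Proposition~\ref{4.1}, is the technical heart of the argument.
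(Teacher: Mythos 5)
The proposal follows the right overall architecture (replace the Green-function weight by the Patterson--Sullivan/Margulis density, smear in the flow direction, fold to $SM$, and invoke uniform mixing), and this is indeed the route the paper takes. However, there is a genuine gap at the step you yourself flag as "the main obstacle," and the mechanism you sketch cannot close it.

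You derive $e^{-RP(\l)}\,G_\l^2(x,\g_v(R)) \sim^{C_4^2} \frac{d\mu_x^\l}{d\mu_y^\l}\bigl(\g_v(+\infty)\bigr)$ from Ancona--Gou\"ezel and then assert that this becomes an $(1+\e')$-approximation "for $R$ large enough." But the Ancona--Gou\"ezel inequality (Theorem~\ref{Alano}) gives a \emph{fixed} multiplicative constant $C_4$, independent of how far $\g_v(R)$ is from $x$; passing $z\to\xi$ only produces $G_\l(x,y)\sim^{C_4}k_\l^{-1}(x,y,\xi)$ with the same fixed $C_4$. Nothing in the argument makes that constant tend to $1$ as $R\to\infty$, so the promised $(1+\e')$-accuracy is unjustified as written, and the whole chain of approximations down to the mixing estimate inherits an uncontrolled bounded error rather than an arbitrarily small one. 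This cannot give the theorem's conclusion $\sim^{(1+\e')^3}$ with $\e'$ arbitrary.

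The paper's resolution is precisely to \emph{not} discard the Na\"im kernel factor. Starting from
\[ \frac{d\mu^{uu}_\l}{dg_{-t}\mu^{uu}_\l}(v) \;=\; e^{-tP(\l)}\,\lim_{z\to\wt v^+}\frac{G_\l^2(\g_{\wt v}(0),z)}{G_\l^2(\g_{\wt v}(t),z)\,G_\l^2(\g_{\wt v}(0),\g_{\wt v}(t))}\,G_\l^2(\g_{\wt v}(0),\g_{\wt v}(t)),\]
one recognizes $\bigl(\th^\l_{\g_{\wt v}(t)}(\g_{\wt v}(0),z)\bigr)^2$ inside the limit; Proposition~\ref{Naim} (resp.\ Proposition~\ref{Naim2}) says this converges to $\th^2_\l(g_tv)$ \emph{uniformly in $\l$} as $\g_{\wt v}(0)\to\wt v^-$ and $z\to\wt v^+$, i.e.\ as $t\to\infty$. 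This is an asymptotic identification whose error does go to $0$ with $R$, unlike the fixed Ancona constant. The price is that one must carry the factor $\th_\l^{-2}$ along: the paper passes the test function $\th^{-2}h$ (not $h$) into the Margulis-type Proposition~\ref{LC}, where it is built into the $\K_\a$-functions $H_\l^\pm$ whose correlations are then estimated by the uniform mixing Proposition~\ref{4.1}. So your plan needs two corrections: replace the Ancona step by the Na\"im-kernel limit to get genuine $(1+\e)$-control in $R$, and keep $\th_\l^{-2}$ explicitly in the test function rather than absorbing it silently into a constant. The rest of the proposal (smearing, Hopf disintegration, using (\ref{eqn:2.9})--(\ref{eqn:2.10}) and Corollary~\ref{cor:3.10}) is then the right bookkeeping.
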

 
The rest of Section~\ref{sec:4.1} is devoted to the proof of Theorem~\ref{renewal2}. Let us first reduce Theorem~\ref{renewal2} to Proposition~\ref{LC} below.

Fix $f,h$ positive and H\"older continuous. We  choose  $\d'_0 >0 $ such that, if $R >1$ and $|R-R'| < \d'_0$, then, for all $x \in \M$ and $\l \in [\l_0 - \d(f,h,\e'),\l_0],$
\begin{equation}\label{4.3}  e^{-R P({\l} )}  \int _{S(x, R)}  f(\Dppi v_x^y) h( \Dppi v_y^x)   G_\l^2 (x, y) dy \sim^{1+\e'}  e^{-R' P({\l} )}  \int _{S(x, R')}  f(\Dppi v_x^y) h( \Dppi v_y^x)   G_\l^2 (x, y) dy. \end{equation}

Then, for $\d' \leq 2\d'_0,$ we claim that \eqref{eqn:5.4} satisfies 
 \begin{eqnarray*}  
\eqref{eqn:5.4} & \sim^{1+\e'}  \frac{1}{\d'} \int _\R  \chi _{\d'} (s-R) e^{-sP(\l)} \left(\int _{S(x, s)} f(\Dppi v_x^y) h(\Dppi v_y^x) G_\l^2 (x,y) dy\right) \, ds  \\
& =\frac{1}{\d'}  \int_{\M} \chi _{\d'} (d(x,y)-R) e^{-d(x,y) P(\l)} f(\Dppi v_x^y) h(\Dppi v_y^x) G_\l^2 (x,y) d\mbox{Vol}(y)\\
&  \sim^{(1+\e')^2} \frac{1}{\d'}\int_{M_0} \Si (x,y,R,\d') \,  d\Vol (y),  
\end{eqnarray*}
where 
\begin{equation}\label{eqn:Sigma} \Si (x,y,R, \d') :=  \underset{ \{ (v,\rr) : v \in S_{\ppi x}    M \cap \gg_{-\rr} S_{\ppi y } M\}}  {\sum} \chi _{\d'} (R-\rr) f (v )(\th _\l^{-2}h)(-\gg_\rr v)\frac{d \mu_\l ^{uu } }{d\gg_{-\rr}  \mu_\l ^{uu}} ( v).\end{equation}

The claim follows since we can replace $ e^{-\rr P(\l)} G_\l^2 ( \g _{\wt v }(0) ,  \g _{\wt v } (\rr)) $ by $\frac{1} { \theta _\l^2(-\gg_\rr v )} \frac{d \mu_\l ^{uu } }{d\gg_{-\rr}  \mu_\l ^{uu}} (  v) $. Indeed,  we have,   by equation (\ref{flowholonomy}), $$ \frac{d \mu^{uu}_\l }{d\gg_{-\rr} \mu^{uu}_\l  } (v)  = e^{-\rr P(\l )} k_{\l }^2 (\g_{ \wt v} (\rr) ,  \g _{\wt v} (0) , \g_{\wt v} (\infty ) ) . $$ 
Furthermore, by Proposition~\ref{Naim2}, for given $\e'$, if $R$ is large enough (depending on $\e'$) and $|\rr-R| \leq \d' \leq 1$, 
\begin{eqnarray*} \frac{d\mu^{uu}_\l  }{d \gg _{-\rr} \mu^{uu}_\l } ( v) &  
=& e^{-\rr P(\l ) } \lim_{z \to \wt v^+ } \frac{G^2_\l(\g_{\wt v}(0), z)}{G^2_\l(\g_{\wt v}(\rr), z) G^2_\l(\g_{\wt v}(0), \g_{\wt v}(\rr))} \,
G^2_\l(\g_{\wt v}(0), \g_{\wt v}(\rr))  \\
&\sim^{1+\e'}& e^{-\rr P(\l ) } \theta^2_\l(-\gg_\rr v)\, G^2_\l(\g_{\wt v}(0), \g_{\wt v}(\rr)),
\end{eqnarray*}
where the approximation is uniform in $\gg_\rr v$ and $\l$.
 It follows that for $\d' <2\d_0,$ given $\e' >0$,  for all $R$ large enough and all $\l$ close enough to $\l_0$,
\[  (\ref{eqn:5.4} ) \;\sim ^{(1+\e')^2}  \frac{1}{\d'} \int _{M_0}  \Si (x,y,R,\d') \,  d\Vol (y).\]
We are reduced to show:
\begin{prop}\label{LC}  Given $\e' >0$ and positive H\"older continuous functions $f ,h$ on $SM$,  there exist $R_0= R_0 (f,h, \e') $, $\d=\d( f, h, \e')>0$ and  $\d', 0 < \d' <2\d'_0 ,$  such that for $R\geq R_0$, all $x,y\in \M$ and all $\l \in [\l_0-\d, \l_0]$, 
\[
  \Si (x,y,R,\d')  \sim^{(1+\e')}  \Om _\l \d'  \left(\int_{S_{\ppi x} M }  f(v)\, d\mu _\l^{uu} (v) \right) \left( \int _{S_{\ppi y} M } h(-u) \, d\mu _\l^{ss} (u)\right)\\
\]
for $  \Si (x,y,R,\d')$ defined in \eqref{eqn:Sigma}.
Moreover, $R_0(f,h,\e')$ and $\d( f,h, \e')$ depend  only on $\e', ||f||_\a, ||h||_\a, \inf f $ and  $ \inf h.$
\end{prop}

The right hand side in Proposition~\ref{LC} is the same as 
$$ \d' \Om _{\l}   \int _{\pp \M }  f( \Dppi \circ \s ^{-1}_x \xi) d\mu _x^{\l }(\xi ) \int_{ \pp \M}  h (\Dppi \circ \s^{-1}_y \xi )   d\mu_y^{\l} (\xi)$$
 by (\ref{eqn:2.9}) and (\ref{eqn:2.10}).

Theorem~\ref{renewal2} follows from Proposition~\ref{LC} and the previous discussion by integrating the approximation in $y$ over a fundamental domain $M_0$.

\begin{proof}
We combine ideas of \cite{Ma} and Section III in \cite {L}. Choose $\e $ such that $(1+ \e)^{61} \leq 1+\e' .$  
Proposition~\ref{LC} follows from Proposition~\ref{4.1} applied to the non-negative H\"older continuous functions $F^\pm_\l, H^\pm_\l$ with the property that  there exist constants $C, \a, \g_0, \g'_0, \g$ such that for all $x,y\in \M$ and all $\l \in [0, \l_0]$, the following (1)-(5) holds.
\begin{enumerate}
\item $||F^\pm_\l||_\a < C, ||H^\pm_\l||_\a < C,$ 
\item $ \int F^\pm_\l dm_\l >C^{-1}, \int H^\pm_\l dm_\l >C^{-1}.$
\item\begin{eqnarray*} &  & \Om_\l \d' \g_0 (1+\e)^{-14}  \int _{S_{\ppi x} M} f( v) d\mu _\l^{uu} (v)\; \leq \; \int 
F^-_\l   d\m_\l \\ & \leq &\; \int F^+_\l  d\m_\l \; \leq \;  \Om_\l \d' \g_0 (1+\e)^{14} \int _{S_{\ppi x} M} f(v) d\mu _\l^{uu} (v). \end{eqnarray*}
\item \begin{eqnarray*} &  &\Om_\l \g \g'_0 (1+\e)^{-14} \int _{S_{\ppi y} M} h(-u) d\mu _\l^{ss} (u)\; \leq \; \int H^-_\l d\m_\l \\ & \leq &\; \int H^+_\l d\m_\l \; \leq \;  \Om_\l \g \g'_0  (1+\e)^{14}  \int _{S_{\ppi y} M}  h(-u) d\mu _\l^{ss} (u). \end{eqnarray*}
\item There is $R(\e)$ such that for $R \geq R(\e)$,
\begin{eqnarray*} 
 (1+\e)^{-30} \int F^-_\l H^-_\l\circ \gg_R \, dm_\l &\leq & \Om _\l \g \g_0 \g_0'  \Si (x,y,R,\d'(1+\e)),\\
 \Om _\l \g \g_0 \g_0'  \Si (x,y,R,\d') &\leq & (1+ \e)^{30} \int F^+_\l H^+_\l\circ \gg_R \, dm_\l. \end{eqnarray*}
\end{enumerate}

Let $a_4$ be the contraction rate of the stable submanifold: $d(\gg_t v, \gg_t v') \leq e^{-a_4 t}d(v,v')$ for $v,v'$ close enough on the same stable submanifold.
We choose $\d' <2\d'_0 $ with $e^{a_4 \d'} < 2$ and  such that, for all $\xi \in \pp \M,$ all $\l \in [0,\l_0]$, for $ d(v,v') <2 \d', d(x,x') <2\d' ,$ \[  \frac{f(v')}{f (v)}, \frac{h(v')}{h(v)} ,\frac{\th _\l^2(v')}{\th _\l^2 (v)}, k_\l (x,x', \xi), e^{P \b(x, x', \xi) }  \sim^{1+\e} 1,   \] 
where $P := \underset{ \l \in [0,\l_0]} \inf P(\l)<0.$ 
\begin{remark}\label{rmk:8}Dependency of $\d'$ on $\inf f, \inf h$ in Theorem~\ref{renewal2} comes from the choices in the paragraph above and the choice of $\d'_0$ at the beginning of the proof of Theorem~\ref{renewal2}.
\end{remark}
The functions $F^\pm_\l (v), H^\pm_\l(u)$ will approximate $\th_\l^{-2} f (v), \th_\l^{-2} h (-u)$ respectively, on the $\d'$-neighborhoods $N_{\d'} (S_{\ppi x}M)$, $N_{\d'} (S_{\ppi y}M)$ of $S_{\ppi x} M$, $S_{\ppi y}M$, respectively. 

For $w \in N_{\d'}(S_{\ppi x} M$), there exist a unique $v \in S_{\ppi x} M,$ and $v' \in W^{ss}_{loc}(v), t$ such that $v' = \gg_t w$. 
 Similarly, if $w \in N_{\d'} (S_{\ppi y} M)$, then there exists a unique triple $(u,u',s),  u \in S_{\ppi y}  M,  u' \in W^{uu}_{loc}(u)$  such that $u'= \gg_{s}(w).$

By the H\"older regularity of the strong stable and the  strong unstable foliations, the systems of coordinates  $(v,v',t)$ (respectively $(u,u',t)$) are H\"older continuous, uniformly in $x$ and $y$. 

{\it {Step 1. There exist   $\g_0, \g_0' >0 $ and non-negative H\"older continuous functions $a_\pm, b_\pm$ supported on $N_{\d'} S_{\ppi x} M$, $N_{\d'} S_{ \ppi y } M$, respectively, 
such that for all $v \in S_{\ppi x}M $ and $u \in S_{\ppi y}M,$}} \begin{equation}\label{positivemass} \int _{W^{ss}_{loc} (v)}  a_\pm (w)\, d\mu^{ss}_\l (w) = \g_0 (1+\e)^{\pm 1} , \quad   \int _{W^{uu}_{loc} (u)} b_\pm (w) \, d\mu^{uu}_\l (w) = \g_0' (1+\e)^{\pm 1}.\end{equation}
{\it{ Moreover, the H\"older exponent and the H\"older coefficient of $a_\pm, b_\pm $ are bounded uniformly in $x,y, \l$. }}

We denote  $d_{ss}$ (respectively $d_{uu} $) the induced metric on strong stable manifolds $W^{ss}$ (respectively on strong unstable manifolds $W^{uu}$).
\begin{lem}\label{hHolder}
Let $$h_{r,v,\l} = \int_{W^{ss}_{loc}(v)} \psi \left(\frac{d_{ss}(v, v')}{r}\right) d\mu^{ss}_\l (v').$$

The map $(r,v, \l) \mapsto h_{r,v,\l}$ is continuous in $r, v$ and $\l$. For a fixed $r$,  the function $v \mapsto h_{r,v,\l } $ is H\"older continuous, uniformly in $\l \in [0,\l_0]$. As $r $ varies from $0$ to $\d'$, the function $r \mapsto h_{r,v,\l }$ is  increasing and admits right and left derivatives that are bounded below by a positive constant  uniformly in $v,\l$ and $r$ away from zero. \end{lem}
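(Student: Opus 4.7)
The plan is to verify the three claims in order, using the regularity of the family $\{\mu^{ss}_\l\}$ already established. For the joint continuity in $(r,v,\l)$ I apply dominated convergence: the integrand $\psi(d_{ss}(v,v')/r)$ is $1$-Lipschitz in its argument and uniformly bounded by $1$, $d_{ss}$ is continuous, and the family $\mu^{ss}_\l$ depends continuously on $\l$ by Corollary~\ref{cor:3.10} (applied, via the $\iota$-symmetry, to the family $\mu^{uu}_\l$ constructed after Proposition~\ref{measureNonu}). To compare integrals over nearby leaves $W^{ss}_{loc}(v)$ and $W^{ss}_{loc}(v')$ I use the central-unstable holonomy $H=H_{v,v'}$ and its Jacobian $J_\l$, which is uniformly bounded and uniformly H\"older in $\l\in[0,\l_0]$ thanks to the cocycle formula $\rho_\l=k_\l^{2}e^{-P(\l)\b}$ together with Propositions~\ref{HolderNonu} and~\ref{Harnack}. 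The H\"older continuity in $v$ at fixed $r$ then follows from writing
\[
h_{r,v',\l}-h_{r,v,\l}=\int_{W^{ss}_{loc}(v)}\!\left[\psi\!\left(\tfrac{d_{ss}(v',Hw)}{r}\right)J_\l(w)-\psi\!\left(\tfrac{d_{ss}(v,w)}{r}\right)\right]d\mu^{ss}_\l(w),
\]
and estimating the bracket by the H\"older moduli of $H$, $d_{ss}$ and $J_\l$, all uniform in $\l$.

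Monotonicity in $r$ is immediate: $r\mapsto d_{ss}(v,v')/r$ is decreasing and $\psi$ is nonincreasing on $[0,1]$. For the derivatives, note that off the set $\{d_{ss}(v,v')=r\}$ one has $\partial_r\psi(d_{ss}(v,v')/r)=d_{ss}(v,v')/r^{2}\cdot\mathbf{1}_{\{d_{ss}(v,v')<r\}}$, bounded by $1/r$ on the support, so differentiation under the integral is legitimate and yields
\[
\frac{d^{\pm}}{dr}h_{r,v,\l}=\int_{\{d_{ss}(v,v')<r\}}\frac{d_{ss}(v,v')}{r^{2}}\,d\mu^{ss}_\l(v')\;\geq\;\frac{1}{2r}\,\mu^{ss}_\l\!\left(\{r/2<d_{ss}(v,v')<r\}\right),
\]
(the two one-sided derivatives agree since the sphere $\{d_{ss}(v,v')=r\}$ is $\mu^{ss}_\l$-null). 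A uniform lower bound on this annulus mass for $r$ in any compact subinterval of $(0,\d']$ would complete the proof.

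The main obstacle is precisely this uniform positive lower bound on $\mu^{ss}_\l$ of small strong-stable annuli. I expect to obtain it from the Gibbs property of the equilibrium states $m_\l$ of $\vf_\l$: by Proposition~\ref{measureNonu} and the construction of $\mu^{ss}_\l$, the density of $\mu^{ss}_\l$ on transversals is $k_\l^{2}e^{-P(\l)\b}$ times a reference Patterson--Sullivan measure, an expression that is H\"older continuous and uniformly bounded above and below in $\l\in[0,\l_0]$ and over $M_0$ thanks to Proposition~\ref{Harnack} and Corollary~\ref{boundonmeasures}. Combined with the $\G$-cocompactness of $S\M$ and the continuity $\l\mapsto\mu_\l$ of Corollary~\ref{cor:3.10}, this yields the required uniform-in-$(v,\l)$ positive mass of annuli $A_{r/2,r}(v)$, hence the claimed lower bound on the right and left derivatives.
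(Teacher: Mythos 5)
Your proof follows the same route as the paper's: continuity by dominated convergence, H\"older continuity in $v$ via the central-unstable holonomy and its Jacobian, and the derivative lower bound reduced to a positive mass estimate on strong-stable annuli. Two points, however, are not quite right and matter for the uniformity that the lemma asserts.

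First, for uniformity in $\lambda\in[0,\lambda_0]$ of the H\"older modulus of the holonomy Jacobian you cite Propositions~\ref{HolderNonu} and~\ref{Harnack}, but Proposition~\ref{HolderNonu} is stated only for $\lambda<\lambda_0$, with a constant $C(\lambda)$ that is a priori uncontrolled as $\lambda\to\lambda_0$. The uniform bound up to and including $\lambda_0$ is exactly what Proposition~\ref{demiMartin} provides (it is the content of the Ancona--Gou\"ezel inequality), and the paper's proof invokes that proposition, not Proposition~\ref{HolderNonu}, at this step. Without it the H\"older claim uniform over $[0,\lambda_0]$ does not follow.

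Second, your sketch for the annulus lower bound describes the density of $\mu^{ss}_\lambda$ as ``$k_\lambda^{2}e^{-P(\lambda)\beta}$ times a reference Patterson--Sullivan measure.'' There is no fixed reference measure: that cocycle relates $\mu^\lambda_y$ to $\mu^\lambda_x$ for the \emph{same} $\lambda$, while the base measure $\mu^\lambda_x$ itself varies with $\lambda$. The correct mechanism is the other one you name: continuity of $\lambda\mapsto\mu^\lambda$ on the compact interval $[0,\lambda_0]$ (Corollary~\ref{cor:3.10}), full support of each $\mu^{ss}_\lambda$ on the leaf, and $\Gamma$-cocompactness of $S\M$ give the uniform positive lower bound on the measure of annuli $\{r/2<d_{ss}(v,\cdot)<r\}$ for $r$ in a compact subinterval of $(0,\delta']$. (The paper's proof is equally terse here, asserting the lower bound after displaying the one-sided derivative formulas.) Finally, note that the paper deliberately keeps the left and right derivatives distinct since $\mu^{ss}_\lambda$-nullity of the spheres $\{d_{ss}(v,\cdot)=r\}$ is not established; your parenthetical claim that they agree is unsupported, although it is also not needed since your lower bound applies to both one-sided derivatives.
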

\begin{proof} The continuity is as in Margulis's Lemma 7.1 in \cite{Ma2}(p.51). The proof also yields H\"older continuity in $v$. Indeed, $W^{ss}_{loc }(v) $ depends on $v$ in a H\"older continuous way and if $v_1, v_2 $ are close, the holonomy $H_1^2$ from $W^{ss}_{loc }(v_1) $ to $W^{ss}_{loc} (v_2) $ along $W^{cu}$  is H\"older continuous, and satisfies for $v_1', v_1'' \in W^{ss}_{loc} (v_1) ,$ 
\[ d (v_2, H_1^2 v_1) \leq C (d(v_1, v_2))^\a , \; {\textrm { and }} \; |d(H_1^2 v'_1, H_1^2 v''_1 ) -d(v_1', v_1'') | \leq C  (d(v_1', v_1'' ))^\a.\] Moreover  the logarithm of the Radon Nikodym derivatives of the measure $(H_1^2)_\ast \mu_\l  ^{ss} (v_2')  $ with respect to $ \mu_\l  ^{ss} (v_2')  $ is given by  $$
\log  \rho_\l(v_2, H_1^2 v_1)   =  \log k^2_\l (v_2, H_1^2 v_1, \xi) +P(\l)\beta(v_2, H_1^2 v_1,\xi)$$ (see (\ref{holonomy})) and thus it is at most proportional to $ d (v_2, H_1^2 v_1) $ (uniformly in $\l$). 
Since $d (v_2, H_1^2 v_1) \leq C (d(v_1, v_2))^\a$, we can report in the definition of $h_{r,v,\l}$ and see that, for $v_1, v_2 $ close, \[ |h_{r,v_1,\l} - h_{r,v_2 ,\l }| \; \leq \; C(r) (d(v_1,v_2) )^{\a} , \] 
where the constant $C(r) $ is uniform in $\l \in [0, \l _0] $ and goes to infinity as $r \to 0 $.

 Direct computation shows that, as $r $ varies from $0$ to $\d'$, the function $r \mapsto h_{r,v,\l }$ is 
  increasing and admits left and right derivatives    given by $$\frac{\pp }{\pp r} h_{r,v,\l}|_{r-}  =  \lim _{r' <r, r' \to r}  \int_{W^{ss}_{loc}(v)} \frac{1}{r'} d_{ss}(v, v')\chi _{d(v,\cdot) \leq r'} (v') \;d\mu^{ss}_\l (v') $$  and 
 $$\frac{\pp }{\pp r} h_{r,v,\l}|_{r+}  =   \int_{W^{ss}_{loc}(v)} \frac{1}{r} d_{ss}(v, v')\chi _{d(v,\cdot) \leq r} (v') \;d\mu^{ss}_\l (v') .$$ 
The left and right derivatives are bounded from below by a positive constant uniformly in $v,\l$ and $r$ away from $0$.
 \end{proof}
  
For given $\g_0>0$, choose $r^\pm_\l(v, \g_0)$ such that $h_{r^\pm_\l (v, \g_0), v, \l} = \g_0 (1+\e )^{\pm 1} $.
Now choose $\g _0$ so that $r^\pm_\l(v, \g _0) <  \e \d'/2$ for all $v$ and $\l$. Set $r^\pm_\l (v) := r^\pm_\l (v, \g_0 ).$ 
By the Implicit function theorem with H\"older coefficients, 
\footnote{We have $h_{r(v), v} =\g_0 =  h_{r(v'),v'} $ so that 
$ |h_{r(v),v } - h_{r(v'),v} | = |h_{r(v'),v} - h_{r(v'),v'}| \leq C(d(v,v'))^\a, $ 
with uniforms $C, \a$. But $ |h_{r(v),v } - h_{r(v'),v} | $ is greater than $|r(v) - r(v') | $ times the derivative at $r$ of $r \mapsto h_{r,v}$ and the derivative is bounded from below.}
the functions $r^\pm_\l (v) $ are H\"older continuous uniformly in $\l $ for $\l \in [\l_0 - \d(\e), \l_0]$ and $v$.

Now for $w = (v, v',t) \in N_\d(S_{\ppi x} M), \l \in [ \l_0- \d(\e), \l_0]$, define 
$$ a^\pm_\l (w) = \psi \left( \frac{d_{ss} (v, v')}{r^\pm_\l (v)} \right).$$

Properties similar to Lemma~\ref{hHolder} holds for
the function $$(r,u, \l) \mapsto h_{r,u,\l} = \int_{W^{uu}_{loc}(u)} \psi \left(\frac{d_{uu}(u, u')}{r}\right) d\mu^{uu}_\l (u'),$$thus we can define $r'^{\pm}_\l(u)$ analogously: $\g_0'$ is chosen so that $r'^{\pm}_\l(u,\g_0')<\e \d'/2$ and  $r'^{\pm}_\l(u)$ is such that $h_{r^\pm_\l (u), u, \l} = \g_0' (1+\e )^{\pm 1} $.
For $w=(u,u',s) \in  N_\g(S_{\ppi y} M)$, define
$$ b^\pm_\l (w) = \psi \left( \frac{d_{uu} (u, u')}{r'^\pm_\l (u)} \right).$$
The functions $a^\pm, b^\pm$ satisfy the properties of  Step 1. 
$\qed$ 

\begin{remark}\label{remark:lemma4.5} For $\zeta >0 $ small, set, for $t \in  \R,$  $\wt {\psi} _\zeta ^\pm (t) := \max \{1\pm \zeta -|t|, 0\}.$ For $v \in SM$, there are  unique $\zeta _\l^\pm (v)$ such that \[ \int_{W^{ss}_{loc}(v)} \wt{\psi }_{\zeta _\l^\pm (v)} ^\pm \left(\frac{d_{ss}(v, v')}{r_\l^\pm (v)}\right) d\mu^{ss}_\l (v') = \g_0 (1+\e)^{\pm 2} .\] We have an analogous property in coordinates $(u,u',s).$
By continuity,  we can choose $\zeta _0$, $ \zeta _0 := \inf \{ \zeta _\l^\pm (v), \zeta _\l^\pm (u) \}$ such that for all $u,v \in SM$, all $\l \in [0, \l_0],$
\begin{eqnarray*} \g_0 (1+\e)^{-2} &\leq &  \int_{W^{ss}_{loc}(v)} \wt{\psi }_{\zeta _0}^- \left(\frac{d_{ss}(v, v')}{r_\l^-(v)}\right) d\mu^{ss}_\l (v') \\ &\leq  &  \int_{W^{ss}_{loc}(v)} \wt{\psi }_{\zeta _0}^+ \left(\frac{d_{ss}(v, v')}{r_\l^+(v)}\right) d\mu^{ss}_\l (v')  \leq     \g_0 (1+\e)^{2} \\
 \g'_0 (1+\e)^{-2} &\leq &   \int_{W^{uu}_{loc}(u)} \wt{\psi }_{\zeta _0}^- \left(\frac{d_{ss}(u, u')}{r_\l'^-(v)}\right) d\mu^{ss}_\l (u')  \\ & \leq  & \int_{W^{uu}_{loc}(u)} \wt{\psi }_{\zeta _0}^+ \left(\frac{d_{ss}(u, u')}{r_\l'^+(v)}\right) d\mu^{ss}_\l (u')   \leq   \g'_0 (1+\e)^{2}.
 \end{eqnarray*}
 Observe that, given $(M, g)$, the value of $\zeta _0$ depends only on our choices of $\e, \g_0$ and $\g'_0$.
\end{remark}

{\it{Step 2. Definition of $F^\pm_\l, H^\pm_\l $ and Property (1)}}

Consider  Lipschitz continuous  $\chi _\pm (t) $ on $\R$ such that, for all $t \in \R$, $$\chi_{(1+\e)^{-2}}(t) \leq \chi _-(t)  \leq \chi_{(1+\e)^{-1}}(t) \leq  \chi (t) \leq  \chi_{(1+\e)}(t) \leq \chi _+(t)  \leq  \chi_{(1+\e)^2}(t). $$ Now for $w = (v,v', t)$, define
$$F^{\pm}_\l (w) = \chi _\pm (t/\d')a_{\pm}(v')( \th _\l^{-2} f)(v)$$
and for $ w = (u,u',s)$, $$H^\pm_\l(w) = \chi_\pm (s/\g) b_\pm(u')( \th _\l^{-2}h)(-u),$$
for some $\g< \d'\e/2$.

Recall that the systems of coordinates  $(v,v',t)$ and $(u,u',s)$ are H\"older continuous uniformly in $x$ and $y$. The functions $F^\pm_\l, H^\pm_\l $ in those coordinates are compositions of H\"older continuous functions ($\psi, f, h$) and of the functions $r_\pm, r'_\pm$ that depend on $v$ in a H\"older continuous way, uniformly in $\l \in [ 0, \l_0]$ by Step 1, which proves Property (1).

{\it{ Step 3. Properties (2), (3) and (4)}}

Recall that under Hopf parametrization introduced in Section~\ref{sec:2}, if we let $x_0=x$, the lift $\wt m_\l $ of $m_\l$ to $S\M$ is given by $$d\wt m_\l (\xi, \eta, t) =
 \Om _\l (\th_x^\l)^2 (\xi ,\eta) e^{2P(\l) (\xi | \eta)_x} [d\mu^\l _x (\xi) \x d\mu^\l_x(\eta)\x dt ] .$$

Consider $ \wt w =  \wt w(\xi, \eta, t)$ close to $S_x\M$ and write the coordinates $(v,v',t)$ of $ w = \Dppi \wt w$ as:
\[ v = \Dppi (\sigma _x^{-1} (\eta)), \quad v' = \Dppi (W^{ss} (\sigma _x^{-1} (\eta) )\cap \g_{[\xi, \eta]} ), \quad t = t.\]
In particular, $w $ is close to $v$ and 
$$ \th_x^\l (\xi , \eta) = \th _\l (w) k_\l (x, p(w), \xi )  k_\l (x, p(w), \eta) \sim^{(1+\e)^2} \th _\l (w) \sim^{(1+\e)^3} \th _\l (v) ,$$ and $$e^{-P(\l ) (\xi , \eta )_x } \sim ^{(1+\e)^2} 1 .$$

We see that the measure $\wt m_\l $ has a density $\sim^{(1+\e)^8} \Om _\l \th _\l^{2} ( v) $ 
with respect to the product measure $ d\mu _x^\l (\xi) \x d \mu _x^\l (\eta) \x dt.$
When we change coordinates from the Hopf parametrization $(\xi, \eta, t)$ to the coordinates $(v,v',t)$ in a neighborhood of $S_xM$, the  mapping $(\eta, t) \mapsto (v,t) $ sends the measure $d\mu _x^\l (\eta) \x dt $ to the measure $d\mu^{uu}_\l (v) \x dt$ (see equation \ref{eqn:2.9}), the mapping $\xi \mapsto v'$ sends the measure $d\mu _x^\l $ to a measure with density $\sim ^{(1+\e )^4} 1$ with respect to the measure $d\mu _\l ^{ss} (v') $.
This implies that   in the neighborhood of $S_{\ppi x} M$, the measure $ m_\l$ in the coordinates $(v,v', t) $ has a density  
$ \sim^{(1+\e)^{12}}$ with respect to the measure 
$$ \Om _\l \th^2_\l (v)  [d\mu_\l ^{uu} (v) \x d\mu_\l^{ss}(v')\x dt ].$$
 
Since $\d' (1+\e)^{-1} \leq  \int \chi_- \left(\frac{t}{\d'} \right) dt \leq  \int \chi_+ \left(\frac{t}{\d'} \right) dt \leq \d'(1+\e) $, it follows that 
\begin{eqnarray*}
 \int F^+_\l  (w,\l ) d\m_\l &\leq &(1+\e )^{12}\Om _\l \int \chi_\pm \left(\frac{t}{\d'} \right) dt  \int_{S_{\ppi x M}} \left( \int _{W^{ss}_{loc}(v)} a^\pm_\l (v') d\mu _\l^{ss} (v') \right) f( v) d\mu ^{uu}_\l (v) \\
& \leq & {(1+ \e)^{14}}  \Om_\l  \d' \g_0 \int _{S_{\ppi x} M} f( v)  d\mu _\l^{uu} (v),
\end{eqnarray*} and
$$
 \int F^-_\l  (w,\l ) d\m_\l \geq  {(1+ \e)^{-14}}  \Om_\l  \d' \g_0 \int _{S_{\ppi x} M} f( v)  d\mu _\l^{uu} (v).
 $$
 
Similarly, in the $\d'$-neighborhood of any lift of $S_{ \ppi y} M$, we have, in the $(u,u',s)$ coordinates, where $ u \in S_y \M, u' \in W^{uu}_{loc}(u), |s| \leq 2\d' $, 
$$dm_\l (u,u',s) \sim^{(1+\e)^{12}}
 \Om _\l \th^2_\l (u)  [d\mu_\l ^{uu} (u') \x d\mu_\l^{ss}(u)\x ds ].$$

The analog computation yields that
\begin{eqnarray*} &  & (1+\e)^{-14} \Om_\l \g \g'_0 \int _{S_{\ppi y} M} h(-u)  d\mu _\l^{ss} (u)\; \leq \; \int H^-_\l d\m_\l \\ & \leq &\; \int H^+_\l d\m_\l \; \leq \;  (1+\e)^{14} \Om_\l \g \g'_0   \int _{S_{\ppi y} M}  h(-u)  d\mu _\l^{ss} (u). \end{eqnarray*}
This shows Properties (3) and (4).
Property (2) follows as $ \int f d\mu_\l^{uu}  $ and $ \int h d\mu _\l^{ss} $ are bounded away from 0, uniformly in $ x, y $ and $ \l \in [0, \l_0]$ by Corollary~\ref{boundonmeasures}. 

{\it{ Step 4. Preparation for property (5)}}

We have to estimate \[ \Si (x,y,R,\d') = \underset{ \{ (v,\rr) : v \in S_{\ppi x} M \cap \gg_{-\rr} S_{\ppi y } M\}}{\sum} \chi _{\d'} (R-\rr)  f (v )(\th _\l^{-2}h)(-\gg_\rr v)\frac{d \mu_\l ^{uu } }{d\gg_{-\rr}  \mu_\l ^{uu}} ( v). \]
For the second inequality of property (5), for each $v_0 \in  S_{\ppi x} M \cap \gg_{-\rr} S_{\ppi y } M$ for some $\rr, |\rr-R| < \d'/2, $ let $$ B(v_0) \; := \; \{ w \in SM, d(\gg_\rr w, \gg_\rr v_0  ) \leq 2\d' \; {\textrm {for}} \; 0 \leq \rr \leq R \} .$$ If $\d'_0$ is small enough, the sets $B(v_0), B(v_0') $ associated to distinct $v_0 , v_0'$ are disjoint by expansivity of $\gg_t$. We will show in Step 5 that for each such $v_0$, 
\begin{equation}\label{F+H+} f (v_0 )(\th _\l^{-2}h)(-g_\rr v_0)\frac{d \mu_\l ^{uu } }{d\gg_{-\rr}  \mu_\l ^{uu}} ( v_0) \; \leq \; \frac{(1+\e)^{30}}{\Om _\l \g \g_0 \g_0' }  \int_{B(v_0)} F^+_\l H^+_\l\circ \gg_R dm_\l.\end{equation}
 The second inequality of Property (5) follows by summing over all possible $v_0$.

For the first inequality of property (5), assume $F^{-}_\l(w) H^-_\l(\gg_Rw) \neq 0$. Then, we claim that 
there is a unique $ v_0 \in S_{px}M$  and $\rr \in \R_+$ such that $g_\rr v_0 \in S_{py}M, w \in B(v_0),$ {$ |R-\rr| < (1+\e) \d'/2 $}. We will show in Step 5 that the  following equation holds
 \begin{equation}\label{F-H-} (1+\e)^{30} f (v_0 )(\th _\l^{-2}h)(-g_\rr v_0)\frac{d \mu_\l ^{uu } }{d\gg_{-\rr}  \mu_\l ^{uu}} ( v_0)\;  \geq \; \frac{1}{\Om _\l \g \g_0 \g_0' }  \int_{B(v_0)} F^-_\l H^-_\l\circ \gg_R dm_\l.\end{equation}
 The first inequality of Proposition (5) follows since the union of all $B(v_0) $ covers the set where $F^{-}_\l H^-_\l\circ \gg_R$ does not vanish.

To prove the claim, by negative curvature, it suffices to find a vector $v_0$ such that $d( w,  v_0  ) \leq 2\d' ,$ and $d(\gg_R w, \gg_R v_0  ) \leq 2\d' .$ The vector $v_0$ will be found at the intersection of  $S_{px }M $ with {$ \cup_{\tau, |\tau | \leq \d'/2} \gg_{-R+\tau }S_{py }M$}. Using the coordinates $(v,v',t)$ of $w$ and $(u,u',s) $ of $\gg_Rw$, observe that $d(\gg_{R} v , \gg_R v') < e^{-Ra_4}\d'$, $\gg_R v' = \gg_{s-t} u'$ and that $W^{uu}_{loc} (\gg_{s-t} u') $ intersects $\gg_{s-t} S_{py} M$  at $\gg_{s-t}u$ with $d_{uu} (\gg_{s-t}u, \gg_{s-t}u') < \d' \e / (1+\e)$. \footnote{ We have  $d_{uu} (u, u') < \frac{\d' \e } {2(1+\e)}$ and the $d_{uu} $ distance is expanded under $\gg_{s-t} $ by less than $e^{a_4 \d'} <2.$} 
For $R$ large enough,  the manifolds  $W^{uu}_{loc} (\gg_Rv'), W^{uu}_{loc} (\gg_Rv)  $ and $\gg_{R}S_{px} M$ are so  close that they all intersect {$ \cup_{\tau, |\tau | \leq 2\d'\e / (1+\e)} \gg_{\tau }\gg_{s-t}S_{py }M$} and the distances between the intersections is smaller than $\d' \e /16.$ We have found a point $v_0 \in S_{px}M $ and $\rr$ such that $\gg_\rr v_0 \in S_{py} M$. 
 The value of $\rr$ satisfies $ |\rr-R+t -s| \leq \d'\e /8$. Since $|s| \leq \g/2 < \d' \e /4$ {and $|t| < \d'/2$}, we have indeed  {$|R-\rr| < \d'/ 2  + 3\d' \e /8 < (1+\e)\d' /2.$}

The proof of Property (5) reduces to the proof of equations \ref{F+H+} and \ref{F-H-}.

{\it{ Step 5. Property (5): Proof of equations \ref{F+H+} and \ref{F-H-}}}

Fix $v_0 \in  S_{\ppi x} M \cap \gg_{-\rr} S_{\ppi y } M$ for some $\rr, |\rr-R| < \d'/2 .$
 Using the coordinates $(v,v',t)$ of $w$ and $(u,u',s) $ of $\gg_Rw$, we write  \begin{eqnarray*} &  \int _{B(v_0)} F^\pm_\l (w)  H^\pm_\l (\gg_R w ) \, dm_\l(w)  \; = \\  
& \int_{B(v_0)} \th _\l^{-2} f (v(w)) \th _\l^{-2} h (-u(\gg_Rw)) \chi _\pm (\frac{t (w)}{\d'}) \chi _\pm (\frac{s(\gg_Rw)}{ \g}) a_\pm (v'(w)) b_\pm (u'(\gg_Rw)) dm_\l(w).\end{eqnarray*}
  and we calculate this integral up to $(1+ \e)^{30}$.

Firstly, the functions $f(v(w)),h(-u(\gg_Rw)),\th_\l(v(w))$ and $\th_\l(-u(\gg_Rw))$ vary with ratio less than $(1+\e)$ on each $B(v_0) $.
Secondly, the measure $dm_\l (w) $ is the product of  the Lebesgue measure on the direction of the flow 
 and some measure on transversals, which we denote by $dm^\perp_\l (w) $. Furthermore, inside each geodesic intersected with $B(v_0)$, $t(w) - s(w) $ is constant. Recall that $\g < \d' \e/2.$ If there is $w \in B(v_0)$ with $t(w) \leq \d'/2$ such that $s(\gg_\rr w) \leq \g/2$ for some $\rr$ close to $R$, we still have $t(\gg_\tau w) \leq \d'/2$ and $s(\gg_{\rr +\tau} w) \leq \g/2$ for an interval of length $\g$ of values of $\tau$ unless $t(w) \geq \d'/2 - \g$ or $t(w) \leq -\d'/2 +\g.$ In all cases, we have 
$\int  \chi _- (t /\d') \chi _- (s/ \g)\, dt  \leq \int \chi_{+}(s/\g) ds \leq  (1+\e)^2\g , \int  \chi _+ (t /\d') \chi _+ (s/ \g)\, dt  \geq \int \chi_{-}(s/\g) ds \geq  (1+\e )^{-2} \g .$

It remains to estimate 
$ \int_{(B(v_0))^\perp }  a_\pm (v'(w)) b_\pm (u'(\gg_Rw)) dm^\perp_\l(w), $ where $\perp $ is a projection on some well chosen transversal to the flow direction in $v_0$. 
For $d(w, v_0) < 3\d'$, define $v''(w) = W^{ss} (v_0) \cap W^{cu} (w), u''    (w) = W^{uu} (v_0) \cap W^{cs} (w).$ For a transversal to the flow in $v_0$, the system $(v'',u'') $ form a system of coordinates in the neighborhood of $v_0$.

As before, the measure $\m_\l$ restricted to $B(v_0)$ satisfies
\begin{equation}\label{localproduct} dm_\l (u'',v'',t) \sim^{(1+\e)^{4}}
 \Om _\l \th^2_\l (v_0)  [d\mu_\l ^{uu} (u'') \x d\mu_\l^{ss}(v'')\x dt]. \end{equation}
We claim that if $R$ is large enough, then $ d(v'(w),v''(w)) \leq \zeta_1,$ where $\zeta _1 $ will be chosen later. Indeed, $v' (w)$ and $v'' (w)$ are on the same central unstable manifold.  There is $v''' \in W^{uu} (v'') $ and a time shift $\tau '$ such that $v' = \gg_{\tau'} v'''.$  We have $d(v'(w),v''(w)) \leq d(v''(w),v'''(w)) + \tau '.$ For $R $ large enough  $d(v''(w),v'''(w)) < \zeta _1 /3.$ To estimate $\tau'$, observe that this is the same time shift as the one between $\gg_t v'' $ and $\gg_t v''',$  i.e.  the intersections of $W^{ss} (\gg_tv)$ and $W^{ss} (\gg_t v_0)$ with the same central unstable manifold. The points $\gg_t v$ and $\gg_t v_0$ are $\d'$-close, since they are both $\d '$-close to $\gg_t w$. The  time shift as the one between $\gg_t v'' $ and $\gg_t v'''$ is of the order of the sum of $d(\gg_t v_0 , \gg_t v'') $ and the distance between $\gg_t v$ and $W^{uu} (v_0).$ Both distances can be made smaller than $\zeta _1 /3 $ by choosing $R$ large enough.

Since the functions $r^\pm $ are H\"older continuous, one may choose $\zeta _1$ in such a way that if $ d(v'(w),v''(w)) \leq \zeta_1,$ then, for all $\l \in [0, \l_0]$,  $|\frac{d_{ss}(v, v')}{r_\l^-(v)} - \frac{d_{ss}(v_0, v'')}{r_\l^-(v_0 )}| \leq \zeta _0, $ where $\zeta _0 $ is given by Remark \ref{remark:lemma4.5}. Then, 
\[  \wt{\psi }_{\zeta _0}^- \left(\frac{d_{ss}(v_0 , v''(w))}{r_\l^-(v_0)}\right)  \leq a_-(v'(w)) \leq a_+(v'(w)) \leq  \wt{\psi }_{\zeta _0}^+ \left(\frac{d_{ss}(v_0 , v''(w))}{r_\l^+(v_0)}\right) .\]

In the same way, reasoning around $\gg_Rv_0$,  we have, if $R$ is large enough,
\[  \wt{\psi }_{\zeta _0}^- \left(\frac{d_{uu}(\gg_Rv_0 , \gg_Ru''(w))}{r_\l'^-(\gg_Rv_0)}\right)  \leq b_-(u'(\gg_Rw)) \leq b_+(u'(\gg_Rw)) \leq  \wt{\psi }_{\zeta _0}^+ \left(\frac{d_{uu}(\gg_Rv_0 , \gg_Ru''(w))}{r_\l'^+(\gg_Rv_0)}\right) .\]
Using (\ref{localproduct}), we obtain that the integrals $ \int_{(B(v_0))^\perp }  a_\pm (v'(w)) b_\pm (u'(\gg_Rw)) dm^\perp_\l(w)$ are, up to $(1+\e)^4$, given by $ \Om _\l \th^2_\l (v_0)  $ times
\[ \int _{W^{ss} (v_0 ) \x W^{uu}(v_0) } \wt{\psi }^\pm_{\zeta _0} \left(\frac{d_{ss}(v_0 , v'')}{r_\l(v_0)}\right)  \wt{\psi }_{\zeta _0}^\pm \left(\frac{d_{uu}(\gg_Rv_0 , \gg_Ru'')}{r_\l'(\gg_Rv_0)}\right)   \, d\mu_\l ^{uu} (u'') \x d\mu_\l^{ss}(v'').\]This is the integral of a product over  a product measure. We have, by our choice of $\zeta _0$
\[  \int _{W^{ss} (v_0 ) } \wt{\psi }^\pm_{\zeta _0} \left(\frac{d_{ss}(v_0 , v'')}{r_\l(v_0)}\right)\,  d\mu_\l^{ss}(v'') \; \sim ^{(1+ \e)^2} \g_0 .\]
Recall that, on $W^{uu} (v_0) $, $ \frac{d(\gg_R)_\ast \mu ^{uu} } { d\mu^{uu} } (u'')  \sim ^{(1+\e)^{4}}  \frac{d\gg_R \mu _\l^{uu} }{d  \mu _\l^{uu} }(\gg_Rv_0) = \frac{d\mu _\l^{uu} }{d \gg_{-R} \mu _\l^{uu} }(v_0), $ so that 
\begin{eqnarray*}  &  & \int _{W^{uu}(v_0) }  \wt{\psi }_{\zeta _0}^\pm  \left(\frac{d_{uu}(\gg_Rv_0 , \gg_Ru'')}{r_\l'(\gg_Rv_0)}\right)   \, d\mu_\l ^{uu} (u'')\\ &  \sim ^{(1+\e)^4}  & \frac{d\mu _\l^{uu} }{d \gg_{-R} \mu _\l^{uu} }(v_0)  \int _{W^{uu}(\gg_Rv_0) }  \wt{\psi }_{\zeta _0}^\pm \left(\frac{d_{uu}(\gg_Rv_0 , u'')}{r_\l'(\gg_Rv_0)}\right)   \, d\mu_\l ^{uu} (u'') \\ & \sim ^{(1+\e)^6}  &\frac{d\mu _\l^{uu} }{d \gg_{-R} \mu _\l^{uu} }(v_0) \g'_0.
\end{eqnarray*}
Altogether, we see that 
\begin{eqnarray*} &  &  \int _{B(v_0)} F^\pm_\l (w)  H^\pm_\l (\gg_R w ) \, dm_\l(w)   \\ &   \sim ^{(1+\e)^{30}} & \th _\l^{-2}(v_0) f(v_0) \th _\l^{-2}(\gg_R v_0) h(-\gg_Rv_0) \x \g \x \Om _\l \th _\l^2 (v_0) \x \g_0 \x \frac{d\mu _\l^{uu} }{d \gg_{-R} \mu _\l^{uu} }(v_0) \g'_0. \end{eqnarray*}
This proves equations \ref{F+H+} and \ref{F-H-} and achieves the proof of property (5).

{\it{Step 6. End of the proof of Proposition~\ref{LC}}}

By Properties (1),  (2) we can apply Proposition~\ref{4.1} and find $R_0, \d_0 $ independent of $\l, x, y $ such that for $R >R_0, \l \in [\l_0 - \d_0 , \l_0 ]$, 
\begin{eqnarray*}  \int F^-_\l H_- \circ \gg_R \, dm _\l &\sim ^{(1+\e) }&  \int F^-_\l\,  d\m_\l  \int H_- \,dm_\l \\  \int F_+  H_+ \circ \gg_R \, dm _\l  &\sim ^{(1+\e)} & \int F_+\,  d\m_\l  \int H_+\,dm_\l .\end{eqnarray*}
We get 
\begin{eqnarray*} &  &\Om_\l  \g_0 \g \g_0'  \Si (x,y,R,\d') \\ && \quad \sim^{ (1+\e)^{60} }\; \Om_\l^2 \d ' \g_0 \g \g_0'  \left(\int_{S_{\ppi x} M }  f(v)\, d\mu _\l^{uu} (v) \right) \left( \int _{S_{\ppi y} M } h(-u) \, d\mu _\l^{ss} (u)\right),\end{eqnarray*}
which is the statement of Proposition~\ref{LC} after dividing both  terms  by $\Om _\l \g_0 \g \g_0' .$

The condition on $\d'$ before step 1 depends on functions $f,h$ (see Remark~\ref{rmk:8}). The conditions on $R$ and $\d$ have been geometric in Steps 1 to 5 and depend only on $\e.$ 
Now  $R_0$ and $\d_0$  are given by  Proposition~\ref{4.1}  and depend   on $\e, ||F_\l^\pm||_\a, ||H_\l^\pm||_\a, \inf_\l \int F_\l^\pm \,dm_\l $ and  $ \inf_\l \int H_\l^\pm \,dm_\l$. Finally,   $||F_\l^\pm||_\a, ||H_\l^\pm||_\a, \inf_\l \int F_\l^\pm \,dm_\l $ and  $ \inf_\l \int H_\l^\pm \,dm_\l$ themselves depend only on $\e, ||f||_\a, ||h||_\a, \inf f$ and  $ \inf h.$
\end{proof}

\subsection{Convergence of measures} 
We state in this subsection several consequences and variants of Theorem~\ref{renewal2} which will be used in the next sections. Set $\Om := \Om_{\l_0}$ and $\Upsilon:=\Upsilon_{\l_0}$.

First, observe that the expression (\ref{eqn:5.4}) is continuous in $\l$ as $\l \to \l_0$ by Corollary~\ref{cor:3.10}. By choosing $\d_1 =\d_1(f,h,\e) $ such that for $\l \in [\l_0 - \d_1, \l_0]$
\begin{eqnarray*} 
&  & \Om    \int _{\pp \M }  f( \ppi \circ \s ^{-1}_x \xi) d\mu _x^{\l_0 }(\xi )\int_{M_0}\left( \int_{ \pp \M}  h (\Dppi \circ \s^{-1}_y \xi )   d\mu_y^{\l_0} (\xi) \right) \mathrm{dVol}(y)\\
& \sim ^{(1+\e')}& \Om _{\l}   \int _{\pp \M }  f( \ppi \circ \s ^{-1}_x \xi) d\mu _x^{\l }(\xi )\int_{M_0}\left( \int_{ \pp \M}  h (\Dppi \circ \s^{-1}_y \xi )   d\mu_y^{\l} (\xi) \right)\mathrm{dVol}(y),
\end{eqnarray*}
we obtain a corollary of Theorem~\ref{renewal2} by taking $\d (f,h ,\e') < \d_1(f,h,\e')$
:
\begin{cor}\label{renewal6} 
Given $\e'>0$ and positive H\"older continuous functions $f,h$ on $SM$, there is $R(f,h,\e')$ and $\d( f,h, \e')$ such that if $R >R( f,h, \e')$ and $\l_0 - \l < \d( f,h,\e' )$, for all $x \in \M$, 
 \begin{equation*}  \hspace{-1.2 in}  e^{-R P({\l} )}  \int _{S(x, R)}  f(\Dppi v_x^y) h( \Dppi v_y^x)   G_\l^2 (x, y) dy \quad \quad \sim^{(1+\e')^4} \end{equation*} 
 \begin{equation*} \Om   \int _{\pp \M }  f( \Dppi \circ \s ^{-1}_x \xi) d\mu _x^{\l_0 }(\xi )\int_{M_0}\left( \int_{ \pp \M}   h (\Dppi \circ \s^{-1}_y \xi )   d\mu_y^{\l_0} (\xi) \right)
  d\mbox{Vol}(y),
 \end{equation*}
 where $R(f,h,\e')$ and $\d( f, h,\e')$ depends  only on $\e', ||f||_\a,$ $\inf f $ and  $\inf h.$ 
 
 \end{cor}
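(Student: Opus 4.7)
The plan is to combine Theorem~\ref{renewal2}, which provides the desired asymptotic but with the $\l$-dependent quantities $\Om_\l, \mu_x^\l, \mu_y^\l$ on the right-hand side, with the continuity of those quantities at $\l = \l_0$ furnished by Corollary~\ref{cor:3.10}. In other words, the corollary is a stability statement: one replaces the $\l$-dependent right-hand side of Theorem~\ref{renewal2} by its value at $\l_0$, absorbing the error into one extra factor of $(1+\e')$.

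More precisely, I would first apply Theorem~\ref{renewal2} with tolerance $\e'$, yielding constants $R(f,h,\e')$ and $\d(f,h,\e')$ such that for $R > R(f,h,\e')$ and $\l \in [\l_0 - \d(f,h,\e'), \l_0]$,
$$e^{-R P(\l)} \int_{S(x,R)} f(\Dppi v_x^y)\, h(\Dppi v_y^x)\, G_\l^2(x,y)\, dy \;\sim^{(1+\e')^3}\; \Om_\l \int_{\pp\M} f\, d\mu_x^\l \int_{M_0} \int_{\pp\M} h\, d\mu_y^\l\, d\Vol(y).$$
Then I would invoke Corollary~\ref{cor:3.10}, which asserts that $\Om_\l \to \Om$ and that the generalised Patterson-Sullivan family $\mu^\l$ depends continuously on $\l$ at $\l_0$. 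Since the integral $\int_{\pp\M} f(\Dppi \circ \pi_x^{-1}\xi)\, d\mu_x^\l(\xi)$ depends on $x$ only through $\ppi(x) \in M$ by the $\G$-equivariance of the family $\mu^\l$, the continuity in $\l$ is uniform in $x$ by compactness of $M$; the $y$-integration against $d\Vol$ over $M_0$ deals with the $y$-dependence in the same way. Hence one can select $\d_1 = \d_1(f, h, \e') > 0$ such that for $\l_0 - \l < \d_1$, the right-hand side above is within a factor of $(1+\e')$ of the same expression with $\l$ replaced by $\l_0$.

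Composing the two approximations and taking the new $\d(f,h,\e')$ to be the minimum of the two tolerances $\d(f,h,\e')$ and $\d_1$, I obtain the stated bound with factor $(1+\e')^4$. I do not expect any real obstacle here: the only point requiring attention is the uniformity in $x \in \M$ of the continuity $\l \mapsto \int f\, d\mu_x^\l$, and this is immediate from $\G$-equivariance (reducing the question to the compact fundamental domain $M_0$) together with the continuity supplied by Corollary~\ref{cor:3.10}, which itself rests on the continuity of the Martin kernel $k_\l$ and the potential $\vf_\l$ in $\l$ established in Propositions~\ref{demiMartin} and~\ref{derivativeHolder}.
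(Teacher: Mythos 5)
Your proposal matches the paper's own proof: apply Theorem~\ref{renewal2} to obtain the $(1+\e')^3$ approximation with the $\l$-dependent constants $\Om_\l, \mu^\l$, then invoke Corollary~\ref{cor:3.10} to pass to $\Om, \mu^{\l_0}$ at the cost of one extra factor $(1+\e')$, shrinking $\d$ as needed. Your additional remark about uniformity in $x$ via $\G$-equivariance and compactness of $M_0$ is a sound observation that the paper leaves implicit.
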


\begin{cor}\label{renewal5} Fix $x \in \M$. Given $\e'>0$ and a positive H\"older continuous function $f$ on $S_x \M$, there is $R(f,\e')$ and $\d( f,\e')$ such that if $R >R( f,\e')$ and $\l_0 - \l < \d( f,\e' )$, 
 \begin{equation}   e^{-R P({\l} )}  \int _{S(x, R)}  f( v_x^y)    G_\l^2 (x, y) dy \; \sim^{(1+\e')^4} 
\; \Om \int _{\pp \M }  f( \s ^{-1}_x \xi) d\mu _x^{\l_0 }(\xi ),
 \end{equation}  
 where $R(f,\e')$ and $\d( f,\e')$ depends  only on $\e', ||f||_\a,$ and $\inf f.$
In particular, for $\l= \l_0$, 
$$ \lim_{R \to \infty} e^{-R P({\l_0} )}  \int _{S(x, R)}  f(v_x^y)    G_{\l_0}^2 (x, y) dy = 
\; \Om  \int _{\pp \M }  f(  \s ^{-1}_x \xi) d\mu _x^{\l_0 }(\xi ).$$
\end{cor}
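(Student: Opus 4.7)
The plan is to reduce Corollary~\ref{renewal5} directly to Corollary~\ref{renewal6} by choosing $h \equiv 1$, after extending $f$ from the single fiber $S_x\M$ to a positive H\"older continuous function on the compact space $SM$.

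First, since $\ppi: \M \to M$ is a local isometry, the restriction of $\Dppi$ to $S_x\M$ is a H\"older diffeomorphism onto $S_{\ppi x}M \subset SM$. I transport $f$ along this diffeomorphism to obtain a positive $\a$-H\"older function $F_0 := f \circ (\Dppi|_{S_x\M})^{-1}$ on $S_{\ppi x}M$, with H\"older constant $L$, and extend it to $SM$ by the McShane formula
\[
F(w) \;:=\; \inf_{v\,\in\,S_{\ppi x}M}\bigl( F_0(v) + L\, d(v,w)^{\a}\bigr).
\]
A routine check gives that $F$ is $\a$-H\"older with constant $\le L$, satisfies $F|_{S_{\ppi x}M} = F_0$, and $F \ge \min F_0 > 0$ everywhere on $SM$. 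Consequently $F(\Dppi u) = f(u)$ for every $u \in S_x\M$; in particular $F(\Dppi v_x^y) = f(v_x^y)$ and $F(\Dppi \pi_x^{-1}\xi) = f(\pi_x^{-1}\xi)$.

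Next I apply Corollary~\ref{renewal6} to the pair $(F,h)$ with $h \equiv 1$; this $h$ is positive H\"older continuous with $\int h\, dm_\l = 1 > 0$, so the hypotheses are satisfied uniformly in $\l$. The left-hand side of Corollary~\ref{renewal6} becomes exactly $e^{-RP(\l)}\int_{S(x,R)} f(v_x^y)\,G_\l^2(x,y)\,dy$, while the right-hand side reduces to
\[
\Om\int_{\pp\M} f(\pi_x^{-1}\xi)\, d\mu_x^{\l_0}(\xi)\,\cdot\,\int_{M_0}\mu_y^{\l_0}(\pp\M)\, d\Vol(y).
\]
The second factor equals $1$ by the normalization of the Patterson--Sullivan family fixed in Proposition~\ref{measureNonu}, valid at $\l_0$ thanks to the continuity established in Corollary~\ref{cor:3.10}. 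Setting $R(f,\e') := R(F,1,\e')$ and $\d(f,\e') := \d(F,1,\e')$ from Corollary~\ref{renewal6} yields the desired $(1+\e')^4$-equivalence, with constants depending only on $f$ and $\e'$ since $h$ is fixed. The limit statement at $\l = \l_0$ then follows by letting $\e' \to 0$.

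There is no substantial obstacle: all the dynamical content---the uniform mixing of the geodesic flow and the Margulis-type argument on expanding spheres---is already packaged into Corollary~\ref{renewal6}. The only bookkeeping is to confirm that $F$ can be built with H\"older norm and positive lower bound controlled purely in terms of $f$, which the McShane extension achieves cleanly.
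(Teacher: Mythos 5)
Your proposal is correct and follows the same route as the paper: extend $f$ to a positive H\"older function on all of $SM$ (the paper equivalently says to take a $\G$-invariant extension on $S\M$), then apply Corollary~\ref{renewal6} with $h\equiv 1$ and use the normalization $\int_{M_0}\mu_y^{\l_0}(\pp\M)\,d\Vol(y)=1$. The McShane formula is a clean, explicit way to carry out the extension that the paper leaves implicit, but there is no substantive difference in the argument.
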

\begin{proof}
Extend $f$ to a $\G$-invariant H\"older continuous function on $S\M$ and consider the function induced on $SM$. The statement follows by letting $h = 1$ in Corollary ~\ref{renewal6}. \end{proof}
Letting $f = 1$ in Corollary~\ref{renewal6}, we obtain the convergence of measures announced in the introduction.
\begin{cor}\label{renewal4} Fix $x \in \M$. As $R \to \infty $ and $\l \to \l_0$, the measures $m_{x,\l, R}$ defined {in the introduction $(*)$} converge to the measure $ \Om \mu^{\l_0}_x (\pp \M)  \ov m $ on $SM,$ where $\ov m $ is given by, for any continuous function $h$ on $C(SM) ,$
\[ \int_{SM} h \, d\ov m  \; = \; \int_{M_0}\left( \int_{ \pp \M} h (\Dppi \circ \s^{-1}_y \xi )  d\mu_y^{\l_0} (\xi) \right)
  d\mbox{Vol}(y) .\] \end{cor}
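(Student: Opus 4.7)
The plan is to recognize Corollary~\ref{renewal4} as essentially a direct rewriting of Corollary~\ref{renewal6} in the special case $f \equiv 1$, followed by a standard density argument to pass from H\"older to continuous test functions.

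First I will unfold the definition. By construction, $m_{x,\l,R}$ is the $\Dppi$-pushforward of the lift of $e^{-RP(\l)} G_\l^2(x,y)\, dy$ on $S(x,R)$ to the inward unit vectors $\{v_y^x : y \in S(x,R)\}$, so for every $h \in C(SM)$,
\[ \int_{SM} h\, dm_{x,\l,R} \; = \; e^{-RP(\l)} \int_{S(x,R)} h(\Dppi v_y^x)\, G_\l^2(x,y)\, dy. \]
This is exactly the left-hand side of~\eqref{eqn:5.4} with $f \equiv 1$ (positive and trivially H\"older). Applying Corollary~\ref{renewal6} to a positive H\"older continuous $h$ therefore gives, for every $\e' > 0$, some $R(h,\e')$ and $\d(h,\e')$ such that whenever $R \geq R(h,\e')$ and $\l_0 - \l < \d(h,\e')$,
\[ \int_{SM} h\, dm_{x,\l,R} \; \sim^{(1+\e')^4} \; \Om\, \mu_x^{\l_0}(\pp \M) \int_{SM} h\, d\ov m, \]
since $\int_{\pp \M} d\mu_x^{\l_0} = \mu_x^{\l_0}(\pp \M)$. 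Writing a general real-valued H\"older $h$ as $h = h_+ - h_-$ (or adding a large constant) extends this weak-$*$ convergence to all H\"older test functions.

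To upgrade to arbitrary $h \in C(SM)$, I will invoke a density argument, which requires a uniform upper bound on the total masses $m_{x,\l,R}(SM)$. Proposition~\ref{Lemma2.5} gives $\int_{S(x,R)} G_{\l_0}^2(x,y)\, dy \leq C$ for $R \geq 1$, and inequality~\eqref{2.9} in its proof together with $P(\l) \leq 0$ on $[0,\l_0]$ (Corollary~\ref{Pnegative} and Corollary~\ref{Pnonpositive}) yields the same bound uniformly for $\l \in [0,\l_0]$ and $R \geq R_0$. Hence $m_{x,\l,R}(SM) \leq C$ uniformly in the relevant regime. Since $SM$ is compact and H\"older continuous functions are uniformly dense in $C(SM)$, a standard three-$\e$ argument promotes the weak-$*$ convergence from H\"older to continuous test functions, which gives the stated convergence.

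I expect the main (essentially only) obstacle to be the uniform mass control just described: Corollary~\ref{renewal6} supplies convergence only for a fixed H\"older test function, with quantitative thresholds $R(h,\e'), \d(h,\e')$ that depend on $h$, so any density extension must be anchored by an $h$-independent bound on $m_{x,\l,R}(SM)$. All the genuine dynamical work (thermodynamic formalism, Patterson--Sullivan measures, and uniform mixing) has already been absorbed into Theorem~\ref{renewal2}; the present corollary is a packaging statement.
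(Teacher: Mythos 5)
Your proposal follows the same route as the paper, which simply says ``letting $f=1$ in Corollary~\ref{renewal6}''; you have filled in the routine approximation step that the paper leaves implicit. One small logical slip is worth flagging: you first argue from \eqref{2.9} and $P(\l)\leq 0$ that $\int_{S(x,R)}G_\l^2(x,y)\,dy\leq C$ uniformly, and then write ``hence $m_{x,\l,R}(SM)\leq C$,'' but the second does not follow from the first since $m_{x,\l,R}(SM)=e^{-RP(\l)}\int_{S(x,R)}G_\l^2(x,y)\,dy$ and $e^{-RP(\l)}\geq 1$; fortunately \eqref{2.9} as written is precisely $e^{-RP(\l)}\int_{S(x,R)}G_\l^2\leq C$, so it yields the mass bound directly and the detour through $P(\l)\leq 0$ is unnecessary. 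With that correction, the uniform mass bound, the shift-by-a-constant trick to reach signed H\"older test functions, and the density argument in $C(SM)$ all go through, and the conclusion matches the paper's statement.
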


  In  the  proof of Theorem~\ref{renewal2}, the choice of $\d(f,h, \e') $ is only made in Step 6, when we want to use the uniform mixing of Proposition~\ref{4.1}. For a fixed $\l$, we can use instead the regular mixing of $m_\l $ for H\"older continuous functions and obtain a proof of Proposition~\ref{renewal}. We  can write, taking $f=h=1,$
\begin{cor}\label{formula} In Proposition~\ref{renewal}, the limit $D(x, \l)$ is given by \[ D(x, \l ) \; = \; \Om _\l \, \mu _x^\l (\pp \M) \, \int _{M_0} \int_{\pp \M}  d\mu _y^\l (\xi) d\mbox{Vol} (y) = \Om _\l \, \mu _x^\l (\pp \M). \]
 \end{cor}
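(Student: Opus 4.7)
My plan is to re-run the proof of Theorem~\ref{renewal2} with the constant observables $f \equiv 1$, $h \equiv 1$ for a fixed $\l \in [0,\l_0)$ (instead of uniformly as $\l \to \l_0$). All of the preliminary constructions in the proof of Proposition~\ref{LC} --- the approximating Hölder continuous functions $F^\pm_\l$, $H^\pm_\l$ (Steps 1--2), the local product structure of $m_\l$ in the $(v,v',t)$ and $(u,u',s)$ coordinates (Step 3), and the geometric localization of contributions along a single geodesic (Step 4) --- carry over verbatim, since they only require $f,h$ to be positive Hölder continuous with bounded norm and positive integral, which are all trivial for the constant function $1$.

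The only ingredient that used the parameter-uniform mixing statement of Proposition~\ref{4.1} is the asymptotic factorization in Step 5. Since $m_\l$ is the unique equilibrium state of the Hölder continuous potential $\vf_\l$ on a topologically mixing Anosov flow, it is mixing, so for the fixed Hölder observables $F^\pm_\l,H^\pm_\l$ the classical mixing theorem yields
\[ \lim_{R \to \infty} \int F^\pm_\l \, (H^\pm_\l \circ g_R) \, dm_\l \; = \; \int F^\pm_\l \, dm_\l \cdot \int H^\pm_\l \, dm_\l. \]
Combined with the bounds from Steps 3--4, this turns the $(1+\e')^3$ estimate in Theorem~\ref{renewal2} into a genuine limit in $R$ after sending the auxiliary parameter $\e \to 0$. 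Specializing the right-hand side of (\ref{eqn:5.4}) to $f \equiv h \equiv 1$ then gives
\[ D(x,\l) \; = \; \lim_{R \to \infty} e^{-R P(\l)} \int_{S(x,R)} G_\l^2(x,y) \, dy \; = \; \Om_\l \, \mu_x^\l(\pp \M) \int_{M_0} \mu_y^\l(\pp \M) \, d\mbox{Vol}(y), \]
which is the first equality of the corollary. The second equality follows immediately from the normalization selected in Proposition~\ref{measureNonu}, namely $\int_{M_0} \mu_y^\l(\pp \M)\, d\mbox{Vol}(y) = 1$.

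There is essentially no obstacle: the statement is a bookkeeping specialization of Theorem~\ref{renewal2} to the trivial observables, combined with the normalization of the conformal family $\{\mu_y^\l\}$. The one substantive change relative to Theorem~\ref{renewal2} is cosmetic --- replacing the parameter-uniform mixing of Proposition~\ref{4.1} by the classical mixing of the fixed Gibbs measure $m_\l$ --- which gives an equality rather than an $(1+\e')^3$-approximation and so produces the clean closed-form expression for $D(x,\l)$.
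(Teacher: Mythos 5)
Your argument is exactly the one the paper intends: the text immediately preceding Corollary~\ref{formula} observes that the only place in the proof of Theorem~\ref{renewal2} where $\d(f,h,\e')$ (and hence uniformity in $\l$) is used is Step~5, and that for a fixed $\l$ one can replace Proposition~\ref{4.1} by ordinary mixing of $m_\l$ for H\"older observables, then specialize to $f=h=1$ and invoke the normalization $\int_{M_0}\mu_y^\l(\pp\M)\,d\mathrm{Vol}(y)=1$ from Proposition~\ref{measureNonu}. Your proposal matches this in every substantive detail.
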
 

As a Corollary of the proof of Theorem~\ref{renewal2} and Corollary~\ref{renewal5}, we state   a generalization which will be needed in Section~\ref{sec:7}.
\begin{prop}\label{renewal3}  Given $\e>0$ and positive H\"older continuous functions $f,u$ on $SM$, there is $R(f,u,\e)$ and $\d( f,u, \e)$ such that if $R >R( f,u, \e)$ and $\l_0 - \l < \d( f,u,\e )$,
 \begin{eqnarray*}  && e^{-R P(\l)}   \int _{S(x, R)} f(\Dppi v_x^y) \left( \frac{1}{R} \int _0^R u(\gg_s  \Dppi v_x^y) ds \right)  G_\l^2 (x, y) dy\; \;\;\;\;\; \\ 
 \quad \quad  && \quad \quad \sim^{1+\e} \; \Om  \int _{\pp \M } f( \ppi \s ^{-1}_x \xi) d\mu _x^{\l_0 } (\xi )  \int _{SM} u dm_{\l_0}, \end{eqnarray*}
where $R(f,u,\e)$ and $\d( f,u,\e)$ depends  only on $\e, ||f||_\a, ||u||_\a, \inf f$ and  $\inf u.$
 \end{prop}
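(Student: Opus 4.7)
The strategy mirrors the proof of Theorem~\ref{renewal2} (via Proposition~\ref{LC}), with two modifications: we take $h\equiv 1$ and we insert the time-averaged factor $u_R(v):=\frac{1}{R}\int_0^R u(g_s v)\,ds$ alongside $f$. The mixing tool used at the end of the proof is correspondingly upgraded: we invoke the uniform averaged mixing of Proposition~\ref{4.2} in place of the uniform mixing of Proposition~\ref{4.1}.

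Following Theorem~\ref{renewal2}, I would parametrize $z\leftrightarrow(v,t)$ with $v=\Dppi v_x^z\in S_{\ppi x}M$ and $t=d(x,z)$, apply the approximation $e^{-tP(\l)}G_\l^2(x,\g_{\wt v}(t))\sim^{1+\e}\th_\l^{-2}(g_tv)\,\frac{d\mu^{uu}_\l}{dg_{-t}\mu^{uu}_\l}(v)$ valid for $|t-R|\le\d'$ and $\l$ close to $\l_0$, and smooth in $t$ using $\d'^{-1}\chi_{\d'}(R-\cdot)$. Lifting to a fundamental domain $M_0$ as in Proposition~\ref{LC}, this reduces matters, up to factors $(1+\e)^{O(1)}$, to estimating
\begin{equation*}
\Sigma := \underset{\{(v,t):\,v\in S_{\ppi x}M\cap g_{-t}S_{\ppi y}M\}}{\sum}\chi_{\d'}(R-t)\,(f\th_\l^{-2})(v)\,u_R(v)\,\frac{d\mu^{uu}_\l}{dg_{-t}\mu^{uu}_\l}(v),
\end{equation*}
also summed over $y\in M_0$; the target is $\d'\,\Om\bigl(\int_{\pp \M}f(\pi^{-1}_x \xi)\,d\mu_x^{\l_0}(\xi)\bigr)\bigl(\int_{SM}u\,dm_{\l_0}\bigr)$, where the normalisation $\int_{M_0}\mu^{\l_0}_y(\pp \M)\,d\mathrm{Vol}(y)=1$ absorbs the sum over $y$.

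Next, exactly as in Steps 1--4 of Proposition~\ref{LC}, I would construct H\"older bumps $F^\pm_\l$ supported in a $\d'$-neighborhood of $S_{\ppi x}M$ absorbing $f\th_\l^{-2}$, and H\"older bumps $H^\pm_\l$ supported near $S_{\ppi y}M$ (with $h\equiv 1$, so $H^\pm_\l$ depends only on the $b_\pm$ factors). Exponential contraction on stable leaves combined with H\"older continuity of $u$ gives $|u_R(v)-u_R(w)|\le C\d'^\a$ uniformly in $R$ whenever $w$ lies in the local product neighborhood of $v$. Hence the counting argument of Step 5 of Proposition~\ref{LC} applies without change and sandwiches $\Sigma$, up to $(1+\e)^{O(1)}$, between constant multiples of $\int_{SM}F^\pm_\l\cdot u_R\cdot(H^\pm_\l\circ g_R)\,dm_\l$.

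Finally, Fubini rewrites this integral as $\frac{1}{R}\int_0^R\bigl[\int F^\pm_\l\cdot(u\circ g_s)\cdot(H^\pm_\l\circ g_R)\,dm_\l\bigr]ds$, and Proposition~\ref{4.2} applied to $(F^\pm_\l,u,H^\pm_\l)$ factorises this as $\int F^\pm_\l\,dm_\l\cdot\int u\,dm_\l\cdot\int H^\pm_\l\,dm_\l$ to within $(1+\e)$, uniformly in $R$ large and $\l\in[\l_0-\d'_0,\l_0]$. Substituting the estimates for $\int F^\pm_\l\,dm_\l$ and $\int H^\pm_\l\,dm_\l$ from Step 3 of Proposition~\ref{LC}, collecting normalising constants, dividing by $\d'$, and using $\int u\,dm_\l\to\int u\,dm_{\l_0}$ (Corollary~\ref{cor:3.10}) yields the claim. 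The main subtlety is ensuring that the H\"older norms of $F^\pm_\l,H^\pm_\l$ and the lower bounds on $\int F^\pm_\l\,dm_\l,\int H^\pm_\l\,dm_\l$ (which determine the threshold $t'_0$ in Proposition~\ref{4.2}) are uniform in $\l\in[\l_0-\d,\l_0]$ and $x\in\M$; this is arranged by the same considerations as in Proposition~\ref{LC}.
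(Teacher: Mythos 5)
Your proposal is correct and follows essentially the same route as the paper's proof (Proposition~\ref{LlT6.4}): reduce to the counting sum as in Theorem~\ref{renewal2}, keep the same bump functions $F^\pm_\l,H^\pm_\l$, use exponential contraction of geodesics together with the H\"older continuity of $u$ to insert the averaged factor $u_R$ at the cost of one extra $(1+\e)$, sandwich the sum by $\int F^\pm_\l\,u_R\,(H^\pm_\l\circ g_R)\,dm_\l$, and then replace the use of Proposition~\ref{4.1} by the averaged three-mixing of Proposition~\ref{4.2}. The one imprecision is the phrase ``whenever $w$ lies in the local product neighborhood of $v$'': the uniform-in-$R$ bound $|u_R(v)-u_R(w)|\le C\d'^\a$ requires $w$ and $v$ to be $\d'$-close at \emph{both} time $0$ and time $t\approx R$ (closeness at time $0$ alone does not prevent exponential separation by time $R$), which is exactly what the joint supports of $F^\pm_\l$ and $H^\pm_\l\circ g_R$ enforce in the counting argument.
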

This is the analog of Corollary \ref{renewal5}, with the  extra term $\frac{1}{R} \int _0^R u(\gg_s  \Dppi v_x^y) ds $, which should yield the term $  \int _{SM} u dm_{\l_0}$ in the limit. We introduce a H\"older continuous function $h$ and extend the proof of Theorem~\ref{renewal2} with an extra $u$-term.
So,  we replace $ \Si (x,y,R, \d') $ by
 \begin{eqnarray*} &    & \Si' (x,y,R, \d') := \\  & \;  & \underset{ \{ (v,\rr) : v \in S_{\ppi x}    M \cap \gg_{-\rr} S_{\ppi y } M\}}  {\sum} \chi _{\d'} (R-\rr) f (v )\left( \frac{1}{\rr} \int _0^\rr u(\gg_s  v) ds \right) (\th _\l^{-2}h)(-\gg_\rr v)\frac{d \mu_\l ^{uu } }{d\gg_{-\rr}  \mu_\l ^{uu}} ( v)\end{eqnarray*}
 and we similarly choose $\d' _1>0 $ such that, if $R $ is large enough and $\d' <2\d'_1$, then, for all $x \in \M$ and $\l \in [\l_0 - \d(f,h,\e'),\l_0],$
\begin{eqnarray*} &  &   e^{-R P({\l} )}  \int _{S(x, R)}  f(\Dppi v_x^y)\left( \frac{1}{R} \int _0^R u(\gg_s  v_x^y) ds \right)  h( \Dppi v_y^x)   G_\l^2 (x, y) dy  \\ 
&& \qquad \sim ^{(1+\e')}  \frac{1}{\d'} \int _{M_0}  \Si '(x,y,R,\d') \,  d\Vol (y).\end{eqnarray*}

 We are reduced to show the analog of Proposition~\ref{LC}, namely 
\begin{lem}\label{LlT6.4}  Given $\e' >0$ and positive H\"older continuous functions $f ,u$,  there exist $R_1= R_1 ( f,u,\e') $, $\d_1( f, u, \e')>0$ and  $\d', 0 < \d' <2\d_1' ,$  such that for $R\geq R_1$, all $x,y\in M$ and all $\l \in [\l_0-\d_1, \l_0]$, 
\begin{eqnarray*}
&   &   \Si' (x,y,R,\d') \\ && \quad \sim^{1+\e'} \Om _\l \d'  \left(\int_{S_{\ppi x} M }  f(v)\, d\mu _\l^{uu} (v) \right) \left( \int _{S_{\ppi y} M } h(-w) \, d\mu _\l^{ss} (w)\right) \left(\int u \, dm_\l \right) . \end{eqnarray*}
Moreover, $R_1(f,u,\e')$ and $\d_1( f,u, \e')$ depends  only on $\e', ||f||_\a, ||h||_\a, ||u||_\a, \inf f$, $\inf u$ and  $\inf h.$
\end{lem}
 
\begin{proof} We choose the same $\e $ such that $(1+ \e)^{61} \leq 1+\e' .$ We choose $\d' _1 < \d' $ small enough  that, for all $t>0$, if $v,w \in SM $ are such that $d(v,w) <\d' _1$ and $ d(\gg_t v, \gg_t w) < \d'_1$, then
\[ \int_0^t u(\gg_s v ) ds \; \sim^{1+\e} \; \int_0^t u(\gg_s w) ds .\] 
This is possible because $u$ is H\"older continuous, positive, and the two geodesics $\gg_s v, \gg_s w$ satisfy \[ d_{SM} (\gg_s v, \gg_s w )\;  \leq \;  C\d_1' \max \{ e^{-a_4 s}, e^{a_4 (s-t)} \} ,\] 
where $C$ is a positive geometric constant. 
We then construct $F^\pm_\l, H^\pm_\l $ in the same way, with this new $\d'_1$ (and accordingly possibly new $\g_0, \g_0', \g$). Properties (1) to (4) still hold. In the equations \ref{F+H+} and \ref{F-H-}, we consider the integrals
\[ \int _{B(v_0)} F^\pm_\l(w) \left( \frac{1}{R} \int _0^R u(\gg_s w) ds \right) H^\pm_\l (w) \, dm_\l (w). \] we loose one more $\sim^{(1+\e)} $ factor when we replace $\left( \frac{1}{R} \int _0^R u(\gg_s w) ds \right)$ by $\left( \frac{1}{R} \int _0^R u(\gg_s v_0) ds \right)$. 

The new Property (5) reads as:
there is $R(\e)$ such that for $R \geq R(\e)$,
\[ (1+\e)^{-31} \int F^-_\l \left( \frac{1}{R} \int _0^R u\circ \gg_s ds \right)H^-_\l\circ \gg_R \,dm_\l \;\leq \; \Om _\l \g \g_0 \g_0'  \Si '(x,y,R,\d'(1+\e)),\]
\[ \Om _\l \g \g_0 \g_0'  \Si '(x,y,R,\d') \;\leq \; (1+ \e)^{31} \int F^+_\l \left( \frac{1}{R} \int _0^R u\circ \gg_s ds \right)H^+_\l\circ \gg_R\, dm_\l. \]
We conclude as above, using  Proposition~\ref{4.2} instead of Proposition~\ref{4.1}. \end{proof}

\section{Topological pressure at $\l_0$}\label{sec:5}

In this section, we show that $P(\l _0 ) = 0 $ and show direct consequences. 
\subsection{Vanishing of $P(\l _0 ) $} We already know that $P(\l _0 ) \leq 0$ {by Corollary~\ref{Pnonpositive}}. We show below in Proposition~\ref{branch} that if $P(\l _0 ) <0$ and thus  $\int _{S(x, R)} G_{\l_0}^{2}  (x, y) dy $ decays exponentially with $R$ (by Theorem~\ref{renewal2}), then $G_{\l_0 + \e} (x, y)  $ is finite, contradicting the definition of $\l_0$.

\begin{prop}\label{branch} 
$P(\l _0 ) = 0 .$
\end{prop}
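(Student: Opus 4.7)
The plan is to argue by contradiction: assuming $P(\l_0) < 0$ we will show that $G_{\l_0+\e}(x,y)$ is finite for some $\e > 0$ and some $x \neq y$, contradicting the spectral identity $\l_0 = \lim_{t\to\infty} -\frac{1}{t} \ln \Pp(t,x,y)$ from (\ref{bottom}).

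The first step is to extract the quantitative sphere estimate from the hypothesis: applying Corollary~\ref{renewal5} with $f \equiv 1$ and combining with the uniform bound of Proposition~\ref{Lemma2.5} yields constants $C, c > 0$ such that
\[\int_{S(x,R)} G_{\l_0}^2(x,z)\, dz \;\leq\; C e^{-cR} \qquad \text{for all } R \geq 1, \; x \in \M.\]
For distinct $x,y \in \M$, define the iterated convolutions
\[I_{k+1}(x,y) \;:=\; \int_{\M^k} G_{\l_0}(x,z_1) G_{\l_0}(z_1,z_2) \cdots G_{\l_0}(z_k,y) \, d\mathrm{Vol}^k(z_1,\dots,z_k).\]
Combining Proposition~\ref{prop:derivative} with the definition $G_\l(x,y) = \int_0^\infty e^{\l t} \Pp(t,x,y)\, dt$ gives
\[k! \, I_{k+1}(x,y) \;=\; \int_0^\infty t^k e^{\l_0 t} \Pp(t,x,y) \, dt.\]

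The heart of the argument is a geometric bound $I_{k+1}(x,y) \leq C_{x,y} \, \rho^k$ for some $\rho > 0$ independent of $k$. For the base case I would split $\int G_{\l_0}(x,z) G_{\l_0}(z,y) \, dz$ into two regions: near the geodesic $[xy]$, the Ancona-Gou\"ezel inequality (Theorem~\ref{Alano}) controls $G_{\l_0}(x,z) G_{\l_0}(z,y)$ by $G_{\l_0}(x,y)$, while in the complementary region Cauchy--Schwarz on spheres combined with the sphere estimate above and the pointwise bound of Theorem~\ref{expdecay} furnishes exponential decay. For the inductive step I would write $I_{k+1}(x,y) = \int I_k(x,z) G_{\l_0}(z,y) \, dz$ and apply the same splitting, with Ancona-Gou\"ezel ensuring the constants propagate uniformly in $k$. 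Granted this bound, for $\e \in (0, 1/\rho)$ all terms are non-negative and Tonelli's theorem yields
\[\sum_{k \geq 0} \e^k I_{k+1}(x,y) \;=\; \int_0^\infty \Big( \sum_{k \geq 0} \tfrac{(\e t)^k}{k!} \Big) e^{\l_0 t} \Pp(t,x,y) \, dt \;=\; \int_0^\infty e^{(\l_0+\e)t}\Pp(t,x,y)\, dt \;=\; G_{\l_0+\e}(x,y) \;<\; \infty.\]
On the other hand, (\ref{bottom}) implies $\Pp(t,x,y) \geq e^{-(\l_0 + \e/2) t}$ for all sufficiently large $t$, whence $G_{\l_0+\e}(x,y) \geq \int_T^\infty e^{\e t/2}\, dt = +\infty$, the desired contradiction.

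The main obstacle is the geometric bound $I_{k+1}(x,y) \leq C_{x,y} \, \rho^k$ in the inductive step. While the sphere estimate from the hypothesis $P(\l_0) < 0$ is the essential quantitative input, iterating it through arbitrary compositions of $G_{\l_0}$ requires delicate use of the Ancona-Gou\"ezel inequality to prevent the accumulation of Harnack-type constants at each convolution, as well as careful control of the diagonal singularity of $G_{\l_0}$ as in Corollary~\ref{Gequivalent}.
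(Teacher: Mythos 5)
Your overall strategy matches the paper's: assume $P(\l_0)<0$, use the resulting exponential decay of $\int_{S(x,R)} G_{\l_0}^2\,dz$ to show that the Taylor coefficients $\frac{\pp^k}{\pp\l^k}G_\l(x,y)\big|_{\l_0}=k!\,I_{k+1}(x,y)$ grow at most like $k!\,\rho^k$, conclude that $G_\l(x,y)$ extends analytically past $\l_0$ (equivalently $G_{\l_0+\e}(x,y)<\infty$), and contradict (\ref{bottom}). Your closing contradiction is phrased slightly differently — you use the lower bound $\Pp(t,x,y)\geq e^{-(\l_0+\e/2)t}$ rather than the paper's analytic-continuation language — but these are the same obstruction.

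The genuine gap is in the inductive bound $I_{k+1}(x,y)\leq C_{x,y}\rho^k$, which you explicitly flag as ``the main obstacle'' but do not resolve, and the sketch you give does not close. Writing $I_{k+1}(x,y)=\int_{\M} I_k(x,z)\,G_{\l_0}(z,y)\,dz$ and propagating the constants would require, in effect, the inequality
\[
\int_{\M} G_{\l_0}(w,z)\,G_{\l_0}(z,y)\,dz \;\leq\; \rho\, G_{\l_0}(w,y)
\]
uniformly in $w,y$. This inequality is \emph{false}: by the Ancona--Gou\"ezel inequality the tube of radius $1$ around the geodesic segment $[w,y]$ alone already contributes $\asymp d(w,y)\,G_{\l_0}(w,y)$, a factor that grows linearly in $d(w,y)$ and so accumulates under iteration into a factorial-type growth that destroys the $\rho^k$ bound. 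The paper's remedy — and the idea missing from your sketch — is to prove the \emph{stronger} weighted bound
\[
F_k := \int_{\M^k} G_{\l_0}(x,x_1)\cdots G_{\l_0}(x_k,y)\, e^{\d\,d(x,x_k)}\, d\Vol^k \;\leq\; C\rho^k
\]
for a small $\d>0$, with the exponential weight $e^{\d\,d(x,x_k)}$ attached to the last integration variable. This weight is exactly what makes the one-step convolution estimate (Lemma~\ref{lem:4.1}) close: the near-tube integral $\int_{\{z:\,d(z,L)\leq 2\}} e^{\d\,d(x,z)}\,dz$ is now bounded by $C\,e^{\d\,d(y,x_k)}\leq C'e^{\d\,d(x,x_k)}$, turning the linear factor into an exponential that is absorbed into the weight on the right-hand side, while the far region is handled via the Fubini-along-$L$ device and the weighted sphere bound (\ref{GL(65bis)}). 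Your Cauchy--Schwarz on spheres plus Theorem~\ref{expdecay} handles the far region but cannot by itself kill the linear near-tube growth, so without introducing a weight of this kind your induction does not terminate at a geometric bound.
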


\begin{proof}
Assume that $P(\l _0 ) < 0 $. We claim that for all $x \not = x' $, there exists $\e >0$ such that the function $ \l \mapsto G_\l (x,x') $ admits a real analytic extension on an $\e $-neighborhood of $\l _0$. In particular, for $\l_0 < \l < \l_0 + \e $, the extension $G_\l (x, x') $ satisfies $G_\l (x,x' ) = \int _0^\infty  e^{\l t} \Pp(t,x,x') dt $, a contradiction with the definition of $\l_0 $.

Let us now prove our claim. Fix $x \neq x' \in \widetilde{M}$.
By Proposition~\ref{derivative},
\[ \frac {\pp ^k }{\pp \l^k} G_\l (x,x') \; = \; k! \int _{\M ^k} G_\l (x, x_1) G_\l (x_1, x_2) \cdots G_\l (x_k, x' ) \, d{\textrm{Vol}}^k (x_1, x_2, \cdots, x_k ) .\]
The claim follows with $\e = 1/\rho$, if we show that there are positive numbers $\d, C$ and $\rho $ such that: 
\begin{equation}\label{GL(64)} 
F_k  \; := \; \int _{\M ^k} G_{\l_0} (x, x_1) G_{\l_0} (x_1, x_2) \cdots G_{\l_0} (x_k, x')e^{\d d(x, x_k)} \, d{\textrm{Vol}}^k (x_1, x_2, \cdots, x_k ) \; \leq \; C \rho ^k .
\end{equation}

Since $P(\l _0 ) <0 $, by {Theorem~\ref{renewal2}}, there is  $C, \d >0 $ such that, for all $x  \in \M $, all $R >1$, 
\[ \int _{S(x,R)} G_{\l _0}^2 (x,z) \, dz\leq C e^{-\d R}  {\textrm { and thus }} \int _{\{ y \in \M; d(x,y) \geq 2\}} G_{\l_0}^2 (x,y) \, d{\textrm{Vol}}(y) < +\infty .\]
By possibly choosing a smaller $\d >0 $, we have
\begin{equation}\label{GL(65bis)}  \int _{\{ y \in \M; d(x,y) \geq 2\}} G_{\l_0}^2 (x,y)e^{\d d(x,y)}  \, d{\textrm{Vol}}(y) \;\leq \; B\end{equation}
for some constant $B$.   
For this choice of $\d $, we prove (\ref{GL(64)}) by induction on $k$. For $k=0 $, (\ref{GL(64)}) is trivial for a suitable choice of $C$. We are going to show that $ F_{k+1} / F_k $ is bounded independently of $k$ (compare \cite{GL} Proposition 4.7). We write: 
\[ F_{k+1} = \int_{\M} \int _{\M ^{k}} G_{\l_0} (x, x_1) \cdots G_{\l_0} (x_k, z )G_{\l _0} (z,x') e^{\d d(x, z)} \, d{\textrm{Vol}}^k (x_1, \cdots, x_k ) d{\textrm{Vol}}(z).\]
Relation (\ref{GL(64)}) follows from Lemma~\ref{lem:4.1} for $x=x' $ and $ y=x_k$ with $\rho := \rho ' e^{2 \d d(x,x')}.$
Indeed, this yields
\begin{eqnarray*}
\int _{\M} G_{\l_0} (x_k, z) G_{\l_0}(z,x') e^{\d d(x,z)}d\Vol(z) \leq &e^{\d d(x,x')} \int _{\M} G_{\l_0} (x', z) G_{\l_0}(z,x_k) e^{\d d(x',z)}d\Vol(z)\\
\leq  &e^{\d d(x,x')} \rho '  G_{\l_0} (x', x_k)  e^{\d d(x',x_k)}  \; {\textrm {by Lemma \ref{lem:4.1}}} \\
\leq  &e^{2\d d(x,x')} \rho '  G_{\l_0} (x_k, x')  e^{\d d(x,x_k)} .
\end{eqnarray*}
\end{proof}

\begin{lem}\label{lem:4.1} There is $\rho >0 $ such that, for all $x,y \in \M ,$ \[  \int _{\M} G_{\l_0} (x, z )G_{\l _0} (z,y) e^{\d d(x, z)} \,d{\textrm{Vol}}(z) \; \leq \; \rho' \;  G_{\l_0} (x, y )e^{\d d(x, y)} .\] \end{lem}
\begin{proof} 
{ 
Assume first that $ d(x,y) \leq 2\ab$, for some $\ab>R_0$ to be fixed later. By Corollary~\ref{convolution}, if $d(x,y) \leq 2\ab$ then \[ \int _{B(x,4\ab)} G_{\l_0} (x, z )G_{\l _0} (z,y) e^{\d d(x, z)} \,d{\textrm{Vol}}(z) \; \leq \; C'_0 \;  G_{\l_0} (x, y ) \; \leq \; C'_0 \;  G_{\l_0} (x, y )e^{\d d(x, y)}\] for some $C'_0 =C'_0(\ab)$. Moreover,  $G_{\l_0}(x,y)$ is bounded from below and therefore it suffices to show that 
\[  \int _{\M\setminus B(x,4\ab)} G_{\l_0} (x, z )G_{\l _0} (z,y) e^{\d d(x, z)} \,d{\textrm{Vol}}(z) \; \leq \;C''_0 \]
for some $C''_0$. On the set  $ z \in \M, d(z,y )\geq 4\ab$, we can write $G_{\l_0} (x, z )G_{\l _0} (z,y) \leq C_0^{2\ab} (G_{\l_0} (x, z ))^2$  by Proposition~\ref{Harnack}. 
By (\ref{GL(65bis)}), this part of the integral has a contribution at most {$C_0^{2\ab} B.$} 
Thus, there is a constant $\rho _0  $ such that, if $d(x,y) \leq 2\ab,$ then
\[  \int _{\M} G_{\l_0} (x, z )G_{\l _0} (z,y) e^{\d d(x, z)} \,d{\textrm{Vol}}(z) \; \leq \; \rho _0\;  G_{\l_0} (x, y )e^{\d d(x, y)} .\] }
Consider now the case $d(x,y) \geq 2\ab$ and
 let $L$ be the geodesic segment going from $y $ to $x$. We write $\M = \M_1 \cup \M_2 \cup \M_3 \cup \M_4\cup \M_5 \cup \M_6 $ and consider the six integrals $\int _{\M_i} G_{\l_0} (x, z )G_{\l _0} (z,y) e^{\d d(x, z)} \,d{\textrm{Vol}}(z)$. Let $pr(z)$ be the point of $L$ realizing $d(z,pr(z)) = d(z, L).$ We define, for  $ \ab' >\ab$ to be chosen later,
 \begin{eqnarray*} 
 \M_1 &:= & \{ z \in \M , d(pr(z), x) \geq \ab,  d(pr(z), y) \geq \ab, d(z,L )\geq \ab \}\\
   \M_2 &:= & \{ z \in \M , d(pr(z), y) \leq \ab, d(z, y )\geq \ab' \}\\
  \M_3 &:= & \{ z \in \M , d(pr(z), y) \leq \ab,  d(z, y) \leq \ab' \}\\
  \M_4 &:= & \{ z \in \M ,  d(pr(z), x) \leq \ab, d(z,x )\geq \ab' \}\\
   \M_5 &:= & \{ z \in \M , d(pr(z), x) \leq \ab, d(z, x) \leq \ab'   \}\\
   \M_6 &:= & \{ z \in \M , d(pr(z), x) \geq \ab,  d(pr(z), y) \geq \ab, d(z,L )\leq \ab \}.
\end{eqnarray*}
\begin{figure}[!htb]
\centering
\includegraphics[scale=0.4]{Mi.png}
\caption{$\widetilde M_i$}
\label{fig6}
\end{figure}

On $\M_1$, consider the thin geodesic right triangles $ (y,pr(z),z) $ and $ (x ,pr(z),z)$. The distances $d(pr(z), [z,y]), d(pr(z), [z,x])$ from $pr(z)$ to both geodesics $[z,y]$ and $[z,x] $ are bounded above by a hyperbolicity constant $a_5$.
Let $z_1, z_2$ be the points realizing these distances : $d(pr(z), [z,x])=d(pr(z), z_1), d(pr(z), [z,y])=d(pr(z), z_2).$

We choose $\ab \geq R_0$ such that $d(z, z_1), d(z, z_2), d(x,z_1)$ and $d(y,z_2)$ are equal or greater than $R_0$, where $R_0$ is the constant in Ancona-Gou\"ezel inequality (Theorem~\ref{Alano}). 
Using  Harnack inequality and the hard side of the Ancona-Gou\"ezel  inequality, we get 
\begin{eqnarray*} 
G_{\l _0} (x, z ) \; &\leq &C_4C_0 ^{2 a_5} G_{\l _0} (x,pr(z))G_{\l _0} (pr(z),z ) \\
G_{\l _0} (z,y) \; &\leq &C_4C_0^{2 a_5}  G_{\l _0} (z,pr(z))G_{\l _0} (pr(z),y).
 \end{eqnarray*} 
Therefore, we have
\begin{eqnarray*} & & \int _{\M_1 } G_{\l_0} (x, z )G_{\l _0} (z,y) e^{\d d(x, z)} \,d{\textrm{Vol}}(z) \; \\
&\ls &  \int _{\M_1}  G_{\l_0} (x , pr(z) )G_{\l _0} (pr(z), y ) e^{\d d(x, pr(z))}  G_{\l_0}^2 (z,pr(z))e^{\d d(z,pr(z))} \,d{\textrm{Vol}}(z)\\
&\ls &  G_{\l _0} (x ,y)  \int _{\M_1}  e^{\d d(x, pr(z))}  G_{\l_0}^2 (z,pr(z))e^{\d d(z,pr(z))} \,d{\textrm{Vol}}(z), \end{eqnarray*}
by the easy side of Ancona-Gou\"ezel inequality.

We use the function $\psi : \R \to \R, \psi(t) = \max (1-|t|, 0)$. 
Since $\int_{\ab-1}^{d(x, y)-\ab+1} \psi(t-s) dt = 1$ for all $s$ between $\ab$ and $d(x,y)-\ab$, we obtain 
\begin{eqnarray*}& &  \int _{\M_1}  e^{\d d(x, pr(z))}  G_{\l_0}^2 (z,pr(z))e^{\d d(z,pr(z))} \,d{\textrm{Vol}}(z) \\
 & = & \int _{\M_1}  \int_{\ab-1}^{d(x, y)-\ab+1} \psi(t - d(x, pr(z)))    e^{\d d(x, pr(z))} G_{\l_0}^2 (z,pr(z))e^{\d d(z,pr(z))} \, dt \; d{\textrm{Vol}}(z)
\end{eqnarray*}
Let $w_s $ be the point on the geodesic $[x,y]$ of distance $s$ from $x$,  for $\ab-1 \leq s \leq d(x, y)-\ab+1$. We disintegrate the integral with respect to $d\mathrm{Vol}(z)$ as $d\mu^{\mathrm{Vol}}_s(.)ds$, where $d\mu^{\mathrm{Vol}}_s(.)$  is a measure  on the points  $z$ with $d(x, pr(z))=s$. By Fubini theorem, the right hand side of the previous equality is equal to
\begin{eqnarray*}
&& \int_{\ab-1}^{d(x, y)-\ab+1} \int_R^{d(x,y)-R}  \int _{\{ z\in \M_1: d(x, pr(z))=s\}} \psi(t - s)    e^{\d s} G_{\l_0}^2 (z,w_s)e^{\d d(z,w_s)} \,d\mu^{\mathrm{Vol}}_s(z) \, ds \, dt\\
&& \ls   \int_{\ab-1}^{d(x, y)-\ab+1} \int_{R}^{d(x,y)-R}  \int _{\{ z \in \M_1: d(x, pr(z))=s\}}    e^{\d t} G_{\l_0}^2 (z,w_t)e^{\d d(z,w_t)} \,d\mu^{\mathrm{Vol}}_s(z)ds \, dt\\
&& \leq   \int_{\ab-1}^{d(x, y)-\ab+1}  e^{\d t}  \int _{\{ z \in \M; d(z, L) \geq 2 \}}   G_{\l_0}^2 (z,w_t)e^{\d d(z,w_t)} \,d\mathrm{Vol}(z) \, dt\\
&& \ls  \int_{\ab-1}^{d(x, y)-\ab+1}    e^{\d t} B \, dt \; \ls \;  e^{\d d(x, y)} ,\end{eqnarray*}
where the first inequality uses Harnack inequality for replacing $w_s$ by $w_t$ as $d(w_s,w_t)<1$, and the third inequality uses (\ref{GL(65bis)}). We conclude that there is a $C_1'$ such that \[ \int _{\M_1 } G_{\l_0} (x, z )G_{\l _0} (z,y) e^{\d d(x, z)} \,d{\textrm{Vol}}(z) \;\leq C_1' G_{\l_0} (x, y )e^{\d d(x, y)} .\] 

It remains to prove that the integrals on $\M_i$ for $i=2, \dots, 6$ have similar bounds.
Choose $\ab' >> \ab$ large enough so that there exists $a_6 =a_6(R, R')$ with the following properties:
\begin{enumerate}
\item for  $ z \in \Ma,$  there is a point $z_1 \in [z,x]$ with $d(z_1,x)>R_0$, $d(z_1,z)>R_0$ and $d(z_1,y) < a_6(R,R')$,
\item for  $ z \in \Mc,$  there is a point $z_1 \in [z,y]$ with $d(z_1,y)>R_0$, $d(z_1,z)>R_0$ and $d(z_1,x) < a_6(R,R').$
\end{enumerate}
 The choice of $R'$ can be made independent of the position of $x,y$ as soon as  $d(x,y) \geq 2R$.
Apply Harnack inequality (Proposition~\ref{Harnack}) and Ancona-Gou\"ezel inequality (Theorem~\ref{Alano}) to  get, if $z \in \M_2,$
\[ G_{\l_0} (x, z )G_{\l _0} (z,y) e^{\d d(x, z)}  \; \ls G_{\l _0} (x,y) e^{\d d(x, y)} (G_{\l_0} (y, z ))^2  e^{\d d(y, z)} .\]
By (\ref{GL(65bis)}), we obtain a constant $C'_2 $ such that \[\int _{\M_2 } G_{\l_0} (x, z )G_{\l _0} (z,y) e^{\d d(x, z)} \,d{\textrm{Vol}}(z) \;\leq C'_2  G_{\l_0} (x, y )e^{\d d(x, y)} .\]
The proof is similar for $\M_4$ and we obtain a constant $C'_4$.

For $z \in \M_3,$ we have, by Proposition~\ref{Harnack}, \[G_{\l_0} (x, z )G_{\l _0} (z,y) e^{\d d(x, z)}\ls G_{\l_0} (x, y )e^{\d d(x, y)}G_{\l_0} (y, z ).\]
Using (\ref{finiteintegral}), we obtain a constant $C'_3 $ such that \[\int _{\M_3 } G_{\l_0} (x, z )G_{\l _0} (z,y) e^{\d d(x, z)} \,d{\textrm{Vol}}(z) \;\leq C'_3  G_{\l_0} (x, y )e^{\d d(x, y)} .\]
The proof is similar for $\M_5$ and we obtain a constant $C'_5$.

For $z \in \M_6$,  $pr(z) $ is at distance at least $R_0$ from $x$ and from $y$. We then have $G_{\l_0} (x, z )G_{\l _0} (z,y) \ls G_{\l_0} (x, y )$ by Harnack inequality and the easy side of the Ancona inequality (\ref{Ancona}).The integral $\int _{ \M_6} e^{\d d(x, z)}  d \textrm{Vol}(z) $ can be estimated  as \[ Ce^{2\d R'} \int _{R_0}^{d(x,y )-R_0} e^{\d t} \, dt  \ls e^{\d d(x,y)},\] 
as for $\M_1.$ Altogether, we obtain a constant $C'_6$ such that $\int _{\M_6}  \;\leq C'_6  G_{\l_0} (x, y )e^{\d d(x, y)} .$
The constant in Lemma ~\ref{lem:4.1}  is {$\rho ' = \max\{ \rho_0,  \sum_{i=1}^6 C'_i \}$. } \end{proof}
\subsection{Applications of Proposition~\ref{branch} }
\subsubsection{Behavior of $\frac{\pp }{\pp \l } G_\l (x,y) $ at $\l_0$}
\begin{prop}\label{limit2}{ For $x \not= y \in \M$}, $$ \lim\limits_{\l \to \l_0} -P({\l} ) \frac{\pp }{\pp \l } G_\l (x,y) \; = \;  \Om c(x,y), $$
where  
$c(x,y) $ is given   by \begin{equation}\label{Harish-Chandra} c(x,y) =  \int k_{\l_0} (x,y, \xi )  d\mu^{\l_0}_x (\xi).\end{equation} 
 Moreover, for any compact neighborhood $K$ of $x$ in $\M$, there is $\l'< \l_0$ such that $y \mapsto \sup_{\l, \l' \leq \l \leq \l_0 } \left(-P({\l} ) \frac{\pp }{\pp \l } G_\l (x,y)\right) $ is integrable on $K$.
\end{prop} 
\begin{proof} We have:
\begin{eqnarray*}
 -P({\l} ) \frac{\pp }{\pp \l } G_\l (x,y) &= & -P({\l} ) \int_{\M} G_\l (x,z) G_\l (y,z)  d\mbox{Vol} (z) \\ 
 &=& -P({\l} ) \int _0^\infty e^{P({\l} ) R} \left( \int_{S(x,R)}e^{-P({\l} )R} k_\l (x,y,z) G_\l^2 (x, z)   dz\right) dR.
\end{eqnarray*}
 Let $A$ be the diameter of $K$. We are going to cut the integral $\int _0^\infty  = \int_0^{A+1} + \int _{A+1}^{R'} +\int _{R'}^\infty ,$ for some $R'$ chosen later, and show the (dominated on $K$) convergence of each integral separately.

By Corollary~\ref{convolution}, for $y \in K,$ $$ \int_{B(x,A+1)} G_\l (x,z) G_\l (y,z)  d\mbox{Vol} (z) \leq \int_{B(x,A+1)} G_{\l_0} (x,z) G_{\l_0} (y,z)  d\mbox{Vol} (z) \leq C G_{\l_0} (x,y) .$$  The function $y \mapsto G_{\l_0} (x,y)$ is integrable on $B(x,A+1) $ by  (\ref{eqn2.3}). Since $P(\l) $ goes to 0, this part  converges to 0. The convergence is dominated since $\sup_{\l, 0\leq \l \leq  \l_0 } | P(\l) | < \infty .$ 

In the same way, using Propositions~\ref{Harnack} and \ref{Lemma2.5}, we can write, for all $y, 0 <d(x,y )\leq A,$
\begin{eqnarray*}   \int _{A+1}^{R'} \left( \int_{S(x,R)} G_\l (x,z) G_\l (y,z) \, dz \right)  dR &\leq & C_0^A  \int _{A+1}^{R'}  \left( \int_{S(x,R)} G_{\l_0} (y,z) G_{\l_0} (y,z)  \, dz \right) dR  \\ &\leq  & C_0^A C (R' -A).\end{eqnarray*}
Thus $(-P(\l)) \int _{A+1}^{R'}  \left( \int_{S(x,R)} G_\l (x,z) G_\l (y,z)  \right) dR \to 0 $ as $\l \to \l_0$.

 On the other hand, as $R\to \infty $, the function $k_\l(x,y,z)$ is close  to $k_\l (x, y, (v_x^z)^+)$ uniformly in $\l $ (Theorem \ref{thm:1.4}),  thus it can be considered as a H\"older continuous function on $S_x M$. Observe that the  constant $C(\max \{d(x,y),1\} ) $ in Proposition~\ref{demiMartin} is uniform for $y \in K$ so that the H\"older norm of $k_\l (x, y, (v_x^z)^+)$ is uniformly bounded for $\l \in [0, \l_0]$  and $y \in K$.  \footnote{Here we use the fact that the interval $[\l_0 - \d, \l_0]$ in the conclusions of Section 4 depend only on $||f||_\a, \inf f,$ etc.} By Corollary~\ref{renewal5}, given $\e>0$, for $R'$ large enough and $\l$ close enough to $\l_0$, uniformly for $y \in K$,
 \begin{equation}\label{aformula}  \int_{S(x,R)}e^{-P({\l} )R} k_\l(x,y,z) G_\l^2 (x, z)   dz\sim^{1+\e}\Om \int k_{\l_0} (x, y, \xi )  d\mu^{\l_0}_x (\xi) = \Om \, c(x,y). \end{equation} As $\l \to \l_0$, $P(\l ) \to P({\l_0} ) = 0 $, it follows that
 $$ \underset{ \l \to \l_0}{\lim} -P(\l ) \frac{\pp }{\pp \l } G_\l (x,y) = \underset{\l \to \l_0, R \to \infty}{\lim} \int_{S(x,R)} e^{-P({\l} )R} k_\l(x,y,z)  G_\l^2 (x, z)   dz = \Om\, c(x,y). $$
\end{proof}

In particular, since $\Om$ and $c(x,y) $ are positive numbers, $ \frac{\pp }{\pp \l } G_\l (x,y) $ goes to infinity as $\l \to \l_0.$

\begin{remark}\label{rmk:5.4} It follows from the proof above that $$\lim_{\l \to \l_0} -P({\l} ) \int_{\M \setminus B(x,1)} G_\l (x,z) G_\l (x,z)  d\mbox{Vol} (z) = \Om \int k_{\l_0} (x, x, \xi )  d\mu^{\l_0}_x (\xi)  = \Om \mu^{\l_0}_x(\pp \M).$$
\end{remark}

\subsubsection{ Global limits.} Using corollary~\ref{renewal5} ($f =1$ for the first limit and $f=k_{\l_0}(x,y,z)$ for the second limit), we obtain
\begin{prop}\label{prop:5.5} For $x,y \in \M$, as $R \to \infty $, we have, with the above notations
\[ \int_{S(x,R)} G_{\l_0}^2 (x,z) \, dz \; \to \; \B \mu_x^{\l_0} (\pp \M), \quad  \int_{S(x,R)} G_{\l_0} (x,z) G_{\l_0} (y,z) \, dz \; \to \; \B c(x,y),\] and, for  any $\a$-Holder continuous function $h$ on $S_x \M$, there exists $R(h,\e)$ and $\d=\d(R,\e)$ such that for $R>R(h, \e)$ and $\l \in [ \l_0-\d, \l_0]$
$$ e^{-P(\lambda)R}\int_{S(x,R)} h(v_x^z)G_{\l}^2 (x,z) \, dz \; \to \; \B \int_{\pp \M} h(\Dppi \s _x^{-1}(\xi)) \mu_x^{\l_0}(\xi).$$
$$\int_{S(x,R)} h(v_x^z)G_{\l_0}^2 (x,z) \, dz \; \to \; \B \int_{\pp \M} h(\Dppi \s _x^{-1}(\xi)) \mu_x^{\l_0}(\xi)$$

\end{prop}
 
\begin{remark} 
Observe that the last limit can serve as another definition of the $\mu^{\l_0}_x$. Observe also that the bounds $R(h, \e), \d(h,\e)$ depend on the H\"older norm of $h$ and not anymore on $\inf h$ since the convergence holds for constant functions.\\
\end{remark}
\subsubsection{Proof of Theorem \ref{PattersonSullivan} and Corollary~\ref{Mohsen}}\label{sec:5.2.3}

\noindent {\it Proof of Theorem \ref{PattersonSullivan}.}
Since the function $\vf_{\l_0}$ is H\"older continuous (Corollary~\ref{derivativeHolder}), Proposition~\ref{measureNonu} applies to $\vf_{\l_0} $ as well. 
Theorem \ref{PattersonSullivan} follows since $P(\l_0 ) = 0 $. 
\begin{proof}[Proof of Corollary~\ref{Mohsen}]
We have to show that the  energy $\mathcal E(\mu ^{\l_0})$ of the family $\mu _x^{\l_0}$ is $4\l_0.$ By the relation (\ref{rayleigh}), 
\[ \mathcal E(\mu ^{\l_0}) = 4 \int _{M_0} \left(\int_{\pp \M} \| \nabla_x k_{\l_0}(x_0,x,\xi) \|^2 d\mu _{x_0}^{\l_0}(\xi)  \right) d\mbox{Vol} (x).\]
By using a partition of unity,  any $C^1$ vector field $Z$ on $M_0$ can be decomposed as a sum of $C^1$ vector fields with compact support inside a fundamental domain and thus $\int _{M_0} {\textrm {Div}}Z (x) d\Vol (x)  =0 .$ In particular,
\begin{eqnarray*}  0 &=& \int _{M_0} {\textrm {Div}}(x)  \nabla_x k_{\l_0}^2(x_0,x,\xi)d\Vol (x) = - \int _{M_0} \D_x k_{\l_0}^2(x_0,x,\xi)d\Vol (x)\\ &=& -2\l_0  \int _{M_0}\frac{d\mu _x^{\l_0} }{d\mu _{x_0}^{\l_0} }(\xi ) \,  d\mbox{Vol} (x) + 2 \int _{M_0}  \| \nabla_x k_{\l_0}(x_0,x,\xi) \|^2  d\mbox{Vol} (x).\end{eqnarray*}
It follows that \[ \int _{M_0} \left(\int_{\pp \M} \| \nabla_x k_{\l_0}(x_0,x,\xi) \|^2 d\mu _{x_0}^{\l_0}(\xi)  \right) d\mbox{Vol} (x) = \l_0 \int _{M_0} \left( \int _{\pp\M} d\mu _{x}^{\l_0}(\xi)  \right) d\mbox{Vol} (x)= \l_0.\]
\end{proof}

\section{Proof of Theorem~\ref{locallimit}} \label{sec:proof}

\subsection{Derivative of the Green function} \label{sec:7}

In this subsection, we establish
\begin{theo}\label{Theorem3.1}With the above notations, 
 for $x \not = y \in \M$, as $\l \to \l_0 $,
\[ \frac{\pp }{\pp \l} G_\l (x,y) \; \sim \; \frac{\sqrt{\Upsilon}}{2\sqrt {\l_0 - \l}}c(x,y)  .\]
where $c(x,y)$ is given by (\ref{Harish-Chandra}) and $\Upsilon=\Upsilon_{\l_0}$, given by (\ref{Upsilon}).\end{theo}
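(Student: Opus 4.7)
The plan is to use Proposition~\ref{prop:derivative} with $k=2$ to write
\[
\frac{\partial^2 G_\lambda(x,y)}{\partial \lambda^2} \;=\; 2\int\int_{\M \times \M} G_\lambda(x, z_1)\, G_\lambda(z_1, z_2)\, G_\lambda(z_2, y)\, dz_1\, dz_2,
\]
and then to carry out an asymptotic analysis of this triple integral precise enough to pin down the constant as $\lambda \to \lambda_0$, finally combining with Proposition~\ref{limit2} (which already gives $-P(\lambda)\, \partial_\lambda G_\lambda(x,y) \to \Omega\, c(x,y)$).

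I would follow the template of Section~\ref{sec:4}: first rewrite the integrand using the Na\"{i}m and Martin kernel identities $G_\lambda(z_1, z_2) = \theta_x^\lambda(z_1,z_2)\, G_\lambda(x,z_1) G_\lambda(x,z_2)$ and $G_\lambda(z_2, y) = k_\lambda(x,y,z_2) G_\lambda(x,z_2)$, so that the integrand becomes $G_\lambda^2(x,z_1)\, G_\lambda^2(x,z_2)\, \theta_x^\lambda(z_1,z_2)\, k_\lambda(x,y,z_2)$. Decomposing into double spherical shells $z_i \in S(x, R_i)$ and applying a joint three-point form of Corollary~\ref{renewal6}---which relies on uniform three-mixing of the Gibbs-Margulis measures (Proposition~\ref{4.2}, established via the uniform Dolgopyat estimate of Section~\ref{Appendix}) together with Ancona--Gou\"ezel inequality (Theorem~\ref{Alano}) to tame the diagonal singularity of $\theta_x^\lambda$---converts the inner integral into an angular integral against $\Omega^2\, d\mu_x^{\lambda_0} \otimes d\mu_x^{\lambda_0}$, in which the normalization $\tau_x^{\lambda_0}(S^2M) = 1$ brings the constant $\Upsilon$ into play.

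Integrating over $R_1, R_2 \in [0, \infty)$ is then a two-scale analysis in which the diagonal blow-up of $\theta_x^\lambda$ must be cut off in a $\lambda$-dependent way. The crucial identity
\[
\int_{\M} G_\lambda(z, y)\, c(x, z)\, dz \;=\; \frac{c(x, y)}{\lambda_0 - \lambda},
\]
which is immediate from $c(x, \cdot)$ being a positive $(-\lambda_0)$-eigenfunction and $G_\lambda$ being the integral kernel of $(-\Delta - \lambda)^{-1}$, together with the joint renewal asymptotic identifies the coefficient of $(\lambda_0 - \lambda)^{-3/2}$ in the expansion of $\partial^2 G_\lambda/\partial \lambda^2$ as a constant times $c(x,y)$ expressible in terms of $\Omega$ and $\Upsilon$. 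Combining this with Proposition~\ref{limit2} and solving simultaneously for $-P(\lambda)/\sqrt{\lambda_0 - \lambda}$ and $\sqrt{\lambda_0-\lambda}\, \partial_\lambda G_\lambda(x,y)/c(x,y)$ forces both limits to exist, with the arithmetic producing $\sqrt{\lambda_0-\lambda}\, \partial_\lambda G_\lambda(x,y) \to \sqrt{\Upsilon}\, c(x,y)/2$ as stated.

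The hard part will be the uniform-in-$\lambda$ control of the double-shell renewal estimate, in particular the delicate balance between the exponential decay provided by $P(\lambda) < 0$ and the diagonal blow-up of $\theta_x^\lambda$; one must reconcile several regimes in $(R_1, R_2)$ so that the naive formal calculation (which alone would give only $1/(\lambda_0-\lambda)$) produces the correct $(\lambda_0-\lambda)^{-3/2}$ leading term. The Ancona--Gou\"ezel inequality, which holds uniformly up to $\lambda_0$, and the uniform mixing results of Section~\ref{Appendix} are indispensable here.
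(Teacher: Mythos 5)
Your overall strategy matches the paper's: analyze $\partial^2_\lambda G_\lambda(x,y)$ via the triple Green-function integral of Proposition~\ref{prop:derivative}, use Na\"{i}m/Martin factorizations together with the uniform three-mixing/renewal machinery to identify the constant in terms of $\Omega$ and $\Upsilon$, and then combine with Proposition~\ref{limit2}. Two places, however, deserve scrutiny.

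First, the way you propose to extract the exponent is dangerously close to circular. You say you will ``identify the coefficient of $(\lambda_0-\lambda)^{-3/2}$ in the expansion of $\partial^2 G_\lambda/\partial\lambda^2$,'' but all that the renewal machinery naturally gives is information with $-P(\lambda)$ in the exponential weight (you integrate $\int_0^\infty e^{P(\lambda)R}(\cdots)\,dR$, etc.). Converting $-P(\lambda)$ to $\sqrt{\lambda_0-\lambda}$ \emph{is} Corollary~\ref{equivalentP}, which is itself a consequence of Theorem~\ref{Theorem3.1}; you cannot assume it. The paper sidesteps this precisely by \emph{not} computing the $(\lambda_0-\lambda)^{-3/2}$ coefficient directly. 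Proposition~\ref{Proposition3.17} only asserts $\lim_{\lambda\to\lambda_0}-P^3(\lambda)\,\partial^2_\lambda G_\lambda(x,y)=2\Omega^3 c(x,y)/\Upsilon$, which needs only $P(\lambda)\to 0$. Then, writing $F(\lambda)=\partial_\lambda G_\lambda(x,y)$, one observes that Propositions~\ref{limit2} and~\ref{Proposition3.17} yield $\lim\ 2F'/F^3=4/(\Upsilon c^2)$; since $\frac{d}{d\lambda}\bigl(-F^{-2}\bigr)=2F'/F^3$ and $F\to\infty$, integrating from $\lambda$ to $\lambda_0$ gives $F^{-2}\sim\frac{4}{\Upsilon c^2}(\lambda_0-\lambda)$, hence the $(\lambda_0-\lambda)^{-1/2}$ rate \emph{without ever needing the rate of $P(\lambda)$ a priori}. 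Your ``solving simultaneously'' remark is presumably aimed at this, but as written it is not an argument, and if taken at face value your plan to first get the $(\lambda_0-\lambda)^{-3/2}$ coefficient of $\partial^2_\lambda G$ puts the cart before the horse. (Your ``crucial identity'' $\int G_\lambda(y,z)c(x,z)\,dz=c(x,y)/(\lambda_0-\lambda)$ is in fact correct, since $c(x,\cdot)$ is a bounded positive $(-\lambda_0)$-eigenfunction, but it only supplies a consistency check, not a way to break this circle.)

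Second, your decomposition of the triple integral into double spherical shells $S(x,R_1)\times S(x,R_2)$, followed by a hypothetical ``joint three-point form of Corollary~\ref{renewal6},'' is not how the paper proceeds, and you would face the diagonal blow-up of $\theta^\lambda_x(z_1,z_2)$ head-on without a tool to handle it. The paper instead groups $G_\lambda(z,w)G_\lambda(w,y)/G_\lambda(y,z)$ into the auxiliary function
\[
\Psi_\lambda(x,y,z)=\frac{1}{d(x,z)}\Bigl(-P(\lambda)\int_{\M}\frac{G_\lambda(z,w)G_\lambda(w,y)}{G_\lambda(y,z)}\,d\Vol(w)\Bigr),
\]
and shows $\Psi_\lambda(x,y,z)\sim\Omega\,\frac{1}{d(x,z)}\int_0^{d(x,z)}u(g_s v_x^z)\,ds$ for an explicit H\"older function $u$ with $\int u\,dm_{\lambda_0}=\Omega/\Upsilon$, reducing everything to a \emph{single}-sphere renewal theorem with a time-average along geodesics, namely Proposition~\ref{renewal3} (the place where three-mixing via Proposition~\ref{4.2} enters). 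This is the key technical device, and it is precisely what tames the near-diagonal contribution that you acknowledge but leave unresolved. So the gap is concrete: without the $\Psi_\lambda$-reduction (or an equivalent), your ``two-scale analysis'' is incomplete, and without the ODE trick the passage from $P(\lambda)$-asymptotics to $(\lambda_0-\lambda)^{1/2}$-asymptotics is unjustified.
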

Theorem~\ref{Theorem3.1} follows from the following Proposition.
\begin{prop}\label{Proposition3.17} 
 For all $x,y \in \M$,
\begin{equation}\label{3.10bis} 
\lim_{\l \to \l_0} -P^3(\l)  \int _{\M} \int _{\M} G_\l (x, z) G_\l (z,w) G_\l (w, y) d{\Vol} (w)  d\mbox{Vol} (z) \; = \; \frac{\B^3 }{\Upsilon} c(x,y). \end{equation}
In particular, for $x \not = y \in \M$,  
\begin{equation}\label{3.10} 
\lim_{\l \to \l_0}  -P^3(\l) \frac{\pp ^2}{\pp \l ^2} G_\l (x,y) = 2\frac{\B^3 }{\Upsilon} c(x,y). \end{equation}
Moreover, for any compact neighborhood $K$ of $x$,  there is $\l'< \l_0$ such that $$y \mapsto \sup_{\l, \l' \leq \l \leq \l_0 } \left(-P^3(\l)  \int _{\M} \int _{\M} G_\l (x, z) G_\l (z,w) G_\l (w, y) d\mbox{Vol} (w)  d\mbox{Vol} (z) \right) $$ is integrable on $K$.
 \end{prop}
 We will estimate the integral \eqref{3.10bis} in two regions, $B(x,2)$ and the rest.
\begin{lem} There is a constant $C$ such that for all $\l, 0 \leq \l < \l_0,$ 
\[ \int _{B(x,2)}G_\l (x, z) \left(\int _{\M} G_\l (z,w) G_\l (w, y) d{\Vol} (w) \right) d{\Vol} (z) \leq C  \frac{\pp }{\pp \l} G_\l (x,y).\] \end{lem} 
\begin{proof}By Proposition~\ref{prop:derivative}, it suffices to show that \[ \int _{B(x,2)}G_\l (x, z) G_\l (z,w)  d{\Vol} (z) \leq C   G_\l (x,w).\]
For $d(x,w) \leq 3$, this follows from Corollary ~\ref{convolution}. For $d(x,w) \geq 3$, $G_\l (z,w) \leq C_0^2 G_\l (x,w) $ and $\int _{B(x,2)}G_\l (x, z)  d{\Vol} (z) \leq C$ by (\ref{eqn2.3}).\end{proof}
It follows that
$$
\lim_{\l \to \l_0} -P^3(\l)  \int _{B(x,2)} \int _{\M} G_\l (x, z) G_\l (z,w) G_\l (w, y) d{\Vol} (w)  d\mbox{Vol} (z)  = 0$$
 and the convergence is dominated on $K$ (see Proposition~\ref{limit2}).

 For the rest of the integral, we have
\begin{eqnarray*}
& & -P^3(\l)  \int _{\M\setminus B(x,2)} \int _{\M} G_\l (x, z) G_\l (z,w) G_\l (w, y) d\mbox{Vol} (w)  d\mbox{Vol} (z) \\ 
&=& -P^3(\l)  \int _{\M\setminus B(x,2)}  G_\l^2  (x, z) \frac{G_\l (y,z) }{G_\l (x, z)} \left( \int _{\M} \frac{G_\l (z,w) G_\l (w, y)}{G_\l(y,z)} d\mbox{Vol} (w)\right)  d\mbox{Vol} (z) \\
&=& P^2(\l) \int_2^\infty R e^{P(\l )R}\left( \int_{S(x,R)} e^{-P(\l ) R}   G_\l^2  (x, z) k_\l(x,y,z) \Psi_\l (x,y, z)  dz \right) dR,
\end{eqnarray*}
where $$ \Psi _\l (x, y, z) =   \frac{1}{d(x,z)}\left(-P(\l )  \int _{\M} \frac{G_\l (z,w) G_\l (w, y)}{G_\l(y, z)} d\mbox{Vol} (w)\right).$$ 
As in the proof of Proposition~\ref{limit2}, as $\l \to \l_0$, $P(\l ) \to 0 $ and the above integral converges towards \begin{equation} \label{limit3} \lim\limits_{R \to \infty , \l \to \l_0} \int_{S(x,R)} e^{-P(\l ) R}   G_\l^2  (x, z) k_\l(x,y, z)\Psi_\l (x, y, z)  dz \end{equation} if the limit exists uniformly in $\l$, which we will show for the rest of the proof.
First we study $\Psi _\l (x,y,z) $.
\begin{lem}\label{lem:6.4} There is a  H\"older continuous positive function  $u $ on $SM$  
such that for fixed $x, y, \e>0,$ there exist $R(d(x,y),\e)$ and $\delta = \delta(d(x,y), \e)$ so that
for any $z$ with $d(x,z)>R(d(x,y),\e)$ and $\l $ $\d$-close to $\l_0 $,
\[ \Psi_\l(x,y,z) \; \sim^{1+\e} \;  \B \frac{1}{d(x,z)}\int_0^{d(x,z)} u (\gg_s v_x^z) ds.\] 
\end{lem}
\begin{proof} For $w \in \M$, write $pr(w) $  for the projection of $w$ on the geodesic segment from $x$ to $z$.
For $R>0$, we denote 
$ N_R (x) \; := \; \{ w; w\in \M, d(x, pr(w)) \leq R\}, N_R (z) \; := \; \{ w; w\in \M, d(z, pr(w)) \leq R\}$
and define $$\M_1 := N_{\ab+1}(x)^c \cap N_{\ab+1}(z)^c = \{ w; w \in \M , \ab +1 \leq d(x,pr(w)) \leq d(x,z) -\ab-1 \}.$$
Let us first show that the integral on ${\widetilde{M_1}}^c$ is bounded.
As in Lemma~\ref{lem:4.1}, we decompose ${\widetilde{M_1}}^c$ into $\Ma \cup \Mb \cup \Mc \cup \Md , $ with  $\Ma :=  N_{\ab+1} (x) \setminus B(x, \ab'), $ $\Mb := N_{\ab+1} (x) \cap B(x,\ab'),$ $\Mc :=  N_{\ab+1}(z) \setminus B(z,\ab'),$ $\Md :=  N_{\ab+1}(z) \cap B(z,\ab') $ for $\ab' > \ab$ large enough so that there exists $a_6 =a_6(R, R')$ with the following properties:
\begin{enumerate}
\item for  $ z \in \Ma,$  there is a point $z_1 \in [z,x]$ with $d(z_1,x)>R_0$, $d(z_1,z)>R_0$ and $d(z_1,y) < a_6(R,R')$,
\item for  $ z \in \Mc,$  there is a point $z_1 \in [z,y]$ with $d(z_1,y)>R_0$, $d(z_1,z)>R_0$ and $d(z_1,x) < a_6(R,R').$
\end{enumerate}
 As in Lemma~\ref{lem:4.1}, the choice of $R'$ is uniform on $d(x,y)$. We use the Ancona-Gou\"ezel inequality (\ref{Alano}) to write for instance 
\begin{eqnarray*}  -P(\l)\int_ {\Mc}\frac{G_\l (w,z) G_\l (w, y)}{G_\l(z,y)} dw  \; &= &\; -P(\l) \int_{\Mc} G^2_\l (w,z) \frac{G_\l (w, y)}{G_\l(z,y)G_\l(w,z)} dw\\
 &\leq & \; -P(\l) C \int_{\Mc} G^2_\l (w,z)\frac{G_\l (w, y)}{G_\l(w',y)G_\l(w,w')} dw \\
&\leq &\; -P(\l) C \int_{\M\setminus B(z,\ab')} G^2_\l (w,z) dw
 \end{eqnarray*} 
which is bounded by Remark~\ref{rmk:5.4}. The argument is similar for $\Ma$.

For $w \in \Mb,$ $d(w,x )\leq \ab'$, $\displaystyle \frac{G_\l (w,z) G_\l (w, y)}{G_\l(z,y)}  \leq C(d(x,y)) G_\l (x,w)$ and the integral is finite by (\ref{finiteintegral}). The argument is similar for the integral over $\Md \subset B(z,\ab').$

We conclude that the contribution $-P(\l )  \int _{\M_i} \frac{G_\l (z,w) G_\l (w, y)}{G_\l(y, z)} d\mbox{Vol} (w)$ has an upper bound which depends on $d(x,y)$ and is independent on $d(x,z)$.
 
Now it remains to integrate on $\M_1$. We will find a $\G$-invariant positive H\"older continuous function $u$ on $S\M$ such that for $\l $ close to $\l_0$, independently on $d(x,z)$ but depending on  $d(x,y) $, \[-P(\l )  \int _{\Me} \frac{G_\l (z,w) G_\l (w, y)}{G_\l(y, z)} d\mbox{Vol} (w) \; \sim^{1+\e} \;  \B \int_0^{d(x,z)} u (\gg_s v_x^z) ds.\]
For  a vector $v = \dot{\g}_{v_x^z}(s), 0 \leq s \leq d(x,z)$ and $w \in \Me$, set $$\psi (v, w):= \psi (d(pr(w), \pi (v)))
=\max \{ 1-d(pr(w), \pi(v)), 0 \}$$ and $\ov  u_\l(v) :=   \int _{\M} \psi (v,w)  \frac{G_\l (w,z) G_\l (w, y)}{G_\l(z,y)} d\mbox{Vol} (w) .$ We have
\begin{eqnarray*} 
 \int_{\ab+1}^{d(x,z)-\ab-1} \ov u_\l (\gg_s v_x^z) ds
 &\leq& \int _{\Me} \frac{G_\l (z,w) G_\l (w, y)}{G_\l(y, z)} d\mbox{Vol} (w)  \\ 
&\leq& \int_{\ab}^{d(x,z)-\ab} \ov u_\l (\gg_s v_x^z) ds.
\end{eqnarray*}
We are reduced to find $u$ such that $-P(\l) \ov u_\l  (v) \to \B u(v)$ as $\l \to \l_0$, independently on $d(x,z)$ and depending on  $d(x,y) $. 
Rewrite   $ \ov  u_\l(v)$ as $ \int _0 ^\infty e^{P(\l )r} u_{\l,  r}(v) dr, $
where \[u_{\l,  r}(v) := e^{-P(\l )r} \int _{S(\pi (v), r)} G_\l^2 (\pi (v), w)  \psi (v,w)  \frac{G_\l (w,z) G_\l (w, y)}{G_\l(z,y) G_\l^2 (\pi (v) , w)} dw ,\] 

\begin{figure}[!htb]
\centering
\includegraphics[scale=1.2]{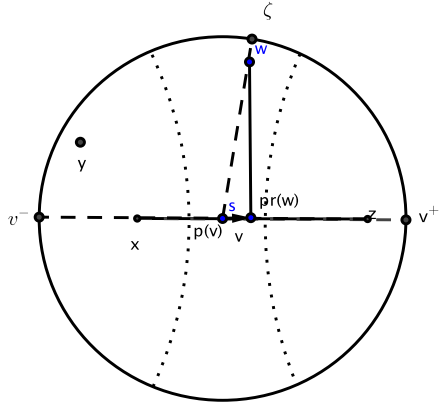}
\caption{Approximating by Naim kernels }
\label{fignew4}
\end{figure}

We choose  $\ab=\ab(x,y, \e)$ larger than 1 such that the angle between the vectors $v_{pr(w)}^x$ and $v_{pr(w)}^y$ is small enough if $d(x,pr(w)) \geq \ab$ and that Proposition~\ref{Naim2} holds for the triples $(x, \pi(v), w),  (y, \pi(v), w)$ and $(z, \pi(v), w):$ for $w\notin  N_\ab(x)\cup N_\ab(z)$  and $pr(w)$ is far from $w$, independently on $d(x,z)$, 
\[  \frac{G_\l (w,z) G_\l (w, y)}{G_\l(z,y) G_\l^2 (\pi (v) , w)} =
 \frac {\th ^\l _{\pi(v)} (w,z )\th ^\l _{\pi(v)} (w,y )}{\th ^\l _{\pi(v)} ( y,z)} \sim^{1+\e}
  \frac {\th ^\l _{\pi(v)} (\z , v^+ )\th ^\l _{\pi(v)} (\z , v^-)}{\th ^\l _{\pi(v)} ( v^-,v^+)},  \]
 where $\z$ is the end point of the geodesic going from $pr( w)$ to $w$ (see Figure~\ref{fignew4}). 

Extend the projection $pr$ to the boundary $\pp \M$. Then for $w \notin  N_1(x)\cup N_1(z)$, $\psi (v, w) = \psi (v, \z ) $. Also, the functions $ d_{\pi (v)} (\z , v^\pm ) $ are bounded away from 0 and the function $\th ^\l _{\pi(v)} (\z , v^+ )\th ^\l _{\pi(v)} (\z , v^-)$ is uniformly H\"older and bounded away from 0.
The denominator $\theta_{\l_0}(v)$ is also H\"older and the approximation is uniformly H\"older continuous.
Therefore, the map $$\zeta \mapsto \psi(v, \zeta) \frac {\th ^\l _{\pi(v)} (\z , v^+ )\th ^\l _{\pi(v)} (\z , v^-)}{\th ^\l _{\pi(v)} ( v^-,v^+)}$$
is H\"older  continuous uniformly on $v$.
By  Proposition~\ref{prop:5.5} centered at $\pi (v)$, there is $R(\e) $ and $\d (\e)  $ such that for $r \geq R(\e), \l \in [\l_0 - \d(\e) , \l _0], $  we have $ u_{\l,  r}(v) \; \sim^{1+\e} \B u(v) $, where 
\begin{equation}\label{u} u(v) \; = \; \int _{\pp \M} \psi (v, \z )  \frac {\th  _{\pi(v)} (\z , v^+ )\th  _{\pi(v)} (\z , v^-)}{\th (v)} d\mu^{\l_0}_{\pi (v)} (\z).\end{equation}
In the above equation, $v$ is a vector in the geodesic from $x$ to $z$.
Now consider $u$ above as a function on $S\M$ and observe that the right hand side of \eqref{u} is well-defined $\G$-invariant and positive on $S\M$. Let us denote the induced function on $SM$ by $u$ again. 

{We claim that the function $u$ is H\"older continuous on $SM$. Indeed,  consider two vectors $v_1,v_2 \in S\M$ at a small distance $d(v_1,v_2)$. For each $t \in [-1,1],$ we associate to $v'_1 = \gg_t v_1$ the vector $v'_2 = \gg_t v_2$. We have $d(v'_1, v'_2) \leq Cd(v_1,v_2).$ We can now pair each vector in $S_{p(v'_1)} \M $ orthogonal to $v'_1$ with a vector in $S_{p(v'_2)} \M $ orthogonal to $v'_2$, also within a distance  at most $ Cd(v_1,v_2).$ By considering their points at infinity, we have paired  each $\z_1 \in \pp \M$ such that  $\psi (v_1, \z_1) >0$  with a  point $\z_2 \in \pp\M$ such that  $\psi (v_1, \z_1 ) = \psi (v_2, \z_2)$ and $d_{p(v_1)} (\z_1, \z_2 ) \leq C (d(v_1, v_2))^\a.$ So, in formula (\ref{u}), the integrand  and the measure, which are H\"older continuous in $\zeta$ and smooth in $\pi(v)$ depend H\"older continuously on $v$.}
 
 It follows that for $\l$ close to $\l_0$, the function $\ov u_\l$ which is a function of $x,y,z$ satisfies  
 $$ -  P(\l) \ov u_\l (v) \sim^{1+\e} -P(\l) \Om u(v) \int_0^\infty e^{P(\l)r} dr = \Om u(v)$$
 independently on $d(x,z)$ and uniformly on $x$ and $y$ as long as $d(x,y)$ is bounded.
\end{proof}

\begin{proof}[Proof of Proposition~\ref{Proposition3.17}]  By \eqref{limit3} and Lemma~\ref{lem:6.4}, it remains to show that the limit 
\[  \lim_{R \to \infty, \l \to \l_0} \int_{S(x,R)} e^{-P(\l ) R}   G_\l^2  (x, z) k_\l(x,y,z) \left(\frac{1}{R} \int _0^R u (\gg_s v_x^z) ds \right) dz ,\] 
exists uniformly in $\l$ where the function $u$ is given by \eqref{u}.
As in the proof of Proposition~\ref{limit2}, we can replace $k_\l (x,y,z) $ by $k_{\l_0} (x, y, \s _x (v_x^z))$ for $R $ sufficiently large and $\l $ close to $\l _0$.
By Proposition~\ref{renewal3}, for $R$ large and $\l_0 - \l$ small,
\[  e^{-P(\l ) R}  \int_{S(x,R)}  G_\l^2  (x, z) k_{\l_0}(x, y, \s _x (v_x^z)) \left(\frac{1}{R} \int _0^R u (\gg_s v_x^z) ds \right) dz \sim  \B^2 c(x,y) \int _{SM} u \,dm_{\l_0} \]
by (\ref{Harish-Chandra}). Proposition~\ref{Proposition3.17} follows since
\begin{eqnarray*} 
&&\int _{SM} u dm_{\l_0} =  \quad \int_{SM_0} \int _{\pp \M} \psi (v, \z )  \frac {\th _{\pi(v)} (\z , v^+ )\th  _{\pi(v)} (\z , v^-)}{\th (v)} d\mu^{\l_0}_{\pi (v)} (\z)dm_{\l_0}(v) \\
&=& \; \int_{SM_0} \int _{\pp \M} \psi (v, \z )  \frac {\th_{\pi(v)} (\z , v^+ )\th_{\pi(v)} (\z , v^-)}{\th(v)} d\mu^{\l_0}_{\pi (v)} (\z)\Om \th^2(v) d\mu^{\l_0}_{\pi (v)}(v^-) d\mu^{\l_0}_{\pi (v)}(v^+) dt\\
&=&  \frac{\Om}{\Upsilon} \; \int _{\pp \M} \int_{(v^-, v^+, t) \in SM_0}  \psi (v, \z )  dt d\wt\tau^{\l_0} _{\pi(v)} (v^+ , v^-, \z) \\
& =&  \frac{\Om}{\Upsilon} \;  \wt\tau^{\l_0} _{\pi (v)} (S^2 M_0) =  \frac{\Om}{\Upsilon} .
\end{eqnarray*}
Recall that $\widetilde \tau^{\l_0} _{x}, \Upsilon$ are defined in \eqref{eq:tau} and \eqref{Upsilon}.
\footnote{The last equality is direct: take a point $(v^+, v^-, \z) $ well inside $S^2M_0$. Then, clearly, $\int_{(v^-, v^+, t) \in SM_0}  \psi (v, \z )  dt = 1.$ The boundary effects for the other points compensate exactly, so that the integral $ \int _{\pp \M} \int_{(v^-, v^+, t) \in SM_0}  \psi (v, \z )  dt d\wt\tau^{\l_0} _{\pi(v)} (v^+ , v^-, \z) $ is $ \wt\tau^{\l_0} _{\pi (v)} (S^2 M_0).$}
\end{proof}
\begin{proof}[Proof of Theorem~\ref{Theorem3.1}] Set $F(\l ) = \frac{\pp }{\pp \l} G_\l (x,y) $. By Proposition \ref{limit2} and Proposition~\ref{Proposition3.17}, 
$$\lim_{ \l \to \l_0}-P(\l ) F(\l ) =\B \; c(x,y) \;\ \textrm{and} \; \lim_{\l \to \l_0} -P^3(\l ) F'(\l ) =2 \frac{\Om^3}{\Upsilon} c(x, y).$$
 
 It follows that $\displaystyle \frac{2F'(\l) }{F(\l )^3} $ converges towards $\displaystyle \frac{4}{ \Upsilon}( c(x,y)) ^{-2} .$ Since $F(\l )$ goes to $\infty $ as $\l \to \l_0 $, we conclude that  $F(\l) \sim \frac {\sqrt{\Upsilon}} {2 }\frac{c(x,y)}{\sqrt{ \l_0 - \l }} .$  \end{proof}
By Proposition~\ref{limit2} and Theorem~\ref{Theorem3.1}, we obtain
\begin{cor}\label{equivalentP}
As $\l \to \l_0$, 
$$ - \frac{P(\l)}{\sqrt{\l_0 - \l}} \to \frac{2 \Om}{ \sqrt{\Upsilon}}.$$
\end{cor}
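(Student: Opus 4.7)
The plan is to combine the two asymptotic statements directly, treating Corollary~\ref{equivalentP} as a simple consequence of the ratio of the limits established in Proposition~\ref{limit2} and Theorem~\ref{Theorem3.1}. Fix any pair of distinct points $x,y \in \M$ and set $F(\l) := \frac{\partial}{\partial \l} G_\l(x,y)$.

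First I would recall from Proposition~\ref{limit2} that
\[
\lim_{\l \to \l_0} -P(\l) F(\l) \; = \; \Om \, c(x,y),
\]
so in particular the product $-P(\l) F(\l)$ converges to a strictly positive finite number (since $\Om > 0$ and $c(x,y) > 0$, the latter because $c(x,y)$ is an integral of the positive Martin kernel against the positive finite measure $\mu_x^{\l_0}$). Next, Theorem~\ref{Theorem3.1} gives the asymptotic
\[
F(\l) \; \sim \; \frac{\sqrt{\Upsilon}}{2\sqrt{\l_0 - \l}}\, c(x,y) \quad \text{as } \l \to \l_0,
\]
and in particular $F(\l) \to +\infty$.

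The step is then to divide: writing
\[
-\frac{P(\l)}{\sqrt{\l_0 - \l}} \; = \; \frac{-P(\l) F(\l)}{\sqrt{\l_0 - \l}\, F(\l)},
\]
the numerator tends to $\Om\, c(x,y)$ by Proposition~\ref{limit2}, while the denominator is equivalent to $\frac{\sqrt{\Upsilon}}{2} c(x,y)$ by Theorem~\ref{Theorem3.1}. Since $c(x,y) > 0$, we may cancel it and conclude
\[
-\frac{P(\l)}{\sqrt{\l_0 - \l}} \; \longrightarrow \; \frac{\Om\, c(x,y)}{\tfrac{\sqrt{\Upsilon}}{2} c(x,y)} \; = \; \frac{2\Om}{\sqrt{\Upsilon}}.
\]

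There is essentially no obstacle here: the entire content of the corollary is packaged in the two previous results, and the only thing to verify is that $c(x,y) \neq 0$ so that the cancellation is legitimate. This positivity is immediate from the definition \eqref{Harish-Chandra} together with the positivity of $k_{\l_0}$ and the fact that $\mu_x^{\l_0}(\partial \M) > 0$ (Corollary~\ref{boundonmeasures}). Note moreover that although $c(x,y)$ is the quantity being cancelled, the resulting limit $2\Om/\sqrt{\Upsilon}$ is independent of the choice of $x,y$, which is consistent with the fact that the left-hand side does not depend on $x,y$ at all.
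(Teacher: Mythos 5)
Your proof is correct and takes essentially the same approach as the paper, which simply states that Corollary~\ref{equivalentP} follows by combining Proposition~\ref{limit2} and Theorem~\ref{Theorem3.1}. You have merely spelled out the obvious cancellation argument, including the needed positivity of $c(x,y)$, which the paper leaves implicit.
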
 

Applying Proposition~\ref{Proposition3.17} and Corollary~\ref{equivalentP}, we get
 \begin{cor}\label{cor:6.5} For all $x, y \in \M,$
\[ \lim_{\l \to \l_0}(\l_0 -\l)^{3/2} \int  _{\M\x\M} G_\l (x,z) G_\l (z,w) G_\l(w,y) d\Vol (z) d\Vol (w) 
\; = \; \frac{\sqrt \Upsilon}{8} c(x,y).\] 
Moreover, for any compact neighborhood $K$ of $x$ in $\M$, there is $\l'< \l_0$ such that $$y \mapsto \sup_{\l, \l' \leq \l \leq \l_0 } (\l_0 -\l)^{3/2} \int  _{\M\x\M} G_\l (x,z) G_\l (z,w) G_\l(w,y) d\Vol (z) d\Vol (w) $$ is integrable on $K$.
\end{cor}

\subsection{Proof of Theorem~\ref{locallimit} and Theorem~\ref{Phi_zero}  }\label{sec:8}
The proof relies on the following Proposition, based on Hardy-Littlewood Tauberian Theorem:
 \begin{prop}\label{weaklocallimit} Fix $x_0 \in \M$. Let $F$ be a nonnegative $C^\infty$ function on $\M$, with compact support. Then,
 \begin{eqnarray*}&  & \lim\limits_{t\to \infty } t^{3/2}  \int _{\M \x \M} e^{\l_0t} \Pp(t,x, y)F(x)F(y) \, d\mbox{Vol}(x) d\mbox{Vol}(y) \\ &  &\quad = \;  \frac{\sqrt{\Upsilon}}{4 }  \int _{\M \x \M} c(x,y) F(x) F(y)  \, d\mbox{Vol}(x) d\mbox{Vol}(y),\end{eqnarray*}
 where $c(x,y) $ is given by (\ref{Harish-Chandra}). \end{prop}
 \begin{proof} Set $\mu_F$ for the spectral measure of $F$, i.e. the Borel finite measure on the spectrum $[0, +\infty )$ of $\D - \l_0 $ such that, for all $m \geq 0 $, { \[ \int _{{\M}} F(x) \D^m F(x)\,  d\mbox{Vol}(x) \;  =\; \int _{0}^{+\infty} (\varpi - \l_0 )^m \, d\mu _F (\varpi) .\] }
The function  \[ c_F(t) \; := \;  \int _{\M \x \M} e^{\l_0t} \Pp(t,x, y)F(x)F(y) \, d\mbox{Vol}(x) d\mbox{Vol}(y) = \int _{0}^{+\infty } e^{-\varpi  t} \, d\mu _F(\varpi) \] 
is nonincreasing  in $t$. It satisfies the following property
\begin{lem}\label{lem:6.7} {For all $s>0$,}
\[ \int _0^{+\infty } e^{-s t} t^2 c_F(t) \, dt =  2 \int  _{\M^4} G_{\l_0 -s} (x,z) G_{\l_0-s} (z,w) G_{\l_0 -s}(w,y) F(x) F(y)  \,d{\Vol }^4(z,w,x,y).\] \end{lem}
\begin{proof} On the one hand, we have 
\[ \int _0^{+\infty } e^{-s t} t^2 c_F(t) \, dt \; = \; \int _{\M\x \M} \int_0^\infty t^2  e^{(\l_0 - s )t} \Pp(t,x, y) dt F(x)F(y) \, d\mbox{Vol}(x) d\mbox{Vol}(y) .\]
On the other hand, we may write
\small{ \begin{eqnarray*}
&  & 2 \int  _{\M^4} G_{\l_0 -s} (x,z) G_{\l_0-s} (z,w) G_{\l_0 -s}(w,y) F(x) F(y)  \,d{\Vol }^4(z,w,x,y) \\
&=& 2 \int  _{\M^4 \x \R_+^3} e^{(\l_0 -s)(t+u+v)} \Pp(t,x,z) \Pp(u,z,w)  \Pp(v,w,y) \, dtdudv  F(x) F(y)  \,d{\Vol }^4(z,w,x,y) \end{eqnarray*}}
Introducing the variables $u+v =: r$ and $t+r =: \tau $ and using the semigroup property of the heat kernel, we obtain 
\[ \int _{\M^2} \left(\int_0^\infty \tau ^2  e^{(\l_0 - s )\tau } \Pp(\tau ,x, y) \, d\tau \right) F(x)F(y) \, d\mbox{Vol}^2(x,y). \]\end{proof}
By Corollary~\ref{cor:6.5} and Lemma~\ref{lem:6.7} we have, as $s \to 0,$ \footnote{Here we use the domination from Corollary~\ref{cor:6.5}, which follows from all the preceding domination results in Proposition~\ref{limit2} and Proposition~\ref{Proposition3.17}.}
\[ s^{3/2} \int _0^{+\infty } e^{-s t} t^2 c_F(t) \, dt \to  \frac{\sqrt{\Upsilon}}{4 }  \int _{\M \x \M} c(x,y) F(x) F(y)  \, d\mbox{Vol}(x) d\mbox{Vol}(y).
\]
By Hardy-Littlewood Tauberian Theorem (\cite{F} p. 445), as $T \to \infty$, we have
\begin{equation}\label{eqn:sim} \int_0^T t^2 c_F(t) dt \; \sim \; \frac{\sqrt \Upsilon}{4\G(5/2)} T^{3/2}\int _{\M \x \M} c(x,y) F(x)F(y) \, d\mbox{Vol}(x) d\mbox{Vol}(y).\end{equation}
Now we claim that 
\[ c_F(t)  \sim   \frac{\sqrt \Upsilon}{2\sqrt \pi t^{3/2}} \int _{\M \x \M} c(x,y) F(x)F(y) \, d\mbox{Vol}(x) d\mbox{Vol}(y).\] 
Indeed, by setting $\Xi T^{3/2}$ to be the right hand side of the equation (\ref{eqn:sim}), we have, for all $\e >0$, 
$$  \int_T^{T(1+\e)} t^2 c_F(t) dt \; = \; T^{3/2} \Xi (1+\e)^{3/2} - \Xi T^{3/2} + o(T^{3/2}) = \Xi T^{3/2} ((1+\e)^{3/2} -1 +o(1)).$$
On the other hand, since $c_F(t)$  is a non-increasing  function of $t$, for $\e >0 $ small,
$$ \int_T^{T(1+\e)} t^2 c_F(t) dt   \leq c_F(T) \int_T^{T(1+\e)} t^2 dt = c_F(T)  T^3 (\e +  \e^2 + \e^3/3) .$$
Comparing the two inequalities yields:
$$ \liminf _{T \to \infty } c_F(T) T^{3/2} \; \geq \;  \frac{3\Xi }{2} + o(\e).$$
 One shows  in the same way, using $\int_{T(1-\e)}^T$,  that $ \limsup _{T \to \infty } c_F(T) T^{3/2} \; \leq \;   \frac{3\Xi }{2} .$ This proves Proposition~\ref{weaklocallimit}.
 \end{proof}
 
 \begin{proof}[Proof of Theorem~\ref{locallimit} and Theorem~\ref{Phi_zero}.]

Since $c(x,y) = \int k_{\l_0}(x,y) d\mu_x$, and $k_{\l_0}(x,y)$ is smooth as a $\l_0$-harmonic function, the function $c(x,y)$ is smooth in $x$ and $y$. Moreover, by Proposition \ref{boundedgradient} below,  $ \log \Pp(t,x,y)$ has bounded gradient, uniformly in $t$ large.  We can therefore apply Proposition~\ref{weaklocallimit} to  functions $F$ with compact support such that the measures $F(x) d\Vol (x) $ converge to the Dirac measure $\d_{x_0} $ to get 
\[ \lim\limits _{t\to \infty }  t^{3/2} e^{\l_0 t} \Pp(t,x_0,x_0) \; = \; \frac{\sqrt \Upsilon}{2\sqrt \pi } c(x_0,x_0). \] 

We get the general case of $x_0 \not = x_1$  of Theorem~\ref{locallimit} and Theorem~\ref{Phi_zero} in the same way by applying Proposition~\ref{weaklocallimit} to functions that approximate $\d_{x_0} + \d_{x_1} .$  
\end{proof}

\section{Appendix I: Uniform mixing}\label{Appendix}

In this section, we establish a uniform power mixing of the geodesic flow for Gibbs measures, when the potential varies in a neighbourhood of the space $\K_\a$ of functions which will be defined shortly. The proof combines the ideas from  \cite{P1} and \cite{P2}, with a slightly different framework. For the comfort of the reader, we recall the different steps in our notations.

\subsection{Uniform mixing and three-mixing}\label{sec:7.1}
Let $ \X:= ( X, \A, m; \bg_t, t \in \R )$ be a system with a one parameter group $ \{\bg_t, t \in \R \} $ of measurable transformations of the space $(X, \A)$ preserving a probability measure $m$. For bounded measurable functions $f,g, h$ we define the correlations functions for $s,t \geq 0$:
\begin{eqnarray*} 
\rho _{f,g,m} (t) &= &  \int  f(x) g (\bg_t x ) dm (x) - \int f \, dm \int g \, dm  \\ 
\rho _{f,g,h,m } (s,t) &=&   \int  f(x) g(\bg_sx)  h (\bg_{s+t} x ) dm (x) - \int f \, dm \int g \, dm  \int h\, dm   \\ 
\ov {\rho }_{f,g,h,m } (t) &=&   \frac{1}{t} \int_0^t \left[ \int  f(x) g(\bg_sx)  h (\bg_{t} x ) dm (x) \right] ds- \int f \,dm \int g\, dm  \int h \,dm 
\end{eqnarray*} 

The system $\X$ is called {\it { mixing }} if $ \lim _{t \to \infty } \rho _{f,g,m }(t) = 0 $ for all bounded functions $f,g $, {\it { 3-mixing }} if $ \lim _{s,t \to \infty } \rho _{f,g,h ,m}(s,t) = 0 $ for all bounded functions $f,g,h $ and {\it { average 3-mixing }} if $ \lim _{t \to \infty } \ov {\rho }_{f,g, h,m }(t) = 0 $ for all bounded functions $f,g, h $. It is a well-known open problem whether mixing implies 3-mixing. It is easy to see that mixing implies average 3-mixing. 

Let us consider the rate of mixing.
A system $\X$ is called \emph{power mixing} for a class $\K$ of functions if for $f, g \in \K$,
$ \rho_{f,g, m}(t)$ decays polynomially (see Theorem~\ref{theo:uniform2mixing} for a precise statement). Below, we will show a uniform version of a power mixing of the geodesic flow for the class $\K=\K_\a$ which we define now.

Let $\a > 0 $. We denote $\K_\a $ the space of functions $f$ on $X$ such that $\| f\|_\a < \infty $, where
\[ \| f\|_\a := \sup _x |f(x)| + \sup _{x \not = y } \frac{|f(x) -f(y) |}{(d(x,y))^\a} . \]

From now on, let $\bg_t$ be an Anosov flow. For any potential function $\varphi \in \K_\a$, there is a unique invariant probability measure $m_\varphi $ attaining the supremum of the mesure theoretic pressure $h_m(\bg) + \int \varphi dm$ in the set $\Omega$ of all $\bg_t$-invariant Borel probability measures, i.e.:
\[P(\varphi ) : = \sup _{m \in \Omega } \left\{ h_m (\bg) + \int \varphi \, dm \right\} = h_{m_\varphi } (\bg) + \int \varphi \,  dm_\varphi, \]
where $h_m(\bg) $ denotes the measure theoretic entropy of $m$ (see e.g. \cite{PP}). The quantity $P(\varphi)$ is called the topological pressure of the potential function $\varphi$. The mapping $\varphi \mapsto m_\varphi $ is continuous from $\K_\a$ to the space of measures on $X$ endowed with the weak* topology.

The following property is important in Dolgopyat's approach to the speed of mixing.

\begin{defi} A system $\X$ is topologically power mixing if there exists $t_0, \d >0$ such that for any $r$, and $t > \max\{ \frac{1}{r^\d}, t_0 \}$, and any $x,y$, 
$$ \bg_t(B(x,r)) \cap B(y,r) \neq \emptyset.$$
\end{defi}

We now establish a local uniform power mixing for topologically power mixing Anosov flows, for Gibbs measures associated to potentials $\vf$, and for functions in $\K_\a$. The mixing rate is uniform as we vary the potential $\vf$ in a small neighbourhood in $\K_{\a_0}$, for $\a$ and $\a_0$ sufficiently small.

\begin{theo} \label{theo:uniform2mixing} Let $\X$ be a topologically power mixing Anosov flow. There exists $\a_0>0$ with the following property: let $\vf_0 \in \K_{\a_0}(X)$ be a potential. There exist $\e >0$, $\a>0$ and $C'_0, c'_0>0$ such that for all $\vf$
with $||\vf - \vf_0||_{\a_0} < \e$ and all $f,g,h \in \K _\a,$ we have, for all positive $s,t$:
\begin{equation}\label{uniform2mixing} \big| \rho_{f,g,h, m_{\varphi} }(s,t) \big|\; \leq \; C'_0 \|f\|_\a \|g\|_\a \|h\|_\a [(1 + s)^{-c'_0 } +(1 + t)^{-c'_0 } ]. \end{equation} \end{theo}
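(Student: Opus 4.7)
The plan is to follow the Dolgopyat–Pollicott strategy through symbolic dynamics and Ruelle transfer operators, adding one layer of uniformity in the potential. First, construct a finite Markov section for the Anosov flow and present the dynamics as a suspension flow over a two-sided topologically mixing subshift of finite type $(\Sigma,\sigma)$ with H\"older roof function $\tau$. For a potential $\varphi$ in a small $\K_\alpha$-neighbourhood of $\varphi_0$, denote by $\psi_\varphi$ its symbolic representative; after the standard cohomological reduction to one-sided dynamics on $\Sigma^+$, introduce the family of complex-parameter transfer operators
\[
\LL_{\varphi,s}f(x) \;=\; \sum_{\sigma(y)=x} e^{\psi_\varphi(y) - (P(\varphi)+s)\tau(y)}\,f(y),\qquad s \in \C,
\]
acting on $\alpha$-H\"older functions on $\Sigma^+$. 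For real $s$ near $0$, Ruelle--Perron--Frobenius theory supplies a simple leading eigenvalue with a spectral gap; continuous dependence of the spectral data on $\varphi$ (as an analytic perturbation of $\LL_{\varphi_0,0}$) guarantees that this gap persists uniformly on a small neighbourhood of $\varphi_0$.

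The technical heart is the \emph{uniform Dolgopyat estimate}: for $s=a+ib$ with $|b|\geq 1$,
\[
\|\LL_{\varphi,s}^n\|_{\K_\alpha \to \K_\alpha} \;\leq\; C\,|b|^A\, \vartheta^n,
\]
with $C$, $A$, $\vartheta<1$ independent of $\varphi$ in the neighbourhood. The argument follows Dolgopyat's template of an iterative $L^2$-contraction scheme exploiting oscillatory cancellation from the temporal distance function on pairs of periodic orbits. The crucial uniform nonintegrability (UNI) condition on $\tau$ is a property of the flow alone, hence independent of $\varphi$; the inductive contraction constants depend on $\psi_\varphi - P(\varphi)\tau$ only through its $\alpha$-H\"older norm, which varies continuously with $\varphi \in \K_\alpha$. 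By possibly shrinking the neighbourhood, one thereby obtains a single set of constants valid throughout.

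Once these uniform estimates are in place, a standard Paley--Wiener / contour-shift argument converts them into the two-point bound
\[
|\rho_{F,H,m_\varphi}(t)| \;\leq\; C \|F\|_\alpha \|H\|_\alpha (1+t)^{-c_0}, \qquad F, H \in \K_\alpha,
\]
uniformly in $\varphi$. To upgrade to three-mixing, centre the observables ($\bar f = \bar u = \bar h = 0$): the expansion of $\rho_{f,u,h,m_\varphi}(s,t)$ then splits into three ``one-centred'' summands that are genuine two-point correlations over times $s$, $t$, and $s+t$ respectively, plus a triple-centred remainder. The one-centred summands are controlled directly by the preceding uniform power two-mixing. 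For the triple-centred integral, code via the same Markov section and use the local product structure of $m_\varphi$ on cylinder sets to rewrite it as a sum of products of transfer-operator matrix elements over the two independent time gaps; applying the uniform Dolgopyat estimate once to each gap yields bounds of order $(1+s)^{-c'_0}$ and of order $(1+t)^{-c'_0}$, and the statement follows.

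The main obstacle is verifying that Dolgopyat's inductive contraction scheme admits uniform constants under small $\K_\alpha$-perturbations of the potential. Although the UNI condition underpinning the cancellation is purely flow-theoretic, many auxiliary quantities (spectral gap sizes, sup-norms of eigenprojections, thermodynamical normalisations) are $\varphi$-dependent, and each must be shown to remain bounded uniformly across a $\K_\alpha$-neighbourhood. This is a book-keeping exercise rather than a source of genuinely new ideas, but it is the bulk of the technical work.
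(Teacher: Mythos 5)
Your roadmap matches the paper's strategy: symbolic coding of the Anosov flow, uniform Dolgopyat resolvent bounds for the complex transfer operators $\LL_{\phi + s\tau}$ (obtained uniformly in $\phi$ near $\phi_0$ by tracking the constants in Melbourne's Lemma~3.7, Kato perturbation of the spectral gap, and Dolgopyat's no-approximate-eigenfunction criterion, which is what topological power mixing actually delivers rather than UNI), then a double Laplace transform of $\rho(s,t)$ followed by analytic continuation past the imaginary axis and contour shifting. Your centering expansion of the triple correlation into two-point pieces over gaps $s$, $t$, $s+t$ plus a triple-centred remainder is also correct; the paper centres only $f$ and $h$ because only $\nu_\phi(F(\cdot,0))$ and $\nu_\phi(H(\cdot,0))$ enter the pole structure of $\hat\rho(s_1,s_2)$, but either normalisation works.

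However there is one concrete step your sketch skips that the paper devotes an entire subsection to, and it is not merely bookkeeping: the reduction of the \emph{observables} (not the potential) from two-sided to one-sided dependence. The operators $\LL_{\phi + s\tau}$ act on $\K_\a(\Si_+)$, whereas $f,u,h$ on the suspension $\Si^\tau$ depend on the whole two-sided sequence. For two-point correlations the Sinai--Bowen--Ruelle shift-by-$\s^{\lfloor n/2\rfloor}$ trick works because there is a single gap to sacrifice; for three-point correlations the middle observable sits between two independent gaps $n,m$, and no single shift makes all three observables future-measurable without losing control of one gap. The paper gets around this with Ruelle's decomposition $A=\sum_j A_j$ (each $A_j$ depending only on coordinates of index $\geq -j$), the associated shifted functions $\wt A_j^s(x)=e^{-s\tau^j(x)}A_j(\s^j x)$ on $\Si_+$, and a re-indexing of the quintuple sum over $(n,m,j,k,\ell)$ by $(p,q)=(n-k+\ell,\,m+k-j)$ so that the nested structure $\int \wt H\,\LL^q_{\phi+s_2\tau}\bigl[\wt U\,\LL^p_{\phi+s_1\tau}\wt F\bigr]\,d\nu_\phi$ reappears. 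Your phrase ``standard cohomological reduction to one-sided dynamics'' applies to the potential $\psi_\varphi$ only, and ``local product structure on cylinder sets'' does not by itself substitute for this mechanism; without it, the triple-correlation half of the proof does not close.
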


\begin{prop}
\footnote{In each of subsection~\ref{sec:7.2.2} and \ref{sec:7.2.3}, we prove Theorem~\ref{theo:uniform2mixing} for some class of functions $f,g,h$ with $\int f = \int g = \int h = 0$, prove Proposition~\ref{theo:uniformmixing}, and then use Proposition~\ref{theo:uniformmixing} to reduce the proof of Theorem~\ref{theo:uniform2mixing} to the case when $\int f = \int g = \int h = 0.$}
\label{theo:uniformmixing} Let $\X$  be a topologically power mixing Anosov flow. There exists $\a_0>0$ with the following property: let $\vf_0 \in \K_{\a_0}(X)$ be a potential. There exist $\e >0$, $\a>0$ and $C, c>0$ such that for all $\vf$ with $||\vf - \vf_0||_{\a_0} < \e$ and all $f, g \in \K _\a,$ we have, for all positive $t$:
\begin{equation}\label{uniformmixing} \big|\rho_{f,g, m_{\varphi }} (t)\big| \; \leq \; C \|f\|_\a \|g\|_\a (1 + t)^{-c } . \end{equation}

 \end{prop}
\begin{cor}\label{theo:uniformavmixing}Let $\X$  be a topologically power mixing Anosov flow. There exists $\a_0>0$ with the following property: let $\vf_0 \in \K_{\a_0}(X)$ be a potential. There exist $\e >0$, $\a>0$ and $C'_0, c'_0>0$ such that for all $\vf$ with $||\vf - \vf_0||_{\a_0} < \e$ and all $f,g,h \in \K _\a,$ we have, for all positive $t$:
\begin{equation}\label{uniformavmixing} \big| \ov {\rho}_{f,g , h, m_{\varphi} } (t) \big|\; \leq \; C_0' \|f\|_\a \| g \|_\a \|h\|_\a  (1 + t)^{-c_0' }  . \end{equation} \end{cor}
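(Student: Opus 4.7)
The strategy is to deduce Corollary~\ref{theo:uniformavmixing} directly from Theorem~\ref{theo:uniform2mixing} by rewriting the averaged triple correlation as an average of the three-point correlation $\rho_{f,u,h,m_\vf}(s, t-s)$ and integrating the polynomial bound. Concretely, unraveling the definitions and using that $\int f\,dm\int u\,dm\int h\,dm$ is constant in $s$, one has the identity
\begin{equation*}
\ov \rho_{f,u,h,m_{\vf}}(t) \; = \; \frac{1}{t} \int_0^t \rho_{f,u,h,m_{\vf}}(s,\, t-s)\, ds,
\end{equation*}
since the correlation $\rho_{f,u,h,m_{\vf}}(s,t-s)$ is precisely $\int f(x)\, u(\vf_s x)\, h(\vf_t x)\, dm_{\vf}(x)$ minus the product of the three integrals.

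Next, I would apply Theorem~\ref{theo:uniform2mixing} inside the integral, which gives, for every $\vf$ in the $\e$-neighborhood of $\vf_0$ and every $s\in[0,t]$,
\begin{equation*}
|\rho_{f,u,h,m_\vf}(s,t-s)|\;\leq\; C'_0 \|f\|_\a \|u\|_\a \|h\|_\a \left[(1+s)^{-c'_0} + (1+t-s)^{-c'_0}\right].
\end{equation*}
The change of variable $s'=t-s$ in the second term yields $\int_0^t[(1+s)^{-c'_0}+(1+t-s)^{-c'_0}]\,ds = 2\int_0^t (1+s)^{-c'_0}\, ds$, and this integral is explicit: it is bounded by a constant if $c'_0>1$, grows like $\ln(1+t)$ if $c'_0=1$, and grows like $(1+t)^{1-c'_0}/(1-c'_0)$ if $c'_0<1$. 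Dividing by $t$, I would obtain in each case a polynomial decay in $t$; choosing the new exponent to be $c''_0 := \min(c'_0, 1/2)$ (say) gives a uniform bound of the form $C''_0 \|f\|_\a\|u\|_\a\|h\|_\a (1+t)^{-c''_0}$ for $t\geq 1$. For $t\in[0,1]$ the trivial bound $|\ov\rho_{f,u,h,m_\vf}(t)|\leq 2\|f\|_{\infty}\|u\|_{\infty}\|h\|_{\infty}$ suffices, and can be absorbed by enlarging the constant.

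There is no real obstacle here: the only care needed is the choice of the new exponent, since if $c'_0\geq 1$ one gets either $1/t$ or $\ln(t)/t$ decay, which must be weakened slightly to remain of the form $(1+t)^{-c''_0}$ with a single positive power. Renaming $c''_0$ back to $c'_0$ and enlarging $C'_0$ accordingly yields the statement of the corollary, with the same neighborhood of $\vf_0$ as in Theorem~\ref{theo:uniform2mixing}.
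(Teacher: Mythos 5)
Your argument is correct, and it is precisely the (implicit) route the paper intends: the corollary is stated immediately after Theorem~\ref{theo:uniform2mixing} with no separate proof, relying on the elementary identity $\ov\rho_{f,u,h,m}(t)=\frac{1}{t}\int_0^t\rho_{f,u,h,m}(s,t-s)\,ds$, integration of the polynomial bound, and the observation (which you make explicitly) that one may have to shrink the exponent, say to $\min(c'_0,1/2)$, when $c'_0\ge 1$; since the constants $C'_0,c'_0$ in the corollary are re-quantified, this renaming is entirely consistent with the statement.
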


We assume now that the system $\X $ is the geodesic flow $\gg_t , t \in \R$ on the unit tangent bundle $X = SM$, where $M$ is a closed negatively curved manifold.  

Liverani proved exponential mixing for contact Anosov flows for the Liouville measure, which implies exponential mixing for the geodesic flow on manifolds of negative curvature for the Liouville measure \cite{Li}. It implies that the geodesic flow is topologically power mixing. Thus we can apply the above theorems to the geodesic flow and the Gibbs measure associated to $\vf_{\l_0}$ to obtain Propositions~\ref{4.1} {and \ref{4.2}}.

\subsection{Proof of Theorem \ref{theo:uniform2mixing} and Proposition~\ref{theo:uniformmixing}}

First, following Bowen and Ruelle \cite {B}, \cite{BR}, we can reduce the problem to the corresponding problem on suspended symbolic flows by introducing Poincar\'e sections for the flow with Markov property (see also \cite{PP} Chapter 9 and Appendix III), in such a way that H\"older continuous functions on $SM$ correspond to H\"older continuous functions on the symbolic system. (The H\"older constant might change, say from $\a_0$ to $2 \a$.)

We may thus assume that there is a subshift of finite type $(\Si, \s)$ and a positive $\a$-H\"older continuous function $\tau $  on $\Si $ such that the system $\X$ is the suspension flow $\sigma _t (x, r) = (x, r+t) $ on the set $ \Si^\tau := \{ (x, r): x \in \Sigma, 0 \leq r \leq \tau (x) \}/ [(x,\tau (x)) \sim (\s x, 0)].$ Let us denote by $[a_0, \cdots, a_k]$ the cylinder set $\{ x : x_i = a_i, i=0,\cdots, k\}$. Let us also define $d_\a$ on the space $\Si_+$ of one-sided sequences with the left-shift by $d_\a(x,y)=\a^k$, where $k$ is the first index for which $x_k, y_k$ are not equal.
Let us denote by $\K_\a(\Sigma_+)$ the space of $d_\a$-Lipschitz functions on {the space $\Si_+$ of one-sided sequences}. Let $\varphi \in \K_{2\a}(\Si^\tau)$  be a potential function on $\Si^\tau$. Then the function $ \int_0^{\tau (x)} \vf (x,r) \, dr $ is $d_{2\a}$-Lipschitz on $\Si.$

We may assume that the function $\tau$ is a function on $\Si_+$ in the sense that  $\tau (x) = \tau(y) $ if the points $x$ and $y$ in $\Si $ have the same nonnegative coordinates. Moreover, the function $\tau $ is a $d_\a$-Lipschitz function on $\Si _+$. The function $\phi_1$ on $\Si_+$ associated to $ \int_0^{\tau (x)} \vf (x,r) \, dr $ is a $d_\a$-Lipschitz function (\cite{Sin}, \cite{Bo}, see also Proposition 1.2 of \cite{PP} for example). Now normalize $\phi_1 $ to obtain a  $d_\a$-Lipschitz function $\phi$ with $\LL_{\phi}1=1$, where 
\begin{equation}\label{transfer1} \LL _{\phi } F (x) := \sum _{y; \s y = x } e^{\phi (y)} F(y)
\end{equation} is the transfer operator associated to $\phi$ (see e.g. \cite{PP} page 115 for these classical reductions).
We conclude that the map $\cT$ sending $\vf$ to $\phi$ is continuous from $\K_{\a_0}(SM)$ into $\K_\a (\Si _+) $. The equilibrium  measure $m_\vf $ for the function $\varphi$ is of the form \[ m_\vf = \frac {1}{\int \tau \, d\nu _\phi} (\ov \nu _\phi \otimes dr) \big|_{\Si^\tau }, \]
where $\ov \nu _\phi$ is the unique $\s $-invariant probability measure on $\Si$ such that its projection $\nu _\phi$ to $\Si _+ $ satisfies, for all functions $F \in C(\Si _+) $,
\begin{equation}\label{transfer2} \int \LL _{\phi } F \, d\nu_\phi = \int F \, d\nu _\phi. \end{equation}
Let us denote $\phi^{(k)}(x) = \phi(x) + \phi(\s (x)) + \cdots + \phi(\s^{k-1} (x))$.
For a given $\varphi_0$, we choose an $\e_1$-neighborhood of $\phi_0=\cT \vf_0$ so that there exists a constant $C_1 \geq 1$ with, for all {\it{normalized}} $\phi$ in the  $\e_1$-neighborhood of $\phi_0,$ all $k \in \N,$
\begin{equation}\label{eqn:8.6}
\left|\frac{e^{\phi^{(k)}(x)}}{e^{\phi^{(k)}(y)}} - 1 \right|  \leq C_1 \a^{-k} d_\a(x,y),\quad  \forall x,y \in \Si _+ \end{equation} \begin{equation}\label{eqn:8.6bis} \mathrm{and} \;\; C_1^{-1}  \leq \frac{\nu_\phi [a_0, \cdots, a_{k-1}] }{e^{\phi^{(k)}(x)}}\leq C_1, \quad  \forall x \in [a_0, \cdots, a_{k-1}]. 
\end{equation}
 \footnote{ Assume   the coordinates of $x$ and $y$  coincide  up to $k+n-1, n\geq 0$. Then,  for $j <k,|\phi(\s^j x)-\phi (\s ^jy)|\leq \a^{-j} d_\a(x,y) ||\phi|| .$ Therefore,  $|\phi^{(k)} (x) - \phi^{(k)} (y)| \leq \sum_{j=0}^{k-1}  \a^{-j} d_\a(x,y) ||\phi|| \leq  \a^{-k} d_\a(x,y) \frac{||\phi||}{1-\alpha}$.  If $x,y $ are not in the same $ [a_0, \cdots, a_{k-1}],$ then $\a^{-k} d_\a (x,y)$ is big. Note that the denominator of the second inequality does not have $e^{Pk}$ since $P=0$ for a normalized $\phi .$}

With those choices, for all $\phi$, 1 is an isolated eigenvalue of $\LL _{\phi } $ with eigenfunction the constant 1 (see \cite{PP}, Theorem 2.2 page 21). A ball of radius $r$ in $\Si ^\tau$ contains a cylinder of length $- C \log r $ in $\Si $ times an interval of length $cr$ in the flow direction. Its image on the manifold contains a ball of radius $r^{D}$, for some $D$. Therefore, the suspension  flow $\X$ is topologicallly power mixing for the symbolic distance. 

\begin{remark} The rest of the proof in this section follows the ideas of D. Dolgopyat (\cite{D2}). In order to check that all the arguments are uniform for equilibrium measures $m_\vf $ for $\vf$ in a neighborhood of $\vf_0$, we found it more convenient to follow \cite{Me}. In particular,  the constants $C_1, C_6, C_7, \g_3$ in this section  coincide with those in \cite{Me}.
\end{remark}

\subsubsection{Properties of the complex transfer operator}

In this subsection, we will denote the space of complex $d_\a$-Lipschitz continuous functions on $\Si_+$ by $ \K_\a (\Si _+ ) $ again. Let $\phi \in \K_\a (\Si_+)$ with $\LL_\phi 1= 1$.
We define the complex transfer operator $\LL _{\phi + s \tau}, s \in \C$ on $\K_\a (\Si _+) $ by 
$$\LL _{\phi +s\tau } F (x) := \sum _{y; \s y = x } e^{\phi(y) + s \tau (y)} F(y)  .$$
Following \cite{Me}, set $s=a+ib$.

We recall that, by mixing of the geodesic flow, $ \| \LL _{\phi + i b \tau } \|_\a < 1 $ for $b  \neq 0$  (see \cite{PP} Proposition 6.2). In particular, for $b \neq 0,$ the series $\sum _n \LL ^n_{\phi + ib \tau} $ converges as a series of operators in $\K_\a (\Si _+) .$ The sum $\sum _n \LL ^n_{\phi + s \tau} = (I - \LL_{\phi + s \tau} )^{-1} $ depends analytically on $s=a +ib$ for $a<0$ and has a continuous extension to $a=0, b\neq 0$. Dolgopyat's method allows to extend analytically that sum beyond the imaginary axis (Propositions \ref{spectralgap} and \ref{3.5}).
\begin{prop}\label{spectralgap} There is $\d = \d_{\phi_0}>0, \e>0 $ such that, for all normalized $\phi$ with $||\phi-\phi_0||_\a<\e$,  the mapping $s \mapsto \sum _n \LL ^n_{\phi+ s\tau  }$ is meromorphic on $V_\d,$ where    $$V_\d := \{ s=a + ib : |b | < 2 , \;  | a | <  \d \} $$ with a simple pole at $s = 0 $. Moreover, for a function $K \in \K_\a (\Si _+) ,$ the residue at $s = 0 $ of the meromorphic function $s \mapsto \sum _n \LL ^n_{\phi + s \tau} K $  (with values in $\K_\a $) is a  constant function with value $\nu _\phi (K).$ \end{prop}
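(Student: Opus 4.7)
The plan is to combine analytic perturbation theory around $s=0$ with a uniform spectral-radius bound for $\LL_{\phi+ib\tau}$ on the rest of the imaginary axis, and then to transfer everything uniformly to $\phi$ in a neighborhood of $\phi_0$.

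First I will analyze the spectrum near $s=0$. Lemma~\ref{fundamental}(3) says that $\LL_\phi$ has $1$ as a simple isolated eigenvalue with a uniform spectral gap: all other spectrum sits in $\{|z|\leq \gamma_3<1\}$. The rank-one spectral projection is $P_\phi K = \nu_\phi(K)\cdot \mathbf{1}$, as follows from $\LL_\phi\mathbf{1}=\mathbf{1}$ together with (\ref{transfer2}). Since $s\mapsto \LL_{\phi+s\tau}$ is a holomorphic family of bounded operators on $\K_\a(\Si_+)$ (it acts by $F\mapsto \LL_\phi(e^{s\tau}F)$, and $e^{s\tau}$ depends analytically on $s$ in the Banach algebra of multipliers), Kato's perturbation theorem will produce a neighborhood $U_0$ of $0$ on which $\LL_{\phi+s\tau}$ has a simple holomorphic eigenvalue $\lambda(s)$ with $\lambda(0)=1$, a holomorphic rank-one projection $P(s)$, and a complementary part $N(s)$ of spectral radius $\leq \gamma'<1$. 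On $U_0 \setminus \{\lambda(s)=1\}$ this yields the meromorphic representation
\[
\sum_{n=0}^\infty \LL^n_{\phi+s\tau} \;=\; \frac{1}{1-\lambda(s)}\,P(s) \;+\; (I-N(s))^{-1}(I-P(s)).
\]
First-order perturbation theory will give $\lambda'(0)=\nu_\phi(\LL_\phi(\tau))=\int\tau\,d\nu_\phi>0$, so $s=0$ is a simple zero of $1-\lambda(s)$ and thus a simple pole of the sum, whose residue applied to $K$ is a constant multiple of $P(0)K = \nu_\phi(K)\cdot\mathbf{1}$ — a constant function proportional to $\nu_\phi(K)$ as claimed.

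Next I will handle $s$ in $V$ away from $s=0$. Fix $\eta_0>0$ so small that $\{|s|\leq \eta_0\}\subset U_0$. For real $b$ with $\eta_0\leq |b|\leq 2$, the paragraph preceding the statement (via \cite{PP} Proposition 6.2) gives that the spectral radius of $\LL_{\phi+ib\tau}$ on $\K_\a$ is strictly less than $1$. By continuity of the spectral radius in $s$ and compactness of the arc $\{ib : \eta_0\leq |b|\leq 2\}$, this bound is uniform: $\rho(\LL_{\phi+ib\tau})\leq 1-\eta_1$. A standard Neumann-series argument will then produce $\delta_1>0$ such that on $\{\eta_0\leq |b|\leq 2,\ |a|<\delta_1\}$ the resolvent $(I-\LL_{\phi+s\tau})^{-1}$ exists, depends holomorphically on $s$, and is uniformly bounded; hence $\sum_n \LL^n_{\phi+s\tau}$ is holomorphic there. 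Setting $\delta := \min(\delta_1,\eta_0/2)$ and gluing the two regimes establishes meromorphicity on $V$ with unique pole at $s=0$.

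Finally, for uniformity in $\phi$ in a small $\K_\a$-neighborhood of $\phi_0$: the map $\vf\mapsto \phi=\mathcal{T}(\vf)$ is continuous into $\K_\a(\Si_+)$, and $\phi\mapsto \LL_\phi$ is continuous as an operator-valued map, so simple isolated eigenvalues, spectral gaps and spectral radii vary continuously under small perturbations (again by Kato's theory). Thus the constants $U_0$, $\gamma'$, $\eta_1$ and $\delta_1$ can be chosen uniform for $\phi$ in a sufficiently small neighborhood of $\phi_0$. I expect the main obstacle to be preserving the uniformity all along the compact arc of imaginary $s$; this is handled by a routine open-cover argument whose building block is the pointwise bound of \cite{PP} Proposition 6.2. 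Everything else reduces to Kato's perturbation theorem and the residue calculation above.
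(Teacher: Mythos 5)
Your proof is correct and follows essentially the same route as the paper, which simply cites Parry–Pollicott (Prop.\ 6.2 for the spectral-radius bound on the imaginary axis, Theorem 10.2 for the simple pole at $0$) together with Kato IV.3.1 and compactness of $\overline V$ for the uniformity in $\phi$; your unpacking of the residue via the rank-one projection $P(s)$ and the perturbed eigenvalue $\lambda(s)$ is exactly what those citations encode. The only small imprecision is that your computation actually yields a residue equal to $-\nu_\phi(K)/\lambda'(0)=-\nu_\phi(K)/\int\tau\,d\nu_\phi$ (a nonzero constant multiple of $\nu_\phi(K)$), which is all the paper uses — namely, that the residue vanishes iff $\nu_\phi(K)=0$.
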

\begin{proof} For a fixed $\phi$ , this follows from  \cite {PP}, Proposition 6.2 and Theorem 10.2, with a fixed $\d=\d _\phi $. {By \cite{Ka} Theorem IV.3.1 and compactness of  the closure $\overline {V_\d}$,  there is a neighborhood $\mathcal {U}_0 $ of $\phi _0 $ such that for normalized $\phi \in \mathcal{U}_0$, the rest of the spectrum of $ \LL _{\phi + s \tau}, s \in \overline {V_\d}, $ is separated from 1 by $\d=\d_{\phi_0}$.} \end{proof}

\begin{prop}\label{3.5}(Compare with Lemma 3.5 of \cite{Me}) Let $\X$ be a topologically power mixing Anosov flow. Let $\phi_0$ be a $\a$-H\"older continuous function. 
There exist constants  $\e, \d, \b, D_0$ such that, for all normalized $\phi, \| \phi -\phi _0 \|_\a < \e,$   the series of operators $\sum _n \LL ^n_{\phi +s\tau } $ has an analytic extension on the region $U= U_{\d,\b}$, where  $$U_{\d,\b} \;  := \; \{ s, s= a+ib; |b| > 1 , \;  |a | <  \frac{2\d}{|b|^{\b/2}} \} $$ and, for $s \in U,$ 
  \begin{equation} \|\sum _n \LL ^n_{\phi +s \tau } \|_\a  \; \leq \; D_0 |b |^{D_0} . \end{equation} 
\end{prop}
\begin{proof} As in \cite{Me}, we carry the calculations  for $0 \leq a \leq 1$ and $b >1$. They are analogous for $b< -1$ and for $-1 \leq a \leq 0.$
More precisely, we find a neighborhood $\mathcal U$ of $\phi _0 $  and $\theta >0, C>0$ such that  the conclusion holds for all $s=a+ib$ with $|b|>1,$ $ |a|<C^{-1}|b|^{-\theta}$,
and for all normalized $\phi \in \mathcal U$.
We first have the preliminary estimate of \cite{Me} in a uniform way.
\begin{lem}\label{fundamental} (Lemma 3.7 of \cite{Me}) There exist $C_6, C_7, \gamma_3, \e_2>0$ such that for all normalized $\phi$ with $||\phi-\phi_0||_\a<\e_2$,
\begin{enumerate}
\item $|\LL_{\phi+ ib\tau} |_\infty \leq 1,$
\item $|| \LL_{\phi+ib\tau}^n F||_\a \leq C_6 \{ b |F|_\infty + \a^n||F||_\a \}$ for all $n \geq 1$ and $F \in \K_\a(\Si_+)$,
\item $|| \LL_{\phi}^n F - \int_{\Si_+} F d\nu_\phi ||_\a \leq C_7 \gamma_3^n ||F||_\a$ for all $n \geq 1$ and $F \in \K_\a(\Si_+)$.
\end{enumerate}
\end{lem}
\begin{proof}
Part (2) comes from the basic inequality (\cite{PP}, Proposition 2.1) thus $C_6$ is uniform in $\phi$. Part (3) comes from the spectral gap of $\LL_\phi$ thus $C_7$ and $\gamma_3$ can be chosen uniformly in a neighbourhood of $\phi_0$ (see e.g. Kato \cite{Ka} Theorem IV.3.1).
\end{proof}
As in \cite{Me}, define
\[ \| f\|_b := \max \left\{ |f|_\infty,  \frac{1}{2C_6 b} \sup _{x \not = y } \frac{|f(x) -f(y) |}{(d(x,y))^\a} \right \}. \]
Since one may assume that $2C_6 b>1$, we have
$$ || F||_b \leq ||F||_\a \leq (2C_6b+1) ||F||_b,  $$
which implies that $||\LL||_\a/||\LL||_b$ lies between $2C_6b+1$ and $(2C_6b+1)^{-1}$.

\

Let $M_b F = e^{-ib\tau} F \circ \sigma$. 
\begin{defi} The operator $M_b$ has no approximate eigenfunction if there exists $N \in \N$ such that for every triple $( \theta \geq N, \b>0, C \geq 1)$, there exists $k=k(\theta, \b, C)$ such that for all $ (b, \rho, F)$ with $|F|=1, \rho \in \R$ and $|b|>k$, 
$$ |M_b^{\b \log |b|} F(y)-e^{i \rho} F(y) |  \geq C |b|^{-\theta}, $$ for some $y$.\end{defi}

\begin{lem}[Uniform version of Section 3.2 of \cite{Me}]\label{uniformDolgo} Consider the following conditions.
\begin{enumerate}
\item $M_b$ has no approximate eigenfunction.
\item There exist constants  $\e,D$ such that, for all normalized $\phi$ with $||\phi-\phi_0||_\a<\e$, and $b>1,$   the series of operators $\sum _n \LL ^n_{\phi +ib \tau } $ satisfies
  \begin{equation*}  \|\sum _n \LL ^n_{\phi +ib \tau } \|_b  \; \leq \; D |b |^{D} . \end{equation*} 
\item There exist constants  $\e, \d, \b, D_0$ such that, for all normalized $\phi$ with $||\phi-\phi_0||_\a<\e$,  the function $s \mapsto \sum _n \LL ^n_{\phi +s\tau } $ has an analytic extension to the region $U_{\d,\b}$  and  for $s \in U_{\d,\b},$ 
  \begin{equation*}  \|\sum _n \LL ^n_{\phi +s \tau } \|_b  \; \leq \; D_0 |b |^{D_0} . \end{equation*}  \end{enumerate}
With the above notations, (1) implies (2) and (2) implies (3).

\end{lem}

\begin{proof} See Section 3.2  of \cite{Me}. Let $\e_1$ be a constant such that $C_1$ in equation ~(\ref{eqn:8.6}) and $\a_1, \a_2$ in \cite{Me} are uniform in $\phi$ in $\e_1$-neighborhood of $\phi_0$. Now let $\e = \min \{ \e_1, \e_2\}$, where $\e_2$ is chosen as in Lemma~\ref{fundamental}. 
\end{proof}
{We now achieve the proof of Proposition~\ref{3.5}}:
topologically power mixing of $\X$ implies that $M_b$ has no approximate eigenfunction by Sections 3 and 5 of \cite{D2}, thus Proposition~\ref{3.5} follows.
\end{proof}

\subsubsection{One-sided smooth functions}\label{sec:7.2.2}
We start by proving  Theorem \ref{theo:uniform2mixing} for a  particular space of functions. For $\a >0 $ and $M \in \N$, let $\K_{\a, M}^+  $ be the set of functions $f$ on $\Si ^\tau $ with the following properties:
\begin{itemize}
\item for all $x \in \Si ,$ $f(x,r) = 0 $ for $r$ outside the interval $[ \frac{\inf \tau }{3},  \frac{2\inf \tau }{3}]$, 
\item for all $x \in \Si ,$ $r \mapsto f(x,r)$ is of class $C^M$,
\item for all $r \in [ \frac{\inf \tau }{3},  \frac{2\inf \tau }{3}]$, $x \mapsto f(x,r) $ depends only on the nonnegative coordinates of $x$ and
\item the functions $ \frac{\partial ^k f}{\partial r^k }  (x,r)$, for  $0\leq k \leq M $ are $\a$-H\"older continuous in $x \in \Si $ and continuous in $r$.
\end{itemize}
For $f \in \K_{\a, M}^+ $, we denote $||f||_{\a, M} := \sup _{r, k \leq M} ||  \frac{\partial ^k f}{\partial r^k }  (.,r)||_\a .$ 
The heart of the proof uses the arguments of \cite {D2} to establish:
\begin{prop}\label{prop:uniform2mixing} Let $\phi_0 \in \K_\a (\Si _+)$as above. There exist $\e, C, c>0$ and $M$  such that for all $\phi, \| \phi - \phi _0 \|_{\a} < \e$, all $f,g,h \in \K _{\a, M}^+,$ we have, for all positive $t_1,t_2$:
\begin{equation}\label{uniform2mixing2} \big| \rho_{f,g,h, m_{\vf }}(t_1,t_2) \big| \; \leq \; C \|f\|_{\a,M} \|g\|_{\a,M} \|h\|_{\a,M} [(1 + t_1)^{-c }+ (1 + t_2)^{-c } ]. \end{equation} \end{prop}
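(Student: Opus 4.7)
\medskip

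\noindent\textbf{Proof proposal.} The plan is to adapt the Dolgopyat--Pollicott--Melbourne scheme for power mixing to the three--point correlation setting, exploiting the uniformity in $\phi$ near $\phi_0$ that is already built into Propositions~\ref{3.5} and~\ref{spectralgap}. After the symbolic reduction recalled above, the relevant operators are the complex transfer operators $\LL_{\phi+s\tau}$ acting on $\K_\a(\Si_+)$, and all forthcoming estimates will hold uniformly for $\phi$ in the intersection of the neighbourhoods on which Propositions~\ref{3.5} and~\ref{spectralgap} apply.

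First I would set up, for $F,H \in \K^+_{\a,M}$, a transfer--operator representation of the two--point correlation
\[
\rho_{F,H,m_\varphi}(t) \;=\; \int F\cdot H\circ\sigma_t\,dm_\varphi - \int F\,dm_\varphi\int H\,dm_\varphi,
\]
of the form $\rho_{F,H,m_\varphi}(t)=\sum_{n\ge 0}\int K_{F,H,t,n}\,d\nu_\phi$, where $K_{F,H,t,n}$ involves $\LL_\phi^n$ applied to functions built from the flow-direction slices of $F$ and $H$ evaluated at the excess time $t - \tau_n(x)$. Taking the Laplace transform in $t$ and using that the fibre-integrals of $F,H$ depend on the flow coordinate only, one obtains an expression of the form
\[
\widehat{\rho}_{F,H,m_\varphi}(s) \;=\; \bigl\langle (I-\LL_{\phi+s\tau})^{-1}\,\widehat{F}(s,\cdot),\,\widehat{H}(s,\cdot)\bigr\rangle_{\nu_\phi} - \text{(diagonal term)},
\]
where $\widehat{F}(s,x)=\int_0^{\tau(x)} e^{-sr}F(x,r)\,dr$, and similarly for $H$.

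Next I would exploit the $C^M$--regularity in the flow variable: integrating by parts $M$ times in $r$, one has $|\widehat{F}(ib,\cdot)|_\a \leq C\,\|F\|_{\a,M}\,(1+|b|)^{-M}$, and the same for $H$. The inversion formula recovers $\rho_{F,H,m_\varphi}(t)$ as a contour integral. Using Proposition~\ref{spectralgap} to isolate the simple pole at $s=0$ (whose residue exactly cancels the product $\int F\,dm_\varphi \int H\,dm_\varphi$) and Proposition~\ref{3.5} to shift the tail of the contour onto the curve $\{\mathrm{Re}\, s = -c|b|^{-\theta}\}$, one gains an exponential factor $e^{-ct|b|^{-\theta}}$ against a polynomial growth $|b|^\theta$ for the resolvent and polynomial decay $(1+|b|)^{-M}$ for the data. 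Balancing $|b|\sim t^{1/(1+\theta)}$ yields a polynomial bound $C\|F\|_{\a,M}\|H\|_{\a,M}(1+t)^{-c_0}$ for some $c_0>0$, provided $M$ is large enough compared to $\theta$. This proves the two--point version (and gives Proposition~\ref{theo:uniformmixing} once extended to $\K_\a$ by a standard approximation argument in Section~\ref{Appendix}). All constants are uniform for $\varphi$ in a small $\K_\a$--neighbourhood of $\varphi_0$ because $C_1,C_6,C_7,\gamma_3,\theta,\delta$ are.

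To handle the three--point correlation I would carry out the same procedure in two variables. Writing, via the symbolic coding, $\rho_{f,u,h,m_\varphi}(t_1,t_2)$ as a sum indexed by two non--negative integers $(n_1,n_2)$ counting the Markov returns in the intervals $[0,t_1]$ and $[t_1,t_1+t_2]$, one gets an expression of the schematic form $\sum_{n_1,n_2}\int \LL_\phi^{n_1}\!\bigl(\widetilde f\cdot\LL_\phi^{n_2}(\widetilde u\cdot \widetilde h)\bigr)\,d\nu_\phi$, where each ``$\widetilde{\,\cdot\,}$'' stands for a flow-slice evaluated at an excess-time determined by $(t_1,t_2,n_1,n_2)$. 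A double Laplace transform in $(t_1,t_2)$ then decouples the two sums and produces two resolvents $(I-\LL_{\phi+s_1\tau})^{-1}$ and $(I-\LL_{\phi+s_2\tau})^{-1}$ acting on factors depending smoothly on the flow coordinates of $f,u,h$. Inverting and shifting each contour independently as in the two--point case, the centred piece of the three--point correlation splits naturally into two parts: one decays in $t_1$ by the $r$--smoothness of $f$ (and $u$), and the other decays in $t_2$ by the $r$--smoothness of $h$ (and $u$). This produces exactly the $(1+t_1)^{-c}+(1+t_2)^{-c}$ bound of~(\ref{uniform2mixing2}).

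The main obstacle will be step three: organising the double--Laplace representation so that it factorises cleanly through two commuting resolvents. The delicate point is that $u$ appears at the intermediate time and its flow--direction integrations by parts contribute to the decay in both $t_1$ and $t_2$; one must choose, for each Fourier mode, which of the two variables absorbs $u$'s smoothness. A careful Markov--section bookkeeping—splitting $u$ via a partition of unity in the flow direction adapted to whether the return to the section falls within the first or the second time interval—should resolve this and produce the additive bound. Once this factorisation is in place, the uniformity in $\varphi$ is automatic from Propositions~\ref{3.5} and~\ref{spectralgap}, and Corollary~\ref{theo:uniformavmixing} follows from Theorem~\ref{theo:uniform2mixing} by averaging in $s$.
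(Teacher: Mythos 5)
Your outline reproduces the paper's proof strategy quite faithfully: symbolic reduction, Laplace transform of the correlation, a nested two--resolvent representation, analytic continuation via Propositions~\ref{3.5} and~\ref{spectralgap}, and inverse Laplace by two successive contour shifts, with uniformity in $\varphi$ coming for free from the uniform resolvent bounds. The one point you misdiagnose is the ``main obstacle'' you flag in the last paragraph: there is no need to split $u$ by a partition of unity in the flow direction. After the symbolic reduction, the double Laplace transform of $\rho(t_1,t_2)$ applied to the sum over returns $(n,m)$ yields \emph{automatically} a representation of the form
\[
\widehat\rho(s_1,s_2)=\sum_{n,m}\int_{\Si_+} H(x,-s_2)\,\LL^m_{\phi+s_2\tau}\bigl[U(\cdot,s_2-s_1)\,\LL^n_{\phi+s_1\tau}F(\cdot,s_1)\bigr](x)\,d\nu_\phi(x),
\]
with $U$ evaluated at the \emph{difference} $s_2-s_1$: the change of variables $w=r+t_1-\tau^n(x)$, $z=w+t_2-\tau^m(\sigma^n x)$ produces exactly this factor, not a function of $s_1$ or $s_2$ separately. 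Inserting the resolvent bounds (after subtracting the residue at $s=0$, which cancels the products of means), the analytic continuation satisfies $|\widehat\rho(s_1,s_2)|\lesssim \|f\|_{\a,M}\|u\|_{\a,M}\|h\|_{\a,M}\,(1+|b_1|)^{D_0-M}(1+|b_1-b_2|)^{-M}(1+|b_2|)^{D_0-M}$.

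When you invert in $s_2$ along the shifted contour, you simply bound $(1+|b_1-b_2|)^{-M}\le 1$ and retain the factor $(1+|b_1|)^{D_0-M}$; this gives polynomial decay in $t_2$, and the subsequent inversion in $s_1$ gives polynomial decay in $t_1$. The additive bound $(1+t_1)^{-c}+(1+t_2)^{-c}$ falls out directly, with no bookkeeping on which time window absorbs $u$'s flow-direction smoothness. Introducing a partition of unity here would only obscure the factorisation and is not what the argument requires.
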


\begin{proof} Choose $\e$ so that Proposition~\ref{3.5} and Proposition~\ref{spectralgap} holds for all $\phi$ with $ \| \phi - \phi _0 \| < \e $. Fix $f,g,h, \phi $ and write $\rho (t_1,t_2) $ for $\rho_{f,g,h, m_{\varphi} }(t_1,t_2) .$  Assume first that {$\int f \, dm_\varphi =  \int h \, dm_\varphi = 0.$}
We consider the Laplace   transform
$$ \wh \rho (s_1, s_2 ) = \int _{\R_+ \x \R_+} \rho (t_1, t_2) e^{-s_1 t_1} e^{-s_2t_2} \, dt_1 \,dt_2  $$
which makes sense a priori for $a_j>0,$ where $s_j = a_j + i b_j, j = 1,2.$ The following computation is valid for $a_j >0 $ and will allow us to extend $ \wh \rho (s_1,s_2) $ analytically  to a larger domain and deduce the decay of $\rho (t_1,t_2) $ as $t_1,t_2$ go to infinity. 
\begin{lem} Consider the Laplace transforms $F, G $ and $H$ of the functions $f,g$ and $h$ given by:
$$ F(x, s) = \int _\R e^{-sr} f(x,r) dr , \; G(x,s) = \int _\R e^{-sr } g(x,r) dr , \; H(x,s) = \int _\R e^{-sr} h(x,r) dr .$$ Then, we have, for $a_1,a_2 >0$:
$$ \wh \rho (s_1,s_2) 
 =  \sum _{n,m}   \int _\Si H(x, s_2) \LL ^m_{\phi -s_2\tau  } \big[ G(., s_1-s_2 ) \LL ^n_{\phi -s_1\tau  } F (.,-s_1) (.) \big](x) \, d\nu _\phi (x) .$$ \end{lem}
\begin{proof}
We develop:
\begin{eqnarray*}
\wh \rho (s_1,s_2) & = &  \int _{\R_+ \x \R_+ } \int _{\Si ^\tau}  f(x,r) g(\s_{t_1}(x,r))  h (\s_{t_1 +t_2} (x,r )) e^{-s_1 t_1} e^{-s_2t_2} \, dm_\vf(x,r) \, dt_1 \,dt_2\\
  & =& \sum _{n,m}  \int _{\R_+ \x \R_+ \x \R_+} \int _\Si  f(x,r) g(\s^n x, r+t_1 -\tau ^n (x))  h (\s^{n+m}x, r+t_2 +t_1-\tau ^{n+m}(x) )\\
  & & \quad \quad \quad \quad       \quad \quad \quad \quad  \quad \quad \quad \quad   \quad \quad \quad \quad   e^{-s_1 t_1} e^{-s_2t_2}  \, dr \, d\nu _\phi (x)\, dt_1 \,dt_2,  \quad \quad \quad (*)
\end{eqnarray*}
where $\tau ^n (x) :=\sum _{k = 0}^{n-1} \tau (\s ^k (x))$. Observe that for all fixed positive $n,m$ the integral in $t_1, t_2, r$ is also an integral over $ \R \x \R \x \R .$   Then using the variables $w = r+ t_1 -\tau ^n(x)$ and $z = w +t_2 - \tau ^m (\s ^n x) $, the integral (*) can be written as 
{$$(*) =  \int _\Si  H(\s ^{n+m} x, s_2) e^{-s_2 \tau ^m (\s ^n x) }G(\s ^n x, s_1-s_2) e^{-s_1 \tau ^n (x) }  F(x, -s_1 )\, d\nu_\phi (x) .$$}
Using now  the invariance of $\nu _\phi $ under $\LL _{\phi } $ (\ref{transfer2}) and the fact that $ \LL^n (H K\circ \s^n )(x) = K(x) \LL (H) (x),$ we obtain:
 {\begin{eqnarray*}
(* )
  &=&   \int _\Si H(\s ^m x, s_2) e^{-s_2 \tau ^m ( x) } G(x, s_1-s_2 ) \LL ^n_{\phi -s_1\tau } F (\cdot,-s_1) (x) \, d\nu _\phi (x) \\
 &=&   \int _\Si H(x, s_2) \LL ^m_{\phi -s_2\tau  } \big[ G(\cdot, s_1-s_2 ) \LL ^n_{\phi -s_1\tau  } F (\cdot,-s_1) (\cdot) \big](x) \, d\nu _\phi (x) .
 \end{eqnarray*}}
 The Lemma follows for $a_j = \Re \, s_j >0.$ \end{proof}
 
  By Proposition~\ref{3.5} and our choice of $\e$, we conclude that there exist constants  $\d, \b, D_0$ such that, for all normalized $\phi$ with $||\phi-\phi_0||_\a<\e$,   { the mapping  $s \mapsto \sum _n \LL ^n_{\phi +s\tau } $ extends analytically} on the region {$U_{\d,\b}$}   and,  for {$s \in U_{\d,\b},$}
  \begin{equation}  \|\sum _n \LL ^n_{\phi +s \tau } \|_\a  \; \leq \; D_0 |b |^{D_0} . \end{equation} Moreover, by  Proposition~\ref{spectralgap}, there is $\d >0 $ such that  the series of operators $\sum _n \LL ^n_{\phi +s \tau } $ converges and is meromorphic on  the region $V_\d$, has 
 a simple pole at $0$ and has residue at 0 the projection on the constant function $\nu _\phi (.)$.

 On the other hand, since $f,g $ and $h$ belong to $\K_{\a, M}^+$ , the functions $s \mapsto F(\cdot,s), s\mapsto G (\cdot,s ) $ and $s\mapsto H(., s  ) $ are {holomorphic from $\C$} into $\K _\a (\Si _+) .$ Moreover, { for $s = a +ib$ and $|a| $ bounded,} the functions $\| F(\cdot, s )\|_\a, \| G(\cdot,s )\|_\a $ and $ \| H(\cdot, s)\|_\a $ decay at infinity as $(|b|)^{-M} $  
and 
  $$ \nu _\phi  (F(., 0)) \;=\; \int _{\Si _+} \left( \int _\R f(x,r) \,dr \right) \, d\nu_\phi (x) \;=\; \int _{\Si ^\tau } f \, dm_\vf \; = \; 0 .$$
  It follows that the function $$J(x,s) := \sum _n \LL ^n_{\phi + s\tau  } F(\cdot, s) (x)$$ is analytic from  {$U_{\d,\b} \cup V_\d $} into $\K_\a$ and that its $\K_\a $-norm is bounded by   $C \| f\|_{\a,M} (1 + |b |)^{D_0-M } $ as $
  |b | \to \infty $.
 Summarizing, {for each $b_2 \neq 0$,  the function $s_1 \mapsto  \wh \rho (s_1,ib_2) $ admits an analytic extension to $\{ (s_1, ib_2) ; s_1 \in U_{\d,\b} \cup V_\d \} $} and this extension satisfies: 
 {$$  \wh \rho (s_1,ib_2 ) \; = \; \sum _m \int _{\Si _+ } H(x, ib_2 ) \LL ^m_{\phi -ib_2 \tau   } [ G(\cdot, s_1 -ib_2 ) J(\cdot,-s_1 )] (x) \, d\nu _\phi (x)  .$$}
 As before, for each fixed $s_1  \in U_{\d,\b} \cup V_\d $,   the mapping $s_2\mapsto \sum _m \LL ^m_{\phi +s_2 \tau  } [ G(\cdot, s_2-s_1 ) J(\cdot,s_1  )] (x) $ is meromorphic  from $U_{\d,\b} \cup V_\d$ with a unique simple pole at $s_2 = 0 $ and a residue a constant function on $\Si _+ $ with value $C_0(s_1).$
 Therefore, for all $s_1 \in U_{\d,\b} \cup V_\d,$ $s_2 \mapsto \wh \rho (s_1,s_2 )$ admits a meromorphic extension to $U_{\d,\b} \cup V_\d$ of the form 
  $$  \wh \rho (s_1, s_2 ) \; = \; \frac {C_0(s_1) \int _{\Si _+ } H(x,0) \, d\nu _\phi (x) }{2\pi i s_2} + \ov \rho (s_1, s_2 ) ,$$ 
  where $\ov \rho (\xi , \eta ) $ is an analytic function on $(U_{\d,\b} \cup V_\d) \x  (U_{\d,\b} \cup V_\d) $  such that 
  {$$|\ov \rho (s_1,s_2 )| \; \leq \; C\| h\|_{\a,M}\| g\|_{\a,M} \| f\|_{\a,M} (1+ |b_2 |)^{-M }(1+ |b_1-b_2|)^{D_0-M }(1+ |b_1|)^{D_0-M }.$$}
  We again have $\int _{\Si _+ } H(x,0) \, d\nu _\phi  (x) = 0 $ by our condition that   $\int h \, d\mu_\phi = 0 $ and finally, the function $\wh \rho (s_1, s_2) $  admits  an analytic extension  to $(U_{\d,\b} \cup V_\d) \x  (U_{\d,\b} \cup V_\d) $ and satisfies:
  {$$|\wh \rho (s_1, s_2 )| \; \leq \; C \| h\|_{\a,M}\| g\|_{\a,M} \| f\|_{\a,M} (1+ |b_2 |)^{-M }(1+ |b_1-b_2|)^{D_0-M }(1+ |b_1|)^{D_0-M }.$$}
    
  We now compute $\rho (t_1,t_2 ) $ as the Laplace inverse of $\wh \rho (s_1,s_2)$  by integrating on the imaginary axis in $s_2$ and in $s_1$. 
    For a fixed $s_1 \in U_{\d,\b} \cup V_\d$, we can move the curve of integration in $s_2  $  to the curve $$ \G := \{ -\d \min \{ 1, \frac { 1} {|b |^\b }\} +i b  ; b \in \R \} .$$ We obtain that the function $ \wt \rho (s_1 , t_2 ) $ 
  \begin{eqnarray*} &  & \wt \rho (s_1, t_2) := \frac{-1}{4\pi ^2} \int _\R \wh \rho( s_1, ib_2) e^{ib_2 t _2} \, db_2 \\ &  = &\frac{-1}{4\pi ^2} \left(  \int _{-1}^{+1} \wh \rho (s_1, -\d +i b_2 ) e^{ib_2t_2} e^{-\d t_2 }\, db_2 + \int _{ \R \setminus [-1, 1] }\wh \rho ( s_1,  -  \d  \frac { 1} {|b_2 |^\b } +ib_2)e^{ib_2t_2} e^{-\d t_2/ |b_2|^\b }\, db_2 \right)\end{eqnarray*}
  is, as a function of $s_1$, an analytic function on $U_{\d,\b} \cup V_\d$ and satisfies 
    \begin{eqnarray*} | \wt \rho (s_1 , t_2 )| & \leq & C\frac{ \| h\|_{\a,M}\| g\|_{\a,M} \| f\|_{\a,M}}{(1+ |b_1|)^{M- D_0 }} \left( 2 e^{-\d t_2} + \int _{ \R \setminus [-1, 1] } \frac{  e^{-\d t_2 / |b|^\b} }{(1 + |b|)^{M- D_0}}\, db \right) \\
     &\leq & C \frac { \| h\|_{\a,M}\| g\|_{\a,M} \| f\|_{\a,M}}{(1+ |b_1|)^{M-D_0 }} (1 + t_2) ^{- \b }, \end{eqnarray*} 
    as soon as $M > D_0 + 2.$ 
We are interested in  $\rho (t_1,t_2) =  \int _\R \wt \rho (s_1 , t_2) e^{i b_1 t_1 } \, db_1$. In the same way, by moving the curve of integration in $s_1 $ to $\G $, we obtain (recall that we have assumed that {$\int f \, dm_\l = \int h \, dm_\l = 0$}):
\[ \rho (t_1, t_2 ) \; \leq \;  C  \| h\|_{\a,M}\| g\|_{\a,M} \| f\|_{\a,M} [ (1 + t_1)^{-\b } + (1 + t_2)^{-\b }]. \]

\

Observe that the above proof also yields, setting $g = 1$:
\begin{prop}\label{prop:uniformmixing} Let $\phi _0 \in \K_\a (\Si _+)$as above. For $\e, C, c>0$ and $M$  as above,  for all normalized $\phi$ with $||\phi-\phi_0||_\a<\e$, , all $f,h \in \K _{\a, M}^+,$  we have, for all positive $t$,
\begin{equation}\label{uniformmixing2} \big| \rho_{f,h, m_{\vf} }(t) \big| \; \leq \; C \|f\|_{\a,M}  \|h\|_{\a,M} [(1 + t)^{-c}]. \end{equation} \end{prop}

Indeed, if we assume $\int f \,dm_\vf  = 0 $, this is exactly the same computation, with only one variable $s$. But (\ref{uniformmixing2}) holds for $f$ as soon as it holds for $f -\int f \,dm_\vf.$ By the same token, using Proposition~\ref{prop:uniformmixing}, we can replace in (\ref{uniform2mixing2}) $f $ and $h$ by $f -\int f \,dm_\vf$
and $h -\int h \,dm_\vf.$ This achieves the proof of Proposition~\ref{prop:uniform2mixing}.
   \end{proof}

\subsubsection{From one-sided to two-sided smooth functions}\label{sec:7.2.3}
This part goes back to Ruelle (\cite {R}), we present it here for completeness. We consider a new space of functions:
for $\a >0 $ and $M \in \N$, let $\K'_{\a, M} $ be the set of functions $f$ on $\Si ^\tau $ with the following properties:
\begin{itemize}
\item for all $x \in \Si ,$ $f(x,r) = 0 $ for $r$ outside the interval $[ \frac{\inf \tau }{3},  \frac{2\inf \tau }{3}]$, 
\item for all $x \in \Si ,$ $r \mapsto f(x,r)$ is of class $C^M$ and
\item the functions $ \frac{\partial ^k f}{\partial r^k }  (x,r)$, for  $0\leq k \leq M $ are $\a$-H\"older continuous on $\Si $ and continuous in $r$.
\end{itemize}
For $f \in \K'_{\a, M} $, we still denote $\| f\|_{\a, M} := \sup _{r, k \leq M } \|  \frac{\partial ^k f}{\partial r^k }  (.,r) \| _\a .$ We show in this subsection \begin{prop}\label{twosided} There exist $\e', C', c'>0$ and $M$  such that for all normalized $\phi$ with $||\phi-\phi_0||_\a<\e'$,  all $f,g,h \in \K' _{\a, M},$ we have, for all positive $t_1,t_2$:
\begin{equation*}\big|\rho_{f,g,h, m_{\vf} }(t_1,t_2) \big|  \; \leq \; C' \|f\|_{\a,M} \|g\|_{\a,M} \|h\|_{\a,M} [(1 + t_1)^{-c' }+ (1 + t_2)^{-c' } ]. \end{equation*} \end{prop}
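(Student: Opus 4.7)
The strategy is to reduce two-sided H\"older functions to one-sided ones via Sinai's cohomology lemma, then invoke Proposition~\ref{prop:uniform2mixing}. Sinai's Lemma asserts: for any $\a$-H\"older $F$ on the two-sided $\Si$, there exist $\a'$-H\"older $\tilde F$ depending only on nonnegative coordinates and $\a'$-H\"older $G$ on $\Si$ (with $\a' = \a^{1/2}$, say) such that $F = \tilde F + G\circ \s - G$ and $\|\tilde F\|_{\a'} + \|G\|_{\a'} \leq C \|F\|_\a$ for some universal $C$.

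I would first apply this lemma to each section $f(\cdot, r)$ of $f \in \K'_{\a, M}$. Because Sinai's construction is linear and continuous in the parameter $r$, the resulting $\tilde f, g$ are $C^M$ in $r$, supported in $[\inf\tau/3, 2\inf\tau/3]$, and satisfy $\tilde f \in \K_{\a', M}^+$, $g \in \K'_{\a', M}$ with norms controlled by $C\|f\|_{\a, M}$. Apply the same decomposition to $u$ and $h$. Substituting into $\rho_{f, u, h, m_\vf}(t_1, t_2)$ and expanding produces one principal term $\rho_{\tilde f, \tilde u, \tilde h, m_\vf}(t_1, t_2)$, bounded by Proposition~\ref{prop:uniform2mixing} (applied with exponent $\a'$ and $M$, shrinking $\e'$ accordingly), plus finitely many cross terms, each containing at least one coboundary factor of the form $g(\s x, r) - g(x, r)$.

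To handle the cross terms, I would use that on $\Si^\tau$ the identification $(x, \tau(x)) \sim (\s x, 0)$ gives $(\s x, r) = \sigma_{\tau(x)}(x, r)$ under the suspension flow, so because $g$ is $C^M$ in $r$ with $M \geq 1$,
\[
g(\s x, r) - g(x, r) = \int_0^{\tau(x)} (\pp_r g)(\sigma_s(x, r)) \, ds, \qquad \pp_r g \in \K'_{\a', M-1}.
\]
By flow-invariance of $m_\vf$ and Fubini, each such cross term becomes a bounded time-integral (over $s \in [0, \sup \tau]$) of two- or three-point correlations of the form $\rho_{\pp_r g, u', h', m_\vf}(t_1 - s, t_2)$ (or shorter ones, if more than one coboundary appears), where $u', h'$ are again in $\K'_{\a', M-1}$ built from $\tilde u, \tilde h$ and the coboundaries of $u, h$. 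Iterating Sinai's Lemma once more on each remaining two-sided factor (losing exponent to $\a'' = \a^{1/4}$, which is still admissible after possibly shrinking $\e'$ so that Proposition~\ref{spectralgap} applies at the new exponent) reduces every cross term to an integral of correlations bounded uniformly in $\vf$ by Propositions~\ref{prop:uniformmixing} and~\ref{prop:uniform2mixing}.

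The principal obstacle will be the careful bookkeeping of the cross terms --- in particular choosing $M$ large enough to absorb the two derivatives one loses across the two rounds of cohomological decomposition --- and verifying that uniformity in $\vf$ on a fixed $\K_\a$-neighborhood of $\vf_0$ is preserved. The latter follows because Sinai's construction is explicit with universal constants, the exponent $\a'' = \a^{1/4}$ and any larger H\"older norms produced are all controlled by $\|f\|_{\a,M}$, and all the mixing estimates depend on $\vf$ only through its H\"older norm on the chosen neighborhood. Summing the finitely many contributions yields the stated bound (\ref{uniform2mixing2}) for $f, u, h \in \K'_{\a, M}$, with constants $C', c' > 0$ and some integer $M$ depending only on $\vf_0$.
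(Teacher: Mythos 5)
Your outline captures the right high-level strategy (reduce two-sided functions to one-sided ones, bound the main term by Proposition~\ref{prop:uniform2mixing}, and control the error), but the Sinai-cohomology route you take has a genuine and, I believe, fatal obstruction in the suspension-flow setting, and it is precisely the one that the paper's Ruelle-style decomposition is designed to sidestep.

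The problem is that Sinai's lemma produces a coboundary for the \emph{shift} $\s$ on $\Si$, not for the \emph{flow} $\s_t$ on $\Si^\tau$, and the passage between the two is not as tame as your sketch suggests. Your key identity $g(\s x,r)-g(x,r)=\int_0^{\tau(x)}(\pp_r g)(\s_s(x,r))\,ds$ is correct, but the upper limit $\tau(x)$ depends on the base point. When you attempt to Fubini, you must insert the indicator $1_{\{s<\tau(x)\}}$, and this indicator is only $L^\infty$ --- it is not H\"older in $(x,r)$, nor does it vanish where $(\pp_r g)\circ\s_s$ is nonzero (for $s$ between roughly $\tau(x)$ and $\tau(x)+\inf\tau/3$ the integrand survives). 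Equivalently, if you change variables via $(x,r)\mapsto\s_{\tau(x)}(x,r)=(\s x,r)$ to eliminate $g\circ\s$, the time shifts in the other two factors become $t_1-\tau(\s^{-1}y)$ and $t_1+t_2-\tau(\s^{-1}y)$, i.e.\ $y$-dependent, and you no longer have a correlation function $\rho_{\cdot,\cdot,\cdot,m_\vf}(t_1-s,t_2)$ at a fixed lag. So the cross terms do \emph{not} reduce to integrals of correlations of H\"older functions in the way you claim, and Propositions~\ref{prop:uniformmixing}--\ref{prop:uniform2mixing} do not apply to them. A second, independent, issue is that even if the Fubini step could be salvaged, the iteration does not terminate: the coboundary $g$ (and hence $\pp_r g$) produced by Sinai's lemma is itself two-sided, so applying Sinai again gives a one-sided main part \emph{plus a new two-sided coboundary}, and each round also costs one $r$-derivative (passing from $\K'_{\a,M}$ to $\K'_{\a',M-1}$). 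Two rounds do not close the argument.

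The paper's proof takes a genuinely different route, going back to Ruelle \cite{R}: instead of a cohomological correction, it writes $A=\sum_{j\ge0}A_j$ where $A_j$ depends only on the coordinates $(x_{-j},x_{-j+1},\dots)$ and $\sup|A_j|\le\a^j\|A\|_\a$. Each piece $A_j$ is then turned into a one-sided function not by subtracting a coboundary but by precomposing with $\s^j$, and the resulting cocycle $\tau^j$ is absorbed into the complex parameter of the transfer operator via $\wt A_j^s(x)=e^{-s\tau^j(x)}A_j(\s^j x)$. The gain $\a^j$ beats the loss in the H\"older norm, giving a convergent series after a change of exponent $\a\to\a'$, and the whole expression is then summed in $(n,m)$ (reorganized as $(p,q)$) into the same transfer-operator/Laplace-transform framework used for $\K^+_{\a,M}$. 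There are no coboundaries, no flow-time integrals with $x$-dependent limits, and no iteration --- which is exactly why this approach closes whereas the Sinai-lemma approach, in the suspension setting, does not.
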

\begin{proof}

Assume first  that $\int f \, dm_\vf = \int g \, dm_\vf = \int h \, dm_\vf = 0$.

The following construction reduces the proof of Proposition \ref{twosided} to a direct extension of the  proof of Proposition \ref{prop:uniform2mixing}. Let $A(x) $ be a function in {$\K_{\a} ( \Si ) $ }; then (see e.g. \cite{P1}), there exists a decomposition $ A = \sum _{j=0 }^\infty  A_j $, where
\begin{enumerate} 
\item $x \mapsto A_j (x) $ depends only on the coordinates $( x_{-j}, x_{-j+1}, \cdots )$ of $x$,
\item $\sup _x |A_j (x) | \leq \a^j \| A\|_\a $ and
\item $ \| A_j \|_{\a} \leq \| A\|_\a $. 
\end{enumerate}
{Now assume that $s \mapsto A(x,s) $ is {holomorphic from $\C$} into $\K _\a (\Si _+)$ and that { for $s = a +ib$ and $|a| $ bounded,} the function $\| A(\cdot, s )\|_\a $ decays at infinity as $(|b|)^{-M} .$  The same construction yields a holomorphic family $s \mapsto A_j(x,s)$ with properties (1),(2) and (3) true for all $s$.}\footnote{The mapping $A \mapsto A_j$ can be chosen  linear from $\K_\a$ to $\K_\a$ and therefore $s \mapsto A_j(x,s) $ is holomorphic from $\C$ into $\K _\a (\Si _+)$. See  \cite {R}, page 110.}
{We define the functions $\wt A_j  (x,s) := e^{- s\tau ^j ( x) } A_j (\s^{j} x,s) $.} Then, by \cite {R} (see also \cite{D1} and \cite{P1}), there is $\a', 0 <\a' <\a , $ and $ \theta , 0< \theta < 1,$ such that, for all $s$ with {$s = a+ ib, |b|>1$ \begin{enumerate} 
\item $x \mapsto \wt A_j (x,s)) $ depends only on the coordinates $( x_{0}, x_{1}, \cdots )$ of $x$,
\item $\sup _x |\wt A_j (x,s) | \leq e^{Cj|a|} \a^j \| A(.,s)\|_\a $ and
\item $ \| \wt A_j(.,s) \|_{\a'} \leq   C e^{Cj|a|}    | b| \theta ^j  \| A(.,s)\|_\a  $.
\end{enumerate}} 
Finally, we set {$\wt A(x,s)  := \sum _j \wt A_j (x,s) ;$} we have, if $|a|$ is small enough, 
 \begin{enumerate} 
\item $x \mapsto \wt A (x,s) $ depends only on the coordinates $( x_{0}, x_{1}, \cdots )$ of $x$,
\item $\sup _x |\wt A (x,s) | \leq C \| A\|_\a $,
\item $ \| \wt A(.,s)  \|_{\a'} \leq   C    | b |  \| A(.,s)\|_\a  $   {for $|b|>1$} and
\item $\int \wt A (x,0) \, d\ov \nu (x) = \int A(x,0) \, d\ov \nu (x) $ for any shift invariant measure $\ov \nu $ on $\Si $.
\end{enumerate}
{In particular, by property (3), for $|a|$ small enough,  the function $\| \wt A_j(\cdot, s )\|_{\a'} $ decays at infinity like $(|b|)^{-M+1} .$}
Property (4) is clear since {$ \wt A (x,0) = \sum _j \wt A_j (x,0) = \sum _j  A_j (\s ^j x,0)$}, whereas $A(x,0) = \sum _j  A_j (x,0)$ and both series of functions converge uniformly. %

Choose $\e'$ so that for all normalized $\phi$ with $||\phi-\phi_0||_\a<\e'$, Proposition~\ref{spectralgap} and Proposition~\ref{3.5} apply on $\K_{\a'}$. Fix $f,g,h   \in \K'_{\a, M}$ and write $\rho (t_1,t_2) $ for $\rho_{f,g,h, m_{\vf} }(t_1,t_2)$. We  now write as before the Laplace transform $ \wh \rho (s_1,s_2)$ of $\rho (t_1,t_2)$ as:
{$$ \wh \rho (s_1,s_2) 
 =  \sum _{n,m}   \int _\Si  H(\s ^{n+m} x, s_2) e^{-s_2 \tau ^{m+n} (x) }G(\s ^n x, s_1-s_2) e^{(s_2-s_1) \tau ^n (x) }  F(x, -s_1 )\, d\ov \nu_\phi (x)  ,$$ 
 where, as before, the functions $H(x,s), G(x,s) $ and $F(x,s)$ are the Laplace transforms of the functions $f,g$ and $h.$ The functions $H(x,s), G(x,s) $ and $F(x,s)$ satisfy all the above assumptions and we can associate the functions $\wt H(x,s), \wt G(x,s) $ and $\wt F(x,s)$ such that their $\| \|_{\a'} $ norms in $x$  decay at infinity as $(|b|)^{-M+1} .$}

We consider this sum  as a series in the sense of tempered distributions: for any $B (s,t) $ in the Schwartz space of $\R^2$, $\int \wh B(ib_1, ib_2) \wh \rho (ib_1, ib_2 )\, db_1  db_2 $ makes sense and is equal to $-4\pi^2 \int B(t_1,t_2) \rho (t_1, t_2 ) \, dt_1 dt_2.$ The series of integrals $\int B(t_1,t_2) \rho _{n,m} (t_1,t_2) \, dt_1dt_2 $ converges absolutely. It still does if one considers the sum over $n,m $ in $ \Z $ instead of $\Z_+$.
For each $(n,m) \in \Z\x\Z$, we write, using the decompositions $H (x, s) =  \sum _j H_j (x, s),$ $  G (x, s ) = \sum _k G_k (x, s) , F(x, s )= \sum _\ell F_\ell (x, s) $ and the above $\wt A _j$ notation: 
{\begin{eqnarray*}
& & \wh \rho_{n,m}   (s_1,s_2) =\\
&:= & \int _\Si H(\s ^{n+m} x, s_2) e^{-s_2 \tau ^{m+n} (x) }G(\s ^n x, s_1-s_2) e^{(s_2-s_1) \tau ^n (x) }  F(x, -s_1 )\, d\ov \nu_\phi (x)  \\
 & = &  \sum _{j,k,\ell}   \int _\Si H_j(\s ^{n+m} x, s_2) e^{-s_2 \tau ^{m+n} (x) }G_k(\s ^n x, s_1-s_2) e^{(s_2-s_1) \tau ^n (x) }  F_\ell (x,- s_1 )\, d\ov \nu_\phi (x)  \\
&  =  & \sum _{j,k,\ell}   \int _\Si  \wt H_j^{s_2}(\s ^{n+m-j} x, s_2) e^{-s_2 \tau ^{m+n-j} (x) } \wt G_k^{(s_1-s_2)}(\s ^{n-k} x, s_1-s_2) e^{(s_2-s_1) \tau ^{n-k} (x) }  \\
&    &\quad \quad \quad    \quad \quad \quad   \quad \quad \quad   \quad \quad \quad   \quad \quad \quad   \quad \quad  \quad   \quad \quad   \wt F^{-s_1}_\ell(\s ^{-\ell }x, -s_1 )e^{-s_1 \tau^\ell (\s ^{-\ell} x)}\, d\ov \nu_\phi (x) \\
& =   & \sum _{j,k,\ell}   \int _\Si  \wt H_j(\s ^{n+m-j} x, s_2) e^{-s_2 \tau ^{m+k-j} (\s^{n-k}x) } \wt G_k(\s ^{n-k} x, s_1-s_2) e^{-s_1 \tau ^{n-k+\ell} (\s ^{-\ell }x) }  \\
&   & \quad \quad \quad    \quad \quad \quad   \quad \quad \quad   \quad \quad \quad   \quad \quad \quad   \quad \quad \quad   \quad \quad   \quad \quad \quad   \quad \quad   \wt F_\ell(\s^{-\ell}x, -s_1 ) \, d\ov \nu_\phi (x),
\end{eqnarray*}}
where we used the cocycle relation $ \tau ^{n+m } (x) = \tau ^n (x) + \tau ^m (\s ^n x) $ valid for all $m,n \in \Z $.

We now replace the summation in $(n,m) $ by a summation in $(p,q) $, where $p : =n-k +\ell , q := m+k -j $. 
Assume for example $p \geq 0,  q \geq 0 $ (and then $ p+q = n+m -j +l  \geq 0 $). We write, using the invariance of $ \ov \nu _\phi$, the integral
 { \begin{equation}\label{nmjkl}    \int _\Si  \wt H_j(\s ^{n+m-j} x, s_2) e^{-s_2 \tau ^{m+k-j} (\s^{n-k}x) } \wt G_k(\s ^{n-k} x, s_1-s_2) e^{-s_1 \tau ^{n-k+\ell} (\s ^{-\ell }x) } 
  \wt F_\ell(\s^{-\ell}x, -s_1 ) \, d\ov \nu_\phi (x),\end{equation}
 as: 
  \[  \int _\Si   \wt H_j(\s ^{n+m-j +\ell } x, s_2) e^{-s_2 \tau ^{m+k-j} (\s^{n-k+\ell}x) } \wt G_k(\s ^{n-k+\ell} x, s_1-s_2) e^{-s_1 \tau ^{n-k+\ell} (\s ^{-\ell }x) }   \wt F_\ell(x, -s_1 ) \, d\nu_\phi (x),\]}
  where we replaced $\ov \nu _\phi $ by $\nu _\phi $ since the integrand now depends only on the non-negative  coordinates of $x$.
  As before, we can write these integrals using the transfer operators as
{ \begin{eqnarray*} & \int _\Si  \wt H_j(\s ^{m+k -j} x, s_2) e^{-s_2 \tau ^{m+k -j  } ( x) } \wt G_k ( x, s_1 -s_2) \LL _{\phi -s_1\tau}^{n-k+\ell }
 ( \wt F_\ell ( ., -s_1)) (x) \, d\nu_\phi (x) \\
 = & \int_\Si \wt H_j (x, s_2)  \LL _{\phi -s_2 \tau}^{q } [\wt G_k ( ., s_1 -s_2) \LL _{\phi -s_1 \tau}^{p}
 ( \wt F_\ell ( ., -s_1)) (.)] (x) \, d\nu _\phi (x).
 \end{eqnarray*}}
If $|a_1|, |a_2 | , $ and $ |a_1 -a_2| $ are  small enough, one can  sum  in $j,k,\ell \in \Z_+^3$ the integral (\ref{nmjkl}) for the same value of $(p,q)$;  we obtain, when $p,q \geq 0 $, 
 {\[ \int_\Si \wt H(x, s_2)  \LL _{\phi -s_2 \tau}^{q } [\wt G ( ., s_1-s_2) \LL _{\phi -s_1 \tau }^{p}
 ( \wt F ( ., -s_1)) (.)] (x) \, d\nu _\phi (x). \]}
The other possible signs of $p , q$ and $p+q$ are treated in the same way. 

 By applying   Proposition \ref{3.5} to $\K_{\a'} $, we conclude that there are positive numbers $ \d', \b' , D'_0$ such that, for all normalized $\phi$ with $|| \phi - \phi_0||_\a < \e',$  the series of operators $\sum _n \LL ^n_{\phi + s\tau } $ {has an analytic extension to  the region $U' = U_{\d',\b'}$  and  for $s \in U',$ }
  \begin{equation}  \|\sum _n \LL ^n_{\phi + s \tau} \| \; \leq \; D'_0 |b |^{D'_0} . \end{equation}
 Moreover,  there is $\d' >0 $ such that on the series of operators $\sum _n \LL ^n_{\phi + s \tau  } $ converges and is meromorphic on  the region $V'= V_{\d'}$, with a simple pole at $0$ and residue the projection on the constant function $\nu _\phi (.)$.   
We conclude as above (but with a different argument for each one of the six sums over $(p,q), (-q, p+q), (-p, p+q), (-p-q,q), (p, -p-q), (-p,-q) $ in $(\Z_+ \x \Z_+)$) that $\wh \rho (s_1, s_2)$ is given by an analytic function defined on the region where $s_1, s_2$ and $s_1 - s_2$ all belong to $U' \cup V'$ (and have a real  part smaller than $\d_0$) and satisfying 
$$|\wh \rho (s_1 ,s_2)| \; \leq \; C \| h\|_{\a,M}\| g\|_{\a,M} \| f\|_{\a,M} (1+ |b_1 |)^{D''_0-M }(1+ |b_1 - b_2 |)^{D''_0-M }(1 + |b_2 |)^{D''_0-M },$$
where $D''_0 = D'_0 + 1$.

If $M$ has been chosen  greater than $D''_0 +2$, we obtain  Proposition \ref{twosided} (for functions with integral 0)  by the same argument as  before, provided one chooses in each of the six cases contours $\G$ of integration with the right sign.

The extension of Proposition~\ref{prop:uniformmixing} to functions $f,h \in \K'_{\a, M}$ with $\int f dm_\phi = 0 , \int h dm_\phi = 0 $ goes again by the same computation, without the function $g$. Again, (\ref{uniformmixing2}) holds for $f$ as soon as it holds for $f -\int f \,dm_\vf.$ This justifies the reduction to functions with integral 0 in the proof of proposition~\ref{twosided}. \end{proof}

\subsubsection{H\"older continuous functions} 

We conclude the proof of Theorem~\ref{theo:uniform2mixing} and of Proposition~\ref{theo:uniformmixing} by approximating any H\"older continuous function by regular functions. We have proven  (\ref{uniform2mixing}) for functions in $\K'_{\a, M} $ with some constants $C', c'$;  (\ref{uniform2mixing}) holds also if $f,g, h$ are such that $f \circ \s_{t_1}, g \circ \s _{t_2}, h \circ \s_{t_3}  \in \K' _{\a,M} $ for  bounded $t_i, i = 1,2, 3$. There is $C_9 = 10  + 6\frac{\sup _x\tau (x)}{\inf _x \tau(x)} $ such that any function which is of class $C^M$ along the trajectories of the special flow $(\Si ^\tau, \s_t, t \in \R )$ and such that  the first $M$ derivatives along the flow are $\a$-H\"older continuous functions can be written as a sum of less than $C_9$ functions in $\K'_{\a,M}$. Using the projection from the manifold to $\Si ^\tau $, we conclude that there exist $\e, C'', c'>0,  \a, \a_0, M $ such that for all $\vf, \| \vf - \vf _0 \|_{\a_0} < \e $, all $f,g,h $ that are  of class $C^M$ along the trajectories of the  flow and  such that all  the derivatives along the flow up to order $M$ belongs to $\K_\a(SM) $,  we have, for all $t_1,t_2\geq 0 $:
\begin{equation*}\ \big| \rho_{f,g,h, m_{\varphi} }(t_1,t_2) \big|\; \leq \; C'' \|f\|_{\a ,M} \|g\|_{\a , M} \|h\|_{\a , M}[(1 + t_1)^{-c' } +(1 + t_2)^{-c'} ], \end{equation*}
where $\| . \|_{\a, M} $ is the maximum of the $\| \|_\a $ norms of the first $M $ derivatives along the flow.

We conclude by smoothing all  functions in $\K_\a$. Let $\ov \psi $ be a $C^M $ nonnegative  function on $\R$, with support in $[-1, +1] $ and integral 1. For $\e>0$ and a function $f \in \K_\a$, set $$\ov \psi _\e (t): = \frac{1}{\e} \ov \psi (\frac {t}{\e }) \; \textrm {and } f_\e (x) := \int _\R \ov \psi _\e (t) f (\vf _t x) \,dt .$$ 
We have $ \sup _x |f(x)  - f_\e(x)  | \leq \e ^\a \| f \|_\a $ and $\| f_\e \|_{\a, M } \leq \e^{ -M- 1} \| f \|_\a $.

Fix $t_1,t_2>0$, choose $\displaystyle \e =  [ 1/3 (1 + t_1)^{-c' } + 1/3(1 + _2)^{-c'} ]^{\frac {1}{\a +3M +3}} $ and replace $f,g,h$ by $f_\e , g_\e, h_\e .$ One obtains  (\ref{uniform2mixing}) for $f,g,h$ with some  constant $C'_0 $ and $c'_0 = \frac {c'\a}{\a +3M +3}.$

\section{Appendix II: Potential theory on $\M$}\label{Appendix2} In this section, we recall the potential  theory that we used. Some justifications are more transparent when using the probabilistic approach.
\subsection{General theory}

 Let $\M$ be a simply connected nonpositively  curved Hadamard
 manifold with Ricci curvature bounded from below. Then the manifold is {\it{stochastically complete}} (\cite{Pi}, \cite{Y2}) and the heat kernel $\Pp (t,x, y)$ satisfies, for all $x,z \in \M, s,t >0$
 \begin{equation}\label{complete} \int _{\M} \Pp (t,x, y) \, d\Vol (y) \; = \; 1, \quad {\textrm {and}} \quad \Pp (t+s, x, z) = \int _{\M} \Pp (t,x, y)\Pp (s, y,z)  \, d\Vol (y) . \end{equation}
 
 The following  results of Sullivan \cite{Su} hold more generally for open connected Riemannian manifold without boundary.
\begin{defi}\label{lambda0}
The bottom of the spectrum $\l_0$ is defined to be
$$ \l_0 =  \inf \frac{\int_{\M} |\nabla \phi|^2}{\int_{\M} |\phi|^2},$$ {where the infimum is taken} 
over smooth functions $\phi$ on $\M$ with compact support.
\end{defi}
Indeed, the $L^2$ spectrum of the operator {$\D$} is a subset of $[\l_0, +\infty)$ that contains $\l_0$ (\cite{Su}). Moreover,  the same $\l_0$ is related to 
smooth positive eigenfunctions of $\D$. 
\begin{lem}\label{lem2.2} {With $\l_0$ as in the definition \ref{lambda0}, }
\begin{enumerate}
\item For each $\l \leq \l_0$, there is a smooth positive $\l$-harmonic function $\phi$.
 For each $\l > \l_0$, there are no smooth positive $\l$-harmonic functions.
\item {If for some $x\neq y$,} $\int_0 ^\infty e^{\l_0 t} \Pp(t,x,y) dt =\infty,$ then there is a unique positive $\l_0$-harmonic function $\phi_0$ up to multiplicative constants.
\item {If for some $x \neq y$,} $\int_0 ^\infty e^{\l_0 t} \Pp(t,x,y) dt =\infty,$ the Markov process on $\M$ associated with the semi-group of probability densities 
\begin{equation}\label{eqn2.1} q(t,x, y) :=  \Pp(t,x,y) \frac{\phi_0(y)}{\phi_0(x)} e^{\l_0 t}\end{equation} 
is recurrent, i.e. almost every path starting from any point in $\M$ enters every set of positive measure infinitely often.
\end{enumerate}
\end{lem}
\begin{proof}
Part (1) is Theorem 2.1 of \cite{Su}
Part (2) and (3) are Theorem 2.7 and Theorem 2.10 of \cite{Su}, respectively.
\end{proof}
We recall the  Harnack inequality and its consequence.

\begin{prop}[Harnack inequality \cite{L}, Theorem 6.1]\label{Harnack}  
 There is a $C_0 >1 $ such that for all $\l \in [0, \l_0]$, for any positive {$\l $-harmonic} function $f$  on an open  domain $\DD $,  we have $\| \nabla \log f\|(x)  \leq \log C_0$ if $d(x, \pp \DD ) > 1 $.
\end{prop}

{We also recall a consequence of the parabolic Harnack inequality in the case when the Ricci curvature is bounded from below by some constant $-a_7^2.$
\begin{prop}\label{boundedgradient} There are $C, T_1$ such that, for all  $x,y$ in a compact set $A\subset \M$, $t\geq T_1$, \[  \| \nabla \log \Pp (t,x,y) \| \leq C .\] \end{prop}
\begin{proof} Choose $R$ large enough that $A\subset B(x,R/2).$
The function $\Pp (t, x, y)$ is a solution of the heat equation on $\M$ with Ricci curvature bounded below, by $-a_7^2$,  then by a sharp gradient estimate by Souplet and Zhang \cite{SZ}, on $\{ (y,t) : y \in B(x, R/2), s\in [t_0 -T/2, t_0] \}$,
$$ \frac{|\nabla_y \Pp (t, x, y)|}{\Pp (t, x, y)} \leq C \left( \frac{1}{R} + \frac{1}{\sqrt{T}} + a_7 \right) \left( 1 + \log \frac{\max \Pp (t, x, y)}{\min \Pp (t, x, y) }\right),$$
where the maximum and minimum are taken on the set $\{ (y,t) : y \in B(x,R), t\in [t_0 - T, t_0]\}$.\\
We need to show that $\displaystyle  \frac{\max \Pp (t, x, y)}{\min \Pp (t, x, y)}$ is bounded uniformly for $t$ large. 
Assume not. \\ Then there exist $y_n, y'_n \in B(x,R), t_n \to \infty, T_n, T'_n \in [0,T]$ such that $\displaystyle  \frac{\Pp (t_n - T_n, x, y_n)}{\Pp (t_n - T'_n, x, y'_n)} \to \infty.$ We can assume, by taking a subsequence, that $y_n \to y, y'_n \to y', T_n \to T_\infty , T'_n \to T' _\infty $ and that there exist $\l_0 $ harmonic functions $\psi, \psi'$ on $B(x,R)$ such that 
\[  \frac{\Pp (t_n - T_n, x, y_n)}{ \Pp (t_n - 2T, x, x)} \to e^{\l_0 ( T_\infty  -2T) }\psi (x,y)  \;\; {\textrm{and}} \;\;  \frac{\Pp (t_n - T'_n, x, y'_n)}{ \Pp (t_n - 2T, x, x)} \to e^{\l_0 ( T'_\infty  -2T)} \psi '(x,y') .\]
(See e.g. \cite{ABJ}, Theorem 2.2). The function $\psi '$ is a $\l_0$-harmonic function that is not identically $0$. Indeed, by \cite{ABJ}, Lemma 2.1, \[ \psi'(x,x)= e^{-\l_0 ( T'_\infty  -2T)}\lim\limits_{t \to \infty }  \frac{\Pp (t - T'_\infty , x, x)}{ \Pp (t - 2T, x, x)}= 1.\] 
  So it does not vanish, and the above limit cannot be $+\infty .$
\end{proof}}

We assume in the rest of this section that the Green function $G_{\l _0} (x,y)= \int_0^\infty e^{\l_0 t} \Pp (t,x,y)\, dt $ is finite.

\subsection{Relative Green function}

 A {\it {path }} in $\M$ is a continuous mapping $\om = \om _t, t\geq 0,$ from $[0, + \infty ) $ to $\M$. The space $\Om $ of paths is endowed with the compact open topology and the corresponding Borel $\s $-algebra. It follows from (\ref{complete}) that for each $x \in \M$, there is a probability measure $\P_x$ on $\Om $ such that $\om_0 = x \; \P_x$-a.e., $\{ \om _t, t \geq 0 \} ,$ is a Markov process and for all  Borel subsets $A$ of $\M$, all $t > 0,$ 
 $$ \P_x (\{ \om , \om _t \in A \}) \; = \; \int _{A} \Pp (t,x, y) \, d\Vol (y) .$$
 The probability $\P_x$ is called the Wiener measure starting from $x$ and the corresponding expectation integral is denoted by $\E_x.$  
 
   Let $A$ be a closed subset of $\M $ and assume $x \not \in A$. For $\om \in \Om_x$, let $T_A(\om)   \in ]0, + \infty ]$  be the first time the trajectory $\om $ hits $A$.  For $\l \leq \l_0, $ the {\it {relative Green}} function $G_\l (x,y : \M \setminus A )$ is the positive function such that,
for every nonnegative measurable function $F$,
\begin{equation}\label{stoppingtime}   \int_{\M \setminus A} F(y) G_\l (x,y :\M \setminus A ) dy \; = \;  \E _x \left[\int _0^{T_A(\om)} e^{\l t} F(\om_t) \,  dt \right] . \end{equation}
{For all open sets $\DD$ and $\CC \subset \DD$  all $0 \leq \l \leq \l_0, $ and all $x \neq y  \in  \CC,$ we have $$ G_\l (x,y :\CC ) \leq G_\l(x,y :\DD) \leq G_{\l_0} (x,y :\DD ) \leq G_{\l_0}(x,y) < + \infty .$$
 \begin{cor}\label{coro:Harnack}  There is a constant $C_0$ such that for any open set $\DD$, 
 any $0 \leq \l \leq \l_0 $ and any $x, y,z \in \DD$ such that $d(x,z), d(x,y), d(x,\pp\DD), d(y,\pp\DD), d(z,\pp\DD) $ are all at least 1, we have
 \[ G_\l (x,z:\DD) G_\l (x,y:\DD) \leq C_0 \max \{ G_\l (x,y):\DD);  d(x,y )\geq 1\}   G_\l (z,y:\DD) .\]
 \end{cor}
(See {\it{Remarque}} on page 94 of \cite{An2} for a proof of Corollary~\ref{coro:Harnack}.)} 

Consider $A$ a closed $(n-1)$-dimensional submanifold in 
 {$\DD$} and assume $x,z \in \DD .$  Write $T(\om )$ for  $T_{A\cup \pp \DD} (\om )  .$ Observe that  if  {$T (\om ) < T_{\pp \DD}(\om ), \; \om _{T (\om)} \in A\subset \DD .$} In particular, in that case, $ G_\l (\om _{T (\om ) }, z: \DD)$ makes sense.  
\begin{prop}\label{Markov} {With the above notations, we have,   for all $\l \leq \l_0$, all $x,z \in \DD \setminus A,$} $$ G_\l (x,z: \DD ) \; = \; \E_x \left[1_{T (\om ) < T_{\pp \DD}(\om ) }  e^{\l T (\om)} G_\l (\om _{T(\om)}, z: \DD)\right] + G_\l (x, z: \DD \setminus A) . $$ \end{prop}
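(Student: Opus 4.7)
The strategy is to test the claimed identity against an arbitrary nonnegative measurable function $F$ on $\DD$ and exploit the probabilistic representation (\ref{stoppingtime}) of the relative Green function together with the strong Markov property of Brownian motion. Writing $T(\om)=T_{A\cup \pp\DD}(\om)$ and $T_{\pp\DD}(\om)$ for the exit time of $\DD$, observe that on the event $\{T(\om)=T_{\pp\DD}(\om)\}$ the trajectory exits $\DD$ without ever meeting $A$, while on the event $\{T(\om)<T_{\pp\DD}(\om)\}$ one has $\om_{T(\om)}\in A\cap \DD$, so all terms below make sense.

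First I would split the Feynman–Kac integral at the stopping time $T(\om)$:
\[ \int_0^{T_{\pp\DD}(\om)} e^{\l t} F(\om_t)\,dt \; = \; \int_0^{T(\om)}e^{\l t}F(\om_t)\,dt \; + \; 1_{T(\om)<T_{\pp\DD}(\om)} \int_{T(\om)}^{T_{\pp\DD}(\om)}e^{\l t}F(\om_t)\,dt . \]
Taking $\E_x$ and using (\ref{stoppingtime}) with $A\cup \pp\DD$ as the stopping set identifies the first term with $\int_{\DD\setminus A} F(y)\,G_\l(x,y:\DD\setminus A)\,dy$, since the Brownian motion killed on entering $A\cup\pp\DD$ is the same as the Brownian motion killed on leaving $\DD\setminus A$.

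For the second term, the strong Markov property at $T(\om)$ is the key ingredient: conditional on $\mathcal{F}_{T(\om)}$ and on the event $\{T(\om)<T_{\pp\DD}(\om)\}$, the shifted path $(\om_{T(\om)+s})_{s\ge 0}$ is a Brownian motion starting at $\om_{T(\om)}\in A\cap \DD$ and the remaining integral factorises as $e^{\l T(\om)}$ times a fresh Feynman–Kac integral up to the exit time of $\DD$. Applying (\ref{stoppingtime}) once more turns this into
\[ \E_x\Bigl[1_{T(\om)<T_{\pp\DD}(\om)}\,e^{\l T(\om)}\int_{\DD}F(y)\,G_\l(\om_{T(\om)},y:\DD)\,dy\Bigr]. \]
Adding the two contributions and using arbitrariness of $F\ge 0$ yields the asserted identity.

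The main technical point to justify carefully is the application of the strong Markov property together with Fubini: one must know that the integrand is jointly measurable and that $\E_x\bigl[\int_0^{T_{\pp\DD}(\om)}e^{\l t}F(\om_t)\,dt\bigr]$ is finite for sufficiently many $F$, which is guaranteed by $\l\le \l_0$ and finiteness of $G_{\l_0}$ on $\M$ (and hence on $\DD$). Once this is in hand the decomposition is a bookkeeping step; no genuine analytic difficulty arises beyond invoking the strong Markov property at the stopping time $T(\om)$ and recognising the killed processes.
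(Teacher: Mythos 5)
Your argument is correct and is essentially the same as the paper's: the paper also invokes (\ref{stoppingtime}), splits the Feynman--Kac functional at the stopping time $T(\om)=T_{A\cup\pp\DD}(\om)$, and applies the strong Markov property to the post-$T$ piece. The only cosmetic difference is that the paper tests against the indicator $1_{B(z,\delta)}$ and lets $\delta\to 0$ rather than against a general nonnegative $F$, and it writes the "before $T$'' contribution as the sum of two terms over the events $\{T<T_{\pp\DD}\}$ and $\{T\ge T_{\pp\DD}\}$; both of these are equivalent to what you do.
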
 
\begin{proof} We may assume that $x \neq z$. Then we may write for $\d < d(z, A\cup \pp \DD)/2$, and $d < d(z,x)/2$, 
\begin{eqnarray*} 
& &\int _{B(z, \d)} G_\l (x, w: \DD ) \, dw \\ &=&  \E _x \left[\int _0^{T_{\pp \DD}(\om)} e^{\l t} 1_{B(z,\d)} (\om _t) \, dt \right]\\
&=&  \E_x \left[ 1_{T(\om)< T_{\pp \DD} (\om) } \int _{T (\om) }^{T_{\pp \DD} (\om) } e^{\l t} 1_{B(z, \d)} (\om_t) \, dt \right]   +   \E_x  \left[ 1_{T(\om)< T_{\pp \DD} (\om)}  \int _0 ^{T (w) } e^{\l t} 1_{B(z, \d)}(\om _t) \, dt \right]\\
&&+  \E_x  \left[ 1_{T(\om) \geq T_{\pp \DD} (\om) } \int _0 ^{T_{\pp \DD}(\om) } e^{\l t} 1_{B(z, \d)} (\om_t) \, dt \right]\\
&=& \E_x \left[1_{T(\om )  < T_{\pp \DD}(\om ) }e^{\l T (\om ) }  \int _{B(z, \d)} G_\l (\om _{T(\om)}, w: \DD) \, dw \right] + \int _{B(z, \d)} G_\l (x, y: \DD \setminus A) d\mathrm{Vol}y . 
\end{eqnarray*} 
We used the Strong Markov Property of the stopping time $T(\om)$ to write the last line.\footnote{To justify the convergence as $\d \to 0$, we have to use $G_\l (\om _{T(\om)}, w: \DD) \leq C G_\l (\om _{T(\om)}, z: \DD) $ and $G_\l (x, y: \DD \setminus A) \leq C G_\l (x, z: \DD \setminus A) $ as soon as $\d < 1/2 d(z, A\cup \pp \DD)$ and $\d <d(z,x)/2$, which follows from Proposition \ref{Harnack} applied to a constant multiple of the metric.}
The proposition follows by letting $\d  \to 0.$
\end{proof} 
Let $\varpi _x^\l $ be the distribution on $A\cap \DD $ such that the proposition writes, for all $ \l \leq \l_0,$
\begin{equation*}
G_\l (x,z : \DD) \; = \; \int _{A\cap \DD} G_\l (y,z:\DD) \, d\varpi _x^\l (y) + G_\l (x, z: \DD \setminus A) 
\end{equation*}  
The measure $\varpi_x^0 $ is the distribution of the hitting point $\om _{T(\om ) } $ on $A \cap \DD$ and, for $F$ positive measurable function on $A$, {\begin{equation}\label{hitting}\int _A F(y) \, d\varpi_x^\l (y) = \E_x [ 1_{T(\om)< T_{\pp\DD} (\om) } e^{\l T (\om )} F(\om _{T(\om ) })] .\end{equation} }
\begin{cor}\label{lem:3.2}
Let $A $ be a closed $(m-1)$-dimensional submanifold of the open $\DD$, and $x \in \DD \setminus A.$  {For all $\l \leq \l_0$, all $x,z \in \DD \setminus A,$}, there is a measure $\varpi _x^\l $  on $A  $ such that:
\begin{equation}\label{Poisson} 
G_\l (x,z : \DD) \; = \; \int _{A} G_\l (y,z:\DD) \, d\varpi  _x^\l (y)  + G_\l (x,z: \DD \setminus A ) .
\end{equation} 
\end{cor} 
 {\begin{defi} A barrier $A$ is a closed $(m-1)$-dimensional manifold  that separates $\DD $ into two disjoint connected components. \end{defi}}
 Clearly, if $A$ is a barrier, and $x,z$ are in distinct connected components of $\DD \setminus A$, then all paths going from $x$ to $z$ hit the barrier $A$. Relation (\ref{Poisson}) becomes
 \begin{equation}\label{Poissonbarrier} 
G_\l (x,z : \DD) \; = \; \int _{A} G_\l (y,z:\DD) \, d\varpi  _x^\l (y)  .
\end{equation} 

Assume now that we have disjoint barriers $A_1, A_2 $ in $\DD$. Denote $\CC_i, i = 1,2,3$ the connected components of $\DD \setminus (A_1\cup A_2)$ in such a way that $A_1 $ separates $\CC _1 $ from $\CC _2 $ and that $A_2 $ separates $\CC _2 $ from $\CC _3 $. 
\begin{prop}\label{barrier}  With the above notations, for all  $x \in \CC _1, 0 \leq \l \leq \l_0 , $ the measures  $\varpi _{x,A_1}^\l $, $\varpi _{x,A_2}^\l $ satisfy, for any positive measurable function $F$ on $A_2$, 
\[ \int _{A_2} F(a_2) \, d\varpi _{x,A_2}^\l (a_2) \; = \int _{A_1} \left(  \int _{A_2} F(a_2) \, d\varpi _{a_1,A_2}^\l (a_2)  \right)\, d\varpi _{x,A_1}^\l (a_1)  .\] \end{prop}

\begin{proof} Any path $\om $ starting from $x \in \CC_1$ hits $A_1$ before hitting  $A_2$. Set $T_i (\om ): = T_{A_i}(\om ), i = 1,2.$ Unless $T_{1} (\om ) = T_{2} (\om ) = +\infty ,$ we have $T_{1} (\om ) < T_{2} (\om ).$  Then, we may write:
\begin{eqnarray*} \int _{A_2} F(a_2) \, d\varpi _{x,A_2}^\l (a_2) \, da_2  &=& \E_x \left[ 1_{T_2 (\om ) <\infty } e^{\l T_2 (\om )} F(\om_{T_2(\om)})\right] \\
&=&  \E_x \left[ 1_{T_1 (\om ) <\infty }1_{T_2 (\om ) <\infty } e^{\l T_1 (\om )}e ^{\l (T_2-T_1) (\om )} F(\om_{T_2(\om)})\right] \\
&=& \E_x \left[ 1_{T_1 (\om ) <\infty }e^{\l T_1 (\om )}\E_{\om _{T_1 (\om )}} [ 1_{T_2 (\om ') <\infty }e^{\l T_2 (\om' )} F(\om_{T_2(\om')})] \right] , \end{eqnarray*}
where we used the strong Markov property and $\om'$ is the path $\om'_t = \om _{t + T_1(\om )}$. We obtain
\[ \int _{A_2}  F(a_2) \, d\varpi _{x,A_2}^\l (a_2) \, da_2  \; = \; \E_x \left[ 1_{T_1 (\om ) <\infty } e^{\l T_1 (\om )} \int _{A_2} F(a_2) \, d\varpi _{\om_{T_1(\om)} ,A_2}^\l (a_2)  \right]. \]
The relation follows. \end{proof}

Assume furthermore  that a barrier $A$ is the boundary $\pp \CC$ of a  bounded domain $\CC \subset \DD$. For $x \in \CC,$ write $\Pp (t,x, y : \CC)$ for the fundamental solution of the heat equation vanishing at $\pp \CC.$ For all positive $F$ with compact support inside $\CC$, we have
$$ \int _{\CC} F(y) \Pp (t,x,y : \CC) \, d\Vol (y) \; = \; \E_x \left[ 1_{t< T_A (\om) } F(\om _t) \right].$$
In particular, for $0\leq \l \leq \l_0, x,y \in \CC, $ $$ G_\l (x,y: \CC ) \; = \; \int _0^\infty  e^{\l t} \Pp (t,x, y : \CC) \, dt.$$
\begin{prop}\label{prop:8.8}[See e.g. \cite{GSC}, Section 2.2] The hitting measure $\varpi _x^\l$ has a density $\rho _x^\l $ with respect to the Lebesgue measure $dy$ on $\pp \CC$ given, for $ y \in \pp \CC , $ by
\[ \rho _x^\l (y) \; = \; \frac{\pp}{\pp n} G_\l (x,z: \CC) |_{z=y} ,\]
where $\displaystyle \frac{\pp}{\pp n} $ denotes the derivative in the  direction of the normal to $\pp \CC.$\footnote{Note that we are looking at the hitting measure of a ball, so we have bounded geometry and \cite{GSC} applies. Note that the relation (\ref{Harnackdensity}) is used in the proof of  Lemma~\ref{derivativeHolder}.}\end{prop}
In particular, the densities   $\rho _x^\l $ are $\l$-harmonic functions of $x \in \CC$ and, by Proposition~\ref{Harnack}, satisfy, if $d(x,\pp\CC) >1$, for all $y\in \pp \CC,$
\begin{equation}\label{Harnackdensity}  \| \nabla_{x'} \log \rho _z^\l (y)|_{x'=x} \|\; \leq \; \log C_0 . \end{equation}

\subsection{Regularity of the hitting distributions} 
In the following propositions, we estimate some regularity of  the hitting distribution  with some geometric hypotheses. Since ``bounded geometry" is used in many different ways, let us define it.
\begin{defi}
We say that a $(m-1)$-dimensional submanifold  $A$ has bounded geometry if, for all $x \in A$,  the set $A \cap B(x,2)$ can be  given in local geodesic coordinates by equations with uniformly bounded $C^2$-coefficients.
\end{defi}
\begin{prop}\label{regularity}Let $A$ be a $(n-1)$ dimensional submanifold   of $\DD$ with bounded geometry. Set $A_1$ for the set of points of $A$ at distance at least $1$  from $\DD^c$. There exists a constant $C_3$ such that for  $\l \in [0, \l _0],$ for any positive   function $F$ on $A_1$, any $x \in \DD$ with $d(x, \DD^c) > 1,$ 
  \[\int _{A_1} F(y)  d\varpi _x^\l (y)  \leq \; C_3 L(F)^2 \int _{A }  G_\l (x,y) F(y)  \, dy ,\] where $L(F) : = e ^{\sup_{A } || \nabla \log F  || }$ is the (multiplicative) Lipschitz constant of $ F$ and $dy$ is the Lebesgue measure on $A$.\end{prop}
\begin{proof} Fix  $\d, 0 < \d \leq 1/2. $ We choose a cover of $A_1 $ by open balls $B (y_p, \d) , y_p \in A_1 $ such that the balls $B (y_p, \d/3) , y_p \in A_1 $ are disjoint and a  partition of unity $\vf _p $ on $A_1 $ subordinate to the cover $B(y_p, \d) \cap A_1 $  of $A_1$. We have to estimate:
\[  \int_{A_1} F(y) d\varpi _x^\l (y)  \leq \; \sum _k \sum _p e^{(k+1 )  \l } \E_x \left[1_{ T(\om) \in [k, k +1)}1_{T(\om) < T_\DD (\om ) } \vf _p ( \om _{T(\om ) }) F(\om _{T(\om )})\right]. \]
Firstly, we estimate from above $F$ on $B(y_p, \d) $  by $L(F) F(y_p)$. 
Then, we write  for all $s , k +2 \leq s < k+3,$ 
\begin{eqnarray*}   &  &\P_x \left[ \om _s \in B(y_p , \d)\right]\; \geq \; \P_x \left[ \om _s \in B(y_p , \d) , s<T_\DD (\om) \right] \\ &\geq & \P_x \left[ \om _s \in B(y_p , \d)  ,  k \leq T(\om ) < k+1 , s < T_\DD (\om) , \om _{T(\om)}\in B (y_p ,\d) \cap A_1\right] \\& \geq & \; \E_x\left[  1_{[k, k+1)} (T(\om)) 1_{B(y_p , \d)\cap A_1} (\om _{T(\om)}) U(y_p,\om_{T(\om) }, s-T(\om ))\right] ,
\end{eqnarray*} where \[ U(y,z,t) \; := \; \P_z\left[  \om _{t } \in B(y , \d), 1\leq t \leq  T_\DD (\om) \right].\]  Here, we used the Strong Markov property to write the second inequality. Set \[ C_{10}^{-1} \; := \; \inf \{U(y,z,t);  y,z \in \DD, d(y,z) \leq \d, d(y, \DD ) >1 , 1\leq t\leq  3\} .\]
 The constant $C_{10} $ is finite by bounded geometry and we have
\[\E_x \left[ 1_{B(y_p , \d)} (\om_s) \right] 
\;  \geq \; C_{10}^{-1} \E_x\left[  1_{[k, k+1)} (T(\om)) 1_{B(y_p , \d)\cap A_1} (\om _{T(\om)}) 1_{T(\om) < T_\DD (\om)}\right] .\]
It follows that \[  e^{(k+1 )  \l } \E_x \left[1_{ T(\om) \in [k, k +1)}1_{ T(\om) < T_\DD (\om) } \vf _p ( \om _{T(\om)})\right] \; \leq C_{10} \int _{k+2}^{k+3} \E_x\left[ e^{\l s }  1_{B(y_p , \d)  }(\om _s) \right]\, ds .\] 
We thus have, by summing over $ k \in \N$, \begin{eqnarray*} \int _{A_1  } F(y)  d\varpi _x^\l (y) 
& \leq &  C_{10} L(F) \sum _p F(y_p) \E_x\left[\int _0 ^\infty  e^{\l s} 1_{ B(y_p , \d)} (\om _s)  \, ds \right]\\
&\leq & C_{10} L(F)   \sum _p F(y_p) \int _{B (y_p, \d)} G_\l (x, w) \, dw \\
&\leq & C_0 C_{10} L(F)   \sum _p F(y_p) G_\l (x, y_p) \Vol (B (y_p, \d)).
\end{eqnarray*}
By bounded geometry and our condition on  the $y_p$s, we can  choose $\d$ small enough and a constant $C_{11} $ such that $\Vol (B (y_p, \d)) \leq C_{11}\int_A \vf _p (y) \, dy   .$   By Proposition~\ref{Harnack} and the Lipschitz regularity of $F$,  we have:
\begin{eqnarray*} \int _{A_1 } F(y)  d\varpi _x^\l (y) & \leq & C_{10}C_{11} C_0^2 L(F)^2  \sum _p  \int _A F( y)  G_\l (x,  y) \vf_p (y) \, dy \\ & =&  C_{10}C_{11}C_0^2  L(F)^2    \int _A F(y)  G_\l (x, y) \, dy .\end{eqnarray*}
The inequality  follows.
\end{proof}

{\begin{prop}\label{regularity2}
Let $\CC$ be an open domain, $\CC \subset \DD, d(\CC , \pp \DD )>1$. Let $x \in \CC,$ and assume that $A := \pp \CC$ has bounded geometry. Let $\varpi _x ^\l $ be the distribution   in (\ref{Poisson}) on $A$. There exists a constant $C_3$ such that if $x \in \CC $ and $ d(x, A ) >1,$ then  for  $\l \in [0, \l _0],$ for any positive   function $F$ on $A$, 
\[ C_3^{-1} (L(F))^{-2} \int _{A }  G_\l (x,y:\DD) F(y)  \, dy  \; \leq \; \int _{A } F(y)  d\varpi  _x^\l (y),\]
where $L(F) : = e ^{\sup_{A } || \nabla \log F  || }$ is the (multiplicative) Lipschitz constant of $ F$ and $dy$ is the Lebesgue measure on $A$.
\end{prop}}
\begin{proof}
The proof  is similar to the proof of Proposition~\ref{regularity}. Fix  $\d, 0 < \d \leq 1/2.$ We  choose a cover of $\pp \CC $ by open balls $B (y_p, \d) , y_p \in \pp \CC$ such that the balls $B (y_p, \d/3)\cap \pp \CC , y_p \in \pp \CC $ are disjoint and we choose a  partition of unity $\vf _p $ on $\pp \CC  $ subordinate to the cover $B(y_p, \d) \cap\pp \CC .$ We write, setting $T(\om) = T_{\pp \CC}(\om)$ and using (\ref{hitting}),
\[  \int_{\pp \CC} F(y) d\varpi _x^\l (y)   = \E_x [  e^{\l T (\om )} F(\om _{T(\om ) })] \geq  \sum _{k\geq 3}  \sum _p e^{k\l } \E_x \left[1_{ T(\om) \in [k, k +1)}\vf _p ( \om _{T(\om ) }) F(\om _{T(\om )})\right]. \]
By bounded geometry, there is $\theta , 0< \theta <1,$ such that  one can choose for each $y_p$ a point $z_p \in \CC$  such that $d(z_p, y_p ) = \d$ and $d(z_p, \pp\CC) > \theta \d .$ Let $B_p \subset \CC$ be the ball of center $z_p $ and radius $\theta \d /2.$  Then we write for all $s, k-3 < s\leq k-2,$
\begin{eqnarray*}  \E_x \left[ 1_{ B_{p} }(\om _s)    1_{ T(\om) \in [k, k +1)}\vf _p ( \om _{T(\om ) })\right]& = &  \E_x \left[ 1_{ B_{p} }(\om _s)   \E_{\om _s}  1_{ T(\om') \in [k-s, k +1-s)}\vf _p ( \om' _{T(\om' ) })\right] \\ &\geq & c_{10} \E_x \left[ 1_{ B_{p} }(\om _s)   \right], \end{eqnarray*} where \[ c_{10} \; := \;   \inf _p \inf _{z \in  B_p, 1 \leq \kappa \leq 4} \E_z [ \vf_p (\om' _{T(\om ')}) 1_{ T(\om') \in (\kappa , \kappa  +1) }] \]
is positive  by bounded geometry and our choice of $\vf_p, B_p$.
It follows that 
\begin{eqnarray*}
&  & \int _{\pp\CC  } F(y)  d\varpi _x^\l (y) \;  \geq \; (L(F))^{-1}\sum _p F(y_p) \sum _{k\geq 3} e^{k\l } \E_x \left[1_{ T(\om) \in [k, k +1)}\vf _p ( \om _{T(\om ) }) \right]\\
&\geq &   (L(F))^{-1}\sum _p F(y_p) \sum _{k\geq 3} e^{k\l } \int_{k-3}^{k-2}\E_x \left[1_{ B_p} (\om _s) 1_{ T(\om) \in [k, k +1)}\vf _p ( \om _{T(\om ) }) \right]\, ds\\
&\geq & c_{10} (L(F))^{-1}\sum _p F(y_p) \sum _{k\geq 3} e^{k\l } \int_{k-3}^{k-2 }\E_x \left[1_{ B_p} (\om _s) \right]\, ds\\
&\geq & c_{10} (L(F))^{-1}\sum _p F(y_p)\int _{ B_p} G_\l (x,z) \, d\Vol (z)\\
& \geq &  c_{10} C_0^{-1} L(F)^{-1} \sum _p F(y_p)  G_\l (x,y_p)  \Vol (B_p) \\
& \geq &  c_{10}C_0^{-2} L(F)^{-2}  c_{13}  \int _{\pp\CC} F(y) G_\l (x, y) \, dy,
\end{eqnarray*}
where  $c_{13}$ is another geometric constant  such that $\Vol (B_p) \geq c_{13}\int_{\pp\CC} \vf _p (y) \, dy  $ for all $p$.
\end{proof}

A priori, the constant $C_3$ depends on the geometries of $A$, and of the manifold, only through the choice of $\d$ and of $C_{10}, C_{11}, c_{10} $ and  $c_{13} $. In particular, the estimates  of Propositions \ref{regularity}  and \ref{regularity2} are  uniform for all the closed sets in the text and we use the same constant $C_3$ when we apply them.

\small

\begin{thebibliography}{99}

\bibitem[\bf ABJ]{ABJ} J.-P. Anker, P. Bougerol and T. Jeulin, {\it { The infinite Brownian loop on a symmetric space,}} Rev. Mat. Iberoam., {\bf 18} (2002), 41--97.

\bibitem[\bf An1]{A} A. Ancona, {\it{ Negatively curved manifolds, elliptic operators and the Martin boundary}}, Ann. Math. (2) {\bf 125} (1987) 495--536.

\bibitem[\bf An2]{An2} A. Ancona, {\it{ Th\'eorie du potentiel sur les graphes et les vari\'et\'es}}
in \'Ecole d'\'Et\'e de Probabilit\'es de Saint-Flour XVIII 1988- edited by Paul-Louis Hennequin,   Springer Lecture Notes,  Berlin, Heidelberg, Springer-Verlag,  {\bf 1427} (1990), 5--112.


\bibitem[\bf Ano]{An} D. Anosov, {\it{ Theory of dynamical systems. Part I: Ergodic theory.}}, Lectures held in Warsaw, Spring, 1967. Lecture Notes Series, No. 23 Matematisk Institut, Aarhus Universitet, Aarhus 1970. 


\bibitem[\bf AS]{AS} D. Anosov, J. Sinai, {\it{ Certain smooth ergodic systems. (Russian)}}, Uspehi Mat. Nauk {\bf 22} (1967) no. 5 (137), 107--172.

\bibitem[\bf AnS]{AnS} M.T. Anderson and R. Schoen, {\it{Positive harmonic functions on complete manifolds of negative curvature}}, Annals Math. (2) {\bf 121} (1985), 429--461.

\bibitem[\bf BL]{BL} W. Ballmann and F. Ledrappier,  {\it {Discretization of positive harmonic functions on Riemannian manifolds and Martin Boundary }}, in Actes de la table ronde de g\'eom\'etrie diff\'erentielle en l'honneur
de Marcel Berger, Arthur L. Besse \ ́ed., S\'eminaires et Congr\`es, {\bf 1} (1996), 77-92.


\bibitem[\bf B]{B} P. Bougerol, {\it {Th\'eor\`eme central limite local sur certains groupes de Lie}},  Ann. Sci. \'Ecole Norm. Sup. (4) {\bf 14} (1981), 403--432.

\bibitem[\bf Br]{Br} R. Brooks, {\it The fundamental group and the spectrum of the Laplacian}, Comment. Math. Helv. {\bf 56} (1981), 581--598.

\bibitem[\bf Bo]{Bo} R. Bowen, {\it Equilibrium states and the ergodic theory of Anosov diffeomorphisms,} SLN 470, Springer, Berlin, 1975.
\bibitem[\bf BR]{BR} R. Bowen and D. Ruelle, {\it{The ergodic theory of Axiom A flows,}} Invent. Math., {\bf 29} (1975), 181--202.

\bibitem[\bf CK]{CK} I. Chavel and L. Karp, {\it {Large time behaviour of the heat kernel: the parabolic $\l$-potential alternative}}, Comment. Math. Helv. {\bf 66} (1991), 541--556.

\bibitem[\bf CY]{CY}S. Y. Cheng and S. T. Yau, {{\it {Differential equations on Riemannian manifolds and their geometric applications}}, Commun. Pure Appl. Math. {\bf 28} (1975) 333--354. }

\bibitem[\bf Da]{Da} E. B. Davies, {\it { Non-Gaussian aspects of heat kernel behavior}}, J. London Mat. Soc. (2) {\bf 55} (1997), 105--125.

\bibitem[\bf D1]{D1} D. Dolgopyat, {\it{ On decay of correlations in Anosov flows,}} Ann. Math., {\bf 147} (1998), 357--390.

\bibitem[\bf D2]{D2} D. Dolgopyat, {\it {Prevalence of rapid mixing in hyperbolic flows,}} Ergod. Th. \& Dynam. Sys. {\bf 18} (1998), 1097--1114.


\bibitem[\bf DGM] {DGM} A. Debiard, B. Gaveau and E. Mazet, {\it {Th\'eor\`emes de comparaison en g\'eom\'etrie riemannienne}}, Publ. Kyoto Univ. {\bf 12} (1976), 391--425.

\bibitem[\bf F]{F}  W. Feller, {\it An introduction to probability theory and its applications. Vol. II. John Wiley \& Sons.}


\bibitem[\bf Ge]{Ge} P. Gerl, {\it{ Ein Gleichverteilungsatz auf $F_2$,}} \emph{in} Probability Measures on Groups, Springer Lecture Notes in Math. {\bf 706} Springer-Verlag, Berlin-Heidelberg-New York, (1979), 126--130.

\bibitem[\bf G1]{G1} S. Gou\"ezel, {\it {Local limit theorem for symmetric random walks in Gromov-hyperbolic groups}}, J. Amer. Math. Soc., {\bf 27} (2014), 893--928.

\bibitem[\bf G2]{G2} S. Gou\"ezel, {\it {Martin boundary of random walks with unbounded jumps in hyperbolic groups}}, Ann. Proba., {\bf 43} (2015), 2374--2404.

\bibitem[\bf GSC]{GSC}  A. Grigor'yan and L. Saloff-Coste, {\it{ Hitting probabilities for Brownian motion on  Riemannian manifolds}}, J. Maths. Pures Appl. (9){\bf 81} (2002), 115--142.

\bibitem[\bf GL]{GL} S. Gou\"ezel and S. Lalley,  {\it { Random walks on co-compact Fuchsian groups}},  Ann. Sci. \'Ecole Norm. Sup. (4) {\bf 46} (2013). 

\bibitem[\bf GW]{GW} P. Gerl and W. Woess, {\it {Local limits and Harmonic Functions for Nonisotropic Random Walks on Free Groups,}}
 Proba. Th. Rel. Fields {\bf 71} (1986), 341--355.
\bibitem[\bf H1]{H1} U. Hamenst\"adt, {\it {An explicite description of
harmonic measure}}, Math. Z. {\bf 205} (1990), 287--299.

\bibitem[\bf H2]{H2} U. Hamenst\"adt, {\it {Harmonic measures for compact
negatively curved manifolds}}, Acta Math. {\bf 178} (1997),
39-107.

\bibitem[\bf H3]{H3} U. Hamenst\"adt, {\it {Harmonic measures, Hausdorff measures and positive eigenfunctions}}, J. Diff. Geom. {\bf 44} (1996), 1--31.

\bibitem[\bf Ka]{Ka} T. Kato, Perturbation theory of linear operators, \emph{Grund. math. Wissen.} {\bf 132} Springer Verlaf, Berlin-Heidelberg-New York (1980).

\bibitem[\bf K1]{K1} V. A. Kaimanovich, {\it { Invariant measures of the geodesic flow and measures at infinity on negatively curved manifolds}}, Ann. Inst. H. Poincar\'e, A, Phys. Th\'eor., {\bf 53} (1990), 361--393.

\bibitem[\bf Ko]{Ko} G. Kozma, {\it{ A graph counter-example to Davies's conjecture,}} Rev. Mat. Iberoam. {\bf 30} (2014), 1--12.

\bibitem[\bf L]{L} P. Li, Geometric Analysis, {\it Cambridge University Press} (2012).

\bibitem[\bf La]{La} S. Lalley, {\it { Finite range random walks on free groups and homogeneous trees}}, Ann. Prob. {\bf 21} (1993), 2087--2130.


\bibitem[\bf L1]{L1} F. Ledrappier, {\it { A renewal theorem for the distance in negative curvature}}, Stochastic analysis (Ithaca, New York, 1993) Proc. Symp. Pure Math. {\bf 57} (1995), 351--360.

\bibitem[\bf L2]{L2} F. Ledrappier, {\it {Structure au bord des vari\'et\'es \`a courbure n\'egative,}}
 S\'eminaire Th\'eorie Spec. G\'eom. Grenoble, {\bf 13}
(1995), 97-122.

\bibitem[\bf L3]{L3} F. Ledrappier, {\it Applications of dynamics to compact manifolds of negative curvature}, \emph{Proceedings of the International Congress of Mathematicians, Vol 1,2 (Z\"urich, 1994)}, Birkh\"auser, Basel (1995), 1195--1202.


\bibitem[\bf Li]{Li} C. Liverani, {\it { On Contact Anosov flows,}} Annals of Math., {\bf 159} (2004), 1275--1312.

\bibitem[\bf M1]{Ma} G. Margulis, {\it Applications of ergodic theory to the investigation of manifolds of negative curvature}, \emph{Functional Anal. Appl.} {\bf 3} (1969), 335--336.

{\bibitem[\bf M2]{Ma2} G. Margulis, On Some Aspects of the Theory of Anosov Systems, \emph{Springer Monographs in Math.} Springer-Verlag, Berlin Heidelberg (2004).}

\bibitem[\bf Me]{Me} I. Melbourne, {\it {Rapid decay of correlations for nonuniformly hyperbolic flows,}} Trans. AMS, {\bf 359},  (2007), 2421--2441.

\bibitem[\bf Mo]{Mo} O. Mohsen, {\it Le bas du spectre d'une vari\'et\'e hyperbolique est un point selle}, Ann. Sci. \'Ecole Norm. Sup. (4), {\bf 40} (2007), 191--207.

\bibitem[\bf Mv]{Mv} S. A. Molchanov, {\it{Diffusion processes and Riemannian geometry,}} Russian Math. Surveys {\bf 30} (1975), 1--63.



\bibitem[\bf N]{N} L. Na\"im, {\it { Sur le r\^ole de la fronti\`ere de R.S. Martin dans la th\'eorie du potentiel}}, Ann. Inst. Fourier, Grenoble {\bf 7} (1957), 183--281.

\bibitem[\bf Pi]{Pi} R.G. Pinsky , Positive Harmonic Functions and Diffusion, Cambridge University Press, Cambridge, (1995).

\bibitem[\bf P1]{P1} M. Pollicott, {\it {Multiple mixing for hyperbolic flows,}} \emph{preprint}

\bibitem[\bf P2]{P2} M. Pollicott, {\it {Uniform Dolgopyat super-polynomial mixing,}} \emph{in preparation; personal communication.}

\bibitem[\bf PP]{PP} W. Parry and M. Pollicott, Zeta functions and the periodic orbit structure of hyperbolic dynammics. \emph {Ast\'erisque} {\bf 187--188} (1990).

\bibitem[\bf PPS]{PPS} F. Paulin, M. Pollicott, B. Schapira, Equilibrium states in negative curvature, \emph{Ast\'erisque}, {\bf 373} (2015).


\bibitem[\bf R]{R} D. Ruelle, {\it{ Resonances for Axiom A flows,}}  J. Diff. Geometry {\bf 25} (1987), 99--116.


\bibitem[\bf S]{S} K. Sigmund, {\it{ On the space of invariant measures for hyperbolic flows,} \emph{Amer. J. Math.} {\bf 94}} (1972), 31--37.
\bibitem[\bf Sim]{Si} B. Simon,  {\it {Large time behaviour of the heat kernel: on a theorem of Chavel and Karp}}, Proc. Amer. Math. Soc; {\bf 118} (1993), 513--514.
\bibitem[\bf Sin]{Sin} Y. Sinai, {\it {Gibbs measures in ergodic theory}}, Russ. Math. Surv., 27 (1972), 21-70.

\bibitem[\bf SZ]{SZ} P. Souplet, Q. Zhang, {\it {Sharp gradient estimates and Yau Liouville theorem for the heat equation on non-compact manifolds}}, Bull. London Math. Soc. 38 (2006) 1045--1053.

\bibitem[\bf Su]{Su} D. Sullivan, {\it{ Related aspects of positivity in Riemannian geometry,}} J. Diff. Geom. {\bf 25} (1987), 327--351.

\bibitem[\bf V]{V} N. Varopoulos, {\it {Th\'eorie du potentiel sur des groupes et des vari\'et\'es}}, C.R. Acad. Sci. Paris S\'er. I Math. {\bf 302} (1986), 203--205.
\bibitem[\bf W]{W} P. Walters, An introduction to ergodic theory, {\it {Graduate texts in Mathematics {\bf 79}}},  Springer-Verlag, New York-Berlin, (1982).

\bibitem[\bf Y]{Y2} S.T. Yau, {\it{On the heat kernel of a complete Riemannian manifold}}, J. Math. Pures Appl. (9) {\bf 57} (1978), 191--201.
\end{thebibliography}
\end{document}